\documentclass[a4paper]{amsart}
\usepackage[leqno]{amsmath}
\usepackage{amssymb}
\usepackage{amscd}
\usepackage{amsthm}
\usepackage{mathrsfs}
\usepackage{mathtools}
\usepackage{bbm}
\usepackage{color}
\usepackage{accents}
\usepackage{enumerate}
\usepackage{cite}
\usepackage[utf8]{inputenc}
\usepackage[all,cmtip]{xy}
\usepackage{etoolbox}
\usepackage{tikz}
\usepackage{extarrows}

\numberwithin{equation}{section}

\newcommand{\Z}{\ensuremath{\mathbb{Z}}}
\newcommand{\Q}{\ensuremath{\mathbb{Q}}}
\newcommand{\R}{\ensuremath{\mathbb{R}}}
\newcommand{\C}{\ensuremath{\mathbb{C}}}

\newcommand{\A}{\ensuremath{\mathbb{A}}}
\newcommand{\T}{\ensuremath{\mathbb{T}}}

\DeclareMathOperator{\Hom}{Hom}
\DeclareMathOperator{\End}{End}
\DeclareMathOperator{\Ext}{Ext}

\DeclareMathOperator{\Gal}{Gal}
\DeclareMathOperator{\Mod}{-Mod}

\DeclareMathOperator{\Ker}{ker}
\DeclareMathOperator{\Coker}{coker}

\DeclareMathOperator{\rk}{rk}

\DeclareMathOperator{\ord}{ord}

\DeclareMathOperator{\St}{St}
\DeclareMathOperator{\cind}{c-ind}
\DeclareMathOperator{\Coind}{Coind}
\DeclareMathOperator{\sm}{sm}

\DeclareMathOperator{\HH}{H}

\newcommand{\CG}{\hspace{1em}~^C G}
\newcommand{\cf}{{\mathbbm 1}}

\newcommand{\II}{\ensuremath{\mathbb{I}}}	
\newcommand{\VV}{\ensuremath{\mathbb{V}}}	
\newcommand{\WW}{\ensuremath{\mathbb{W}}}			
\newcommand{\pinfty}{\ensuremath{^{\p,\infty}}}

\newcommand{\Ah}{\ensuremath{\mathcal{A}}}
\DeclareMathOperator{\cont}{ct}
\DeclareMathOperator{\an}{an}
\DeclareMathOperator{\rig}{rig}
\DeclareMathOperator{\loc}{loc}
\newcommand{\m}{\ensuremath{\mathfrak{m}}}
\newcommand{\p}{\ensuremath{\mathfrak{p}}}
\newcommand{\q}{\ensuremath{\mathfrak{q}}}
\newcommand{\LI}{\mathcal{L}}
\DeclareMathOperator{\univ}{un}
\DeclareMathOperator{\pr}{pr}
\newcommand{\Hec}{\mathscr{H}}
\DeclareMathOperator{\ev}{ev}
\DeclareMathOperator{\ob}{ob}

\newcommand{\into}{\hookrightarrow}
\newcommand{\onto}{\twoheadrightarrow}
\newcommand{\too}{\longrightarrow}								
\newcommand{\mapstoo}{\longmapsto}
\newcommand{\intoo}{\lhook\joinrel\longrightarrow}
\newcommand{\tooin}{\longleftarrow\joinrel\rhook}

\newtheorem{Lem}{Lemma}[section]
\makeatletter
\newlength{\@thlabel@width}%
\newcommand{\thmenumhspace}{\settowidth{\@thlabel@width}{\itshape1.}\sbox{\@labels}{\unhbox\@labels\hspace{\dimexpr-\leftmargin+\labelsep+\@thlabel@width-\itemindent}}}
\makeatother
\newtheorem{Pro}[Lem]{Proposition}
\newtheorem{Thm}[Lem]{Theorem}
\newtheorem{Cor}[Lem]{Corollary}
\newtheorem{Con}{Conjecture}

\newtheorem{Ass}{Assumption}

\newtheorem*{Hyp}{Hypothesis}

\theoremstyle{definition}
\newtheorem{Def}[Lem]{Definition}
\newtheorem{Rem}[Lem]{Remark}
\newtheorem{Exa}[Lem]{Example}

\author[L.~Gehrmann]{Lennart Gehrmann}
\address{L.~Gehrmann \\ Fakult\"at f\"ur Mathematik \\ Universit\"at Duisburg-Essen \\ Thea-Leymann-Stra\ss e 9 \\ 45127 Essen \\ Germany}
\email{lennart.gehrmann@uni-due.de}

\title[Automorphic $\LI$-invariants for reductive groups]{Automorphic L-invariants for reductive groups}
\subjclass[2010]{Primary 11F55; Secondary 11F70, 11F75, 11F85}

\setcounter{tocdepth}{1}

\begin{document}

\begin{abstract}
Let $G$ be a reductive group over a number field $F$, which is split at a finite place $\p$ of $F$, and let $\pi$ be a cuspidal automorphic representation of $G$, which is cohomological with respect to the trivial coefficient system and Steinberg at $\p$.
We use the cohomology of $\p$-arithmetic subgroups of $G$ to attach automorphic $\LI$-invariants to $\pi$.
This generalizes a construction of Darmon (respectively Spie\ss), who considered the case $G=GL_2$ over the rationals (respectively over a totally real number field).
These $\LI$-invariants depend a priori on a choice of degree of cohomology, in which the representation $\pi$ occurs.
We show that they are independent of this choice provided that the $\pi$-isotypic part of cohomology is cyclic over Venkatesh's derived Hecke algebra.
Further, we show that automorphic $\LI$-invariants can be detected by completed cohomology.
Combined with a local-global compatibility result of Ding it follows that for certain representations of definite unitary groups the automorphic $\LI$-invariants are equal to the Fontaine--Mazur $\LI$-invariants of the associated Galois representation.
\end{abstract}

\maketitle

\tableofcontents


\section*{Introduction}
Let $f$ be a normalized newform of weight $2k$ and level $\Gamma_0(M)$.
Suppose that $M=pN$ with $p$ prime, $p\nmid N$ and that the $p$-th Fourier coefficient of $f$ is equal to $p^{k-1}$.
As a special case of the general interpolation formula one sees that the central critical value of the $p$-adic $L$-function of $f$ vanishes independently of the value of its complex counterpart.
In \cite{MTT} Mazur, Tate and Teitelbaum conjectured the existence of a constant $\LI(f)\in \C_p$ - the $\LI$-invariant of $f$ - which depends only on the restriction of the Galois representation attached to $f$ to a decomposition group at $p$, such that
\begin{align}\label{derivative}
\frac{d}{ds} \left.L_p(f,s)\right|_{s=k}= \LI(f)\cdot L(f,k).\tag{$\ast$}
\end{align}
In the special situation that $f$ corresponds to a rational elliptic curve $E$, i.e.~$k=1$ and $\Q_f=\Q$, the condition $a_p=1$ is equivalent to $E$ having split multiplicative reduction at $p$.
Thus, by Tate's $p$-adic uniformization theorem there exists a $p$-adic number $q\in \Q_p^{\times}$ of absolute value less than $1$
and an isomorphism
$$\mathbb{G}_m^{\rig}/q^{\Z}=E^{\rig}$$
of rigid analytic groups.
In that case Mazur, Tate and Teitelbaum propose the following candidate for the $\LI$-invariant of $f$:
$$\LI(f)=\frac{\log_p(q)}{\ord_p(q)}.$$

In the higher weight case several constructions of the $\LI$-invariant were proposed.
These a priori different $\LI$-invariants are known to be equal and fulfil equation \eqref{derivative} by the work of several authors (see \cite{BDI} or \cite{Co} for a more detailed discussion).
The following is an incomplete list of various constructions of $\LI$-invariants:
\begin{itemize}
\item Fontaine and Mazur (see \cite{Mazur}) define $\LI$-invariants in terms of the filtered Frobenius module associated to the local Galois representation attached to $f$.
\item In \cite{Teitelbaum} Teitelbaum defines $\LI$-invariants via $p$-adic integration theory in case that $f$ has a Jacquet--Langlands lift to a Shimura curve, which admits a Cerednik--Drinfeld uniformization.
It uses the description of the relevant space of automorphic forms as harmonic cochains on the Bruhat--Tits tree, which are invariant under a $p$-arithmetic subgroup of the group of units of a definite quaternion algebra, and Coleman's $p$-adic integration theory.
\item In analogy with the construction of Teitelbaum a candidate for $\LI(f)$ is defined by Darmon (cf.~\cite{D}) in the weight $2$ case and Orton (cf.~\cite{Orton}) for general weights in terms of harmonic cochains, which are invariant under $p$-arithmetic subgroups of $GL_2(\Q).$
\item Breuil (see \cite{Br2}) gives a definition of $\LI(f)$ by studying the $f$-isotypic component of completed cohomology of modular curves, which leads to the first instances of the $p$-adic Langlands program.
\end{itemize}

In recent years several authors generalized some of these constructions to higher rank groups.
In \cite{Sp} Spie\ss~generalizes Darmon's approach to Hilbert modular forms of parallel weight $2$ and proves the analogue of Mazur, Tate and Teitelbaum's exceptional zero conjecture in this setting.
Besser and de Shalit (see \cite{BdS}) give a generalization of both the $\LI$-invariant of Fontaine--Mazur and the one of Teitelbaum for varieties, which are $p$-adically uniformizable by Drinfeld's $d$-dimensional upper half space.
They replace Coleman's $p$-adic integration theory by Besser's theory of finite polynomial cohomology.
Finally, Ding generalizes Breuil's approach to automorphic forms on definite unitary groups, which are split at $p$ (cf.~\cite{Ding}).
He defines what he calls Breuil's simple $\LI$-invariants and shows that they are equal to Fontaine--Mazur $\LI$-invariants of two-dimensional subquotients of the associated local Galois representation, or rather of the associated $(\varphi,\Gamma)$-module.

The main aim of this article, which is carried out in Section \ref{mainsection}, is to generalize the approach of Darmon, or rather its representation-theoretic reformulation by Spie\ss, to suitable automorphic representations of higher rank reductive groups.
Let us give a rough sketch of the construction, in which we ignore all kinds of class number issues.
Let $\pi$ be a cuspidal automorphic representation of a semi-simple reductive group $G$ over a number field $F$ such that 
\begin{enumerate}[(i)]
\item\label{first} $\pi$ is cohomological with respect to the trivial coefficient system,
\item\label{second} there is a finite place $\p$ of $F$ such that $G$ is split at $\p$ and the local component $\pi_\p$ is the (complex, smooth) Steinberg representation $\St^{\infty}_{G_\p}(\C)$ of $G_\p=G(F_\p)$ and
\item\label{third} a form of strong multiplicity one holds for $\pi$. 
\end{enumerate}
(See Section \ref{Setup} for a complete list of assumptions we impose on $\pi$ and $G$.)
To simplify notation we assume in this introduction that the finite part of $\pi$ can be defined over the rationals.

By property \eqref{first} we can find an arithmetic subgroup $\Gamma \subset G(F)$ such that the $\pi$-isotypic part of $\HH^{\ast}(\Gamma,\Q)$ is non-zero.
Evaluation at an Iwahori-fixed vector yields a map
$$\HH^{\ast}(\Gamma^{\p},\Hom(\St^{\infty}_{G_\p}(\Q),\Q))\too \HH^{\ast}(\Gamma,\Q)$$
for a suitable $\p$-arithmetic subgroup $\Gamma^{\p}\subset G(F)$.
By analyzing the resolution of the Steinberg representation coming from its interpretation as the cohomology with compact supports of the Bruhat--Tits building of $G_\p$ one sees that the map induces an isomorphism on $\pi$-isotypic components (see Proposition \ref{dimensions}).

We fix a Borel subgroup of the split group $G_{F_\p}$.
Let $\Delta$ be the corresponding set of simple roots.
For a subset $J\subseteq\Delta$ we denote by $v^{\infty}_{J}(\Q)$ the associated smooth generalized Steinberg representation of $G_\p$, e.g.~$v^{\infty}_{\emptyset}(\Q)=\St^{\infty}_{G_\p}(\Q)$ and $v^{\infty}_\Delta(\Q)=\Q$.
By a theorem of Dat and Orlik (see \cite{Dat} and \cite{Orlik}) the space of smooth $|I|$-extensions $\Ext_{\infty}^{|I|}(v^{\infty}_I(\Q),\St^{\infty}_{G_\p}(\Q))$ is one-dimensional.
A choice of generator of said Ext-group yields a map
$$\HH^{\ast}(\Gamma^{\p},\Hom(\St^{\infty}_{G_\p}(\Q),\Q))\too \HH^{\ast+|I|}(\Gamma^{\p},\Hom(v^{\infty}_I(\Q),\Q)).$$
A crucial result of the article is that this map induces an isomorphism on $\pi$-typical components (see Corollary \ref{smoothcup}).
In the $PGL_2$-case this result was previously proven by considering an explicit resolution of said extensions by compactly induced representations (cf.~Lemma 6.2 of \cite{Sp}).
We use a different, less explicit method:
by a spectral sequence argument we reduce the proof to the vanishing of certain $\Ext$-groups, which is an easy corollary of the results of Dat and Orlik (see Corollary \ref{extvanishing}).

By a theorem of Borel and Serre the smooth Steinberg representation $\St^{\infty}_{G_\p}(\Q)$ has an integral model, which has a resolution by representations which are compactly induced from finitely generated $\Z$-modules (see Theorem \ref{resolution1}).
This implies that the natural map
\begin{align}\label{intro}\HH^{\ast}(\Gamma^{\p},\Hom(\St^{\infty}_{G_\p}(\Z),\Z))\otimes \Q\too\HH^{\ast}(\Gamma^{\p},\Hom(\St^{\infty}_{G_\p}(\Q),\Q)) \tag{$\dagger$} \end{align}
is an isomorphism (see Proposition \ref{FlachundNoethersch}).
This allows us to take cup products with more general extension classes, which we describe in the following:
For a subset $J\subseteq\Delta$ we denote by $\VV^{\an}_J(\Q_p)$ the associated locally analytic generalized Steinberg representation.
In Section \ref{Extensions2} we construct a natural isomorphism
$$\Hom_{\cont}(F_\p^{\times},\Q_p)\too\Ext^{1}_{\an}(v^{\infty}_I(\Q_p),\VV^{\an}_J(\Q_p)),\lambda \mapstoo \mathcal{E}^{\an}_{I,J}(\lambda\circ i)$$
for any subset $J\subseteq I\subseteq \Delta$ with $|I|=|J|+1$.
Here $i\in I$ denotes the unique simple root not contained in $J$.
This is a slight generalization of a result of Ding, who proved the corresponding result for $G=GL_n$ and $I=\emptyset$ (cf.~\cite{Ding}, Section 2.2).

Let $\St^{\an}_{G_\p}(\Q_p)$ be the locally analytic Steinberg representation and $i\in \Delta$ a simple root.
Via the isomorphism \eqref{intro} and basic theory of $p$-adic integration we can define a cup product pairing
$$\HH^{d}(\Gamma^{\p},\Hom(\St^{\infty}_{G_\p}(\Q),\Q_p))\times \Hom_{\cont}(F_\p^{\times},\Q_p)\xrightarrow{\cup } \HH^{d+1}(\Gamma^{\p},\Hom(v^{\infty}_{\left\{i\right\}}(\Q),\Q_p)).$$
Thus, by restricting to the $\pi$-isotypic component and to an isotypic component of the action of $\pi_0(G_\infty)$ we get a map
$$c^{(d)}_{i}(\lambda)[\pi]^{\epsilon}\colon \HH^{d}(\Gamma^{\p},\Hom(\St^{\infty}_{G_\p}(\Q),\Q_p))^{\epsilon}[\pi] \to \HH^{d+1}(\Gamma^{\p},\Hom(v^{\infty}_{\left\{i\right\}}(\Q),\Q_p))^{\epsilon}[\pi]$$
for any $\lambda \in \Hom_{\cont}(F_\p^{\times},\Q_p)$.
Let $q$ be the lowest cohomological degree, in which $\pi$ occurs.
We define $\LI_{i}^{(d)}(\pi,\p)^{\epsilon}\subseteq \Hom_{\cont}(F_\p^{\times},\Q_p)$ as the kernel of the map $\lambda \mapsto c^{(d+q)}_{i}(\lambda)[\pi]^{\epsilon}$.
By Corollary \ref{smoothcup} the $\LI$-invariant $\LI_{i}^{(d)}(\pi,\p)^{\epsilon}$ does not contain the subspace of smooth homomorphisms and, therefore, its codimension is at least one.
If the multiplicity of $\pi$ is equal to one, we conclude that the codimension of $\LI_{i}^{(0)}(\pi,\p)^{\epsilon}$ is equal to one.

We end Section \ref{mainsection} by giving some easy properties of these $\LI$-invariants.
For example, in Section \ref{properties} we study their behaviour under twisting by characters and Galois actions.
We show that one could define automorphic $\LI$-invariants also in terms of cohomology with compact supports in Section \ref{Compact}. 

In Section \ref{DHa} we give a generalization of the main result of \cite{Ge3}.
More precisely, we show that automorphic $\LI$-invariants are independent of the cohomological degree $d$ provided that the $\pi$-isotypic component of cohomology is cyclic over the derived Hecke algebra introduced by Venkatesh in \cite{Ve}.

Conjecturally one can associate a $p$-adic Galois representation $\rho_\pi$ to $\pi$, which takes values in the so-called $C$-group of $G$.
The fact that $\pi$ is Steinberg at $\p$ should imply that the restriction $\rho_{\pi,\p}$ of $\rho_\pi$ to the local Galois group at $\p$ is totally reducible.
In Section \ref{Galois} we define for each simple root $i\in \Delta$ the $\LI$-invariant $\LI_i(\rho_{\pi,\p})$ as the Fontaine--Mazur $\LI$-invariants of the two-dimensional subquotient of $\rho_{\pi,\p}$ cut out by $i$.
The rest of Section \ref{someconjectures} is devoted to showing that automorphic and Galois theoretic $\LI$-invariants agree in a very special situation that was considered before by Ding.
In particular, we assume that $G$ is a definite unitary group and $\pi$ is spherical except at $\p$ and another place (see Section \ref{Galoisproof} for further assumptions we have to impose on $G$ and $\pi$).

In order to do this, we first show that that the $0$-th degree $\LI$-invariant $\LI_{i}^{(0)}(\pi,\p)^{\epsilon}$ is detected by completed cohomology (see Proposition \ref{last} and \ref{lastlast}).
This is a generalization of a theorem of Breuil, who considered the case of modular curves (see \cite{Br2}, Theorem 1.1.5).
Then, in the above mentioned special situation one can use the local-global compatibility theorem of Ding (see \cite{Ding}, Theorem 1.2) to obtain the equality of $\LI$-invariants.

\subsection*{Notations.}
All rings are assumed to be commutative and unital.
The group of invertible elements of a ring $R$ will be denoted by $R^{\times}$.
If $R$ is a ring and $G$ a group, we will denote the group algebra of $G$ over $R$ by $R[G]$.
Given topological groups $H$ and $G$ we write  $\Hom_{\cont}(H,G)$ for the space of continuous homomorphism from $H$ to $G$.
Let $\chi\colon G\to R^{\times}$ be a character.
We write $R[\chi]$ for the $G$-representation, which underlying $R$-module is $R$ itself and on which $G$ acts via the character $\chi$.
The trivial character will be denoted by $\cf$.

\subsection*{Acknowledgements.}
It is my pleasure to thank Vytautas Paškūnas for extensive discussions about this manuscript, mathematics and life in general.
I am grateful to Yiwen Ding for inviting me for a stay at Beijing International Center for Mathematical Research, during which part of this article was written.
I thank Michael Spie\ss~for helpful conversations on completed cohomology and the anonymous referee for his detailed list of comments that vastly improved the exposition of this paper.
Finally, I would like to thank all past and present members of ESAGA, who taught me new and exciting mathematics during the past five years.


\section{The setup}\label{Setup}
We fix an algebraic number field $F$ with ring of integers $\mathcal{O}$.
In addition, we fix a finite place $\p$ of $F$ lying above the rational prime $p$ and choose embeddings
$$\overline{\Q_\p} \tooin \overline{\Q}\intoo \C.$$

If $v$ is a place of $F$, we denote by $F_{v}$ the completion of $F$ at $v$.
If $v$ is a finite place, we let $\mathcal{O}_{v}$ denote the valuation ring of $F_{v}$ and $\ord_{v}$ the additive valuation such that $\ord_{v}(\varpi)=1$ for any local uniformizer $\varpi\in\mathcal{O}_{v}$.
We write $\mathcal{N}(v)$ for the cardinality of the residue field of $\mathcal{O}_{v}$.

Let $\A$ be the adele ring of $F$, i.e~the restricted product over all completions $F_{v}$ of $F$.
We write $\A^\infty$ (respectively $\A\pinfty$) for the restricted product over all completions of $F$ at finite places (respectively finite places different from $\p$).

If $H$ is an algebraic group over $F$ and $v$ is a place of $F$, we write $H_v=H(F_v)$.
We put $H_\infty=\prod_{v\mid\infty}H_v$.

Throughout the article we fix a connected, semi-simple algebraic group $G$ over $F$ of $F$-rank $l$.
We assume that the base change $G_{F_\p}$ of $G$ to $F_\p$ is split.
Moreover, we assume that $p$ is odd, if the root system of $G$ has irreducible components of type $B$, $C$ or $F_4$, and, if it has irreducible components of type $G_2$, we assume that $p >3$.\footnote{These are exactly the same restrictions as in \cite{OrSchraen} .}
Let $K_\infty\subseteq G_\infty$ denote a fixed maximal compact subgroup.
The integers $\delta$ and $q$ are defined via
\begin{align*}\delta&=\rk G_\infty - \rk K_\infty \\
\intertext{and}
2 q+\delta&=\dim G_\infty - \dim K_\infty.
\end{align*}
That the a priori rational number $q$ is integral follows from the equality
$$\dim H \equiv \rk H \bmod 2,$$
which holds for every reductive group $H$ by the root space decomposition. 

At last, we fix a cuspidal automorphic representation $\pi=\otimes_v \pi_v$ of $G(\A)$ with the following properties:
\begin{itemize}
\item $\pi$ is cohomological with respect to the trivial coefficient system,
\item $\pi$ is tempered at $\infty$ and
\item $\pi_\p$ is the (smooth) Steinberg representation $\St^{\infty}_{G_{\p}}(\C)$ of $G_\p$.
\end{itemize}

\begin{Hyp}[SMO]\label{Hyp}
We assume that the following strong multiplicity one hypothesis on $\pi$ holds:
If $\pi^{\prime}$ is an automorphic representation of $G$ such that 
\begin{itemize}
\item $\pi_v^{\prime}\cong\pi_v$ for all finite places $v\neq\p$,
\item $\pi_\p^{\prime}$ has a Parahori-invariant vector and
\item $\pi_\infty$ has non-vanishing $(\mathfrak{g},K_{\infty}^{\circ})$-cohomology,
\end{itemize}
then $\pi$ is cuspidal, $\pi^{\prime}_\p=\pi_\p$ and $\pi_\infty$ is tempered.
\end{Hyp}
We define $m_\pi$ to be the sum of the multiplicities in the space of automorphic forms of all (distinct) $\pi^\prime$ as above.

\begin{Rem}
One could weaken the assumptions on $G$.
For example, it is enough to assume that $G$ is reductive and not necessarily semi-simple.
To ease notations, e.g.~one does not have to fix a central character, we stick to the semi-simple case.
The condition that the group is split at $\p$ could be removed by slightly extending known results from the literature.
Anyway, it turns out that the split situation is the richest one.
For example, in case $G_\p$ is compact our construction would be empty (see Remark \ref{split} for more details).

Of course, one would like to extend the construction to representations which are cohomological with respect to an arbitrary coefficient system.
Most of the results carry over to the more general situation. 
But the existence of nice integral structures on algebraic twists of Steinberg representations is not known.
This seems to be a difficult problem (see Remark \ref{localg}).

The strong multiplicity one hypothesis is known in some examples, most prominently it is known for all cuspidal automorphic representations in case $G=PGL_n$. Moreover, in that case we have $m_\pi=1$.
\end{Rem}


\section{Local considerations}
In this section we recollect results about generalized Steinberg representations of the group $G_\p$.
If $H$ is an algebraic group over $F_\p$, e.g.~a subgroup of the base change $G_{F_\p}$ of $G$ to $F_\p$, we also denote the group of $F_\p$-valued points of $H$ by $H$.

Given a locally profinite group $H$ and a ring $R$ we denote by $\mathfrak{C}^{\sm}_{R}(H)$ the category of smooth $R[H]$-modules.

\subsection{Resolutions of smooth representations}\label{Resolutions}
We introduce the class of flawless smooth representations.
In a previous article  of the author (cf.~\cite{Ge}) these representations were called homologically of finite type.
The cohomology of such representations (or rather their algebraic duals) is particularly well-behaved (see Proposition \ref{FlachundNoethersch}).

Let $H$ be a connected, semi-simple algebraic group over $F_\p$, $K\subseteq H$ a compact, open subgroup and $L$ an object in $\mathfrak{C}^{\sm}_{R}(K).$
The \emph{(smooth) compact induction} $\cind^{H}_{K}L$ of $L$ from $K$ to $H$ is the space of all functions $f\colon H\to L$ that satisfy
\begin{itemize}
\item $f(hk)=k^{-1}f(h)$ for all $k\in k, h\in H$ and
\item $f$ has compact support.
\end{itemize}
Compact induction $\cind^{H}_{K}L$ defines a smooth $H$-representation via left translation.

\begin{Def}
Let $R$ be a ring and $H$ a connected, semi-simple algebraic group over $F_\p$.
An object $M\in\ob(\mathfrak{C}^{\sm}_{R}(H))$ is called \emph{flawless} if 
\begin{itemize}
\item $M$ is projective as an $R$-module and
\item there exists a finite length resolution
$$0\too P_m\too\cdots\too P_0\too M \too 0$$
in $\mathfrak{C}^{\sm}_{R}(H)$, where each $P_i$ is a finite direct sum of modules of the form
$$\cind_{K}^{H} L$$
with $K\subseteq H$ a compact, open subgroup and $L\in\ob(\mathfrak{C}^{\sm}_{R}(K))$ finitely generated projective over $R$.
\end{itemize}
\end{Def}

\begin{Rem}
Suppose $R=\Omega$ is a field of characteristic $0$.
Then the representations $P_i$ in the definition above are finitely generated, projective objects in $\mathfrak{C}^{\sm}_{\Omega}(H)$.
\end{Rem}

In the classical smooth representation theory of $p$-adic groups the property of being flawless is ubiquitous as the following theorem of Schneider and Stuhler (cf.~\cite{SSred}) shows.
\begin{Thm}[Schneider--Stuhler]\label{admissible}
Let $H$ be a connected, semi-simple algebraic group over $F_\p$ and $\Omega$ a field of characteristic $0$.
Every admissible representation $V\in\ob(\mathfrak{C}^{\sm}_{\Omega}(H))$ of finite length is flawless.
\end{Thm}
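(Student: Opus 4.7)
The plan is to invoke Schneider and Stuhler's resolution of admissible smooth representations by coefficient systems on the Bruhat--Tits building (\cite{SSred}) and then verify that it satisfies the definition of flawless. Since $\Omega$ is a field, projectivity of $V$ over $\Omega$ is automatic, so the only condition requiring attention is the existence of the appropriate finite resolution.

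Let $\mathcal{X}$ denote the Bruhat--Tits building of $H$ over $F_\p$; its dimension equals the semisimple rank $d$ of $H$, which will be the length of the resolution. For each facet $\sigma$ of $\mathcal{X}$, let $\hat{K}_\sigma\subseteq H$ denote its pointwise stabilizer, a compact open subgroup. Schneider and Stuhler define a decreasing filtration $K_\sigma^{(e)}\trianglelefteq \hat{K}_\sigma$ by compact open normal subgroups and show that, for the given $V$, when the level $e$ is chosen sufficiently large, the fixed subspaces $V^{K_\sigma^{(e)}}$---each finite-dimensional over $\Omega$ by admissibility---assemble into an $H$-equivariant coefficient system on $\mathcal{X}$ whose augmented oriented chain complex
\[
0 \longrightarrow C_d \longrightarrow \cdots \longrightarrow C_0 \longrightarrow V \longrightarrow 0
\]
is exact in $\mathfrak{C}^{\sm}_{\Omega}(H)$, with
\[
C_i \;=\; \bigoplus_{\sigma\in \Sigma_i} \cind_{\hat{K}_\sigma}^{H} \bigl(\varepsilon_\sigma \otimes V^{K_\sigma^{(e)}}\bigr),
\]
where $\Sigma_i$ is a finite set of representatives for the $H$-orbits of $i$-dimensional facets and $\varepsilon_\sigma$ is the orientation character.

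Granted this resolution, the flawless condition is immediate: each $C_i$ is a finite direct sum of modules of the form $\cind_{\hat{K}_\sigma}^{H}L_\sigma$ with $\hat{K}_\sigma\subseteq H$ compact open and $L_\sigma$ a finite-dimensional $\Omega$-vector space, in particular finitely generated projective over $\Omega$; finiteness of $\Sigma_i$ comes from cocompactness of the $H$-action on $\mathcal{X}$.

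The genuine work sits entirely on the Schneider--Stuhler side. The main obstacles there are surjectivity of the augmentation---which exploits that in characteristic zero every admissible representation is generated by its vectors fixed under a sufficiently deep pro-$p$-subgroup---and exactness in positive degrees, which rests on the contractibility of $\mathcal{X}$ together with a careful acyclicity analysis of the coefficient system. Since these inputs are already available in the literature, no further calculation is needed here.
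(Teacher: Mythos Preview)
Your proposal is correct and follows the same approach as the paper, which simply cites \cite{SSred} without further elaboration; your sketch of how the Schneider--Stuhler resolution yields a flawless resolution is exactly what that citation unpacks to. One small terminological slip: the dimension of the Bruhat--Tits building is the $F_\p$-rank of $H$, not the absolute semisimple rank, though this does not affect the argument.
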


\begin{Lem}\label{flawlessproperties}
Let $R$ be a ring and $H_1, H_2$ connected, semi-simple algebraic groups over $F_\p$.
\begin{enumerate}
\item\label{flawlessbasechange} If $M\in\ob(\mathfrak{C}^{\sm}_{R}(H_1))$ is flawless and $S$ is an $R$-algebra, then
$$M\otimes_R S \in\ob(\mathfrak{C}^{\sm}_{S}(H_1))$$
is flawless.
\item\label{flawlesstensor} Let $M_1\in \ob(\mathfrak{C}^{\sm}_{R}(H_1))$ and $M_2\in \ob(\mathfrak{C}^{\sm}_{R}(H_2))$ be flawless.
Then the tensor product
$$M_1\otimes_R M_2\in \ob(\mathfrak{C}^{\sm}_{R}(H_1\times H_2))$$
is flawless.
\item\label{flawlesspullback}
If $\phi\colon H_1\to H_2$ is an isogeny of algebraic groups and $M\in \ob(\mathfrak{C}^{\sm}_{R}(H_2))$ a flawless representation of $H_2$, then
$$\phi^\ast(M)\in \ob(\mathfrak{C}^{\sm}_{R}(H_1))$$
is flawless.
\end{enumerate}
\end{Lem}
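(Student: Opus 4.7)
The plan for (1) is to apply $-\otimes_R S$ to the given flawless resolution $P_\bullet \to M$. Because $M$ is $R$-projective and hence $R$-flat, one has $\mathrm{Tor}^R_i(M,S)=0$ for $i\geq 1$, so the resolution stays exact after base change. Each term transforms under the coset decomposition $\cind^{H_1}_K L \cong \bigoplus_{hK\in H_1/K} L$ (as $R$-modules) into $\cind^{H_1}_K (L\otimes_R S)$, and $L\otimes_R S$ inherits finite generation and projectivity from $L$. Projectivity of $M\otimes_R S$ over $S$ follows from the fact that base change preserves projective modules.

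For (2), I would form the total complex $T_\bullet$ of the double complex $P_\bullet^{(1)}\otimes_R P_\bullet^{(2)}$ built from flawless resolutions of $M_1$ and $M_2$. Since every $P_i^{(j)}$ is $R$-flat (being a direct sum of compact inductions of $R$-projective modules), the standard spectral-sequence argument for a double complex of flat modules shows that $T_\bullet\to M_1\otimes_R M_2$ is a finite resolution. Each term is canonically a finite direct sum of modules $\cind^{H_1}_{K_1}L_1\otimes_R \cind^{H_2}_{K_2}L_2 \cong \cind^{H_1\times H_2}_{K_1\times K_2}(L_1\otimes_R L_2)$, with $L_1\otimes_R L_2$ again finitely generated projective over $R$, and $M_1\otimes_R M_2$ is $R$-projective as the tensor product of two $R$-projective modules.

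For (3), the plan is to pull the given resolution back along $\phi$; exactness is preserved because $\phi^*$ is merely restriction on underlying $R$-modules, so it suffices to rewrite $\phi^*(\cind^{H_2}_{K}L)$ in the required shape. Because $\phi$ has finite kernel it is a local isomorphism on $F_\p$-points, so $\phi^{-1}(K)$ is compact open in $H_1$; moreover, Galois cohomology of the finite kernel over the local field $F_\p$ shows that $\phi(H_1)$ has finite index in $H_2$. A Mackey decomposition over the finite double-coset space $\phi(H_1)\backslash H_2/K$ then expresses $\phi^*(\cind^{H_2}_K L)$ as a finite direct sum of modules of the form $\cind^{H_1}_{\phi^{-1}(gKg^{-1})}(L)$, with the inner module still finitely generated projective over $R$.

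The most delicate step is the Mackey bookkeeping in part (3) and the verification that $\phi(H_1)$ has finite index in $H_2(F_\p)$ for an arbitrary isogeny of semi-simple groups; once these are in hand, parts (1) and (2) are purely formal consequences of compact induction commuting with tensor products over $R$ together with flatness of the terms of a flawless resolution.
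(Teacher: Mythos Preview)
Your proposal is correct and follows essentially the same route as the paper: base-change the resolution and use $(\cind_K^{H_1}L)\otimes_R S\cong \cind_K^{H_1}(L\otimes_R S)$ for (1), take the tensor product of the two resolutions together with $(\cind_{K_1}^{H_1}L_1)\otimes_R(\cind_{K_2}^{H_2}L_2)\cong \cind_{K_1\times K_2}^{H_1\times H_2}(L_1\otimes_R L_2)$ for (2), and apply Mackey decomposition over the finite double coset space $\phi(H_1)\backslash H_2/K$ for (3). You are in fact a bit more explicit than the paper about why exactness survives the tensor operations (via $\mathrm{Tor}$-vanishing and flatness of the terms), which the paper leaves implicit.
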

\begin{proof}
The first claim follows from the fact that for any open, compact subgroup $K_1\subseteq H_1$ and any $L_1\in\ob(\mathfrak{C}^{\sm}_{R}(K_1))$ the canonical map
$$(\cind_{K_1}^{H_1} L_1)\otimes_R S \too \cind_{K_1}^{H_1} (L_1\otimes_R S)$$
is an isomorphism.

Let $K_i\subseteq H_i$ be compact, open subgroups and $L_i\in\ob(\mathfrak{C}^{\sm}_{R}(K_i))$ smooth representations.
The $(H_1\times H_2)$-equivariant pairing
\begin{align*}
(\cind_{K_1}^{H_1} L_1)\times (\cind_{K_2}^{H_2} L_2)&\too \cind_{K_1\times K_2}^{H_1\times H_2} L_1\otimes_R L_2\\
(f_1,f_2)&\mapstoo \left[(g_1,g_2)\mapstoo f_1(g_1)\otimes f_2(g_2)\right]
\end{align*}
induces an isomorphism
$$
(\cind_{K_1}^{H_1} L_1)\otimes_R (\cind_{K_2}^{H_2} L_2)\xlongrightarrow{\cong}\cind_{K_1\times K_2}^{H_1\times H_2} (L_1\otimes_R L_2).
$$
Thus, given flawless resolutions $P_\bullet^{[i]}$ of objects $M_i\in \ob(\mathfrak{C}^{\sm}_{R}(H_i))$, then the tensor product $P_\bullet^{[1]}\otimes_R P_\bullet^{[2]}$ of chain complexes is a flawless resolution of $M_1\otimes_R M_2.$

For the last assertion we first note that any isogeny $\phi\colon H_1\to H_2$ of semi-simple groups has to be central and that the corresponding group homomorphism on $F_\p$-valued points has finite kernel and cokernel.
Let $K_2\subseteq H_2$ be an open, compact subgroup.
We fix a set of representatives $g_1,\ldots,g_n\in H_2$ of the double coset $\phi(H_1)\backslash H_2/K_2$
and put $K_1^{g_i}=\phi^{-1}(g_i K_2 g_i^{-1}).$
Given a representation $L_2\in\ob(\mathfrak{C}^{\sm}_{R}(K_2))$ Mackey's decomposition formula tells us that 
$$\phi^\ast(\cind_{K_2}^{H_2}L_2)= \bigoplus_{i=1}^{n} \cind_{K_1^{g_i}}^{H_1}L_2,$$
where $K_1^{g_i}$ acts on $L_2$ via $k.l=(g_i^{-1}\phi(k)g_i).l$ for all $k\in K_1^{g_i}$, $l\in L_2.$
This proves the last claim.
\end{proof}

\subsection{Generalized Steinberg representations}\label{Steinberg}
We fix a Borel subgroup $B$ of the base change $G_{F_\p}$ of $G$ to $F_\p$ and a maximal split torus $T\subseteq B$.
Let $\Delta$ be the associated root basis.
For any subset $I\subseteq\Delta$ let $B\subseteq P_I\subseteq G_{F_\p}$ be the associated standard parabolic subgroup.
For a ring $R$ let $i^{\infty}_I(R)=C^{\infty}(P_I\backslash G_\p,R)$ be the smooth $G_\p$-representation of locally constant $R$-valued functions on the quotient $P_I\backslash G_\p$.
The \emph{generalized (smooth) Steinberg representation} associated to $I\subseteq \Delta$ is given by the quotient
$$v^{\infty}_I(R)= i^{\infty}_I(R) / \sum_{I\subset J\subset \Delta, I\neq J} i^{\infty}_J(R).$$
If $I=\left\{i\right\}$ consists of a single element, we put $v^{\infty}_i(R)=v^{\infty}_{\left\{i\right\}}(R).$
In case $I=\emptyset$ the representation $\St^{\infty}_{G_\p}(R)=v^{\infty}_\emptyset(R)$ is the usual $R$-valued Steinberg representation of $G_\p$.
If $R=\Omega$ is a field of characteristic zero, the generalized Steinberg representations $v^{\infty}_I(\Omega)$ are known to be pairwise non-isomorphic, irreducible representations.
The constituents of the Jordan--Hölder series of $i^{\infty}_I(\Omega)$ are exactly those $v^{\infty}_{I^\prime}(\Omega)$ with $I\subseteq I^{\prime}$, each occurring with multiplicity one.

\begin{Thm}[Borel--Serre]\label{resolution1}
The integral Steinberg representation $\St^{\infty}_{G_\p}(\Z)$ is flawless.
More precisely, there exists a finite length resolution of smooth $G_\p$-modules
$$0\too P_d\too\cdots\too P_0\too\St^{\infty}_{G_\p}(\Z)\too 0$$
with the following properties:
\begin{enumerate}[(a)]
\item\label{higher}
For $i\geq 1$ the representation $P_i$ is isomorphic to a finite direct sum of representations of the form
$$\cind_{K_\p}^{G_\p}\Z(\chi)$$
with $K_\p\subseteq G_\p$ compact open subgroups such that
\begin{itemize}
\item $K_\p$ has a normal subgroup that is a parahoric subgroup but not an Iwahori subgroup and 
\item $\chi\colon K_\p\to \left\{\pm1\right\}$ is a character that is trivial on said parahoric subgroup  
\end{itemize}
\item\label{zeroth}
The representation $P_0$ is isomorphic to
$$\cind_{K_\p}^{G_\p}\Z(\chi)$$
with $K_\p\subseteq G_\p$ a compact open subgroup such that
\begin{itemize}
\item $K_\p$ has a normal subgroup that is an Iwahori subgroup and
\item $\chi\colon K_p\to \left\{\pm1\right\}$ is a character that is trivial on said Iwahori subgroup  
\end{itemize}
\end{enumerate}
\end{Thm}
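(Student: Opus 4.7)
The plan is to construct the resolution as the compactly supported simplicial cochain complex of the Bruhat--Tits building $\mathcal{X}$ of $G_\p$, and to read off the asserted structural properties of each term from the action of $G_\p$ on $\mathcal{X}$. Since $G$ is split at $\p$ of $F_\p$-rank $l$, the building $\mathcal{X}$ is a contractible, locally finite simplicial complex of dimension $d = l$ on which $G_\p$ acts simplicially with only finitely many orbits in each dimension, and on which it acts transitively on chambers with an Iwahori subgroup as the stabilizer of a chosen base chamber.

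The key input is the theorem of Borel and Serre identifying the compactly supported cohomology of the building with the integral Steinberg representation:
\begin{equation*}
H^i_c(\mathcal{X},\Z) = 0 \text{ for } i < d, \qquad H^d_c(\mathcal{X},\Z) \cong \St^{\infty}_{G_\p}(\Z).
\end{equation*}
Granted this, the augmented cochain complex
\begin{equation*}
0 \longrightarrow C^0_c(\mathcal{X},\Z) \longrightarrow \cdots \longrightarrow C^d_c(\mathcal{X},\Z) \longrightarrow \St^{\infty}_{G_\p}(\Z) \longrightarrow 0
\end{equation*}
is exact in $\mathfrak{C}^{\sm}_{\Z}(G_\p)$, and I set $P_i := C^{d-i}_c(\mathcal{X},\Z)$ to obtain a resolution of the advertised length.

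To identify each $P_i$, I decompose $C^j_c(\mathcal{X},\Z)$ according to $G_\p$-orbits of $j$-simplices. After fixing an orientation on a representative $\sigma$ of each of the finitely many orbits, one has
\begin{equation*}
C^j_c(\mathcal{X},\Z) \;\cong\; \bigoplus_\sigma \cind_{K_\sigma}^{G_\p} \Z(\chi_\sigma),
\end{equation*}
where $K_\sigma$ is the setwise stabilizer of $\sigma$ and $\chi_\sigma \colon K_\sigma \to \{\pm 1\}$ encodes the action on the chosen orientation. By Bruhat--Tits theory, the pointwise stabilizer $K_\sigma^{\circ}$ is the parahoric subgroup attached to the facet $\sigma$, is normal in $K_\sigma$, and $\chi_\sigma$ factors through the finite quotient $K_\sigma/K_\sigma^{\circ}$, hence is trivial on $K_\sigma^{\circ}$.

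Matching with the statement: $P_0 = C^d_c(\mathcal{X},\Z)$ corresponds to chambers, whose pointwise stabilizers are exactly the Iwahori subgroups, giving property (b), with (by chamber-transitivity) a single compactly induced summand. For $P_i = C^{d-i}_c(\mathcal{X},\Z)$ with $i \geq 1$, the simplices have positive codimension and their pointwise stabilizers are parahoric subgroups strictly containing an Iwahori, hence not themselves Iwahori, giving property (a). The main obstacle is the integral shape of the Borel--Serre identification: while $\St^{\infty}_{G_\p} \cong H^d_c(\mathcal{X},\cdot)$ over a field of characteristic zero is relatively clean via Solomon--Tits, it is precisely the $\Z$-coefficient version, and particularly the vanishing of $H^i_c(\mathcal{X},\Z)$ in lower degrees, that upgrades the compactly supported cochain complex to a bona fide resolution and constitutes the substantive content of the cited theorem.
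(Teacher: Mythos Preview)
Your proposal is correct and follows essentially the same approach as the paper: both derive the resolution from the compactly supported simplicial cochain complex of the Bruhat--Tits building, invoking Borel--Serre's computation that $H^i_c(\mathcal{X},\Z)$ vanishes below top degree and equals $\St^{\infty}_{G_\p}(\Z)$ there, and then read off the structure of each $P_i$ from the orbit decomposition of simplices (the paper's proof is a terse three sentences saying exactly this). Your added detail on orientation characters and the distinction between setwise and pointwise stabilizers is useful; note only that for non--simply-connected $G$ the pointwise stabilizer may strictly contain the parahoric, but the parahoric is still normal in the full stabilizer $K_\sigma$, so the asserted properties hold regardless.
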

\begin{proof}
By Theorem 5.6 of \cite{BS2} the cohomology of the Bruhat--Tits building associated to $G_\p$ vanishes outside the top degree and the top degree cohomology is isomorphic to the integral Steinberg representation.
Writing down the simplicial complex gives the sought-after resolution.

For \eqref{zeroth}: Note that all Iwahori subgroups are in fact conjugated.
\end{proof}

The following result is essentially due to Schneider and Stuhler.
\begin{Thm}[Schneider--Stuhler]\label{genflaw}
Suppose that every simple factor of the split semi-simple group $G_{F_\p}$ is of type $A_{n}$.
Then $v^{\infty}_I(\Z)$ is flawless for all $I\subseteq\Delta$.
\end{Thm}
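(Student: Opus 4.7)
The plan is to construct an explicit flawless resolution of $v^{\infty}_I(\Z)$ using the coefficient-system technology of Schneider--Stuhler on the Bruhat--Tits building $X$ of $G_\p$. Fix a fundamental chamber $\overline{C}$, and for each facet $F \subseteq \overline{C}$ let $\hat{P}_F \subseteq G_\p$ be the associated parahoric stabilizer, $U_F^+$ its pro-$p$ unipotent radical and $M_F = \hat{P}_F / U_F^+$ the finite reductive quotient. Attach to $I \subseteq \Delta$ a $G_\p$-equivariant coefficient system $\mathcal{F}_I$ on $X$ whose stalk at a facet of type $F$ is, after inflation along $\hat{P}_F \onto M_F$, a generalized Steinberg representation $v^{\infty}_{I_F}(\Z)$ of $M_F$; here $I_F$ encodes the relative position of $F$ with respect to the subset $I$ in the local root basis.

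The augmented oriented chain complex with compact supports
\[
0 \too C_c^{\mathrm{or}}(X_{(d)}, \mathcal{F}_I) \too \cdots \too C_c^{\mathrm{or}}(X_{(0)}, \mathcal{F}_I) \too v^{\infty}_I(\Z) \too 0
\]
is exact by essentially the same combinatorial argument as in the proof of Theorem \ref{resolution1}: the exactness reduces to the acyclicity of an explicit subcomplex of the standard apartment, which is independent of the coefficients. Each term of this complex decomposes as a finite direct sum, indexed by the $G_\p$-orbits of oriented facets of a given dimension, of representations of the form $\cind_{\hat{P}_F^{\dagger}}^{G_\p}(\sigma_F)$ with $\hat{P}_F^{\dagger} \subseteq \hat{P}_F$ of finite index and $\sigma_F$ the corresponding stalk twisted by an orientation character.

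It thus remains to check that each $\sigma_F$ is a finitely generated free $\Z$-module. Under the type $A$ hypothesis the derived group of $M_F$ is a product $\prod_j GL_{n_j}(\mathbb{F}_{q_j})$ and $\sigma_F$ is the outer tensor product of generalized Steinberg representations of these factors. For $H = GL_n(\mathbb{F}_q)$ the Steinberg module $\St^{\infty}_H(\Z)$ is the reduced top homology of the spherical Tits building of $H$, which by the Solomon--Tits theorem is a bouquet of spheres and therefore has torsion-free integral homology; the $\Z$-freeness of the remaining $v^{\infty}_{I'}(\Z)$ for $H$ then follows by descending induction on $|I'|$ from the defining short exact sequences, together with the observation that each $i^{\infty}_{I'}(\Z)$ is an honest $\Z[H]$-permutation module on the finite partial flag variety $P_{I'}(\mathbb{F}_q) \backslash H$.

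The main obstacle is exactly this last step, and it is what forces the type $A$ hypothesis: for finite groups of Lie type of other types the reduced homology of the spherical building can carry torsion, so the integral generalized Steinberg modules of the local reductive quotients need not be $\Z$-free. Any direct extension of the above argument beyond type $A$ would therefore require a genuinely different construction of integral coefficient systems with $\Z$-projective stalks.
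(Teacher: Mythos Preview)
Your approach differs markedly from the paper's. The paper does not build a resolution directly: it uses Lemma~\ref{flawlessproperties}\,\eqref{flawlesspullback} to pass to the adjoint group, which under the type~$A$ hypothesis is a product $\prod_i PGL_{n_i,F_\p}$; then each $v^{\infty}_I(\Z)$ factors as a tensor product of generalized Steinberg representations for the individual $PGL_{n_i}(F_\p)$, and Lemma~\ref{flawlessproperties}\,\eqref{flawlesstensor} reduces the question to a single factor $PGL_n(F_\p)$. That case is then \emph{cited} as Theorem~8 of \cite{SScoh}. So the whole proof is a three-line reduction plus a reference.

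Your sketch, by contrast, tries to reprove the cited Schneider--Stuhler result in one go for a general type~$A$ group. Two points need attention. First, the exactness of your augmented chain complex is precisely the hard content; it does \emph{not} follow from the argument underlying Theorem~\ref{resolution1}, which concerns the constant coefficient system on a contractible building. For a non-constant coefficient system the acyclicity is a genuine theorem, and in \cite{SScoh} it is obtained via the geometry of Drinfeld's upper half space (which is specific to $GL_n$), not by a coefficient-independent combinatorial argument on an apartment. You have also left the assignment $F\mapsto I_F$ and the transition maps of $\mathcal{F}_I$ unspecified, so the complex is not actually defined.

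Second, your explanation of the type~$A$ restriction is incorrect: the Solomon--Tits theorem holds for \emph{every} finite group with a $BN$-pair, so the spherical building is always a bouquet of spheres and the integral Steinberg module of any finite reductive group is $\Z$-free. The type~$A$ hypothesis in the theorem reflects only that \cite{SScoh} constructs the needed resolution for $GL_n$; it is not a torsion obstruction in the stalks.
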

\begin{proof}
By Lemma \ref{flawlessproperties} \eqref{flawlesspullback} we may assume that $G_{F_\p}$ is of adjoint type, i.e.~we have an isomorphism
$$G_{F_p}\cong \prod_{i=1}^{k}PGL_{n_i,F_\p}$$
for some $n_i\geq 1$.
(Note that we always assume that $G_{F_\p}$ is split.)
Thus, every generalized Steinberg representation $v^{\infty}_I(\Z)$ of $G_\p$ can be written as a tensor product
$$v_I(\Z)=\bigotimes_{i=1}^{k} v^{\infty}_{I_i}(\Z)$$
of generalized Steinberg representations $v^{\infty}_{I_i}(\Z)$ for the groups $PGL_{n_i}(F_\p).$

By Lemma \ref{flawlessproperties} \eqref{flawlesstensor} we may reduce to the case $G_\p=PGL_{n}(F_\p),$
which is Theorem 8 of \cite{SScoh}.
\end{proof}

\begin{Rem}\label{localg}
\begin{enumerate}\thmenumhspace
\item 
By \cite{GK2}, Theorem 1, we may embed $v_I(\Z)$ into the space of continuous functions from an Iwahori subgroup of $G_\p$ into a certain finitely generated, free $\Z$-module.
As the later is free as a $\Z$-module, so is the generalized Steinberg representation $v_I(\Z)$.
It is also known that $v_I(\Z)$ is finitely generated over $\Z[G_\p]$.
More precisely, $v_I(\Z)$ is a cyclic $\Z[G_\p]$-module (see \cite{Ait}, Theorem 3.4)
\item Similarly as in the smooth case, one could define flawlessness for locally algebraic representations with coefficients in a finite extension of $F_\p$ or $\mathcal{O}_\p$.
In order to generalize the constructions of this paper to automorphic representations, which are cohomological with respect to an arbitrary coefficient system, one needs the existence of flawless integral lattices in twists of the Steinberg representation by irreducible algebraic representations.
This seems to be a hard problem in general.  
In case $G=PGL_2$ the existence of such flawless lattices in the relevant cases is known by a result of Vignéras (see \cite{Vi}, Proposition 0.9).
\end{enumerate}
\end{Rem}

\subsection{Smooth extensions of generalized Steinberg representations}\label{Extensions1}
We recall the theorem of Dat and Orlik about extensions of smooth generalized Steinberg representations and deduce a simple but crucial corollary.
For this section we fix a field $\Omega$ of characteristic $0$ and often abbreviate $v^{\infty}_I=v^{\infty}_I(\Omega)$ respectively $\St^{\infty}_{G_\p}=\St^{\infty}_{G_\p}(\Omega)$.
The category $\mathfrak{C}^{\sm}_{\Omega}(G_\p)$ has enough injective and projective objects.
For $V,W\in \ob(\mathfrak{C}^{\sm}_{\Omega}(G_\p))$ we abbreviate
$$\Ext^{i}_{\sm}(V,W)=\Ext^{i}_{\mathfrak{C}^{\sm}_{\Omega}(G_\p)}(V,W).$$
\begin{Thm}[Dat, Orlik]\label{DatOrlik}
Given two subsets $I,J\subseteq\Delta$ we put $\delta(I,J)=|I\cup J|-|I\cap J|.$
\begin{enumerate}[(i)]
\item\label{DO1} For $I,J\subseteq \Delta$ we have
$$\Ext^{i}_{\sm}(v^{\infty}_I(\Omega),v^{\infty}_J(\Omega))\cong
\begin{cases}
\Omega & \mbox{if } i=\delta(I,J)\\
0 & \mbox{else.}
\end{cases}$$
\item\label{DO2} Let $I,J,K\subseteq\Delta$ be three subsets with $\delta(I,J)+\delta(J,K)=\delta(I,K)$.
Then, the cup product map
$$\Ext^{\delta(I,J)}_{\sm}(v^{\infty}_I,v^{\infty}_J) \otimes \Ext^{\delta(J,K)}_{\sm}(v^{\infty}_J,v^{\infty}_K)
\xrightarrow{\cup} \Ext^{\delta(I,K)}_{\sm}(v^{\infty}_I,v^{\infty}_K)$$
is an isomorphism.
\end{enumerate}
\end{Thm}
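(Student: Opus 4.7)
The plan is to reduce $\Ext^{\bullet}_{\sm}(v^{\infty}_I,v^{\infty}_J)$ to extensions of the trivial representation of Levi subgroups via parabolic induction and Casselman's computation of Jacquet modules, and then to handle the multiplicativity in~(ii) by realizing the generators as Yoneda splices.

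For part~(i), I would exploit the canonical resolution
\begin{equation*}
0 \too i^{\infty}_\Delta \too \bigoplus_{\substack{K\supsetneq I\\ |\Delta\setminus K|=|\Delta\setminus I|-1}} i^{\infty}_K \too \cdots \too i^{\infty}_I \too v^{\infty}_I \too 0
\end{equation*}
indexed by the Boolean lattice of subsets of $\Delta$ containing~$I$, with differentials assembled from the inclusions $i^{\infty}_{K'}\hookrightarrow i^{\infty}_K$ for $K\subseteq K'$ equipped with alternating signs. Applying $\Hom_{G_\p}(-,v^{\infty}_J)$ yields a hypercohomology spectral sequence whose $E_1$-page involves $\Ext^{\bullet}_{\sm,G_\p}(i^{\infty}_K,v^{\infty}_J)$. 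Since $i^{\infty}_K=\Ind^{G_\p}_{P_K}(\cf)$, Bernstein's second adjointness rewrites each term as $\Ext^{\bullet}_{\sm,L_K}(\cf,r^{G_\p}_{L_K}(v^{\infty}_J))$, where $r^{G_\p}_{L_K}$ denotes normalized Jacquet restriction to the standard Levi $L_K$. Casselman's theorem on Jacquet modules of generalized Steinberg representations then unravels the $E_1$-term combinatorially: only contributions from trivial quotients survive, and a routine count on the Boolean lattice of $\Delta\setminus(I\cup J)$ concentrates the Euler characteristic in degree $\delta(I,J)$, giving a one-dimensional answer.

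For part~(ii), the approach is to realize the generator of $\Ext^{\delta(I,J)}_{\sm}(v^{\infty}_I,v^{\infty}_J)$ as a Yoneda splice of one-step extension classes indexed by the simple roots in the symmetric difference $I\triangle J$. The hypothesis $\delta(I,J)+\delta(J,K)=\delta(I,K)$ is equivalent to $I\triangle K=(I\triangle J)\sqcup(J\triangle K)$, so concatenating a splice for $\xi_{IJ}$ with one for $\xi_{JK}$ yields a Yoneda class with precisely the combinatorial shape of $\xi_{IK}$. The main obstacle is to show that this concatenation is non-zero, i.e.\ that no intermediate differential in the spectral sequence of part~(i) annihilates the Yoneda class. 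I expect this step to require a careful analysis of the partial coboundary maps between the parabolically induced pieces, in the spirit of the explicit filtration arguments of Dat and Orlik. Once non-triviality is verified, both $\Ext$-groups involved are one-dimensional by~(i), so the non-vanishing of the cup product automatically upgrades it to an isomorphism.
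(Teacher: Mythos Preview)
The paper does not actually prove this theorem: its proof consists entirely of citations to Dat (Theorem~1.3 of \cite{Dat}) and Orlik (Theorem~1 of \cite{Orlik}) for part~(i), and to Dat again for part~(ii). So there is no argument in the paper to compare yours against; this is a black-box result.

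That said, your sketch has genuine gaps that would need to be closed before it could stand as a proof. For part~(i), the phrase ``a routine count on the Boolean lattice of $\Delta\setminus(I\cup J)$ concentrates the Euler characteristic in degree $\delta(I,J)$'' is not sufficient: an Euler-characteristic computation constrains only the alternating sum of dimensions, not the vanishing in each individual degree. You must actually analyse the $E_1$-differentials and show the spectral sequence collapses correctly. Moreover, after applying second adjointness you face $\Ext^{\bullet}_{\sm,L_K}(\cf,r_K(v^{\infty}_J))$, and the Jacquet module $r_K(v^{\infty}_J)$ is filtered by twisted generalized Steinberg representations of $L_K$; so ``only contributions from trivial quotients survive'' hides an induction that must be set up with care (and which, in the actual proofs of Dat and Orlik, is where most of the work lies).

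For part~(ii) you effectively concede the gap yourself: the non-triviality of the concatenated Yoneda class is precisely the content of the statement, and you write that you ``expect this step to require a careful analysis \dots\ in the spirit of the explicit filtration arguments of Dat and Orlik.'' Deferring the key step back to the original papers leaves you no further along than the paper's own citation.
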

\begin{proof}
The first claim is proven by Dat (cf.~Theorem 1.3 of \cite{Dat}) and independently by Orlik (cf.~Theorem 1 of \cite{Orlik}).
The second claim is as result of Dat (see \textit{loc.cit.}).
\end{proof}

\begin{Rem}\label{split}
The first claim also holds for non-split semi-simple groups (see \cite{Orlik}).
It is likely that one can refine the arguments in \textit{loc.cit.~}to prove the second claim in this generality.
But note that the split case is the most interesting one for our purposes.
On the contrary, if $G_\p$ is compact, e.g.~the group of reduced norm one elements of the non-split quaternion algebra over $F_\p$, then the only generalized Steinberg representation is the trivial one-dimensional representation and the category $\mathfrak{C}^{\sm}_{\Omega}(G_\p)$ is semi-simple.
Thus, there are no interesting smooth extension classes.
\end{Rem}

\begin{Def}
Let $I,J\subseteq\Delta$ be two subsets with $\delta(I,J)=1$.
We define
$$0 \too v^{\infty}_J(\Omega) \too \mathcal{E}_{I,J}^{\infty}(\Omega) \too v^{\infty}_I(\Omega) \too 0$$
to be any generator of the one-dimensional space $\Ext^{1}_{\sm}(v^{\infty}_I,v^{\infty}_J)$.
\end{Def}

\begin{Cor}\label{extvanishing}
For all subsets $J\subseteq I\subseteq\Delta$ with $|I|=|J|+1$ we have:
$$\Ext_{\sm}^{d}(\mathcal{E}_{I,J}^{\infty}(\Omega),\St^{\infty}_{G_\p}(\Omega))=0\quad \forall d\geq 0.$$
\end{Cor}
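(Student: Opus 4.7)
The plan is to apply the contravariant functor $\Ext^{\bullet}_{\sm}(-,\St^{\infty}_{G_\p}(\Omega))$ to the defining short exact sequence
$$0 \too v^{\infty}_J(\Omega) \too \mathcal{E}_{I,J}^{\infty}(\Omega) \too v^{\infty}_I(\Omega) \too 0$$
and read off the resulting long exact sequence using Theorem \ref{DatOrlik}. Recall that $\St^{\infty}_{G_\p}(\Omega)=v^{\infty}_\emptyset(\Omega)$, so $\delta(I,\emptyset)=|I|$ and $\delta(J,\emptyset)=|J|=|I|-1$. By part \eqref{DO1} of that theorem, $\Ext^{d}_{\sm}(v^{\infty}_I,\St^{\infty}_{G_\p})$ is one-dimensional in degree $d=|I|$ and zero otherwise, and likewise $\Ext^{d}_{\sm}(v^{\infty}_J,\St^{\infty}_{G_\p})$ is one-dimensional in degree $d=|J|$ and zero otherwise. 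Outside of the degrees $d=|J|$ and $d=|I|$, the long exact sequence immediately forces $\Ext^{d}_{\sm}(\mathcal{E}_{I,J}^{\infty},\St^{\infty}_{G_\p})=0$.

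The non-trivial portion of the long exact sequence concentrates in two consecutive degrees and reads
$$0 \too \Ext^{|J|}_{\sm}(\mathcal{E}_{I,J}^{\infty},\St^{\infty}_{G_\p}) \too \Omega \xrightarrow{\ \partial\ } \Omega \too \Ext^{|I|}_{\sm}(\mathcal{E}_{I,J}^{\infty},\St^{\infty}_{G_\p}) \too 0.$$
So everything reduces to showing that the connecting map $\partial$ is an isomorphism. This is the main point of the argument, and the strategy is standard: identify $\partial$ with cup product against the extension class $[\mathcal{E}_{I,J}^{\infty}]\in\Ext^{1}_{\sm}(v^{\infty}_I,v^{\infty}_J)$. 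More precisely, $\partial$ coincides with the cup product map
$$\Ext^{|J|}_{\sm}(v^{\infty}_J,\St^{\infty}_{G_\p})\xrightarrow{[\mathcal{E}_{I,J}^{\infty}]\cup (-)}\Ext^{|I|}_{\sm}(v^{\infty}_I,\St^{\infty}_{G_\p}).$$

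Now observe that $\delta(I,J)+\delta(J,\emptyset)=1+|J|=|I|=\delta(I,\emptyset)$, so part \eqref{DO2} of Theorem \ref{DatOrlik} applies with $K=\emptyset$: the pairing
$$\Ext^{1}_{\sm}(v^{\infty}_I,v^{\infty}_J)\otimes \Ext^{|J|}_{\sm}(v^{\infty}_J,\St^{\infty}_{G_\p})\xrightarrow{\cup}\Ext^{|I|}_{\sm}(v^{\infty}_I,\St^{\infty}_{G_\p})$$
is an isomorphism of one-dimensional $\Omega$-vector spaces. Since $[\mathcal{E}_{I,J}^{\infty}]$ is by definition a generator of $\Ext^{1}_{\sm}(v^{\infty}_I,v^{\infty}_J)$, cupping with it gives an isomorphism, so $\partial$ is an isomorphism. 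Both flanking $\Ext$-groups for $\mathcal{E}_{I,J}^{\infty}$ therefore vanish, completing the proof.

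The only conceptual hurdle is identifying the connecting homomorphism $\partial$ with cup product against $[\mathcal{E}_{I,J}^{\infty}]$; this is however a formal property of Yoneda products and the $\delta$-functorial nature of $\Ext^{\bullet}$, and requires no further input beyond Theorem \ref{DatOrlik}.
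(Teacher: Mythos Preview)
Your proof is correct and follows the same route as the paper: apply $\Ext^{\bullet}_{\sm}(-,\St^{\infty}_{G_\p})$ to the defining short exact sequence and use Theorem~\ref{DatOrlik} to analyze the long exact sequence. You are in fact more careful than the paper at the key point: the paper's argument that $\partial$ is an isomorphism is rather terse (it only records that source and target are one-dimensional, citing part~\eqref{DO1}), whereas you explicitly identify $\partial$ with Yoneda composition against the generator $[\mathcal{E}_{I,J}^{\infty}]$ and invoke part~\eqref{DO2} to conclude.
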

\begin{proof}
By definition we have a short exact sequence of the form
$$0\too v^{\infty}_J\too\mathcal{E}_{I,J}^{\infty}(\Omega)\too v^{\infty}_I\too 0.$$
Most of the terms of the long exact sequence induced by applying $\Hom(\cdot,\St^{\infty}_{G_\p})$ vanish by Theorem \ref{DatOrlik} \eqref{DO1}.
The remaining terms are
$$0\too \Ext_{\sm}^{|J|}(\mathcal{E}_{I,J}^{\infty}(\Omega),\St^{\infty}_{G_\p})\too \Ext_{\sm}^{|J|}(v^{\infty}_J,\St^{\infty}_{G_\p})\xrightarrow{\partial} \Ext_{\sm}^{|I|}(v^{\infty}_I,\St^{\infty}_{G_\p})\too 0.$$
The claim follows since both, the middle term and the term on the right, are one-dimensional by \ref{DatOrlik} \eqref{DO1}.
\end{proof}

\subsection{Locally analytic extensions}\label{Extensions2}
We compute spaces of extensions of generalized Steinberg representations and their locally analytic counterparts.
This slightly generalizes results of Ding (see \cite{Ding}, Section 2.2).

We fix a finite extension $\Omega$ of $\Q_p$.
Let $D(G_\p)=D(G_\p,\Omega)$ be the algebra of $\Omega$-valued distributions on $G_\p$ in the sense of Schneider and Teitelbaum (cf.~\cite{ST}) and $\mathcal{M}(G_\p)$ the abelian category of abstract $D(G_\p)$-modules.
If $V$ is an $(\Q_p-)$analytic representations with coefficients in $\Omega$, its continuous dual $V^{\vee}$ is naturally a $D(G_\p)$-module.
Given two admissible locally analytic representation $V$ and $W$ (in the sense of \cite{STadm}, Section 6) we put
$$\Ext^i_{\an}(V,W)=\Ext^i_{\mathcal{M}(G_\p)}(W^{\vee},V^{\vee}).$$
Note that in general the relation between $\Ext^i_{\an}(V,W)$ and the group of Yoneda $i$-extensions of $W$ by $V$ in the  category of admissible locally analytic representations of $G_\p$ is not clear.
The following two specials cases are known and suffice for our purposes:
First, by Theorem 6.3 of \cite{STadm}, the space $\Ext^0_{\an}(V,W)$ agrees with the space of $G_\p$-equivariant continuous homomorphism from $V$ to $W$. 
Second, the group $\Ext^1_{\an}(V,W)$ is isomorphic to the group of admissible locally analytic extensions of $V$ by $W$ (see \cite{Br3}, Lemma 2.1.1). 

Let $I\subseteq\Delta$ be a subset and $\tau$ an admissible locally analytic representation of $P_I$.
The \emph{locally analytic induction} of $\tau$ to $G_\p$ is the space of functions
$$\II_{P_I}^{\an}(\tau)=\left\{f\colon G_\p\to \tau \mbox{ locally analytic}\mid f(pg)=p.f(g)\ \forall p\in P_I,\ g\in G_\p \right\}.$$
It is well known that the locally analytic induction of $\tau$ is again an admissible locally analytic representation (see for example \cite{EmJac2}, Proposition 2.1.2).
The \emph{generalized locally analytic Steinberg representation} with respect to $I$ is defined as the quotient
$$\VV^{\an}_I(\Omega)=\II_{P_I}^{\an}(\Omega)/\sum_{I\subset J\subset \Delta, I\neq J} \II^{\an}_{P_J}(\Omega).$$
As before, we put $\St^{\an}_{G_\p}(\Omega)=\VV^{\an}_\emptyset(\Omega)$ and often drop the field of coefficients $\Omega$ from the notation.

For $I\subseteq \Delta$ we let $L_I\subseteq P_I$ be the Levi subgroup containing our fixed maximal torus and $Z_I\subseteq L_I$ its centre.
\begin{Pro}\label{DingCW}
Let $I$ and $J$ be subsets of $\Delta$. 
We have
\begin{align*}
\Ext^{i}_{\an}(i^{\infty}_{I}(\Omega),\II^{\an}_{P_J}(\Omega))\cong
\begin{cases}
\HH^{i}_{an}(L_J,\Omega) & \mbox{if}\ J\subseteq I,\\
0 & \mbox{else,}
\end{cases}
\end{align*}
where $\HH^{i}_{\an}(L_I,\Omega)$ denotes the $i$-th analytic cohomology group as defined by Casselman and Wigner in \cite{CaWi}.
\end{Pro}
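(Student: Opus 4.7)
The strategy is to generalize Ding's argument from the case $I=\emptyset$ to arbitrary $I$, via locally analytic Frobenius reciprocity combined with a Bruhat cell analysis.

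The first step is to reduce the computation from $G_\p$ to the parabolic $P_J$ by means of a derived Frobenius reciprocity for locally analytic induction. Specifically, one establishes an isomorphism
$$\Ext^i_{\an}(V, \II^{\an}_{P_J}(\Omega)) \;\cong\; \Ext^i_{P_J,\an}(V|_{P_J}, \Omega)$$
for admissible locally analytic $V$, using an acyclicity result for $\II^{\an}_{P_J}$ in the spirit of Kohlhaase and Orlik-Strauch. One applies this to $V = i^{\infty}_I(\Omega)$, which is admissible locally analytic via its embedding into $\II^{\an}_{P_I}(\Omega)$.

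The second step is to stratify $i^{\infty}_I(\Omega)|_{P_J} = C^\infty(P_I\backslash G_\p, \Omega)$ using the Bruhat decomposition
$$G_\p \;=\; \bigsqcup_{w\in W_J\backslash W/W_I} P_J\, w\, P_I.$$
By orbit-stabilizer, the graded pieces of the resulting finite $P_J$-filtration are smooth compact inductions $\cind^{P_J}_{P_J\cap wP_Iw^{-1}}(\Omega)$. A Shapiro-type lemma in the locally analytic setting then identifies the corresponding $\Ext^i_{P_J,\an}$ groups with the analytic cohomology $\HH^i_{\an}(P_J\cap wP_Iw^{-1},\Omega)$ in the sense of Casselman-Wigner.

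In the third step one analyses the resulting spectral sequence cell by cell. The key combinatorial input is that the intersection $P_J\cap wP_Iw^{-1}$ is a conjugate of $L_J$ precisely when $w$ represents the open double coset and $J\subseteq I$; in that situation the open-cell contribution is exactly $\HH^i_{\an}(L_J,\Omega)$, and the remaining cell contributions cancel through the boundary maps of the spectral sequence. When $J\not\subseteq I$, no cell produces the desired intersection, and all contributions cancel to yield zero.

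The most delicate point is verifying the cancellation of contributions from non-open Bruhat cells: as already visible in the rank-one case $G=GL_2$ with $I=J=\emptyset$, the non-open cells contribute non-trivial analytic cohomology groups that only cancel after careful tracking of the connecting homomorphisms. This is the technical heart of Ding's argument for $I=\emptyset$, and its extension to arbitrary $I$ requires explicit control of the extension classes defining the Bruhat filtration together with the combinatorics of the partial Bruhat order on $W_J\backslash W/W_I$. The Frobenius-reciprocity step also demands care, as one must either check that $\II^{\an}_{P_J}$ preserves injectives on the distribution side or replace the adjunction by a direct acyclicity argument.
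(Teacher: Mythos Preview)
The paper does not supply its own proof here: it cites Ding, Proposition~2.11 (stated there for $GL_n(F_\p)$) and remarks that the argument works verbatim for any split semi-simple $G_\p$. Your outline—derived Frobenius reciprocity to pass from $G_\p$ to $P_J$, Bruhat filtration of $i^\infty_I(\Omega)|_{P_J}$, then Shapiro together with Casselman--Wigner on the graded pieces—is precisely Ding's method, so you are effectively unpacking the citation. Since none of these ingredients (the adjunction for locally analytic parabolic induction, the Bruhat stratification indexed by $W_J\backslash W/W_I$, the acyclicity of unipotent radicals in analytic cohomology) is specific to $GL_n$, the transport to general split $G$ is indeed routine, which is all the paper asserts.

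One framing correction: Ding already handles arbitrary $I$ and $J$; the generalization carried out in this paper is from $GL_n$ to a general split $G$, not from $I=\emptyset$ to general $I$ as you write. You may be conflating this proposition with Theorem~\ref{Ding} further below, where Ding did restrict to $J=\emptyset$ and where the paper genuinely extends the range of $J$.
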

\begin{proof}
For the group $GL_n(F_\p)$ the claim is proven in \cite{Ding}, Proposition 2.11.
The proof of \textit{loc.cit.~}works verbatim in our situation. 
\end{proof}

Let us make the above isomorphism more explicit in the case $i=1$, $J\subseteq I$.
As usual we can identify $\HH^{1}_{\an}(L_J,\Omega)$ with the space of analytic homomorphism from $L_J$ to $\Omega$.
Since $L_J$ is isogenous to its centre times a semi-simple group, we see that the restriction map
$$\Hom_{\cont}(L_J,\Omega)\too \Hom_{\cont}(Z_J,\Omega)$$
is an isomorphism.
In particular, every continuous homomorphism from $L_J$ to $\Omega$ is already $\Q_p$-analytic.
Thus, we get an isomorphism
$$\HH^{1}_{\an}(L_J,\Omega)\cong \Hom_{\cont}(L_J,\Omega).$$
Moreover, the inclusion $L_J\subseteq P_I$ induces an isomorphism 
$$\Hom_{\cont}(P_J,\Omega)\xlongrightarrow{\cong} \Hom_{\cont}(L_J,\Omega).$$
Thus, by Proposition \ref{DingCW} we get an isomorphism
\begin{align}\label{DingInd}
\Hom_{\cont}(P_I,\Omega)\xlongrightarrow{\cong} \Ext^{1}_{\an}(i^{\infty}_{I}(\Omega),\II^{\an}_{P_J}(\Omega)).
\end{align}

Given a continuous homomorphism $\lambda\in\Hom_{\cont}(P_J,\Omega)$ we let $\tau_\lambda$ be the two-dimensional representation of $P_J$ given by
$$\tau_\lambda(p)=\begin{pmatrix} 1 & \lambda(p)\\ 0 & 1\end{pmatrix}.$$
By definition we have a sequence of the form
$$0\too \II_{P_J}^{\an}(\Omega)\too \II_{P_J}^{\an}(\tau_\lambda)\too \II_{P_J}^{\an}(\Omega)\too 0,$$
which is exact by \cite{Ko}, Proposition 5.1 and Remark 5.4.
Pullback along the inclusion $i^{\infty}_{I}(\Omega)\to \II_{P_J}^{\an}(\Omega)$ therefore gives a class
$$\widetilde{\mathcal{E}}^{\an}_{I,J}(\lambda)\in \Ext^{1}_{\an}(i^{\infty}_{I}(\Omega),\II^{\an}_J).$$
It can be checked as in \cite{Ding}, Lemma 2.13, that there exists a non-zero constant $c\in \Omega^{\times}$ (independent of $\lambda$) such that $\lambda$ is mapped to $\widetilde{\mathcal{E}}^{\an}_{I,J}(\lambda)$ under the isomorphism \eqref{DingInd}.

Now suppose that $J$ is strictly contained in $I$.
Let $\mathcal{E}^{\an}_{I,J}(\lambda)\in \Ext^{1}_{\an}(i^{\infty}_{I}(\Omega),\VV^{\an}_J)$
be the pushforward of $\widetilde{\mathcal{E}}^{\an}_{I,J}(\lambda)$ along the quotient map $\II_{P_J}^{\an}(\Omega)\to \VV^{\an}_J$.

The inclusion $P_J\into P_I$ induces an injection
$$\Hom_{\cont}(P_I,\Omega)\intoo\Hom_{\cont}(P_J,\Omega).$$
We will identify $\Hom_{\cont}(P_I,\Omega)$ with its image in $\Hom_{\cont}(P_J,\Omega)$.
\begin{Lem}\label{welldef}
The extension $\mathcal{E}^{\an}_{I,J}(\lambda)$ is split for all $\lambda\in\Hom_{\cont}(P_I,\Omega).$
\end{Lem}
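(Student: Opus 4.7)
The plan is to realise $\mathcal{E}^{\an}_{I,J}(\lambda)$ as the pushforward of another locally analytic extension along the zero morphism, which will force it to be split.

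Since $\lambda$ is defined on the larger parabolic $P_I$, the representation $\tau_\lambda$ is already a representation of $P_I$, so the construction of $\widetilde{\mathcal{E}}^{\an}_{I,J}(\lambda)$ can be carried out verbatim with $P_I$ in place of $P_J$. Concretely, I would first form the short exact sequence
$$0 \too \II^{\an}_{P_I}(\Omega) \too \II^{\an}_{P_I}(\tau_\lambda) \too \II^{\an}_{P_I}(\Omega) \too 0$$
(exact again by \cite{Ko}, Proposition 5.1 and Remark 5.4) and pull it back along $i^{\infty}_I(\Omega)\hookrightarrow\II^{\an}_{P_I}(\Omega)$ to obtain a class $\widetilde{\mathcal{E}}^{\an}_{I,I}(\lambda)\in\Ext^{1}_{\an}(i^{\infty}_I(\Omega),\II^{\an}_{P_I}(\Omega))$. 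The canonical inclusions $\II^{\an}_{P_I}(?)\hookrightarrow\II^{\an}_{P_J}(?)$ (for both $?=\Omega$ and $?=\tau_\lambda$) fit the $P_I$- and $P_J$-sequences into a commutative morphism of short exact sequences, and the inclusions $i^{\infty}_I(\Omega)\hookrightarrow\II^{\an}_{P_?}(\Omega)$ used for the two pullbacks are compatible with $\iota\colon\II^{\an}_{P_I}(\Omega)\hookrightarrow\II^{\an}_{P_J}(\Omega)$. The standard formalism of Yoneda extensions then yields the identification
$$\widetilde{\mathcal{E}}^{\an}_{I,J}(\lambda) \;=\; \iota_{\ast}\,\widetilde{\mathcal{E}}^{\an}_{I,I}(\lambda).$$

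Given this, the lemma falls out quickly. By definition the kernel of the quotient $q\colon\II^{\an}_{P_J}(\Omega)\onto\VV^{\an}_J$ is $\sum_{J\subsetneq K\subseteq\Delta}\II^{\an}_{P_K}(\Omega)$, and since $J\subsetneq I$ this kernel contains $\iota(\II^{\an}_{P_I}(\Omega))$ (take $K=I$). Hence $q\circ\iota=0$, so
$$\mathcal{E}^{\an}_{I,J}(\lambda) \;=\; q_{\ast}\,\widetilde{\mathcal{E}}^{\an}_{I,J}(\lambda) \;=\; q_{\ast}\,\iota_{\ast}\,\widetilde{\mathcal{E}}^{\an}_{I,I}(\lambda) \;=\; (q\circ\iota)_{\ast}\,\widetilde{\mathcal{E}}^{\an}_{I,I}(\lambda) \;=\; 0,$$
and the extension splits. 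The only step requiring honest verification is the identification $\widetilde{\mathcal{E}}^{\an}_{I,J}(\lambda)=\iota_{\ast}\widetilde{\mathcal{E}}^{\an}_{I,I}(\lambda)$, which I expect to be the main technical point; once the commutative morphism of the two sequences is in hand, however, it is a purely formal unwinding of the definitions of pullback and pushforward of Yoneda extensions.
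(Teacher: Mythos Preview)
Your proposal is correct and is essentially the same argument as the paper's: form the $P_I$-induced sequence for $\tau_\lambda$, pull it back along $i^{\infty}_I(\Omega)\hookrightarrow\II^{\an}_{P_I}(\Omega)$, and observe that the composition $\II^{\an}_{P_I}(\Omega)\to\II^{\an}_{P_J}(\Omega)\to\VV^{\an}_J$ vanishes, so the pushforward is trivial. The paper states this in one line; you have simply made the Yoneda-extension bookkeeping (the identity $\widetilde{\mathcal{E}}^{\an}_{I,J}(\lambda)=\iota_\ast\widetilde{\mathcal{E}}^{\an}_{I,I}(\lambda)$ and the functoriality $(q\circ\iota)_\ast=q_\ast\iota_\ast$) explicit.
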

\begin{proof}
Let $\lambda$ be an element of $\Hom_{\cont}(P_I,\Omega)$.
Then $\tau_\lambda$ can be extended to a representation of $P_I$.
The pullback of the exact sequence
$$0\too \II_{P_I}^{\an}(\Omega)\too \II_{P_I}^{\an}(\tau_\lambda)\too \II_{P_I}^{\an}(\Omega)\too 0$$
along the inclusion $i^{\infty}_{I}(\Omega)\to \II_{P_I}^{\an}(\Omega)$ gives a class $\widetilde{\mathcal{E}}(\lambda)\in \Ext^{1}_{\an}(i^{\infty}_{I}(\Omega),\II^{\an}_{P_I}(\Omega))$.
By construction $\widetilde{\mathcal{E}}(\lambda)$ is mapped to $\mathcal{E}^{\an}_{I,J}(\lambda)$ under the composition $\II^{\an}_{P_I}(\Omega)\to\II^{\an}_{P_J}(\Omega)\to\VV^{\an}_J$, which is the zero map by definition. 
\end{proof}

Suppose that $J\subseteq I$ with $|I|=|J|+1$ and let $i$ be the unique root in $I$ which is not contained in $J$.
Then, pullback via $i$ defines an injection
$$\Hom_{\cont}(F_\p^{\times},\Omega)\too \Hom_{\cont}(Z_J,\Omega)\cong\Hom_{\cont}(P_J,\Omega),\quad \psi\mapstoo \psi\circ i.$$
Its image is a complement to $\Hom_{\cont}(P_I,\Omega)$.

\begin{Cor}\label{extisom}
Let $J\subseteq I$ be subsets of $\Delta$ with $|I|=|J|+1.$
The homomorphism
$$\Hom_{\cont}(F_\p^{\times},\Omega)\too \Ext^{1}_{\an}(i^{\infty}_{I}(\Omega),\VV^{\an}_J(\Omega)),\quad \lambda\mapstoo \mathcal{E}^{\an}_{I,J}(\lambda\circ i)$$
is an isomorphism.
\end{Cor}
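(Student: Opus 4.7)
My approach is to compute $\Ext^{1}_{\an}(i^{\infty}_{I}(\Omega),\VV^{\an}_J(\Omega))$ via the long exact sequence of $\Ext^{*}_{\an}(i^{\infty}_I(\Omega),-)$ attached to the defining exact sequence
$$0 \too N \too \II^{\an}_{P_J}(\Omega) \too \VV^{\an}_J(\Omega) \too 0, \qquad N := \sum_{J \subsetneq K \subseteq \Delta} \II^{\an}_{P_K}(\Omega),$$
and then read off that $\lambda \mapsto \mathcal{E}^{\an}_{I,J}(\lambda\circ i)$ gives an explicit splitting.

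First I would compute $\Ext^{*}_{\an}(i^{\infty}_I(\Omega),N)$. The same inclusion--exclusion combinatorics that underlies the smooth Borel--Serre resolution of Theorem \ref{resolution1} yields an exact analytic resolution
$$0 \too \II^{\an}_{P_\Delta}(\Omega) \too \cdots \too \bigoplus_{|K|=|J|+1,\,K\supsetneq J} \II^{\an}_{P_K}(\Omega) \too N \too 0.$$
Applying $\Ext^{q}_{\an}(i^{\infty}_I(\Omega),-)$ termwise and invoking Proposition \ref{DingCW}, every summand with $K \nsubseteq I$ contributes nothing; since $K \supsetneq J$ combined with $|I|=|J|+1$ forces $K = I$, only one summand survives (in the rightmost position). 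The hyper-Ext spectral sequence collapses at $E_1$ and yields $\Ext^{q}_{\an}(i^{\infty}_I(\Omega), N) \cong \HH^{q}_{\an}(L_I, \Omega)$ for every $q \geq 0$.

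Plugging this back into the long exact sequence, together with Proposition \ref{DingCW} for $\II^{\an}_{P_J}$, yields
$$\cdots \too \HH^{1}_{\an}(L_I, \Omega) \xrightarrow{\alpha} \HH^{1}_{\an}(L_J, \Omega) \too \Ext^{1}_{\an}(i^{\infty}_I(\Omega), \VV^{\an}_J(\Omega)) \too \HH^{2}_{\an}(L_I, \Omega) \xrightarrow{\beta} \HH^{2}_{\an}(L_J, \Omega),$$
where $\alpha$ and $\beta$ are the restriction maps along $L_J \hookrightarrow L_I$. The complement structure recalled immediately before the statement already gives $\Coker(\alpha) \cong \Hom_{\cont}(F_\p^\times,\Omega)$ and shows $\alpha$ is injective, so the task reduces to proving $\beta$ injective. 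Unwinding the construction of $\widetilde{\mathcal{E}}^{\an}_{I,J}(\lambda\circ i)$ and its pushforward along $\II^{\an}_{P_J}\to \VV^{\an}_J$ shows the resulting isomorphism is exactly $\lambda \mapsto \mathcal{E}^{\an}_{I,J}(\lambda\circ i)$, and Lemma \ref{welldef} is the consistency check that $\Hom_{\cont}(P_I,\Omega)$-classes die under this pushforward.

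The hardest step will be proving that $\beta$ is injective. I would use the comparison between locally analytic group cohomology and Lie algebra cohomology (\`a la Casselman--Wigner \cite{CaWi}), together with the decomposition $\mathfrak{l}_K = \mathfrak{z}_K \oplus \mathfrak{l}_K^{\mathrm{ss}}$ of reductive Lie algebras and the Whitehead lemmas killing $H^{1}$ and $H^{2}$ of the semisimple factor with trivial coefficients, to deduce $\HH^{q}_{\an}(L_K, \Omega) \cong \Lambda^{q}\Hom_{\cont}(L_K, \Omega)$ for $q \leq 2$ and $K \in \{I,J\}$. Under this identification $\beta$ coincides with $\Lambda^{2}\alpha$, and is therefore injective because $\alpha$ is and because $\Lambda^{2}$ preserves injections of vector spaces over the characteristic-zero field $\Omega$.
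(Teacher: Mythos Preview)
Your overall strategy coincides with the paper's: resolve $\VV^{\an}_J$ (equivalently $N$) by locally analytic principal series, apply Proposition~\ref{DingCW}, and reduce everything to the injectivity of the degree-$2$ restriction map $\HH^{2}_{\an}(L_I,\Omega)\to\HH^{2}_{\an}(L_J,\Omega)$. The paper packages this as a two-column spectral sequence converging to $\Ext^{*}_{\an}(i^{\infty}_I,\VV^{\an}_J)$; your long exact sequence for $0\to N\to\II^{\an}_{P_J}\to\VV^{\an}_J\to 0$, after first computing $\Ext^{*}_{\an}(i^{\infty}_I,N)\cong\HH^{*}_{\an}(L_I,\Omega)$, is an equivalent reformulation.

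Two points need correction. First, the exact analytic resolution you invoke is not a consequence of Theorem~\ref{resolution1}, which is the Bruhat--Tits building resolution of the integral Steinberg and has nothing to do with the inclusion--exclusion complex of parabolic inductions. Exactness of the latter in the locally analytic category is \cite{OrSchraen}, Theorem~4.2, and this is exactly where the standing hypothesis on $p$ (odd for types $B,C,F_4$; greater than $3$ for $G_2$) is used.

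Second, and more seriously, your proposed proof of $\HH^{2}_{\an}(L_K,\Omega)\cong\Lambda^{2}\Hom_{\cont}(L_K,\Omega)$ does not work as written. The reference \cite{CaWi} treats real Lie groups, and in the $p$-adic setting a naive comparison with Lie algebra cohomology fails already in degree~$1$: one has $\dim_\Omega\Hom_{\cont}(F_\p^{\times},\Omega)=[F_\p:\Q_p]+1$, whereas the $\Q_p$-Lie algebra of $F_\p^{\times}$ has dimension $[F_\p:\Q_p]$, so Lie algebra cohomology is one dimension short. The missing piece is the smooth homomorphism $\ord_\p$, which is invisible to the Lie algebra. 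Lazard's comparison theorem applies only to compact $p$-adic Lie groups; to treat the full Levi one must combine it with a Hochschild--Serre argument for the lattice quotient $L_K/L_K^{0}$ of a suitable compact open $L_K^{0}$, and only then does the Whitehead input on $\mathfrak{l}_K^{\mathrm{ss}}$ yield the claimed exterior-power description. The paper avoids redoing this by citing \cite{Schraen}, Corollary~3.14 (see also \cite{Ding}, Remark~2.12). Once that identification is in hand, your observation that $\beta=\Lambda^{2}\alpha$ is injective is exactly the paper's concluding step.
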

\begin{proof}
This is a slight generalization of of \cite{Ding}, Corollary 2.17.
For the convenience of the reader we include a proof of the claim.

By the discussion preceding the proposition it is enough to show that the map
$$\Hom_{\cont}(P_J,\Omega)/\Hom_{\cont}(P_I,\Omega)\too \Ext^{1}_{\an}(i^{\infty}_{I}(\Omega),\VV^{\an}_J(\Omega))$$
is an isomorphism.

By \cite{OrSchraen}, Theorem 4.2, the projection $\II_{P_J}^{\an}(\Omega)\onto \VV^{\an}_J(\Omega)$ extends to a long exact sequence
\[
0\hspace{-0.1em}\to \hspace{-0.1em}\II_{G}^{\an}(\Omega)\hspace{-0.1em}
\to  \hspace{-0.1em}\bigoplus_{\substack{J\subseteq K\subseteq \Delta\\ |\Delta\setminus K|=1}} \II_{P_K}^{\an}(\Omega)\hspace{-0.1em}
\to \hspace{-0.1em}\cdots \hspace{-0.1em}
\to  \hspace{-0.1em}\bigoplus_{\substack{J\subseteq K\subseteq \Delta\\ |K\setminus I|=1}} \II_{P_K}^{\an}(\Omega)\hspace{-0.1em}
\to \hspace{-0.1em}\II_{P_J}^{\an}(\Omega)\to \VV^{\an}_J(\Omega) \hspace{-0.1em}\to \hspace{-0.1em} 0.
\]
Note that here we use our assumption that the prime $p$ is bigger than $2$ (resp.~bigger than $3$) if the root system of $G$ has irreducible components of type $B$, $C$ or $F_4$ (resp.~of type $G_2$).
Let
$$
E_1^{-s,r}=\bigoplus_{\substack{J\subseteq K\subseteq\Delta \\ |K\setminus J|=s}}\Ext^{r}_{\an}(i^{\infty}_{I}(\Omega),\II^{\an}_K(\Omega))
\Rightarrow \Ext^{r-s}_{\an}(i^{\infty}_{I}(\Omega),\VV^{\an}_J(\Omega))
$$
be the induced spectral sequence.
By Proposition \ref{DingCW} all but the $0$-th and $(-1)$-th columns of the $E_1$-page vanish. 

The first row is given by the map
$$
d_1^{1,-1}\colon \Ext^{1}_{\an}(i^{\infty}_{I}(\Omega),\II^{\an}_I(\Omega))\too \Ext^{1}_{\an}(i^{\infty}_{I}(\Omega),\II^{\an}_J(\Omega)),
$$
which can be identified via the isomorphism \eqref{DingInd} with the inclusion
$$\Hom_{\cont}(P_I,\Omega) \into \Hom_{\cont}(P_J,\Omega).$$

Thus, it is enough to prove that the second row map
$$
d_1^{1,-2}\colon \HH^2_{\an}(L_I,\Omega)\too \HH^2_{\an}(L_J,\Omega)
$$
is injective.
By \cite{Schraen}, Corollary 3.14, (see also \cite{Ding}, Remark 2.12) we can identify
\begin{align*}
\HH^2_{\an}(L_K,\Omega)\cong\wedge^2 \Hom_{\cont}(Z_K(L), \Omega)
\end{align*}
for every $K\subseteq \Omega$ and the map $d_1^{1,-2}$ with the canonical inclusion
$$
\wedge^2 \Hom_{\cont}(Z_I(L), \Omega)\into \wedge^2 \Hom_{\cont}(Z_J(L), \Omega).
$$
Thus, the claim follows.
\end{proof}

The following theorem was proven by Ding in the case $G=PGL_n$ and $J=\emptyset$ by computing dimensions on both sides (see \cite{Ding}, Theorem 2.19).
Our approach is slightly different.
We show explicitly that the extensions $\mathcal{E}^{\an}_{I,J}(\lambda)$ lift to an element in $\Ext^{1}_{\an}(v^{\infty}_{I},\VV^{\an}_J)$.
\begin{Thm}\label{Ding}
Let $J\subseteq I \subseteq\Delta$ be subsets with $|I|=|J|+1$.
The natural map
$$\Ext^{1}_{\an}(v^{\infty}_{I},\VV^{\an}_J)\too \Ext^{1}_{\an}(i^{\infty}_{I}(\Omega),\VV^{\an}_J)$$
is an isomorphism.
In particular, there exits a canonical isomorphism
$$\Hom_{\cont}(F_\p^{\times},\Omega)\xlongrightarrow{\cong}\Ext^{1}_{\an}(v^{\infty}_{I},\VV^{\an}_J).$$
\end{Thm}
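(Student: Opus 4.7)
The plan is to compare the two Ext groups via the short exact sequence
\[
0 \to N \to i^\infty_I(\Omega) \to v^\infty_I(\Omega) \to 0, \qquad N := \sum_{I \subsetneq K \subseteq \Delta} i^\infty_K(\Omega),
\]
and the long exact sequence induced by $\Ext^\bullet_{\an}(-, \VV^{\an}_J(\Omega))$. The first step is to show that $\Hom_{\an}(W, \VV^{\an}_J(\Omega)) = 0$ for any smooth $G_\p$-representation $W$ whose irreducible subquotients are of the form $v^\infty_K$ with $K \neq J$. Indeed, such a continuous equivariant map factors through the subspace of smooth vectors of $\VV^{\an}_J(\Omega)$, which is identified with $v^\infty_J(\Omega)$, and Theorem~\ref{DatOrlik}~\eqref{DO1} yields $\Hom_{\sm}(v^\infty_K, v^\infty_J) = 0$ for $K \neq J$; the vanishing then follows by dévissage. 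Applied to $v^\infty_I$, $i^\infty_I$, and $N$ (all of whose constituents $v^\infty_K$ satisfy $K \supseteq I \supsetneq J$), this collapses the long exact sequence to
\[
0 \to \Ext^1_{\an}(v^\infty_I, \VV^{\an}_J) \to \Ext^1_{\an}(i^\infty_I, \VV^{\an}_J) \to \Ext^1_{\an}(N, \VV^{\an}_J),
\]
which immediately delivers injectivity.

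For surjectivity, Corollary~\ref{extisom} reduces the problem to lifting, for each $\lambda \in \Hom_{\cont}(F_\p^\times, \Omega)$, the class $\mathcal{E}^{\an}_{I,J}(\lambda \circ i)$ to $\Ext^1_{\an}(v^\infty_I, \VV^{\an}_J)$. Such a lift exists if and only if the projection $\mathcal{E}^{\an}_{I,J}(\lambda \circ i) \onto i^\infty_I$ admits a $G_\p$-equivariant locally analytic section over $N$; the quotient by the image of this section is then the desired extension of $v^\infty_I$ by $\VV^{\an}_J$ that pulls back to $\mathcal{E}^{\an}_{I,J}(\lambda \circ i)$. Unwinding the definition of $\mathcal{E}^{\an}_{I,J}(\lambda \circ i)$ as the pushout along $\II^{\an}_{P_J}(\Omega) \onto \VV^{\an}_J(\Omega)$ of the pullback of $\II^{\an}_{P_J}(\tau_{\lambda \circ i})$, producing this section reduces to constructing a $G_\p$-equivariant locally analytic map $N \to \II^{\an}_{P_J}(\tau_{\lambda \circ i})$ whose composition with $\II^{\an}_{P_J}(\tau_{\lambda \circ i}) \onto \II^{\an}_{P_J}(\Omega)$ is the natural inclusion $N \hookrightarrow \II^{\an}_{P_J}(\Omega)$.

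Exhibiting this splitting -- equivalently, showing the vanishing of the resulting obstruction class in $\Ext^1_{\an}(N, \VV^{\an}_J(\Omega))$ -- is the main obstacle. The plan is to run a spectral sequence argument parallel to the one in the proof of Corollary~\ref{extisom}, starting from the Orlik--Schraen resolution
\[
0 \to \II^{\an}_G(\Omega) \to \cdots \to \II^{\an}_{P_J}(\Omega) \to \VV^{\an}_J(\Omega) \to 0.
\]
The resulting spectral sequence has
\[
E_1^{-s,r} = \bigoplus_{\substack{J \subseteq K \subseteq \Delta \\ |K \setminus J| = s}} \Ext^r_{\an}(N, \II^{\an}_{P_K}(\Omega)),
\]
and these entries can be evaluated via an analogue of Proposition~\ref{DingCW} applied to the graded pieces $v^\infty_{K'}$ (with $K' \supsetneq I$) of a filtration of $N$, reducing the computation to analytic cohomology of the Levi subgroups $L_K$. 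Under these identifications the image of $\mathcal{E}^{\an}_{I,J}(\lambda \circ i)$ is controlled by the restriction maps $\Hom_{\cont}(P_J, \Omega) \to \Hom_{\cont}(P_K, \Omega)$ for $K \supsetneq I$; since $\lambda \circ i$ fails to extend to any such $P_K$, a Koszul-type cancellation analogous to the one appearing in the proof of Corollary~\ref{extisom} should force the obstruction to vanish, producing the required explicit lift and completing surjectivity.
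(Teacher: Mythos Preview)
Your injectivity argument is correct and coincides with the paper's: the key input is that the smooth socle of $\VV^{\an}_J$ is $v^\infty_J$, so $\Hom$ from any smooth representation with Jordan--H\"older factors $v^\infty_K$, $K\neq J$, vanishes.

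The surjectivity argument, however, has a genuine gap. Your plan hinges on computing the obstruction class inside $\Ext^1_{\an}(N,\VV^{\an}_J)$ via a spectral sequence whose $E_1$-terms are $\Ext^r_{\an}(N,\II^{\an}_{P_K})$, and you propose to evaluate these using ``an analogue of Proposition~\ref{DingCW} applied to the graded pieces $v^\infty_{K'}$''. But Proposition~\ref{DingCW} only computes $\Ext^r_{\an}(i^\infty_{K'},\II^{\an}_{P_K})$, not $\Ext^r_{\an}(v^\infty_{K'},\II^{\an}_{P_K})$; passing from one to the other would itself require an inductive d\'evissage over all $K''\supseteq K'$, and the resulting bookkeeping (together with the promised ``Koszul-type cancellation'') is never carried out. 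As written, the argument does not establish that the obstruction vanishes.

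The paper avoids computing $\Ext^1_{\an}(N,\VV^{\an}_J)$ altogether. Since $N=w^\infty_I=\sum_{k\in\Delta\setminus I} i^\infty_{I\cup\{k\}}$, the surjection $\bigoplus_{k\in\Delta\setminus I} i^\infty_{I\cup\{k\}}\twoheadrightarrow N$ together with the same Hom-vanishing you already used gives an \emph{injection}
\[
\Ext^1_{\an}(N,\VV^{\an}_J)\hookrightarrow \bigoplus_{k\in\Delta\setminus I}\Ext^1_{\an}(i^\infty_{I\cup\{k\}},\VV^{\an}_J),
\]
and the right-hand side is already understood by Corollary~\ref{extisom}. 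It therefore suffices to check, for each $k\in\Delta\setminus I$, that the pullback of $\mathcal{E}^{\an}_{I,J}(\lambda)$ along $i^\infty_{I\cup\{k\}}\hookrightarrow i^\infty_I$ is split. Here is the trick you are missing: since $k\neq i$, one can choose a representative $\lambda_k\in\Hom_{\cont}(P_{J\cup\{k\}},\Omega)$ of the class of $\lambda$ in $\Hom_{\cont}(P_J,\Omega)/\Hom_{\cont}(P_I,\Omega)$; by Lemma~\ref{welldef} this does not change the extension. Then the pullback to $i^\infty_{I\cup\{k\}}$ factors through $\mathcal{E}^{\an}_{J\cup\{k\},J}(\lambda_k)$, which is split again by Lemma~\ref{welldef}. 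This replaces your spectral-sequence computation by a two-line reduction.
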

\begin{proof}
First let us show that the map is injective.
Let $w^{\infty}_{I}$ be the kernel of the map $i^{\infty}_{I}(\Omega)\to v^{\infty}_{I}$.
Considering the long exact sequence of $\Ext$-groups it is enough to show that
$$\Hom_{G_\p}(w^{\infty}_{I},\VV^{\an}_J)=0.$$
By the main theorem of \cite{OrSchraen} the only smooth Jordan--Hölder factor of $\VV^{\an}_J$ is the smooth subrepresentation $v_J^\infty(\Omega).$
On the other hand, the Jordan--Hölder factors of $w^{\infty}_{I}$ are given by the smooth generalized Steinberg representations $v^{\infty}_{K}(\Omega)$ with $I\subsetneq K.$
Thus, the claim follows.

In order to show surjectivity let us first note that by a similar argument the natural map
$$\Ext^{1}_{\an}(w^{\infty}_{I},\VV^{\an}_J)\too \bigoplus_{k\in \Delta\setminus I}\Ext^{1}_{\an}(i^{\infty}_{I\cup\left\{k\right\}}(\Omega),\VV^{\an}_J)$$ is injective.
Therefore, by Corollary \ref{extisom} it is enough to show the following: Let $\lambda$ be an element of $\Hom_{\cont}(P_J,\Omega)$. Then the pullback of the extension $\mathcal{E}^{\an}_{I,J}(\lambda)$ along the inclusion $i^{\infty}_{I\cup\left\{k\right\}}(\Omega)\into i^{\infty}_{I}(\Omega)$ is trivial for all $k\in \Delta\setminus I$.

Given $k\in \Delta\setminus I$ there exists a homomorphism
$$\lambda_k\in\Hom_{\cont}(P_{J\cup\left\{k\right\}},\Omega)\subseteq \Hom_{\cont}(P_{J},\Omega)$$
such that its image in $\Hom_{\cont}(P_J,\Omega)/\Hom_{\cont}(P_I,\Omega)$ agrees with $\lambda$.
By Lemma \ref{welldef} the extensions $\mathcal{E}^{\an}_{I,J}(\lambda)$ and $\mathcal{E}^{\an}_{I,J}(\lambda_k)$ agree.
The pullback of $\mathcal{E}^{\an}_{I,J}(\lambda_k)$ along the inclusion $i^{\infty}_{I\cup\left\{k\right\}}(\Omega)\into i^{\infty}_{I}(\Omega)$ is equal to to the pullback of $\mathcal{E}^{\an}_{J\cup\left\{k\right\},J}(\lambda_k)$ along the inclusion $i^{\infty}_{I\cup\left\{k\right\}}(\Omega)\into i^{\infty}_{J\cup\left\{k\right\}}(\Omega)$.
Since the extension $\mathcal{E}^{\an}_{J\cup\left\{k\right\},J}(\lambda_k)$ is split by Lemma \ref{welldef}, the claim follows.
\end{proof}

\begin{Rem}\begin{enumerate}[(i)]\thmenumhspace
\item Let $\lambda\in \Hom_{\cont}(F_\p^{\times},\Omega)$ be a smooth character.
It is easy to see that the class $\mathcal{E}^{\an}_{I,J}(\lambda\circ i)$ is in the image of the natural injection
$$\Ext_{\sm}^{1}(v^{\infty}_{I},v^{\infty}_J)\to \Ext^{1}_{\an}(v^{\infty}_{I},\VV^{\an}_J).$$
Therefore, if $\lambda$ is non-zero, then $\mathcal{E}^{\an}_{I,J}(\lambda \circ i)$ is a non-zero multiple of $\mathcal{E}_{I,J}^{\infty}.$
\item The calculations are valid for arbitrary split reductive groups, i.e.~we do not have to assume semi-simplicity.
As in \cite{Ding}, one could also allow twists by a fixed irreducible, algebraic representation.
\end{enumerate}
\end{Rem}

\subsection{Extensions of Banach representations}\label{Extensions3}
In this section we give analogues of the results of the previous section in the realm of Banach representations.
In contrast to the locally analytic setting it is crucial that we work with trivial coefficients.

As before let $\Omega$ be a finite extension of $\Q_p$.
Let $R$ be its its ring of integers with uniformizer $\varpi$.
We denote by $\mathcal{U}_{\Omega}(G_\p)$ the category of  unitary $\Omega$-Banach space representation of $G_\p$ that are admissible in the sense of \cite{EmAn}, Definition 6.2.3.
By \cite{EmAn}, Corollary 6.2.16, $\mathcal{U}_{\Omega}(G_\p)$ is an abelian category.
For $V$ and $W$ admissible Banach representations we write
$$\Ext^i_{\cont}(V,W)=\Ext^i_{\mathcal{U}_{\Omega}(G_\p)}(V,W)$$
for the space of Yoneda $i$-Extensions of $W$ by $V$ in $\mathcal{U}_{\Omega}(G_\p)$.

Given a subset $I\subseteq \Delta$ and an admissible Banach space representation $\tau$ of $P_I$ we define its \emph{continuous induction} to $G_\p$ as the space of functions
$$\II_{P_I}^{\cont}(\tau)=\left\{f\colon G_\p\to \tau \mbox{ continuous}\mid f(pg)=p.f(g)\ \forall p\in P_I,\ g\in G_\p \right\}.$$
This is an admissible Banach space representation of $G_\p$ by \cite{EmOrd1}, Proposition 4.1.7.
We define the \emph{generalized continuous Steinberg representations} with respect to $I$ via the quotient
$$\VV^{\cont}_I(\Omega)=\II_{P_I}^{\cont}(\Omega)/\sum_{I\subset J\subset \Delta, I\neq J} \II^{\cont}_{P_J}(\Omega)$$
and put $\St^{\cont}_{G_\p}(\Omega)=\VV^{\cont}_\emptyset(\Omega)$.
As before, we often drop $\Omega$ from the notation.

Further, we define the integral version $\II^{\cont}_{P_I}(R)$ (respectively $\VV^{\cont}_I(R)$) as the $\pi$-adic completion of $i^{\infty}_I(R)$ (respectively $v^{\infty}_I(R)$), which both are objects in $\mathcal{U}_{R}(G_\p)$.

\begin{Lem}
The representation $\II^{\cont}_{P_I}(R)$ defines an open lattice in $\II^{\cont}_{P_I}(\Omega)$ and $\II^{\cont}_{P_I}(\Omega)$ is the universal unitary completion of both, $i_I^{\infty}(\Omega)$ and $\II^{\an}_{P_I}(\Omega)$.
\end{Lem}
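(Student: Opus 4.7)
Since $P_I$ is parabolic in $G_{F_\p}$, the coset space $P_I\backslash G_\p$ is the $F_\p$-points of a projective homogeneous variety, hence compact, and it is totally disconnected because $G_\p$ is locally profinite. Evaluation on $P_I$-cosets identifies $\II^{\cont}_{P_I}(\Omega)$ with $C(P_I\backslash G_\p,\Omega)$ as an $\Omega$-Banach space under the sup norm, whose unit ball is $C(P_I\backslash G_\p,R)$.

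For the first claim I would check that $\II^{\cont}_{P_I}(R) = C(P_I\backslash G_\p,R)$. By definition, $\II^{\cont}_{P_I}(R)$ is the $\varpi$-adic completion of $i^{\infty}_I(R) = C^{\infty}(P_I\backslash G_\p,R)$; on this module of $R$-valued functions the $\varpi$-adic topology coincides with the sup-norm topology, and locally constant functions are uniformly dense in continuous functions on the compact totally disconnected space $P_I\backslash G_\p$, so the completion is the whole unit ball. This gives both that $\II^{\cont}_{P_I}(R)$ is an open lattice in $\II^{\cont}_{P_I}(\Omega)$ and the identification needed below.

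For the universal property I would invoke the standard principle that whenever a continuous $G_\p$-representation $V$ over $\Omega$ admits an open $G_\p$-invariant $R$-lattice $L$, the $\varpi$-adic completion $\widehat{L}\otimes_R \Omega$ is the universal unitary completion of $V$: any continuous $G_\p$-equivariant map $V\to B$ into a unitary Banach representation sends the bounded set $L$ to a $G_\p$-stable bounded $R$-submodule of $B$, which lies in a scaling of the unit ball $B^\circ$ and therefore extends by $\varpi$-adic continuity. I would apply this to $V = i^{\infty}_I(\Omega)$ with $L = i^{\infty}_I(R)$, and to $V = \II^{\an}_{P_I}(\Omega)$ with $L = \II^{\an}_{P_I}(\Omega)\cap C(P_I\backslash G_\p,R)$; this latter lattice is $G_\p$-stable because the sup norm is $G_\p$-invariant, and open in the locally analytic topology because the inclusion $\II^{\an}_{P_I}(\Omega)\into\II^{\cont}_{P_I}(\Omega)$ is continuous. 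Since this lattice sandwiches $i^{\infty}_I(R)$ and sits inside $C(P_I\backslash G_\p,R)$, its $\varpi$-adic completion is squeezed to $\II^{\cont}_{P_I}(R)$, which after inverting $\varpi$ yields $\II^{\cont}_{P_I}(\Omega)$ as the universal unitary completion in both cases.

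The main delicate point is to verify that the chosen lattice $L$ really is bounded in the relevant (smooth or locally analytic) topology, so that the universal property is genuinely in force; granted this, the identifications in the first two paragraphs do all the real work and the remainder of the argument is formal.
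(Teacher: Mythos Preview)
Your framework is sound and your first paragraph correctly identifies $\II^{\cont}_{P_I}(R)$ with $C(P_I\backslash G_\p,R)$. But the gap you flag at the end is fatal as stated: the lattice $L$ is \emph{not} bounded in either topology, so the ``standard principle'' you invoke does not apply. In the smooth case $i^{\infty}_I(\Omega)$ carries the finest locally convex topology, in which a set is bounded only if it lies in a finite-dimensional subspace; thus $i^{\infty}_I(R)$ is open but unbounded. In the locally analytic case $\II^{\an}_{P_I}(\Omega)$ is of compact type, and bounded subsets must lie in a fixed Banach step of the inductive limit, whereas your sup-norm unit ball contains locally analytic functions of arbitrarily small radius of analyticity.

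What the argument actually requires is that $f(L)$ be bounded in $B$ for every continuous $G_\p$-equivariant map $f\colon V\to B$; this is strictly weaker than boundedness of $L$ in $V$, and it is here that the real input lies. In the smooth case the paper uses that $i^{\infty}_I(R)$ is \emph{finitely generated as an $R[G_\p]$-module}: then $f(L)$ is generated over $R[G_\p]$ by finitely many vectors, each of which has bounded $G_\p$-orbit because $B$ is unitary, so $f(L)$ is bounded. This is precisely the content of Emerton's Proposition~1.17, which the paper cites. In the locally analytic case the paper appeals to a result of Breuil--Herzig (their Proposition~3.1) specific to continuous versus locally analytic parabolic induction. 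So the missing ingredient is finite generation over the group ring (respectively the Breuil--Herzig input), not boundedness of the lattice in the source.
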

\begin{proof}
The first statement follows directly from the definition.
Since $i^{\infty}_I(R)$ is finitely generated as an $R[G_\p]$-module the claim about universal unitary completions in the smooth case then follows from \cite{EmUnitary}, Proposition 1.17.
In the locally analytic case the claim about universal unitary completions is a special case of \cite{BrHe}, Proposition 3.1
\end{proof}

\begin{Lem}\label{quotients}
Suppose that
$$A\too B \too C\too 0$$
is a strict exact sequence of continuous $G_\p$-representations on locally convex $\Omega$-vector spaces.
Suppose that $A$ and $B$ admit universal unitary completions $A^{\univ}$ and $B^{\univ}$ that are admissible.
Then the cokernel 
$$C^{\univ}=\Coker(A^{\univ} \to B^{\univ})$$ is the universal unitary completion of $C$.
\end{Lem}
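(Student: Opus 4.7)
My plan is to verify the universal property of $C^{\univ}$ directly, using the universal properties of $A^{\univ}$ and $B^{\univ}$, the fact that $\mathcal{U}_{\Omega}(G_\p)$ is abelian (so cokernels exist and are admissible unitary Banach representations), and the strictness of $A \to B \to C \to 0$.

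First I would produce the canonical map $C \to C^{\univ}$. The composition $B \to B^{\univ} \to C^{\univ}$ vanishes on the image of $A$ by construction. Since $A \to B$ is strict, the induced topology on $B/\overline{\im(A)}$ coincides with the topology on $C$, so the factorization $C \to C^{\univ}$ is continuous and $G_\p$-equivariant. Also $C^{\univ}$ is admissible unitary since $A^{\univ} \to B^{\univ}$ is a morphism in the abelian category $\mathcal{U}_{\Omega}(G_\p)$.

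Next I would verify the universal property. Let $U \in \mathcal{U}_{\Omega}(G_\p)$ and let $f\colon C \to U$ be a continuous $G_\p$-equivariant map. The composite $B \to C \xrightarrow{f} U$ factors uniquely through a morphism $g\colon B^{\univ} \to U$ in $\mathcal{U}_{\Omega}(G_\p)$ by the universal property of $B^{\univ}$. The composite $A \to B \to C$ is zero, so $A \to B^{\univ} \xrightarrow{g} U$ is zero; by uniqueness in the universal property of $A^{\univ}$, the morphism $A^{\univ} \to B^{\univ} \to U$ is also zero. Hence $g$ factors through $C^{\univ} = \Coker(A^{\univ} \to B^{\univ})$, yielding a morphism $\bar{g}\colon C^{\univ} \to U$ extending $f$. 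Uniqueness of $\bar{g}$ follows from the fact that the image of $B$ in $B^{\univ}$ is dense, so the image of $C$ in $C^{\univ}$ is dense, and $U$ is Hausdorff.

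I do not expect a serious obstacle; the only subtlety is making sure that the strictness hypothesis is used correctly to identify continuous $G_\p$-maps $C \to U$ with continuous $G_\p$-maps $B \to U$ that annihilate $\im(A)$. If the sequence were not strict, the quotient topology on $B/\overline{\im(A)}$ could be strictly finer than the topology of $C$, and the candidate factorization $C \to C^{\univ}$ might fail to be continuous.
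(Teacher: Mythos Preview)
Your proposal is correct and follows essentially the same route as the paper: construct the canonical map $C\to C^{\univ}$, then verify the universal property by lifting a test map along $B\to C$, factoring through $B^{\univ}$, and using the universal property of $A^{\univ}$ to kill the image of $A^{\univ}$; uniqueness comes from density. One small point: the universal property of the universal unitary completion is with respect to \emph{all} unitary Banach representations, not only the admissible ones, so you should take the test object $U$ to be an arbitrary unitary Banach representation rather than $U\in\mathcal{U}_{\Omega}(G_\p)$ --- but your argument works verbatim in that generality.
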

\begin{proof}
By definition the canonical map $B\to C^{\univ}$ factors over $C$.
Since the image of $B$ in $B^{\univ}$ is dense, so is the image of $C$ in $C^{\univ}.$

Let $T$ be a unitary representation of $G_\p$ and $f\colon C\to T$ a continuous $G_\p$-equivariant map.
By universality of $B^{\univ}$ the map $B\to C\xrightarrow{f} T$ uniquely factors through the map $B\to B^{\univ}.$
The composition $A\to B\to C\xrightarrow{f} T$ is the zero map.
Thus, by universality of $A^{\univ}$ the map $A^{\univ}\to B^{\univ}\to T$ is the zero map.
In other words, the map $B^{\univ}\to T$ factors through the quotient map $B^{\univ}\onto C^{\univ}.$

In conclusion, we have constructed a $G_\p$-equivariant continuous homomorphism $f^{\univ}\colon C^{\univ}\to T$ such that the diagram
\begin{center}
\begin{tikzpicture}
    \path 	(0,0) 	node[name=A]{$C$}
		(3,-1.5) 	node[name=B]{$C^{\univ}$}
		(3,0) 	node[name=C]{$T$};
    \draw[->] (A) -- (C) node[midway, above]{$f$};
    \draw[->] (B) -- (C) node[midway, right]{$f^{\univ}$};
		\draw[->] (A) -- (B) ;
\end{tikzpicture} 
\end{center}
 commutes. 
The map $f^{\univ}$ is uniquely determined since the image of $C$ in $C^{\univ}$ is dense.
\end{proof}

From the two lemmas above we immediately deduce the following corollary.
\begin{Cor}\label{universal}
The representation $\VV^{\cont}_{I}(R)$ defines an open lattice in $\VV^{\cont}_{I}(\Omega)$ and $\VV^{\cont}_{I}(\Omega)$
is the universal unitary completion of both, $v_I^{\infty}(\Omega)$ and $\VV^{\an}_{I}(\Omega)$.
\end{Cor}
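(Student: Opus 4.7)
The plan is to deduce this corollary directly from the two preceding lemmas, as the lead-in "we immediately deduce" anticipates. There are two assertions: the identification of $\VV^{\cont}_I(\Omega)$ as a universal unitary completion of two different objects, and the fact that the $\varpi$-adic completion $\VV^{\cont}_I(R)$ of $v^{\infty}_I(R)$ is an open lattice inside $\VV^{\cont}_I(\Omega)$.

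For the universal completion statement, I would apply Lemma \ref{quotients} twice, to the strict exact sequences
\[
\bigoplus_{I\subset J\subset\Delta,\,I\neq J} i^{\infty}_J(\Omega) \too i^{\infty}_I(\Omega) \too v^{\infty}_I(\Omega) \too 0
\]
and
\[
\bigoplus_{I\subset J\subset\Delta,\,I\neq J} \II^{\an}_{P_J}(\Omega) \too \II^{\an}_{P_I}(\Omega) \too \VV^{\an}_I(\Omega) \too 0,
\]
which are the defining presentations of $v^{\infty}_I(\Omega)$ and $\VV^{\an}_I(\Omega)$ respectively. The preceding lemma identifies the universal unitary completions of all the terms appearing in the first two positions with $\II^{\cont}_{P_J}(\Omega)$ and $\II^{\cont}_{P_I}(\Omega)$, and these are admissible. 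Lemma \ref{quotients} then forces the universal unitary completion of each rightmost term to be the cokernel
\[
\Coker\Bigl(\bigoplus_{I\subset J\subset\Delta,\,I\neq J} \II^{\cont}_{P_J}(\Omega) \too \II^{\cont}_{P_I}(\Omega)\Bigr),
\]
which is $\VV^{\cont}_I(\Omega)$ by definition.

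For the open lattice assertion, I would invoke Remark \ref{localg}: the integral generalized Steinberg $v^{\infty}_I(R)$ is a cyclic, hence finitely generated, $R[G_\p]$-module. Then Emerton's Proposition 1.17 of \cite{EmUnitary} (the very same input used in the preceding lemma for $\II^{\cont}_{P_I}$) yields that the $\varpi$-adic completion $\VV^{\cont}_I(R)$ is an open lattice in an admissible unitary Banach representation, characterized as the universal unitary completion of $v^{\infty}_I(\Omega)$. Combining this with the first step and the uniqueness of the universal unitary completion, this Banach representation must agree with $\VV^{\cont}_I(\Omega)$, giving the open lattice claim.

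The main obstacle is ensuring that the two defining sequences are strict exact so that Lemma \ref{quotients} applies. In the smooth case this is automatic once the spaces carry the finest locally convex topology, since quotients of smooth representations are smooth. The locally analytic case is more delicate, but can be extracted from the Orlik--Schraen resolution (\cite{OrSchraen}, Theorem 4.2) already used in the proof of Corollary \ref{extisom}: truncating that resolution at the last step presents $\VV^{\an}_I(\Omega)$ as a strict quotient of $\II^{\an}_{P_I}(\Omega)$ by the image of a direct sum of the $\II^{\an}_{P_J}(\Omega)$, which is exactly what is needed. Here one again uses the restriction on $p$ appearing in Section \ref{Setup}.
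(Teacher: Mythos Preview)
Your proposal is correct and follows the same route the paper intends: apply Lemma~\ref{quotients} to the defining presentations of $v^{\infty}_I(\Omega)$ and $\VV^{\an}_I(\Omega)$, using the preceding lemma to identify the universal unitary completions of the inducing terms with the continuous inductions. Your extra care about strict exactness and your invocation of Remark~\ref{localg} together with \cite{EmUnitary}, Proposition~1.17, for the open-lattice statement make explicit what the paper leaves implicit in ``immediately deduce''; the appeal to \cite{OrSchraen} is a safe (if slightly heavy) way to secure strictness in the locally analytic case.
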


As before, let us fix subsets $J\subseteq I\subseteq\Delta$ with $|I|=|J|+1$.
Like in the locally analytic case each homomorphism $\lambda\in\Hom_{\cont}(P_J,\Omega)$ induces an sequence
$$0\too \II_{P_J}^{\cont}(\Omega)\too \II_{P_J}^{\cont}(\tau_\lambda)\too \II_{P_J}^{\cont}(\Omega)\too 0,$$
which is exact by \cite{EmOrd1}, Proposition 4.1.5.
Pushforward via $\II_{P_J}^{\cont}(\Omega)\to \VV^{\cont}_J$ and pullback via $\II_{P_I}^{\cont}(\Omega)\to \II_{P_J}^{\cont}(\Omega)$
gives a class $$\mathcal{E}^{\cont}_{I,J}(\lambda)\in \Ext^{1}_{\cont}(\II^{\cont}_{P_I}(\Omega),\VV^{\cont}_J).$$

\begin{Pro}\label{contpro}
Let $J\subseteq I\subseteq\Delta$ be subsets with $|I|=|J|+1$.
\begin{enumerate}[(i)]
\item The extension $\mathcal{E}^{\cont}_{I,J}(\lambda)$ is split for all $\lambda\in\Hom_{\cont}(P_I,\Omega).$
\item For every $\lambda\in\Hom_{\cont}(P_J,\Omega)$ the extension $\mathcal{E}^{\cont}_{I,J}(\lambda)$ lies in the image of the canonical injective map
$$\Ext^{1}_{\cont}(\VV^{\cont}_{I},\VV^{\cont}_J)\too \Ext^{1}_{\cont}(\II^{\cont}_{P_I}(\Omega),\VV^{\cont}_J).$$
\item Let $i\in I$ be the unique root which is not contained in $J$.
The map
\begin{align}\label{context}
\Hom_{\cont}(F_\p^{\times},\Omega)\too\Ext^{1}_{\cont}(\VV^{\cont}_{I},\VV^{\cont}_J),\quad \lambda\mapstoo \mathcal{E}^{\cont}_{I,J}(\lambda\circ i)
\end{align}
is an isomorphism.
\end{enumerate}
\end{Pro}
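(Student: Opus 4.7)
The plan is to mirror the locally analytic proofs of Lemma \ref{welldef}, Corollary \ref{extisom} and Theorem \ref{Ding} in the continuous setting, replacing the exactness statements of \cite{Ko} by their Banach analogues from \cite{EmOrd1}, Proposition 4.1.5 and 4.1.7. The overall strategy is to first handle the splittings and vanishing of pullbacks, then extract the structure of $\Ext^1_{\cont}(\VV^{\cont}_I,\VV^{\cont}_J)$ by bootstrapping off the locally analytic result via universal completion.

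For part (i) I would argue exactly as in Lemma \ref{welldef}. If $\lambda$ extends to $P_I$, then $\tau_\lambda$ extends to a continuous representation of $P_I$, and one obtains an exact sequence
$$0\to \II^{\cont}_{P_I}(\Omega)\to \II^{\cont}_{P_I}(\tau_\lambda)\to \II^{\cont}_{P_I}(\Omega)\to 0$$
in $\mathcal{U}_\Omega(G_\p)$ by the Banach induction exactness of \cite{EmOrd1}. The class $\widetilde{\mathcal{E}}(\lambda)\in\Ext^1_{\cont}(\II^{\cont}_{P_I},\II^{\cont}_{P_I})$ is a pre-image of $\mathcal{E}^{\cont}_{I,J}(\lambda)$ under $\II^{\cont}_{P_I}(\Omega)\to\II^{\cont}_{P_J}(\Omega)\to \VV^{\cont}_J$, which is the zero map by construction; hence $\mathcal{E}^{\cont}_{I,J}(\lambda)$ is split.

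For part (ii) I would use the long exact sequence obtained from
$$0\to W^{\cont}_I\to \II^{\cont}_{P_I}(\Omega)\to \VV^{\cont}_I\to 0,$$
where $W^{\cont}_I=\sum_{I\subsetneq K}\II^{\cont}_{P_K}(\Omega)$. It suffices to show that the image of $\mathcal{E}^{\cont}_{I,J}(\lambda)$ in $\Ext^1_{\cont}(W^{\cont}_I,\VV^{\cont}_J)$ vanishes; by the natural injection into $\bigoplus_{k\in\Delta\setminus I}\Ext^1_{\cont}(\II^{\cont}_{P_{I\cup\{k\}}},\VV^{\cont}_J)$ the question reduces to the vanishing of each pullback. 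For fixed $k\in\Delta\setminus I$ one decomposes $\lambda=\lambda_k+\mu$ with $\lambda_k\in\Hom_{\cont}(P_{J\cup\{k\}},\Omega)$ and $\mu\in\Hom_{\cont}(P_I,\Omega)$. Part (i) disposes of the $\mu$-contribution, and for $\lambda_k$ the pullback along $\II^{\cont}_{P_{I\cup\{k\}}}\to\II^{\cont}_{P_I}$ equals the pullback of $\mathcal{E}^{\cont}_{J\cup\{k\},J}(\lambda_k)$ along $\II^{\cont}_{P_{I\cup\{k\}}}\to \II^{\cont}_{P_{J\cup\{k\}}}$, which is split by (i) applied with $(I,J)$ replaced by $(J\cup\{k\},J)$. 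Injectivity of the map in (ii) follows from $\Hom_{\cont}(W^{\cont}_I,\VV^{\cont}_J)=0$, which is clear since $W^{\cont}_I$ is generated by induced representations from proper parabolics above $P_I$, whereas $\VV^{\cont}_J$ has no non-zero maps from such representations.

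For part (iii), injectivity follows by combining the injection of (ii) with the calculation in the locally analytic setting: a class $\mathcal{E}^{\cont}_{I,J}(\lambda\circ i)$ which is trivial as an extension of $\VV^{\cont}_I$ by $\VV^{\cont}_J$ is trivial after pullback to $\II^{\cont}_{P_I}(\Omega)$, and then passage to locally analytic vectors (together with the comparison of $\II^{\cont}_{P_I}$-extensions from Proposition \ref{DingCW}) forces $\lambda=0$. For surjectivity I would use the universal completion construction: given a locally analytic extension $\mathcal{E}^{\an}_{I,J}(\lambda\circ i)$ provided by Theorem \ref{Ding}, the push-out along $\VV^{\an}_J\to\VV^{\cont}_J$ together with Lemma \ref{quotients} and Corollary \ref{universal} identifies its universal completion with $\mathcal{E}^{\cont}_{I,J}(\lambda\circ i)$; surjectivity then follows by showing that every class in $\Ext^1_{\cont}(\VV^{\cont}_I,\VV^{\cont}_J)$ arises this way, which I would establish by taking locally analytic vectors of a continuous extension and invoking Theorem \ref{Ding}. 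The main obstacle I expect is precisely this last step: one needs that the functor of locally analytic vectors is exact on the relevant short exact sequences, i.e.\ that extensions of $\VV^{\cont}_I$ by $\VV^{\cont}_J$ have locally analytic vectors forming an extension of $\VV^{\an}_I$ by $\VV^{\an}_J$. This is the crux of the argument and most likely to require a separate input from the theory of admissible Banach representations.
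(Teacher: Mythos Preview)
Your plan for (i) and (ii) is sound and close to the paper's, though the paper economizes in (ii): rather than redoing the $\lambda=\lambda_k+\mu$ decomposition directly in the Banach category, it observes (via Lemma~\ref{quotients}) that $w^{\infty}_I \to \WW^{\cont}_I$ is a universal unitary completion, so the pullback of $\mathcal{E}^{\cont}_{I,J}(\lambda)$ to $\WW^{\cont}_I$ splits as soon as its further pullback to $w^{\infty}_I$ does---and that was already shown in the proof of Theorem~\ref{Ding}. This spares you from having to justify the injection $\Ext^{1}_{\cont}(\WW^{\cont}_I,\VV^{\cont}_J)\hookrightarrow \bigoplus_k \Ext^{1}_{\cont}(\II^{\cont}_{P_{I\cup\{k\}}},\VV^{\cont}_J)$ separately in the continuous category. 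Likewise, for the injectivity of the map in (ii) the paper uses density of $w^{\infty}_I$ in $\WW^{\cont}_I$ to reduce $\Hom_{\mathcal{U}_\Omega(G_\p)}(\WW^{\cont}_I,\VV^{\cont}_J)$ to $\Hom_{G_\p}(w^{\infty}_I,\VV^{\an}_J)=0$, which is cleaner than the parabolic-generation argument you sketch.

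For (iii) you have correctly isolated the crux: exactness of the locally analytic vectors functor. This is not an obstacle but a theorem---Schneider--Teitelbaum, \cite{STadm}, Theorem~7.1, valid on all of $\mathcal{U}_\Omega(G_\p)$. With this in hand the paper's argument is shorter than your proposed route through universal completions of analytic extensions: one gets a map $\Ext^{1}_{\cont}(\VV^{\cont}_I,\VV^{\cont}_J)\to \Ext^{1}_{\an}(\VV^{\an}_I,\VV^{\an}_J)\to \Ext^{1}_{\an}(v^{\infty}_I,\VV^{\an}_J)$, and the composite is injective because any splitting $v^{\infty}_I\to E$ into an admissible unitary $E$ extends to $\VV^{\cont}_I\to E$ by universality. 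Since this composite sends $\mathcal{E}^{\cont}_{I,J}(\lambda\circ i)$ to $\mathcal{E}^{\an}_{I,J}(\lambda\circ i)$ and the latter realize the isomorphism of Theorem~\ref{Ding}, the map $\Hom_{\cont}(F_\p^\times,\Omega)\to \Ext^{1}_{\cont}(\VV^{\cont}_I,\VV^{\cont}_J)$ is sandwiched between two copies of $\Hom_{\cont}(F_\p^\times,\Omega)$ with composite the identity and second arrow injective, hence is itself an isomorphism. So the ``separate input'' you anticipated is precisely \cite{STadm}, Theorem~7.1, and once you invoke it the argument is a one-liner.
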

\begin{proof}
The first claim can be proven exactly as Lemma \ref{welldef}.

Let $\WW^{\cont}_{I}$ be the kernel of the map $\II^{\cont}_{P_I}(\Omega)\to \VV^{\cont}_{I}$ and let $w^{\infty}_{I}$ be the kernel of the map $i^{\infty}_{I}(\Omega)\to v^{\infty}_{I}$.
The subspace $w^{\infty}_{I} \subseteq \WW^{\cont}_{I}$ is dense and, thus, we see that
$$\Hom_{\mathcal{U}_{\Omega}(G_\p)}(\WW^{\cont}_{I}, \VV^{\cont}_{J})\subseteq \Hom_{G_\p}(w^{\infty}_{I},\VV^{\cont}_{J})=\Hom_{G_\p}(w^{\infty}_{I},\VV^{\an}_{J})=0$$
as in the proof of Theorem \ref{Ding}.
Hence, by the long exact sequence of $\Ext$-groups the map
$$\Ext^{1}_{\cont}(\VV^{\cont}_{I},\VV^{\cont}_J)\too \Ext^{1}_{\cont}(\II^{\cont}_{P_I}(\Omega),\VV^{\cont}_J)$$
is injective.

Given an element $\lambda\in\Hom_{\cont}(P_I,\Omega)$ we have to show that the pullback of the exact sequence
$$0 \too \VV^{\cont}_J\too \mathcal{E}^{\cont}_{I,J}(\lambda)\too\II^{\cont}_{P_I}(\Omega) \too 0$$
along the inclusion $\WW^{\cont}_{I} \into \II^{\cont}_{I}$ is split.
One may deduce from lemma \ref{quotients} that $w^{\infty}_{I} \to \WW^{\cont}_{I}$ is a universal unitary completion
Thus, it is enough to prove that the pullback to $w^{\infty}_{I}$ is split.
But this follows from the proof of Theorem \ref{Ding}.

Passing to locally analytic vectors is an exact functor on the category of admissible Banach representations by Theorem 7.1 of \cite{STadm}.
Thus, we get a map
$$\Ext^{1}_{\cont}(\VV^{\cont}_{I},\VV^{\cont}_J)\too \Ext^{1}_{\an}(\VV^{\an}_{I},\VV^{\an}_J).$$
Since $\VV^{\cont}_{I}$ is the universal unitary completion of $v^{\infty}_{I}$ we see that the composition
$$\Ext^{1}_{\cont}(\VV^{\cont}_{I},\VV^{\cont}_J)\too \Ext^{1}_{\an}(\VV^{\an}_{I},\VV^{\an}_J)\too \Ext^{1}_{\an}(v^{\infty}_{I},\VV^{\an}_J)$$
is injective.
By definition it maps the extension $\mathcal{E}^{\cont}_{I,J}(\lambda)$ to its locally analytic counterpart $\mathcal{E}^{\an}_{I,J}(\lambda)$.
Therefore, the last claim follows from Theorem \ref{Ding}.
\end{proof}

Extensions coming from different simple roots are in a sense independent.
To be more precise: let us fix another subset of simple roots $J\subseteq K\subseteq \Delta $ such that $|K|=|J|+1$ and $K\neq I$.
We denote by $k\in K$ the unique simple root not contained in $J$.
The following lemma follows from a lengthy but rather straightforward computation. 
\begin{Lem}\label{indext}
For every two homomorphisms $\lambda_i, \lambda_k\in \Hom_{\cont}(F_\p^{\times},\Omega)$ the equality
$$\mathcal{E}^{\cont}_{I\cup K,I}(\lambda_k\circ k)\cup\mathcal{E}^{\cont}_{I,J}(\lambda_i\circ i)= -\ \mathcal{E}^{\cont}_{I\cup K,K}(\lambda_i\circ i)\cup\mathcal{E}^{\cont}_{K,J}(\lambda_k\circ k)$$
holds in $\Ext^2_{\cont}(\VV^{\cont}_{I\cup K},\VV^{\cont}_J).$
\end{Lem}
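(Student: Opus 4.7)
The identity is an anti-commutativity relation for cup products of $1$-extensions built from different simple roots, and the plan is to translate both sides into a cohomology ring where the identity reflects the standard anti-commutativity of the cup product.

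First, I would pass to locally analytic vectors. This functor is exact on admissible Banach representations by Theorem~7.1 of \cite{STadm}, respects Yoneda cup products, and by construction (as observed in the proof of Proposition~\ref{contpro}) sends $\mathcal{E}^{\cont}_{M_1,M_2}(\mu)$ to $\mathcal{E}^{\an}_{M_1,M_2}(\mu)$. The corresponding map on $\Ext^1$ is injective by Proposition~\ref{contpro}(ii); both factors of each cup product on either side of the identity already lie in this injective image, so their Yoneda products lift naturally, reducing the statement to the analogous identity in $\Ext^2_{\an}(\VV^{\an}_{I\cup K},\VV^{\an}_J)$.

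Second, I would compute both Yoneda products in terms of the analytic cohomology of the Levi $L_J$ via the spectral sequence associated to the resolution of $\VV^{\an}_J$ from \cite{OrSchraen} (as already used in the proof of Corollary~\ref{extisom}):
$$E_1^{-s,r}=\bigoplus_{\substack{J\subseteq M\subseteq \Delta\\|M\setminus J|=s}}\Ext^{r}_{\an}(i^{\infty}_{I\cup K}(\Omega),\II^{\an}_{P_M}(\Omega))\Longrightarrow \Ext^{r-s}_{\an}(i^{\infty}_{I\cup K}(\Omega),\VV^{\an}_J(\Omega)).$$
By Proposition~\ref{DingCW} together with \cite{Schraen}, Corollary~3.14, the summand indexed by $M$ and $r$ identifies with $\wedge^r \Hom_{\cont}(Z(L_M),\Omega)$, and the extension $\mathcal{E}^{\an}_{M_1,M_2}(\mu)$ corresponds (up to a universal non-zero constant) to the character $\mu$ itself, viewed in $\Hom_{\cont}(Z(L_{M_2}),\Omega)$. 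Yoneda splicing translates under this identification into the cup product on $\HH^\bullet_{\an}(L_J,\Omega)$ combined with restriction from $L_M$ to $L_J$. Under these identifications the left-hand side of the claimed identity becomes $c\cdot (\lambda_k\circ k)\wedge(\lambda_i\circ i)$, while the right-hand side becomes $c\cdot (\lambda_i\circ i)\wedge(\lambda_k\circ k)$ for the same non-zero scalar $c$; the identity then follows at once from $\alpha\wedge\beta=-\beta\wedge\alpha$.

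The main obstacle is the cochain-level compatibility underlying the second step: one has to verify that the Yoneda product on $\Ext^\bullet_{\an}$ corresponds to the cup product on analytic group cohomology under the Ding--Casselman--Wigner-type isomorphism of Proposition~\ref{DingCW}, and that the normalization constants identifying $1$-extensions with characters combine symmetrically on the two sides of the identity. While this is intuitively clear, a rigorous proof seems to require building an explicit resolution of $\II^{\an}_{P_J}(\Omega)$ by analytic cochains on $L_J$, realizing each $\mathcal{E}^{\an}_{M_1,M_2}(\mu)$ as the $1$-cocycle $p\mapsto \mu(p)$ on $L_J$, and then checking the identity directly on cocycles---in essence, the ``lengthy but rather straightforward'' calculation alluded to in the statement.
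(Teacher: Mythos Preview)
The paper does not give a detailed proof, stating only that the lemma ``follows from a lengthy but rather straightforward computation'' --- presumably a direct verification with explicit Yoneda representatives built from parabolic inductions of upper-triangular characters. Your proposal attempts a more conceptual route via analytic group cohomology of the Levi, which is an attractive idea, but there is a genuine gap in your first reduction step.

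You argue that passing to locally analytic vectors reduces the identity to one in $\Ext^2_{\an}(\VV^{\an}_{I\cup K},\VV^{\an}_J)$. Exactness of the locally-analytic-vectors functor does give a natural map $\Ext^2_{\cont}\to\Ext^2_{\an}$ compatible with cup products, and Proposition~\ref{contpro} shows the map is injective on the relevant $\Ext^1$ groups. But to conclude the identity in $\Ext^2_{\cont}$ from the identity in $\Ext^2_{\an}$ you need injectivity of this map on $\Ext^2$, and you do not establish this. Knowing that the $\Ext^1$ factors lift does not help: the cup product in $\Ext^2_{\cont}$ is what you are trying to compute, and its image in $\Ext^2_{\an}$ only determines it up to the kernel of that map. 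Without controlling $\Ext^2_{\cont}$ directly (e.g.\ showing it injects into $\Ext^2_{\an}$, or computing it outright), the reduction does not go through.

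You yourself flag that the second step --- matching Yoneda products with cup products in $\HH^\bullet_{\an}(L_J,\Omega)$ and tracking normalization constants --- ultimately requires an explicit cocycle-level verification ``in essence'' equivalent to the direct calculation. Combined with the gap above, your approach does not yet circumvent the computation the paper alludes to. A cleaner route, and likely what the paper intends, is to write down an explicit $4$-term exact sequence in $\mathcal{U}_\Omega(G_\p)$ (coming from the continuous induction of a $3$-dimensional unipotent representation of $P_J$ built from $\lambda_i\circ i$ and $\lambda_k\circ k$) that simultaneously represents both Yoneda splices up to the required sign.
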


\subsection{Integral and mod $p$ extensions}\label{Extensions4}
Keeping the same notations as in the previous section we define $\mathcal{U}_{R}(G_\p)$ to be the abelian category of $\varpi$-adically admissible representations of $G_\p$ over $R$ (cf.~\cite{EmOrd1}).
For $V_0,W_0\in \mathcal{U}_{R}(G_\p)$ we put
$$\Ext^i_{\cont}(V,W)=\Ext^i_{\mathcal{U}_{\R}(G_\p)}(V,W),$$
where the right hand side denotes the group of Yoneda $i$-Extensions of $W_0$ by $V_0$ in $\mathcal{U}_{R}(G_\p)$.

If $V_0\in \ob(\mathcal{U}_{R}(G_\p))$, then $V_0 \otimes_R \Omega$ is an an admissible unitary Banach representation.
Vice versa, if $V$ is an admissible unitary Banach representation, then every open $G_p$-stable $R$-lattice $V_0 \subseteq V$ is a $\varpi$-adically admissible representation.
We call such a lattice residually finite, if the representation $V_0 \otimes_r R/\varpi$ has finite length.

It follows from the definition that the canonical map
$$\Hom_{\mathcal{U}_{R}(G_\p)}(V_0,W_0)\otimes_R \Omega \xlongrightarrow{\cong} \Hom_{\mathcal{U}_{\Omega}(G_\p)}(V_0 \otimes_R \Omega,W_0 \otimes_R \Omega)$$
is an isomorphism for all $V_0, W_0\in \ob(\mathcal{U}_{R}(G_\p))$.

Moreover, since taking tensor product with $\Omega$ is exact, there is also a natural map
$$\loc\colon \Ext^{1}_{\cont}(V_0,W_0)\too \Ext^{1}_{\cont}(V_0 \otimes_R \Omega, W_0 \otimes_R \Omega).$$
The following result is proven in Appendix B of \cite{Hauseux}.
\begin{Pro}\label{AppHau}
Let $V$ and $W$ be admissible unitary Banach representations with open $G_\p$-equivariant lattices $V_0$ and $W_0$.
Suppose that $V_0$ is residually finite.
Then, the localization map
$$\loc\colon \Ext^{1}_{\cont}(V_0,W_0)\too \Ext^{1}_{\cont}(V, W)$$
is an isomorphism.
\end{Pro}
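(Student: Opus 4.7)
\medskip

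The plan is to establish bijectivity of $\loc$ by constructing explicit preimages for surjectivity and explicit trivializations for injectivity, with residual finiteness of $V_0$ being the crucial input in both directions.

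\textbf{Surjectivity.} Given a Banach extension $0 \to W \to E \to V \to 0$ in $\mathcal{U}_{\Omega}(G_\p)$, I would construct an open bounded $G_\p$-stable $R$-lattice $E_0 \subseteq E$ with $E_0 \cap W = W_0$ and $\pi(E_0) = V_0$. First, choose any open bounded $G_\p$-stable lattice $E_1 \subseteq E$; after replacing it by $\varpi^N E_1$ for $N\gg 0$, both $\pi(E_1) \subseteq V_0$ (since $V_0$ is open in $V$) and $E_1 \cap W \subseteq W_0$ (since $W_0$ is open in $W$) hold simultaneously. Then $E_2 := W_0 + E_1$ is an open bounded $G_\p$-stable lattice with $E_2 \cap W = W_0$ and $\pi(E_2) \subseteq V_0$. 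The potential failure is that $\pi(E_2) \subsetneq V_0$. Here the residual finiteness of $V_0$ enters: the quotient $V_0/(\pi(E_2) + \varpi V_0)$ is a quotient of the finite-length $k[G_\p]$-module $V_0/\varpi$, hence is generated by finitely many elements $\bar v_1, \ldots, \bar v_n$. Lift these to $v_i \in V_0$ and then to $e_i \in E$; since the $G_\p$-orbit of $v_i$ is bounded in $V_0$, one can bound the orbit of $e_i$ modulo $W$ uniformly and then enlarge $E_0 := E_2 + \sum_i R[G_\p] \cdot e_i$ to a bounded $G_\p$-stable lattice. A $\varpi$-adic Nakayama argument then gives $\pi(E_0) = V_0$, yielding the desired integral extension.

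\textbf{Injectivity.} Suppose an integral extension $0 \to W_0 \to E_0 \to V_0 \to 0$ splits after inverting $\varpi$, so there is a continuous $G_\p$-equivariant Banach splitting $s\colon V \to E := E_0 \otimes_R \Omega$. I would modify $s$ to land in $E_0$ on $V_0$. Writing $E = s(V) \oplus W$ as a topological $G_\p$-module, the lattice $E_0$ and the direct-sum lattice $s(V_0) \oplus W_0$ are both open bounded $G_\p$-stable lattices in $E$ restricting to $W_0$ on $W$ and projecting onto $V_0$. The obstruction to their agreement as integral extensions is measured by a $G_\p$-equivariant homomorphism $V_0 \to W/W_0$ determined by the failure of $s(V_0)$ to land in $E_0$. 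Residual finiteness of $V_0$ implies that this obstruction factors through the finite-length module $V_0/\varpi^m V_0$ for some $m$, and one checks (using the finitely many generators of $V_0/\varpi$) that after modifying $s$ by a Banach homomorphism $V \to W$ with integral image, one obtains $s(V_0) \subseteq E_0$. This modified map is the required integral splitting.

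\textbf{Main obstacle.} The central technical difficulty lies in the surjectivity step, specifically in constructing the lifts $e_i \in E$ so that their $G_\p$-orbits remain bounded: the individual lifts are only well-defined modulo $W$, and a priori only bounded modulo $W$, so bounding them absolutely requires adjusting by elements of $W$ while preserving $G_\p$-equivariance of the resulting span. The finite-length hypothesis on $V_0/\varpi$ is what makes this adjustment possible, as it reduces the problem to a finite number of orbit-wise bounded corrections rather than an inductive process that could diverge. Without residual finiteness, both the surjectivity construction and the injectivity trivialization would fail to produce bounded $G_\p$-stable lattices.
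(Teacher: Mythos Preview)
The paper does not actually prove this proposition; it simply records that the result is established in Appendix~B of \cite{Hauseux}. So there is no argument in the paper to compare your proposal against, and I can only comment on the internal soundness of your sketch.

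Your direct lattice-theoretic approach is a natural one, but the sketch has a real gap, and separately it misidentifies where the difficulty lies.

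On surjectivity: your stated ``main obstacle'' --- that the $G_\p$-orbits of the lifts $e_i$ might fail to be bounded in $E$ --- is not an obstacle at all. By hypothesis the extension $E$ lies in $\mathcal{U}_\Omega(G_\p)$, hence is unitary, so it carries an open bounded $G_\p$-stable lattice $L$; every $e_i$ lies in some $\varpi^{-N}L$, and therefore $R[G_\p]\cdot e_i \subseteq \varpi^{-N}L$ is automatically bounded. What your argument does \emph{not} control is the intersection $E_0 \cap W$ after you enlarge $E_2$ by the submodules $R[G_\p]\cdot e_i$. You carefully arranged $E_2 \cap W = W_0$, but once you adjoin the orbits of the $e_i$ this can (and typically will) enlarge the intersection strictly beyond $W_0$: any relation $\sum_j r_j(g_j\cdot e_{i_j}) \in W$ inside $E$ contributes a new element of $E_0 \cap W$ that has no reason to lie in $W_0$. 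Since only $V_0$, not $W_0$, is assumed residually finite, you cannot repair this by a symmetric argument on the $W$-side. This is the genuine issue, and your sketch does not address it.

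On injectivity: the outline is plausible but too vague at the decisive step. You correctly observe that the obstruction to an integral splitting is a $G_\p$-equivariant map $\phi\colon V_0 \to W/W_0$ factoring through $V_0/\varpi^m V_0$, and that one wants to lift it to a map $V_0 \to W$. But the assertion that residual finiteness lets one ``check (using the finitely many generators of $V_0/\varpi$)'' that such a lift exists is precisely the point requiring proof. The obstruction to lifting $\phi$ along the surjection $\varpi^{-k}W_0 \twoheadrightarrow \varpi^{-k}W_0/W_0$ is a class in $\Ext^1(V_0, W_0)$, and if you unwind the identifications you find that this class is $[E_0]$ itself --- so as written the argument is circular. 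Something further is needed (for instance, passage to duals and finitely generated modules over the Iwasawa algebra of a compact open subgroup, where localization at $\varpi$ is visibly exact on Ext), and this is presumably the content of Hauseux's appendix.
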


The representations $\II^{\cont}_{P_I}(R)$ and $\VV^{\cont}_I(R)$ are objects in $\mathcal{U}_{R}(G_\p)$.
They are residually finite by the main theorems of \cite{StLy}.

\begin{Lem}\label{CDHN}
Let $J\subseteq I\subseteq\Delta$ be subsets with $|I|=|J|+1$
There exists an integer $k\in \Z$ such that the map
$$\Hom_{\cont}(F_\p^{\times},\varpi^{k}R)\xrightarrow{\eqref{context}}\Ext^{1}_{\cont}(\VV^{\cont}_{I}(\Omega),\VV^{\cont}_J(\Omega))$$ canonically factors over the map
$$\loc\colon \Ext^{1}_{\cont}(\VV^{\cont}_{I}(R),\VV^{\cont}_J(R)) \too \Ext^{1}_{\cont}(\VV^{\cont}_{I}(\Omega),\VV^{\cont}_J(\Omega)).$$
\end{Lem}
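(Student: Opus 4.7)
The plan is to mimic the construction of $\mathcal{E}^{\cont}_{I,J}(\psi \circ i)$ from Section \ref{Extensions3} at the level of integral lattices. Given $\psi \in \Hom_{\cont}(F_\p^\times, R)$, the pullback $\lambda = \psi \circ i \colon P_J \to R$ is $R$-valued, so the unipotent matrix $\tau_\lambda(p) = \begin{pmatrix} 1 & \lambda(p) \\ 0 & 1 \end{pmatrix}$ stabilizes the lattice $R^2 \subseteq \Omega^2$. Using an integral analog of \cite{EmOrd1}, Proposition 4.1.5, the continuous induction at the lattice level---explicitly, the space of continuous functions $f \colon G_\p \to R^2$ with $f(pg) = \tau_\lambda(p) f(g)$---fits into a strict exact sequence
\[
0 \too \II^{\cont}_{P_J}(R) \too \II^{\cont}_{P_J}(\tau_\lambda)_R \too \II^{\cont}_{P_J}(R) \too 0
\]
in $\mathcal{U}_R(G_\p)$. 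Pushing forward along $\II^{\cont}_{P_J}(R) \onto \VV^{\cont}_J(R)$ and pulling back along $\II^{\cont}_{P_I}(R) \into \II^{\cont}_{P_J}(R)$ yields a class $\widetilde{\mathcal{E}}^{\cont}_{I,J}(\lambda)_R \in \Ext^{1}_{\cont}(\II^{\cont}_{P_I}(R), \VV^{\cont}_J(R))$ whose localization is, by construction, the class $\mathcal{E}^{\cont}_{I,J}(\lambda)$ of Section \ref{Extensions3}.

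To lift $\widetilde{\mathcal{E}}^{\cont}_{I,J}(\lambda)_R$ to $\Ext^{1}_{\cont}(\VV^{\cont}_I(R), \VV^{\cont}_J(R))$, I would consider the commutative square of localization maps connecting the natural inclusions $\Ext^{1}_{\cont}(\VV^{\cont}_I(\star), \VV^{\cont}_J(\star)) \too \Ext^{1}_{\cont}(\II^{\cont}_{P_I}(\star), \VV^{\cont}_J(\star))$ for $\star \in \{R, \Omega\}$. Both $\VV^{\cont}_I(R)$ and $\II^{\cont}_{P_I}(R)$ are residually finite by the main theorems of \cite{StLy}, so Proposition \ref{AppHau} renders both vertical localization maps isomorphisms. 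Since the bottom horizontal map is injective and $\mathcal{E}^{\cont}_{I,J}(\lambda)$ lies in its image by Proposition \ref{contpro}, a diagram chase produces the desired lift $\mathcal{E}^{\cont}_{I,J}(\lambda)_R$, and the $R$-linear assignment $\psi \mapstoo \mathcal{E}^{\cont}_{I,J}(\psi \circ i)_R$ provides the canonical factorization promised by the lemma (with $k = 0$ in principle).

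The principal obstacle will be rigorously establishing strict exactness of the integral induction sequence inside $\mathcal{U}_R(G_\p)$. My approach will go via the Iwasawa decomposition $G_\p = P_J K$ for a suitable compact open subgroup $K \subseteq G_\p$, reducing the problem to continuous induction from $P_J \cap K$ to $K$ of the finite-rank $R$-module $R^2$; since $K/(P_J \cap K)$ is compact, the induced module is $\varpi$-adically admissible and strict exactness at the lattice level becomes elementary. This is also where the flexibility in the integer $k$ becomes technically convenient: should $\lambda$ fail to preserve a chosen $K$-stable integral lattice (for instance if one works with a different normalization of the representation $\tau_\lambda$), one shrinks the source to $\Hom_{\cont}(F_\p^\times, \varpi^k R)$ for $k$ sufficiently large, ensuring that $\lambda = \psi \circ i$ takes values small enough to stabilize such a lattice uniformly.
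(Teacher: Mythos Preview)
Your argument is correct and in fact streamlines the paper's own proof. Both begin by constructing the integral class $\widetilde{\mathcal{E}}^{\cont}_{I,J}(\lambda)_R \in \Ext^{1}_{\cont}(\II^{\cont}_{P_I}(R), \VV^{\cont}_J(R))$ exactly as you outline; the paper does this step with a ``similar as before'' hand-wave, so your explicit discussion of strict exactness via the Iwasawa decomposition is a welcome elaboration of a point both routes need.

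Where you diverge is in the lift to $\Ext^{1}_{\cont}(\VV^{\cont}_I(R), \VV^{\cont}_J(R))$. The paper first proves that the restriction map into $\Ext^{1}_{\cont}(\II^{\cont}_{P_I}(R), \VV^{\cont}_J(R))$ is injective by a mod-$\varpi$ computation showing $\Hom_{G_\p}(w^{\infty}_I(R/\varpi), v^{\infty}_J(R/\varpi)) = 0$ via \cite{StLy}, and then argues that the pullback of the integral class to $\WW^{\cont}_I(R)$ splits after scaling by some $\varpi^k$: it splits rationally by Proposition \ref{contpro}, and Proposition \ref{AppHau} is invoked to descend. Finite generation of $\Hom_{\cont}(F_\p^\times, R)$ then makes $k$ uniform in $\lambda$. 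Your diagram chase bypasses the mod-$\varpi$ Hom computation entirely: since $\VV^{\cont}_I(R)$ and $\II^{\cont}_{P_I}(R)$ are both residually finite (as the paper itself records just above the lemma), Proposition \ref{AppHau} makes \emph{both} vertical localization maps isomorphisms, and the lift falls out with $k = 0$. This sharper outcome is consistent with the Remark immediately following the lemma, where the author concedes that $k = 0$ ought to be achievable. In short, you lean more directly on Proposition \ref{AppHau} than the paper chooses to, and in return obtain a cleaner argument and a better constant.
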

\begin{proof}
Let us begin by showing that the canonical map
$$\Ext^{1}_{\cont}(\VV^{\cont}_{I}(R),\VV^{\cont}_J(R))\too \Ext^{1}_{\cont}(\II^{\cont}_{P_I}(R),\VV^{\cont}_J(R))$$
is injective.
Let $\WW^{\cont}_{I}(R)$ be the kernel of the homomorphism $\II^{\cont}_{P_I}(R)\to \VV^{\cont}_{P_I}(R)$.
Considering the long exact sequence of $\Ext$-groups we have to show that
$$\Hom_{\mathcal{U}_{R}(G_\p)}(\WW^{\cont}_{I}(R),\VV^{\cont}_J(R))=0.$$
Thus, reducing $\bmod\ \varpi$ it is enough to show that
$$\Hom_{G_\p}(w^{\infty}_{I}(R/\varpi),v^{\infty}_J(R/\varpi))=0,$$
where $w^{\infty}_{I}(R/\varpi)$ denotes the kernel of the map $i_I^\infty(R/\varpi)\to v_I^\infty(R/\varpi).$
By the main theorems of \cite{StLy} the representation $v^{\infty}_J(R/\varpi)$ is irreducible and none of the Jordan--Hölder factors of $w^{\infty}_{I}(R/\varpi)$ are isomorphic to it and, thus, the claim follows.

Similar as before, we can define an extension
$$0 \too \VV^{\cont}_J(R) \too \mathcal{E}^{\cont}_{I,J}(\lambda)\too \II^{\cont}_{P_I}(R) \too 0$$
for every homomorphism $\lambda\in\Hom_{\cont}(F_\p^{\times},R)$.
Ideally, we would like to show that the pullback of the extension along the inclusion $\WW^{\cont}_{I}(R) \into\II^{\cont}_{P_I}(R)$ is split.
By Proposition \ref{contpro} the pullback is split after inverting $p$.
Thus, by Proposition \ref{AppHau} the extension is split after restricting to $\varpi^{k}\WW^{\cont}_{I}(R)$ for some $k\in \Z$.
Since $\Hom_{\cont}(F_\p^{\times},R)$ is a finitely generated $R$-module, we may choose an integer $k$ that does not depend on the homomorphism $\lambda\in\Hom_{\cont}(F_\p^{\times},R)$.
\end{proof}

We are now able to construct extensions of mod $p$ generalized Steinberg representations.
Namely, we can reduce the resulting map 
$$\Hom_{\cont}(F_\p^{\times},\varpi^{k}R)\too \Ext^{1}_{\cont}(\VV^{\cont}_{I}(R),\VV^{\cont}_J(R))$$
from the lemma above modulo $\varpi^{r}$ for $r\geq 0$ to obtain a map
$$\Hom_{\cont}(F_\p^{\times},\varpi^{k}R/\varpi^{k+r}R)\too \Ext^{1}_{\mathfrak{C}^{\sm}_{R/\varpi^{r}}(G_\p)}(v^{\infty}_{I}(R/\varpi^{r}),v^{\infty}_J(R/\varpi^{r})).$$

\begin{Rem}
In \cite{CDHN} Colmez, Dospinescu, Hauseux and Nizioł construct an isomorphism from $\Ext^{1}_{\cont}(\VV^{\cont}_{I}(R),\VV^{\cont}_J(R))$ to $\Hom_{\cont}(F_\p^{\times},R)$. It is likely that by a careful analysis one can prove that we may take $k=0$ in Lemma \ref{CDHN} and that the map we construct is the inverse to the isomorphism of \textit{loc.cit.}
\end{Rem}


\section{Automorphic L-invariants}\label{mainsection}

\subsection{Cohomology of $\p$-arithmetic groups}\label{Cohomology}
Throughout this section we fix a ring $R$.
Given a compact, open subgroup $K^\p\subseteq G(\A\pinfty)$, a smooth $G_\p$-module $M\in\ob(\mathfrak{C}^{\sm}_{R}(G_\p))$ and an $R[G(F)]$-module $N$ we define $\Ah_R(K^{\p},M;N)$ as the space of all $R$-linear maps $\Phi\colon G(\A\pinfty)/K^\p \times M\to N$.
The $R$-module $\Ah_R(K^\p,M;N)$ carries a natural $G(F)$-action given by
$$(\gamma.\Phi)(g,m)=\gamma.(\Phi(\gamma^{-1}g,\gamma^{-1}.m)).$$

\begin{Exa}
If $M$ is of the form $\cind_{K_\p}^{G_\p} R$ for some compact, open subgroup $K_\p\subseteq G_\p$, we put
$$\Ah(K^\p\times K_\p;N)=\Ah_R(K^\p,M;N).$$
There is a canonical isomorphism
$$\Ah(K^\p\times K_\p;N)\xrightarrow{\cong}C(G(\A^\infty)/(K^\p\times K_\p),N).$$

There is also the following more general version of this isomorphism:
Recall that given a group $H$, a subgroup $K\subseteq H$ and an $R[K]$-module $L$ the \emph{coinduction} $\Coind_{K}^{H}L$ of $L$ from $K$ to $H$ is the space of all functions $f\colon H\to L$ such that $f(hk)=k^{-1}f(h)$ for all $k\in k, h\in H$.
It is a $H$-representation via left multiplication.
For every $L\in\ob(\mathfrak{C}^{\sm}_{R}(G_\p))$ there is a canonical isomorphism
$$\Ah_R(K^\p,\cind_{K_\p}^{G_\p} L;R)\xrightarrow{\cong}\Coind_{K^\p\times K_\p}^{G(\A^\infty)}\Hom_R(L,R),$$
where $K^\p\times K_\p$ acts on $\Hom_R(L,R)$ via the projection onto $K_\p.$
\end{Exa}

\begin{Pro}\label{FlachundNoethersch}
Let $K^\p\subseteq G(\A\pinfty)$ be a compact, open subgroup and $M\in\ob(\mathfrak{C}^{\sm}_{R}(G_\p))$ a flawless representation.
Furthermore, let $\epsilon\colon \pi_0(G_\infty)\to\left\{\pm 1\right\}$ be a character.
\begin{enumerate}[(a)]
\item\label{FuN} The $R$-module $\HH^{d}(G(F),\Ah_R(K^\p,M;R(\epsilon)))$ is finitely generated for all $d$ if $R$ is Noetherian.
\item\label{FN2} If $N$ is a flat $R$-module (endowed with the trivial G(F)-action), then the canonical map
	 $$\HH^{d}(G(F),\Ah_R(K^\p,M;R(\epsilon)))\otimes_R N \too \HH^{d}(G(F),\Ah_R(K^\p,M;N(\epsilon)))$$
	is an isomorphism for all $d\in\Z$.
\item\label{FN3} If $R=\Omega$ is a field of characteristic $0$, then
$$\HH^{d}(G(F),\Ah_\Omega(K^\p,M;\Omega(\epsilon)))= 0$$
for all $d\gg 0$.
\end{enumerate}
\end{Pro}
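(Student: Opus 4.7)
\emph{Plan.} The plan is to reduce all three assertions to standard facts about cohomology of $\p$-arithmetic subgroups of $G(F)$ via the flawless resolution and Shapiro's lemma.

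Fix a flawless resolution $0 \to P_m \to \cdots \to P_0 \to M \to 0$ with each $P_i$ a finite direct sum of modules of the form $\cind_{K_{i,\p}}^{G_\p} L_i$ and $L_i$ finitely generated projective over $R$. Since $M$ and each $P_i$ are projective as $R$-modules, the kernels in the resolution are also $R$-projective, so the contravariant functor $\Ah_R(K^\p,-; N(\epsilon)) \cong \Hom_R(R[G(\A\pinfty)/K^\p]\otimes_R -,\, N(\epsilon))$ converts the resolution into an exact coresolution by $R[G(F)]$-modules. Taking hypercohomology yields a finitely supported spectral sequence
\[
E_1^{p,q} = \HH^q\bigl(G(F), \Ah_R(K^\p, P_p; N(\epsilon))\bigr) \Longrightarrow \HH^{p+q}\bigl(G(F), \Ah_R(K^\p, M; N(\epsilon))\bigr),
\]
which reduces all three claims to the case $M = \cind_{K_\p}^{G_\p} L$ with $L$ finitely generated projective over $R$. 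For such $M$ the example preceding the proposition supplies
\[
\Ah_R(K^\p, M; N(\epsilon)) \cong \Coind_{K^\p \times K_\p}^{G(\A^\infty)} \Hom_R(L, N(\epsilon)).
\]
Using the finite double-coset decomposition $G(\A^\infty) = \bigsqcup_{j=1}^h G(F) g_j (K^\p \times K_\p)$ and Shapiro's lemma applied to the $\p$-arithmetic subgroups $\Gamma_j := G(F) \cap g_j(K^\p \times K_\p)g_j^{-1}$, we obtain
\[
\HH^d\bigl(G(F), \Ah_R(K^\p, M; N(\epsilon))\bigr) \;\cong\; \bigoplus_{j=1}^h \HH^d(\Gamma_j, V_j),
\]
where $V_j \cong \Hom_R(L, R(\epsilon)) \otimes_R N$ with $\Gamma_j$-action twisted by $g_j$ and trivial on the $N$-factor.

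To conclude, by Borel--Serre each $\Gamma_j$ is virtually torsion-free and virtually of type $\mathrm{FL}$ with finite virtual cohomological dimension. This at once yields (a) when $R$ is Noetherian (since $\Hom_R(L, R(\epsilon))$ is then finitely generated), and (c) when $R$ is a field of characteristic $0$. For (b), on a torsion-free finite-index subgroup $\Gamma_j' \leq \Gamma_j$ one has a finite free $R[\Gamma_j']$-resolution of $R$, so $\Gamma_j'$-cohomology commutes with the flat base change $-\otimes_R N$; the equivariant cellular cochain complex of the Borel--Serre bordification then furnishes a finite $R[\Gamma_j]$-complex whose terms are coinduced from finite cell-stabilisers, which lets one descend the flat base change isomorphism to $\Gamma_j$ itself.

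The main obstacle I foresee is the descent in (b) from $\Gamma_j'$ to $\Gamma_j$: the naive Hochschild--Serre spectral sequence would involve cohomology of the finite quotient $\Gamma_j/\Gamma_j'$, which need not commute with $-\otimes_R N$ when the order of this quotient is not invertible in $R$. Circumventing this requires either the direct Borel--Serre construction mentioned above, or an argument that realises $\HH^\ast(\Gamma_j,-)$ as an invariants/equaliser of $\HH^\ast(\Gamma_j',-)$ in a way that visibly preserves flat base change.
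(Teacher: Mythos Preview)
Your approach is essentially the same as the paper's: reduce via the flawless resolution to $M=\cind_{K_\p}^{G_\p}L$, identify $\Ah_R$ as a coinduction, apply Shapiro over the finite double-coset decomposition, and invoke that the resulting congruence subgroups are of type (VFL). One terminological slip: the groups $\Gamma_j = G(F)\cap g_j(K^\p\times K_\p)g_j^{-1}$ are \emph{arithmetic}, not $\p$-arithmetic, since $K^\p\times K_\p$ is compact open in all of $G(\A^\infty)$; this is exactly how the paper phrases it, and the VFL property you need is the classical Borel--Serre result for arithmetic groups.

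Your discussion of the descent obstacle in (b) is in fact more scrupulous than the paper, which simply says ``all claims follow since arithmetic groups are of type (VFL)'' without addressing how flat base change survives passage from a torsion-free finite-index subgroup to $\Gamma_j$ itself. Your proposed fix via the Borel--Serre bordification is the standard one and works.
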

\begin{proof}
It is enough to consider the case that
$$M=\cind_{K_\p}^{G_\p}L,$$
where $K_\p\subseteq G_\p$ is a compact, open subgroup and $L\in\ob(\mathfrak{C}^{\sm}_{R}(K_\p))$ finitely generated projective over $R$.
As mentioned above we have a canonical isomorphism
$$\Ah_R(K^\p,M;R(\epsilon))\xlongrightarrow{\cong}\Coind^{G(\A^{\infty})}_{K^\p\times K_\p}\Hom_R(L,R(\epsilon)).$$
By Borel's theorem on the finiteness of class numbers of algebraic groups over number fields (see Theorem 5.1 of \cite{BoCN}) the double quotient $G(F)\backslash G(\A^\infty)/(K^\p\times K_\p)$ is finite.
We fix a system of representatives $g_1,\ldots,g_n$ and put $\Gamma_{g_i}=G(F)\cap g_i (K\times K_\p) g_i^{-1}.$
Then, Shapiro’s Lemma implies that there is an isomorphism
$$\HH^{d}(G(F),\Coind^{G(\A^{\infty})}_{K^\p\times K_\p}\Hom_R(L,R(\epsilon)))\xrightarrow{\cong}\bigoplus_{i=1}^{n}\HH^{d}(\Gamma_{g_i}, \Hom_{R}(L,R(\epsilon))).$$
Now all claims follow since arithmetic groups are of type (VFL). 
\end{proof}

\begin{Rem}
The group $\pi_0(G_\infty)$ is a finite, abelian, 2-torsion group (see \cite{Mat1964}, Theorem 1).
\end{Rem}

\begin{Cor}\label{limits}
Let $(R,\m)$ be a complete DVR with finite residue field.
Under the same hypothesis as in the proposition above the canonical map
$$\HH^{d}(G(F),\Ah_R(K^\p,M;R(\epsilon)))\too \varprojlim_r \HH^{d}(G(F),\Ah_R(K^\p,M;R/\m^{r}(\epsilon)))$$
is an isomorphism for all $d$.
\end{Cor}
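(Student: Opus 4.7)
The plan is to deduce the corollary from the fact that finitely generated modules over a complete Noetherian local ring are $\m$-adically complete and separated, using a long exact sequence comparison.

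First, I would apply the functor $\Ah_R(K^\p,M;-(\epsilon))$ to the short exact sequence of $R$-modules
$$0\too R\xrightarrow{\varpi^r} R\too R/\m^r\too 0.$$
Since $M$ is flawless, it is projective as an $R$-module by the first condition in the definition; consequently $R[G(\A\pinfty)/K^\p]\otimes_R M$ is projective over $R$ and $\Ah_R(K^\p,M;-)=\Hom_R(R[G(\A\pinfty)/K^\p]\otimes_R M,-)$ is exact in the second variable. Writing $H^d=\HH^d(G(F),\Ah_R(K^\p,M;R(\epsilon)))$, the associated long exact sequence in $\HH^\ast(G(F),-)$ — combined with the identification $\HH^d(G(F),\Ah_R(K^\p,M;\m^r(\epsilon)))\cong H^d$ coming from the $R$-module isomorphism $R\xrightarrow{\cdot\varpi^r}\m^r$ and Proposition \ref{FlachundNoethersch}\eqref{FN2} — breaks into short exact sequences
$$0\too H^d/\varpi^r H^d\too \HH^d(G(F),\Ah_R(K^\p,M;R/\m^r(\epsilon)))\too H^{d+1}[\varpi^r]\too 0,$$
where $[\varpi^r]$ denotes $\varpi^r$-torsion.

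Next, I would check the functoriality in $r$. Comparing the defining sequences for $r'\geq r$ shows that the left vertical map on the first copy of $R$ is multiplication by $\varpi^{r'-r}$, while the right vertical is the canonical surjection $R/\m^{r'}\onto R/\m^r$. Threading this through the long exact sequence, the transition map on $H^d/\varpi^r H^d$ is the canonical surjection, and the transition map on $H^{d+1}[\varpi^r]$ is multiplication by $\varpi^{r'-r}$.

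Finally, I would apply $\varprojlim_r$ to the short exact sequence and use Proposition \ref{FlachundNoethersch}\eqref{FuN} to ensure $H^d$ and $H^{d+1}$ are finitely generated over the complete DVR $R$. This gives
\begin{itemize}
\item $\varprojlim_r H^d/\varpi^r H^d=H^d$ by $\m$-adic completeness;
\item $\varprojlim_r^1 H^d/\varpi^r H^d=0$ by Mittag--Leffler (surjective transitions);
\item $\varprojlim_r H^{d+1}[\varpi^r]=0$ because any coherent system $(x_r)_r$ satisfies $x_r\in\varpi^{r'-r}H^{d+1}$ for all $r'\geq r$, hence $x_r\in\bigcap_s \varpi^s H^{d+1}=0$ by Krull's intersection theorem.
\end{itemize}
The resulting six-term exact sequence then collapses to the desired isomorphism. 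The only genuine subtlety — and thus the main obstacle — lies in correctly tracking the transition maps on the $\varpi$-torsion subsystem $H^{d+1}[\varpi^r]$, as it is only the fact that these are multiplication by $\varpi^{r'-r}$ (together with Krull's theorem) that forces the relevant inverse limit to vanish.
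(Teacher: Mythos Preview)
Your proof is correct and takes a different route from the paper's. The paper's argument is much terser: it observes that each $\HH^d(G(F),\Ah_R(K^\p,M;R/\m^r(\epsilon)))$ is finitely generated over the \emph{finite} ring $R/\m^r$ (Proposition~\ref{FlachundNoethersch}\eqref{FuN} applied with the Noetherian ring $R/\m^r$), hence is a finite set, so Mittag--Leffler is automatic; the isomorphism then follows, implicitly via the Milnor $\varprojlim^1$-sequence applied to the cochain complex of finitely generated free $R$-modules furnished by the VFL property of arithmetic groups. Your approach instead passes through the universal-coefficient short exact sequence $0\to H^d/\varpi^r\to H^d(R/\m^r)\to H^{d+1}[\varpi^r]\to 0$ and handles each piece directly via completeness and Krull's intersection theorem. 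This is more explicit and, notably, never uses the finiteness of the residue field --- so your argument in fact establishes the corollary for any complete DVR. One minor remark: your citation of Proposition~\ref{FlachundNoethersch}\eqref{FN2} for the identification $H^d(\m^r)\cong H^d$ is unnecessary (it follows immediately from the $R$-module isomorphism $R\xrightarrow{\varpi^r}\m^r$), though harmless.
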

\begin{proof}
It suffices to proof that the projective system
$$\left(\HH^{d}(G(F),\Ah_R(K^\p,M;R/\m^{r}(\epsilon)))\right)_r$$
fulfils the Mittag-Leffler condition.
But this is true by Proposition \ref{FlachundNoethersch} \eqref{FuN}.
\end{proof}

For a compact, open subset $K^\p\times K_\p\subseteq G(\A^\infty)$ and a $R[G(F)]$-module $N$ we define
$$\HH^{d}(X_{K^\p\times K_\p},N)^{\epsilon}=\HH^{d}(G(F),\Ah(K^\p\times K_\p;N(\epsilon))).$$
Let us give a brief justification for this notation:
assume that $K^\p\times K_\p\subseteq G(\A^\infty)$ is a neat subgroup and $2$ is invertible in $R$.
Via the decomposition
$$\bigoplus_{\epsilon\colon\pi_0(G_\infty)\to \pm 1}\hspace{-1em} N(\epsilon)\cong N\otimes_R R[\pi_0(G_\infty)]$$
we see that
$$\HH^{d}(X_{K^\p\times K_\p},N)^{\epsilon}\cong\HH^{d}(G(F)^+,\Ah(K^\p\times K_\p;N))^{\epsilon},$$
where $G(F)^+$ denotes the intersection of $G(F)$ with the connected component of $G_\infty$ and the superscript $\epsilon$ on the right denotes taking $\epsilon$-isotypic component with respect to the action of $\pi_0(G_\infty)$.
By Borel's finiteness theorem the double quotient
$G(F)^+\backslash G(\A^\infty)/(K^\p\times K_\p)$ is finite.
As before, we fix a system of representatives $g_1,\ldots,g_n$ and put $\Gamma^+_{g_i}=G(F)^+\cap g_i (K\times K_\p) g_i^{-1}.$
Shapiro's Lemma implies that
$$\HH^{d}(G(F)^+,\Ah(K^\p\times K_\p;N))\cong \bigoplus_{i=1}^{h}\HH^{i}(\Gamma^+_{g_i},N).$$
On the other side let us consider the manifold
$$\mathcal{X}_{K^\p\times K_\p}=G(F)\backslash G(\A)/(K^\p \times K_\p \times K_\infty^\circ),$$
where $K_\infty^\circ\subset K_\infty$ denotes the connected component of the identity.
Since the inclusion $K_\infty\into G_\infty$ is a homotopy equivalence, we see that the space
$$\mathcal{X}=G_\infty/K_\infty$$
is contractible and that
\begin{align*}
\mathcal{X}_{K^\p\times K_\p}
\cong G(F)^+\backslash (G(\A^\infty)/(K^\p \times K_\p) \times \mathcal{X})
=\bigcup_{i=1}^{h}\Gamma_i^+ \backslash \mathcal{X}.
\end{align*}
By our neatness assumption $\Gamma_i^+$ acts properly discontinuously on $\mathcal{X}$ and thus
$$\HH^{i}(\Gamma_i^+,N)=\HH^i(\Gamma_i^+ \backslash \mathcal{X}, \underline{N}),$$
where $\underline{N}$ is the locally constant sheaf associated to $N$.
The group $\pi_0(K_\infty)\cong \pi_0(G_\infty)$ naturally acts on $\mathcal{X}_{K^\p\times K_\p}$.
We have shown that
$$\HH^{d}(X_{K^\p\times K_\p},N)^{\epsilon}\cong \HH^{d}(\mathcal{X}_{K^\p\times K_\p},\underline{N})^{\epsilon},$$
where, as before, the superscript $\epsilon$ on the right hand side denotes taking $\epsilon$-isotypic component.

Let us put
$$\tilde{\Ah}(K^\p;N(\epsilon))= \varinjlim_{K_\p}\Ah(K^\p\times K_\p;N(\epsilon)),$$
where the injective limit runs over all compact, open subgroups $K_\p \subseteq G_\p$.
Right translation defines a smooth $G_\p$-action on $\tilde{\Ah}(K;N(\epsilon))$, which commutes with the $G(F)$-action.

Utilizing that $\p$-arithmetic groups are of type $(VFL)$ similar arguments as in the proof of Proposition \ref{FlachundNoethersch} show that the canonical map
\begin{align}\label{injlimit}
\varinjlim_{K_\p}\HH^{d}(X_{K^\p\times K_\p},N)^\epsilon\xrightarrow{\cong}\HH^{d}(G(F),\tilde{\Ah}(K^\p;N(\epsilon)))
\end{align}
is an isomorphism of smooth $G_\p$-modules.

\begin{Lem}\label{hom}
Let $M$ be a smooth $R[G_\p]$-module and $N$ an $R[G(F)]$-module.
\begin{enumerate}[(a)]
\item There is a natural isomorphism
$$\HH^{0}(G(F),\Ah_R(K^\p,M;N(\epsilon)))\cong \Hom_{R[G_\p]}(M,\varinjlim_{K_\p}\HH^{0}(X_{K^\p\times K_\p},N)^\epsilon),$$
where the injective limit runs over all compact, open subgroups $K_\p \subseteq G_\p$.
\item Let $P\in\mathfrak{C}^{\sm}_{R}(G_\p)$ be a projective object.
Then there are natural isomorphisms
\begin{align}\label{homologicalalgebra}
\HH^{d}(G(F),\Ah_\Omega(K^\p,P;N(\epsilon)))\cong \Hom_{R[G_\p]}(P,\varinjlim_{K_\p}\HH^{d}(X_{K^\p\times K_\p},N)^\epsilon)
\end{align}
for all $d\geq 0$.
\end{enumerate}
\end{Lem}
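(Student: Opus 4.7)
The plan is to reduce both parts to a single underlying identification. For any smooth $R[G_\p]$-module $M$ I will establish a natural $R[G(F)]$-linear isomorphism
\begin{equation*}
\Ah_R(K^\p, M; N(\epsilon)) \xlongrightarrow{\cong} \Hom_{R[G_\p]}(M, \tilde{\Ah}(K^\p; N(\epsilon))),
\end{equation*}
where $G(F)$ acts on the right-hand side through post-composition with its action on $\tilde{\Ah}$. The map sends $\Phi$ to $\widetilde{\Phi}$ defined by $\widetilde{\Phi}(m)(g^\p, g_\p) := \Phi(g^\p, g_\p.m)$, with inverse $\Psi \mapsto [(g^\p, m) \mapsto \Psi(m)(g^\p, 1)]$. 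Checking that $\widetilde{\Phi}(m)$ is $K_\p$-invariant whenever $m$ is, that $\widetilde{\Phi}$ is $G_\p$-equivariant, and that the two constructions intertwine the $G(F)$-actions amounts to unwinding the formula $(\gamma.\Phi)(g^\p, m) = \gamma.\Phi(\gamma^{-1}g^\p, \gamma^{-1}.m)$ and matching it with the diagonal left-translation action of $G(F)$ on $G(\A\pinfty) \times G_\p$.

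Part (a) is then immediate: take $G(F)$-invariants on both sides and use the identification $\HH^0(G(F), \tilde{\Ah}(K^\p; N(\epsilon))) = \varinjlim_{K_\p} \HH^0(X_{K^\p \times K_\p}, N)^\epsilon$ supplied by \eqref{injlimit}.

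For part (b), the crucial structural observation is that on $\tilde{\Ah}(K^\p; N(\epsilon))$ the diagonal left-translation action of $G(F)$ commutes with the right-translation action of $G_\p$ on its own factor. I fix a projective resolution $F_\bullet \to R$ of $R$ as a trivial $R[G(F)]$-module and use the displayed identification to write
\begin{equation*}
\HH^d(G(F), \Ah_R(K^\p, P; N(\epsilon))) = H^d\bigl(\Hom_{R[G(F)]}(F_\bullet, \Hom_{R[G_\p]}(P, \tilde{\Ah}(K^\p; N(\epsilon))))\bigr).
\end{equation*}
Commutativity of the two actions allows the two $\Hom$-functors to be swapped: either side may be described as $R$-linear maps out of $F_\bullet \otimes_R P$ that are $G(F)$-equivariant in the first variable and $G_\p$-equivariant in the second. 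Each term $\Hom_{R[G(F)]}(F_k, \tilde{\Ah}(K^\p; N(\epsilon)))$ is a smooth $R[G_\p]$-module: for $F_k$ free this is a direct sum of copies of $\tilde{\Ah}(K^\p; N(\epsilon))$ itself, and the general case follows by passing to direct summands. Because $P$ is projective in $\mathfrak{C}^{\sm}_R(G_\p)$, the functor $\Hom_{R[G_\p]}(P, -)$ is exact on smooth $R[G_\p]$-modules and therefore commutes with cohomology, giving $\Hom_{R[G_\p]}(P, \HH^d(G(F), \tilde{\Ah}(K^\p; N(\epsilon))))$. A final appeal to \eqref{injlimit} rewrites this as the desired right-hand side.

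The only place where real care is needed is the swap of the two $\Hom$-functors and the subsequent appeal to exactness of $\Hom_{R[G_\p]}(P, -)$; both hinge on the commuting bimodule structure on $\tilde{\Ah}(K^\p; N(\epsilon))$ and on the fact that the complex $\Hom_{R[G(F)]}(F_\bullet, \tilde{\Ah}(K^\p; N(\epsilon)))$ consists of smooth $G_\p$-representations. Everything else is bookkeeping with the explicit formulas for the $G(F)$- and $G_\p$-actions.
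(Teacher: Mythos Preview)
Your treatment of part (a) is correct and matches the paper's: the explicit map you write down is exactly Frobenius reciprocity for the smooth coinduction $\tilde{\Ah}(K^\p;N(\epsilon))$, followed by taking $G(F)$-invariants and invoking \eqref{injlimit}.

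For part (b) your route differs from the paper's. The paper varies $N$ and shows that both sides, viewed as functors of $N$, are effaceable $\delta$-functors agreeing in degree $0$; the key input is that $\Ah_R(K^\p,P;-(\epsilon))$ preserves injectives because its left adjoint $N\mapsto \cind_{K^\p}^{G(\A\pinfty)}R\otimes_R P\otimes_R N(\epsilon)$ is exact (this uses that $P$ is $R$-flat, which the paper deduces from projectivity in $\mathfrak{C}^{\sm}_R(G_\p)$ via Frobenius reciprocity). Your argument instead fixes $N$, works with a projective resolution $F_\bullet\to R$ over $R[G(F)]$, and swaps the two $\Hom$-functors.

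There is one genuine gap in your argument. You assert that $\Hom_{R[G(F)]}(F_k,\tilde{\Ah}(K^\p;N(\epsilon)))$ is a smooth $G_\p$-module because for $F_k$ free it is a ``direct sum'' of copies of $\tilde{\Ah}$. This is a direct \emph{product}, not a direct sum, and since $G(F)$ is not of type $FP_\infty$ (it is not even finitely generated) you cannot take the $F_k$ finitely generated; an infinite product of smooth representations is typically not smooth. Consequently you cannot directly invoke exactness of $\Hom_{R[G_\p]}(P,-)$ on this complex. The repair is short: since $P$ is smooth, $\Hom_{R[G_\p]}(P,C^k)=\Hom_{\mathfrak{C}^{\sm}_R(G_\p)}(P,(C^k)^{\sm})$, and because the $G_\p$- and $G(F)$-actions commute one has
\[
(C^k)^{\sm}=\varinjlim_{K_\p}\Hom_{R[G(F)]}\bigl(F_k,\tilde{\Ah}(K^\p;N(\epsilon))^{K_\p}\bigr)=\varinjlim_{K_\p}\Hom_{R[G(F)]}\bigl(F_k,\Ah(K^\p\times K_\p;N(\epsilon))\bigr).
\]
Filtered colimits are exact, so $H^d\bigl((C^\bullet)^{\sm}\bigr)=\varinjlim_{K_\p}\HH^d(X_{K^\p\times K_\p},N)^\epsilon$, which by \eqref{injlimit} coincides with $H^d(C^\bullet)$. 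Now projectivity of $P$ in $\mathfrak{C}^{\sm}_R(G_\p)$ finishes the argument as you intended. With this correction your approach is valid and arguably more hands-on than the paper's derived-functor reasoning, while the paper's method has the advantage of making the naturality in $N$ and the role of $R$-flatness of $P$ transparent.
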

\begin{proof}
We may write 
$$\tilde{\Ah}(K^\p;N(\epsilon))\cong \varinjlim_{K_\p} C(G_\p/K_\p,\Ah_R(K^\p,R;N(\epsilon))).$$
In other words, as a $G_\p$-representation $\tilde{\Ah}(K^\p;N(\epsilon))$ equals the smooth vectors of the coinduction of $\Ah_R(K^\p,R;N(\epsilon))$ from the trivial group to $G_\p$.
Therefore, by Frobenius reciprocity we have a canonical isomorphism
\begin{align*}
\Hom_{R[G_\p]}(M,\tilde{\Ah}(K^\p;N(\epsilon)))
&\cong\Hom_{R}(M,\Ah_R(K^\p,R;N(\epsilon)))\\
&\cong\Ah_R(K^\p,M;N(\epsilon)).
\end{align*}
This in turn implies that
\begin{align*}
\Hom_{R[G_\p]}(M,\HH^{0}(G(F),\tilde{\Ah}(K^\p;N(\epsilon))))
&\cong\Hom_{R[G_\p]}(M,\Hom_{R[G(F)]}(R,\tilde{\Ah}(K^\p;N(\epsilon))))\\
&\cong\Hom_{R[G(F)]}(R,\Hom_{R[G_\p]}(M,\tilde{\Ah}(K^\p;N(\epsilon))))\\
&\cong\Hom_{R[G(F)]}(R,\Ah_R(K^\p,M;N(\epsilon)))
\end{align*}
and, thus, by \eqref{injlimit} the first claim follows.

We denote by $R[G_\p]\Mod$ (respectively $R\Mod$) the category of all $R[G_p]$-modules (respectively $R$-modules).
Let $P\in\ob(\mathfrak{C}^{\sm}_{R}(G_\p))$ be a projective object.
Given an $R$-module $N$ we consider $\varinjlim_{K_\p} C(G_\p/K_\p, N)$ as a smooth $G_\p$-module via right translation.
By Frobenius reciprocity we have
$$\Hom_{R}(P,N)=\Hom_{R[G_\p]}(P, \varinjlim_{K_\p} C(G_\p/K_\p, N))$$
for any $R$-module $N$ and, therefore, $P$ is also projective as an $R$-module.
In particular, $P$ is flat as an $R$-module.
Hence, the functor
$$R[G(F)]\Mod \too R[G(F)]\Mod,\quad N \mapstoo \cind_{K^\p}^{G(\A\pinfty)}R\otimes_R P\otimes_R N(\epsilon) $$
is exact.
Thus, its right-adjoint
$$\mathcal{F}\colon R[G(F)]\Mod \too R[G(F)]\Mod,\quad N \mapstoo \Ah_R(K^\p,P;N(\epsilon))$$
sends injective objects to injective objects (see for example \cite{Weibel}, Proposition 2.3.10).
Since $P$ is projective as an $R$-module, we also see that $\mathcal{F}$ is exact.
Therefore, we see that the functors
$$R[G_\p]\Mod \too R\Mod,\quad N\mapstoo \HH^{d}(G(F),\Ah_R(K^\p,P;N(\epsilon)))$$
form an effaceable $\delta$-functor and, thus, agree with the right derived functor of
$$R[G_\p]\Mod \too R\Mod,\quad N\mapstoo \HH^{0}(G(F),\Ah_R(K^\p,P;N(\epsilon))).$$

Conversely, since $P\in\ob(\mathfrak{C}^{\sm}_{R}(G_\p))$ is projective, the functors
$$R[G_\p]\Mod\too R\Mod,\quad N\mapstoo \Hom_{R[G_\p]}(P,\varinjlim_{K_\p}\HH^{d}(X_{K^\p\times K_\p},N)^\epsilon)$$
form a $\delta$-functor.
By similar arguments as above we see that this $\delta$-functor is effaceable and, therefore, agrees with the right derived functor of
$$R[G_\p]\Mod\too R\Mod,\quad N \mapstoo \Hom_{R[G_\p]}(P,\varinjlim_{K_\p}\HH^{0}(X_{K^\p\times K_\p},N)^\epsilon).$$
Thus the second claim follows from the first.
\end{proof}

\subsection{The $\pi$-isotypic component}\label{Component}
We determine the $\pi$-isotypic component of various cohomology groups.

Since $v_{I}^{\infty}(R)=v_I^{\infty}(\Z)\otimes R$ for all rings $R$, we have a canonical isomorphism
$$\Ah_\Z(K^\p,v^{\infty}_{I}(\Z);N)\cong \Ah_R(K^\p,v^{\infty}_{I}(R);N)$$
for any $R$-module $N$.
Hence, we abbreviate this space by $\Ah(K^\p,v^{\infty}_{I};N)$ (and similarly for $\St^{\infty}_{G_\p}$ in place of $v^{\infty}_I$).

By \cite{FabianRat}, Theorem C, there exists a number field $\Q_\pi\subseteq \C$ such that $\pi^\infty$ has a model over $\Q_\pi$ that is essentially unique.
We fix a field extension $\Omega$ of $\Q_\pi$ and a compact, open subgroup $K^\p\subseteq G(\A\pinfty)$ such that $(\pi\pinfty)^{K^\p}\neq 0$.
Let
$$\mathbb{\T}=\mathbb{T}(K^\p)_\Omega=C_c(K^\p\backslash G(\A\pinfty)/K^\p,\Omega)$$
be the $\Omega$-valued Hecke algebra of level $K^\p$ away from $\p$.
By abuse of notation we denote the model of $\pi^\infty$ over $\Omega$ induced by the one over $\Q_\pi$ also by $\pi^\infty$.
If $V$ is a $\mathbb{T}(K^\p)_\Omega$-module, we put
$$V[\pi]=\Hom_{\mathbb{T}}((\pi\pinfty)^{K^\p},V).$$

Let $I_\p\subseteq G_\p$ be an Iwahori subgroup.
It is well-known (see for example the discussion above Corollary 2 of \cite{GK2}) that
\begin{align}\label{unIw}\dim_\Omega \St_{G_\p}(\Omega)^{I_\p}=1.\end{align}
We fix a non-zero vector $\varphi\in \St_{G_\p}(\Omega)^{I_\p}.$
By Frobenius reciprocity the vector $\varphi$ gives rise to a $G_\p$-equivariant homomorphism
$$\cind_{I_\p}^{G_\p}\Omega \too \St_{G_\p}(\Omega)$$
that is surjective by the irreducibility of $\St_{G_\p}(\Omega)$.
We denote the induced Hecke-equivariant map in cohomology by
\begin{align}\label{evaluation}
\ev^{(d)}\colon \HH^{d}(G(F),\Ah(K^\p,\St^{\infty}_{G_\p};\Omega(\epsilon)))\too \HH^{d}(X_{K^\p\times I_\p},\Omega)^{\epsilon}.
\end{align}
Note that \eqref{unIw} implies that $\ev^{(d)}$ only depends on $\varphi$ up to multiplication by a non-zero scalar.

\begin{Pro}\label{dimensions}
The following holds:
\begin{enumerate}[(a)]
\item\label{firstdim} For every character $\epsilon\colon \pi_0(G_\infty)\to\left\{\pm 1 \right\}$ we have
 $$\dim_\Omega \HH^{d}(X_{K^\p\times I_\p},\Omega)^{\epsilon}[\pi]= m_\pi \cdot\binom{\delta}{d-q}.$$
\item\label{seconddim} The map $\ev^{(d)}$ induces an isomorphism
$$\HH^{d}(G(F),\Ah(K^\p,\St^{\infty}_{G_\p};\Omega(\epsilon)))[\pi]\xrightarrow{\ev^{(d)}} \HH^{d}(X_{K^\p\times I_\p},\Omega)^{\epsilon}[\pi]$$
for all $d$.
\item\label{thirddim} For every character $\epsilon\colon \pi_0(G_\infty)\to\left\{\pm 1 \right\}$ we have
$$\dim_\Omega \HH^{d}(G(F),\Ah(K^\p,\St^{\infty}_{G_\p};\Omega(\epsilon)))[\pi] = m_\pi \cdot\binom{\delta}{d-q}.$$
\end{enumerate}
\end{Pro}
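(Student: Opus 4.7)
\textbf{Proof plan for Proposition \ref{dimensions}.}
First I would observe that part \eqref{thirddim} is an immediate consequence of \eqref{firstdim} and \eqref{seconddim}, so the real work lies in the first two parts.

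For \eqref{firstdim}, the plan is a standard Matsushima-style computation. Shrinking $K^\p$ to be neat (so that $\pi$-isotypic parts are preserved by Hochschild--Serre), I would identify $\HH^d(X_{K^\p\times I_\p},\Omega)^\epsilon$ with the Betti cohomology of the locally symmetric space $\mathcal{X}_{K^\p \times I_\p}$ in its $\epsilon$-isotypic part, as in the discussion preceding the proposition. By Matsushima's formula, combined with the cuspidality of $\pi$ and the vanishing of the Eisenstein contribution away from the cuspidal spectrum on the $\pi$-isotypic part, the cohomology decomposes as
\[
\HH^d(\mathcal{X}_{K^\p\times I_\p},\Omega)^\epsilon[\pi]\ \cong\ \bigoplus_{\pi'}m(\pi')\cdot(\pi'^{\p,\infty})^{K^\p}[\pi]\otimes (\pi'_\p)^{I_\p}\otimes \HH^d(\mathfrak{g},K_\infty^\circ;\pi'_\infty)^\epsilon,
\]
where the sum is over automorphic $\pi'$ matching the Hecke eigenvalues of $\pi$ away from $\p$. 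By Hypothesis \eqref{Hyp} (SMO), every such $\pi'$ is cuspidal with $\pi'_\p=\St^{\infty}_{G_\p}$ and $\pi'_\infty$ tempered. Since $\dim\St^{I_\p}_{G_\p}=1$, the $\p$-component contributes a factor $1$, and $(\pi'^{\p,\infty})^{K^\p}[\pi]$ contributes a factor $1$ (SMO forces $\pi'^{\p,\infty}\cong\pi^{\p,\infty}$). By the Vogan--Zuckerman classification of cohomological tempered representations (Borel--Wallach, Chapter III), $\dim_\Omega\HH^d(\mathfrak{g},K_\infty^\circ;\pi'_\infty)^\epsilon=\binom{\delta}{d-q}$. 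Summing the multiplicities $m(\pi')$ over all distinct $\pi'$ yields $m_\pi\binom{\delta}{d-q}$, as claimed.

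For \eqref{seconddim}, the plan is to apply Theorem \ref{resolution1}. Tensoring the integral resolution $P_\bullet\to\St^{\infty}_{G_\p}(\Z)$ with $\Omega$ yields a resolution of $\St^{\infty}_{G_\p}(\Omega)$ in $\mathfrak{C}^{\sm}_\Omega(G_\p)$ by projective objects of the form $\cind_{K_\p}^{G_\p}\Omega(\chi)$. Since every $P_i$ is $\Z$-flat, applying the functor $\Ah(K^\p,-;\Omega(\epsilon))$ preserves exactness, so the resulting complex is a resolution of $\Ah(K^\p,\St^{\infty}_{G_\p};\Omega(\epsilon))$ by $G(F)$-modules. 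Taking group cohomology gives a convergent hypercohomology spectral sequence
\[
E_1^{-i,j}=\HH^{j}\bigl(G(F),\Ah(K^\p,P_i;\Omega(\epsilon))\bigr)\ \Longrightarrow\ \HH^{j-i}\bigl(G(F),\Ah(K^\p,\St^{\infty}_{G_\p};\Omega(\epsilon))\bigr).
\]
By Shapiro's lemma (as in the proof of Proposition \ref{FlachundNoethersch}), each $E_1^{-i,j}$ is a direct summand of $\HH^j(X_{K^\p\times K_\p},\Omega(\chi))^\epsilon$ for some $K_\p$ having a parahoric normal subgroup. Now apply part \eqref{firstdim} at the level $K^\p\times K_\p$: SMO forces any $\pi'$ contributing to the $\pi$-isotypic part to satisfy $\pi'_\p=\St^\infty_{G_\p}$, but for $i\geq 1$ the group $K_\p$ contains a parahoric strictly larger than an Iwahori, and such a parahoric has no fixed vectors in the Steinberg representation. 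Hence $E_1^{-i,j}[\pi]=0$ for all $i\geq 1$, and the spectral sequence degenerates on the $\pi$-isotypic component to give an isomorphism
\[
\HH^{d}\bigl(G(F),\Ah(K^\p,\St^{\infty}_{G_\p};\Omega(\epsilon))\bigr)[\pi]\ \xrightarrow{\cong}\ E_1^{0,d}[\pi].
\]
One then identifies the right-hand side with $\HH^d(X_{K^\p\times I_\p},\Omega)^\epsilon[\pi]$: writing the parahoric in $P_0$ as $K_\p$ with Iwahori normal subgroup $I_\p$ and character $\chi\colon K_\p/I_\p\to\{\pm 1\}$, the twist by $\chi$ is absorbed because the one-dimensional space $\St^{I_\p}_{G_\p}$ carries precisely the character $\chi$ under the action of $K_\p/I_\p$. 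Finally, since $\dim\St^{I_\p}_{G_\p}=1$, this identification must coincide with $\ev^{(d)}$ up to a nonzero scalar, completing the proof.

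\emph{The main obstacle} I anticipate is the last identification: tracking the sign character $\chi$ through Shapiro's lemma and verifying that the edge morphism of the spectral sequence matches $\ev^{(d)}$ up to a nonzero multiple. This uses both that the composition $\cind_{I_\p}^{G_\p}\Omega\to P_0\otimes\Omega\to\St^\infty_{G_\p}(\Omega)$ is nonzero (given by a Frobenius-reciprocal Iwahori-fixed vector of $P_0\otimes\Omega$) and the one-dimensionality of $\St^{I_\p}_{G_\p}$, which forces any two nonzero $G_\p$-maps $\cind_{I_\p}^{G_\p}\Omega\to\St^\infty_{G_\p}(\Omega)$ to agree up to a scalar.
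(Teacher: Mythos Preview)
Your proposal is correct and follows essentially the same approach as the paper: part \eqref{firstdim} via $(\mathfrak{g},K_\infty^\circ)$-cohomology of tempered representations together with the SMO hypothesis, and part \eqref{seconddim} via the Borel--Serre resolution of Theorem \ref{resolution1} combined with the vanishing of the $\pi$-isotypic part at parahoric levels strictly larger than Iwahori. The paper is terser---it simply asserts the vanishing $\HH^{i}(X_{K^\p\times K_\p},\Omega)^\epsilon[\pi]=0$ for non-Iwahori parahoric $K_\p$ and says the result ``can be deduced'' from the resolution---while you spell out the hypercohomology spectral sequence and the identification of the edge map with $\ev^{(d)}$; the obstacle you flag (tracking $\chi$) is a genuine bookkeeping point the paper leaves implicit.
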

\begin{proof}
The last claim is a direct consequence of the first two.
It is enough to proof the first claim in the case $\Omega=\C$ by Proposition \ref{FlachundNoethersch} \eqref{FN2}.
But, in that case the claim follows from the computation of $(\mathfrak{g},K_\infty^{\circ})$-cohomology of tempered representations (see Theorem III.5.1 of \cite{BW} and also 5.5 of \cite{Bo} for the non-compact case) and our strong multiplicity one hypothesis on $\pi$.

If $K_\p\subseteq G_\p$ is a parahoric subgroup which is not Iwahori, the strong multiplicity one hypothesis implies that
$$\HH^{i}(X_{K^\p\times K_\p},\Omega)^{\epsilon}[\pi]=0$$
for all $i$.
Thus, the second assertion can be deduced from the resolution of the Steinberg representation given in Theorem \ref{resolution1}.
\end{proof}

Let $J\subseteq I\subseteq \Delta$ be two subset with $|I|=|J|+1$.
The non-split short exact sequence
$$0\too v^{\infty}_J(\Q)\too \mathcal{E}_{I,J}^{\infty}(\Q) \too v^{\infty}_I(\Q)\too 0$$
induces a short exact sequence
$$0\to \Ah(K^\p,v^{\infty}_I;\Omega(\epsilon)) \to \Ah_\Q(K,\mathcal{E}_{I,J}^{\infty}(\Q);\Omega(\epsilon))\to \Ah(K^\p,v^{\infty}_J;\Omega(\epsilon)) \to 0.$$
The induced boundary map in cohomology is Hecke-equivariant and, therefore, yields a map
\begin{align*}
c^{(d)}_{I,J}[\pi]^{\epsilon}\colon\HH^{d}(G(F),\Ah(K^\p,v^{\infty}_J;\Omega(\epsilon)))[\pi]
\to\HH^{d+1}(G(F),\Ah(K^\p,v^{\infty}_I;\Omega(\epsilon)))[\pi].
\end{align*}
(Note that the map is only well-defined up to a non-zero rational scalar.)

\begin{Lem}\label{Vytas}Let $V\in\mathfrak{C}^{\sm}_{\Omega}(G_\p)$ be an admissible smooth representation of finite length such that
$$\Ext_{\sm}^{d}(V,\St^{\infty}_{G_\p}(\Omega))=0\quad \forall d\geq 0.$$
Then:
$$\HH^{d}(G(F),\Ah(K^\p,V;\Omega(\epsilon)))[\pi]=0\quad \forall d\geq 0.$$
\end{Lem}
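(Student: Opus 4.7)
The plan is to reduce to the vanishing hypothesis via a spectral sequence argument built from a flawless resolution of $V$.

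Since $V$ is admissible of finite length over a field of characteristic $0$, the Schneider--Stuhler result (Theorem \ref{admissible}) provides a flawless resolution
$$0\too P_m\too \cdots \too P_0 \too V\too 0$$
in $\mathfrak{C}^{\sm}_{\Omega}(G_\p)$. Each $P_i$ is, in particular, projective in $\mathfrak{C}^{\sm}_{\Omega}(G_\p)$ and flat over $\Omega$. I would first check that applying the functor $\Ah(K^\p,\cdot\,;\Omega(\epsilon))$ to this resolution preserves exactness (using the flatness over $\Omega$ of each $P_i$ and the description $\Ah_R(K^\p,\cind_{K_\p}^{G_\p} L;R)\cong\Coind_{K^\p\times K_\p}^{G(\A^\infty)}\Hom_R(L,R)$). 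This produces a resolution of $\Ah(K^\p,V;\Omega(\epsilon))$ as a complex of $\Omega[G(F)]$-modules.

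Taking $G(F)$-cohomology of this filtered complex yields a convergent spectral sequence
$$E_1^{p,q}=\HH^{q}(G(F),\Ah(K^\p,P_p;\Omega(\epsilon)))\Longrightarrow \HH^{p+q}(G(F),\Ah(K^\p,V;\Omega(\epsilon))).$$
I would now pass to $\pi$-isotypic components; this is exact since on finitely generated Hecke-modules over the characteristic $0$ field $\Omega$ containing $\Q_\pi$, the functor $(\cdot)[\pi]$ is a projection onto a direct summand. Applying Lemma \ref{hom} (this is where projectivity of the $P_p$ is used) gives
$$E_1^{p,q}[\pi]=\Hom_{\Omega[G_\p]}\bigl(P_p,M^{q}[\pi]\bigr),\qquad M^q=\varinjlim_{K_\p}\HH^{q}(X_{K^\p\times K_\p},\Omega)^{\epsilon}.$$

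The key input, and the step I expect to be the main obstacle, is identifying the $G_\p$-representation $M^q[\pi]$. Using the strong multiplicity one hypothesis (SMO) together with the computation of $(\mathfrak{g},K_\infty^\circ)$-cohomology of tempered representations (as in the proof of Proposition \ref{dimensions}), any automorphic representation contributing to this isotypic component has local component at $\p$ isomorphic to $\pi_\p=\St^{\infty}_{G_\p}(\Omega)$; consequently
$$M^{q}[\pi]\cong \St^{\infty}_{G_\p}(\Omega)^{\oplus m_\pi\binom{\delta}{q-q_0}}$$
as a smooth $G_\p$-module, where $q_0$ is the lowest degree. This reduces the $d_1$ differentials to those of the complex $\Hom_{\Omega[G_\p]}(P_\bullet,\St^{\infty}_{G_\p}(\Omega))$, whose cohomology, by projectivity of the $P_p$ and the definition of Ext in $\mathfrak{C}^{\sm}_{\Omega}(G_\p)$, computes $\Ext^{p}_{\sm}(V,\St^{\infty}_{G_\p}(\Omega))^{\oplus\ast}$. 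By the hypothesis this vanishes in every degree, so $E_2^{p,q}[\pi]=0$, the spectral sequence degenerates, and the abutment $\HH^{p+q}(G(F),\Ah(K^\p,V;\Omega(\epsilon)))[\pi]$ vanishes for all $p+q$, as required.
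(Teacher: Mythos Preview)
Your argument is essentially the paper's own: flawless resolution via Schneider--Stuhler, spectral sequence for the double complex, identify $E_1[\pi]$ via Lemma \ref{hom} and SMO, then compute $E_2[\pi]$ as copies of $\Ext^{\bullet}_{\sm}(V,\St^{\infty}_{G_\p})$. One point to tighten: your justification for the exactness of $(\cdot)[\pi]$---that it is a direct summand on finitely generated Hecke-modules over $\Omega$---is not automatic, since the Hecke action on cohomology need not be semisimple; the paper instead observes that SMO forces all $\pi$-isotypic subquotients to lie in the cuspidal part, which is completely reducible by Gelfand--Piatetski-Shapiro, and this is what makes the passage to $[\pi]$-components legitimate.
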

\begin{proof}
By Theorem \ref{admissible} the representation $V$ is flawless.
We fix a resolution $$0\too P_m\too\cdots\too P_0\too V \too 0.$$
The spectral sequence for double complexes gives a spectral sequence starting at
$$E_1^{i,j}= \HH^{j}(G(F),\Ah(K^\p,P_{-i};\Omega(\epsilon)))$$
and converging to
$$E^{d}_{\infty}=\HH^{d}(G(F),\Ah(K^\p,V;\Omega(\epsilon))).$$
By Hypothesis \ref{Hyp} all $[\pi]$-isotypic subquotients of cohomology lie in the cuspidal part.
Since the space of cuspidal automorphic representations is completely reducible by the classical theorem of Gelfand and Piatetski-Shapiro (see \cite{GPS}), we may pass to $[\pi]$-isotypic components.
We want to show that already on the second sheet of the spectral sequence all terms vanish.

By Lemma \ref{hom} we have natural isomorphisms
\begin{align*}
\HH^{j}(G(F),\Ah(K^\p,P;\Omega(\epsilon)))[\pi]
\cong \Hom_{G_\p}(P,\varinjlim_{K_\p}\HH^{j}(X_{K^\p \times K_\p},\Omega)^{\epsilon})[\pi]).
\end{align*}
for any projective smooth representation $P\in\mathfrak{C}^{\sm}_{\Omega}(G_\p)$.
By our strong multiplicity one hypothesis we know that
$$\varinjlim_{K_\p}\HH^{j}(X_{K^\p\times K_\p},\Omega)^{\epsilon}[\pi]\cong \St^{\infty}_{G_\p}(\Omega)^{m_\pi\cdot\binom{\delta}{j-q}}$$
and, therefore, we have natural isomorphisms
$$\HH^{j}(G(F),\Ah(K^\p,P;\Omega(\epsilon)))[\pi]\cong \Hom_{G_\p}(P,\St^{\infty}_{G_\p}(\Omega))^{m_\pi\cdot\binom{\delta}{j-q}}.$$
Since the representations $P_j$ are projective objects in $\mathfrak{C}^{\sm}_{\Omega}(G_\p)$ we see that
$$E_2^{i,j}[\pi]\cong \Ext_{\sm}^{i}(V,\St_{G_\p}^{\infty}(\Omega))^{m_\pi\cdot\binom{\delta}{j-q}}=0$$
holds by our assumption on $V$.
\end{proof}

As a corollary of the lemma above and Corollary \ref{extvanishing} we get:
\begin{Cor}\label{smoothcup}
The map $c^{(d)}_{I,J}[\pi]^{\epsilon}$ is an isomorphism for all subsets $J\subseteq I\subseteq \Delta$ with $|I|=|J|+1$ and every character $\epsilon\colon \pi_0(G_\infty)\to\left\{\pm 1 \right\}$.
\end{Cor}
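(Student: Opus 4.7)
The plan is to deduce the claim directly from the long exact sequence in group cohomology associated to the defining short exact sequence
$$0\too v^{\infty}_J(\Q)\too \mathcal{E}_{I,J}^{\infty}(\Q) \too v^{\infty}_I(\Q)\too 0,$$
together with the vanishing results already established.

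First, I would apply the functor $\Ah(K^\p,\,\cdot\,;\Omega(\epsilon))$. Since all three representations are $\Q$-flat (in fact free over $\Q$) and the functor $M\mapsto \Ah_\Q(K^\p,M;\Omega(\epsilon))$ is exact on such representations, we obtain the short exact sequence of $G(F)$-modules
$$0\too \Ah(K^\p,v^{\infty}_I;\Omega(\epsilon))\too \Ah_\Q(K^\p,\mathcal{E}_{I,J}^{\infty}(\Q);\Omega(\epsilon))\too \Ah(K^\p,v^{\infty}_J;\Omega(\epsilon))\too 0,$$
whose connecting homomorphism is, up to scalar, the Hecke-equivariant map $c^{(d)}_{I,J}$ whose $[\pi]$-component we want to analyse.

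Next, I would pass to the $\pi$-isotypic component. By Hypothesis \ref{Hyp} every cohomology class in the $[\pi]$-component corresponds to a cuspidal contribution, and cuspidal automorphic representations decompose completely by Gelfand--Piatetski-Shapiro, exactly as exploited in the proof of Lemma \ref{Vytas}. Thus taking $[\pi]$-components remains exact, and the long exact sequence truncates to
\begin{equation*}
\cdots\too \HH^{d}(G(F),\Ah(K^\p,\mathcal{E}_{I,J}^{\infty};\Omega(\epsilon)))[\pi]\too \HH^{d}(G(F),\Ah(K^\p,v^{\infty}_J;\Omega(\epsilon)))[\pi]\xrightarrow{c^{(d)}_{I,J}[\pi]^\epsilon}\HH^{d+1}(G(F),\Ah(K^\p,v^{\infty}_I;\Omega(\epsilon)))[\pi]\too \cdots
\end{equation*}

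The conclusion will then follow if we can show that both flanking groups in this sequence vanish. This is where I would invoke the ingredients built up in the excerpt: the middle representation $\mathcal{E}_{I,J}^{\infty}(\Omega)$ is admissible of finite length (being an extension of two admissible finite length representations), and by Corollary \ref{extvanishing} it satisfies $\Ext^{d}_{\sm}(\mathcal{E}_{I,J}^{\infty}(\Omega),\St^{\infty}_{G_\p}(\Omega))=0$ for all $d\geq 0$. Lemma \ref{Vytas} then yields
$$\HH^{d}(G(F),\Ah(K^\p,\mathcal{E}_{I,J}^{\infty};\Omega(\epsilon)))[\pi]=0\quad \forall d\geq 0,$$
and the long exact sequence immediately forces $c^{(d)}_{I,J}[\pi]^\epsilon$ to be an isomorphism in every degree.

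The only subtlety I foresee is the book-keeping for the base change from $\Q$ to $\Omega$ and the exactness of $[\pi]$-components; both are routine given Proposition \ref{FlachundNoethersch}\eqref{FN2} and the cuspidal decomposition argument already used in Lemma \ref{Vytas}. The substantive content is entirely packaged in Corollary \ref{extvanishing} and Lemma \ref{Vytas}, so this proof is essentially a one-line deduction.
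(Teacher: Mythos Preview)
Your proposal is correct and is exactly the argument the paper has in mind: the corollary is stated as an immediate consequence of Lemma \ref{Vytas} applied to $V=\mathcal{E}_{I,J}^{\infty}(\Omega)$, whose hypotheses are verified by Corollary \ref{extvanishing}. The long exact sequence step you spell out is implicit in the paper's one-line deduction.
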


This together with Proposition \ref{dimensions} \eqref{thirddim} implies:
\begin{Pro}\label{fourthdim}
For every character $\epsilon\colon \pi_0(G_\infty)\to\left\{\pm 1 \right\}$ and every subset $I\subseteq \Delta$
we have
$$\dim_\Omega \HH^{d+|I|}(G(F),\Ah(K^\p,v^{\infty}_I;\Omega(\epsilon)))[\pi] = m_\pi \cdot\binom{\delta}{d-q}.$$
\end{Pro}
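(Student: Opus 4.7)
The plan is a straightforward induction on $|I|$, in which the base case is Proposition \ref{dimensions} \eqref{thirddim} and the inductive step is Corollary \ref{smoothcup}. Concretely, I would choose a maximal chain of subsets
$$\emptyset = I_0 \subset I_1 \subset \cdots \subset I_{|I|} = I$$
with $|I_k| = k$ for each $k$, and then compose the boundary maps $c^{(d+q+k-1)}_{I_k, I_{k-1}}[\pi]^{\epsilon}$ for $k = 1, \ldots, |I|$.

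By Corollary \ref{smoothcup} each of these maps is an isomorphism, so their composition gives an isomorphism
$$\HH^{d+q}(G(F),\Ah(K^\p,v^{\infty}_{\emptyset};\Omega(\epsilon)))[\pi] \xlongrightarrow{\cong} \HH^{d+q+|I|}(G(F),\Ah(K^\p,v^{\infty}_I;\Omega(\epsilon)))[\pi].$$
Since $v^{\infty}_{\emptyset} = \St^{\infty}_{G_\p}$, the left hand side has dimension $m_\pi \cdot \binom{\delta}{d-q}$ (after reindexing $d \leadsto d+q$) by Proposition \ref{dimensions} \eqref{thirddim}, which gives the claimed equality after matching indices.

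There is no real obstacle here; the whole content has been set up in the preceding sections. The one point to watch is a consistent reindexing: the statement of the proposition uses $d+|I|$ on the left and $\binom{\delta}{d-q}$ on the right, so one must shift the degree index by $|I|$ along the chain of boundary maps and check that the endpoints match. This is a bookkeeping matter and requires no new ideas.
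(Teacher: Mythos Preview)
Your proposal is correct and matches the paper's own argument, which simply observes that the result follows from Corollary~\ref{smoothcup} together with Proposition~\ref{dimensions}\,\eqref{thirddim}. The only wrinkle is the spurious $+q$ shift you introduce: taking the chain $c^{(d+k-1)}_{I_k,I_{k-1}}[\pi]^{\epsilon}$ instead yields $\HH^{d}(G(F),\Ah(K^\p,\St^{\infty}_{G_\p};\Omega(\epsilon)))[\pi]\cong \HH^{d+|I|}(G(F),\Ah(K^\p,v^{\infty}_I;\Omega(\epsilon)))[\pi]$ directly, and then Proposition~\ref{dimensions}\,\eqref{thirddim} applies without any reindexing.
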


\begin{Rem}\begin{enumerate}\thmenumhspace
\item The higher smooth Ext-groups of the Steinberg representation with itself vanish by the theorem of Dat and Orlik.
Hence, a spectral sequence argument similar to the one above yields an alternative proof of Proposition \ref{dimensions} \eqref{seconddim}.
\item All arguments of this and the previous section carry over to cohomology with values in an arbitrary finite-dimensional algebraic coefficient system.
\item Corollary \ref{smoothcup} was proven in the case $G=PGL_2$ by Spieß using an explicit resolution of the representation $\mathcal{E}_{I,J}^{\infty}(\Q)$ (see \cite{Sp}, Lemma 6.2).
In \cite{AdS} Alon and de Shalit prove a version of the above corollary for the cohomology of cocompact, discrete subgroups of $PGL_d(F_\p)$.
See also the article \cite{GKomega} of Gro\ss e-Klönne for the case of non-trivial coefficient systems.
\end{enumerate}
\end{Rem}

\subsection{Automorphic $\LI$-invariants}\label{Definition} 
Let $\Omega$ be a finite extension of $\Q_\p$, which contains $\Q_\pi$, with ring of integers $R$.
If $V$ is a continuous representation of $G_\p$ on a locally convex $\Omega$-vector space, we write $V^{\vee}=\Hom_{\cont}(V,\Omega)$ for its continuous dual.
Further, we write $\Ah_{\Omega}^{\cont}(K^{\p},V;\Omega(\epsilon))$ for the $\Omega[G(F)]$-module of functions from $G(\A\pinfty)/K^\p$ to $V^{\vee}(\epsilon)$.
We regularly drop the subscript $\Omega$, when it is clear from the context.

\begin{Pro}\label{automaticuniversal}
Suppose that $V$ is a smooth $\Omega$-representation of $G_\p$ that admits a flawless $G_\p$-stable $R$-lattice.
Then $V$ admits a universal unitary completion $V^{\univ}$ and the canonical map
$$\HH^{d}(G(F),\Ah^{\cont}_{\Omega}(K^\p,V^{\univ};\Omega(\epsilon)))\too \HH^{d}(G(F),\Ah_{\Omega}(K^\p,V;\Omega(\epsilon)))$$
is an isomorphism for every character $\epsilon\colon \pi_0(G_\infty)\to\left\{\pm 1\right\}$ and every compact, open subgroup $K^\p\subseteq G(\A\pinfty)$.
\end{Pro}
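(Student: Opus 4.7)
First, we establish the existence of $V^{\univ}$. Let $V_0 \subseteq V$ be a flawless $G_\p$-stable $R$-lattice together with a resolution
\[
0 \to P_m \to \cdots \to P_0 \to V_0 \to 0
\]
where each $P_i$ is a finite direct sum of modules $\cind_K^{G_\p} L$ with $L$ finitely generated projective over $R$. For each such building block, the $\varpi$-adic completion $P^\wedge$ is a $\varpi$-adically admissible representation, and a direct verification shows that $\hat{P} := P^\wedge \otimes_R \Omega$ is the universal unitary completion of $P \otimes_R \Omega$. Applying Lemma \ref{quotients} to the right exact sequence $P_1 \otimes \Omega \to P_0 \otimes \Omega \to V \to 0$ produces $V^{\univ} := \Coker(\hat{P}_1 \to \hat{P}_0)$ as an admissible unitary completion of $V$.

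For the cohomological comparison, the strategy is to use the resolution to set up matching hypercohomology spectral sequences and reduce to the case of a single building block. The flawlessness of $V_0$ entails $R$-projectivity of both $V_0$ and every $P_i$, so the short exact sequences $0 \to Z_i \to P_i \to Z_{i-1} \to 0$ obtained by truncating the resolution split as sequences of $R$-modules, and consequently are preserved by $\Ah_R(K^\p,-;R(\epsilon))$ and by $\varpi$-adic completion alike. Combined with Proposition \ref{FlachundNoethersch}\eqref{FN2} on the smooth side, this yields a spectral sequence
\[
E_1^{-i,j} = \HH^j\bigl(G(F), \Ah(K^\p, P_i \otimes \Omega; \Omega(\epsilon))\bigr) \Longrightarrow \HH^{j-i}\bigl(G(F), \Ah(K^\p, V; \Omega(\epsilon))\bigr),
\]
together with an analogous spectral sequence converging to $\HH^{\ast}(G(F), \Ah^{\cont}(K^\p, V^{\univ}; \Omega(\epsilon)))$ whose $E_1$-terms are $\HH^{\ast}(G(F), \Ah^{\cont}(K^\p, \hat{P}_i; \Omega(\epsilon)))$; the latter uses that $V \mapsto V^{\vee}$ is exact on short exact sequences of admissible Banach representations (Hahn--Banach over $\Omega$) and that the functor of set-theoretic functions on $G(\A\pinfty)/K^\p$ is exact. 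This reduces the isomorphism to the case of a single building block $P = \cind_K^{G_\p} L$.

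For such a building block, the key observation is that every $R$-linear map $P \to R$ is automatically $\varpi$-adically continuous (since $\varpi^n P$ maps into $\varpi^n R$), giving $\Hom_{R,\cont}(P^\wedge, R) = \Hom_R(P, R)$. Consequently, $\Ah_R(K^\p, P; R(\epsilon))$ sits naturally as an $R[G(F)]$-submodule of both $\Ah^{\cont}_\Omega(K^\p, \hat{P}; \Omega(\epsilon))$ and $\Ah_\Omega(K^\p, P \otimes \Omega; \Omega(\epsilon))$, and by the coinduction formula from the example preceding Proposition \ref{FlachundNoethersch} together with Shapiro's lemma, its $G(F)$-cohomology decomposes as the finite direct sum $\bigoplus_i \HH^d(\Gamma_{g_i}, \Hom_R(L, R(\epsilon)))$ of finitely generated $R$-modules. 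Applying Corollary \ref{limits} to pass to the inverse limit and then inverting $\varpi$ recovers both sides of the desired isomorphism. The main obstacle is to establish the continuous analogue of Proposition \ref{FlachundNoethersch}\eqref{FN2}, namely that inverting $\varpi$ commutes with $G(F)$-cohomology in the continuous setting; this is handled using the (VFL)-property of the arithmetic subgroups $\Gamma_{g_i}$, which ensures finite generation of cohomology at each finite level and the Mittag--Leffler condition for the resulting inverse system.
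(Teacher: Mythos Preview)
Your overall strategy---reduce via the flawless resolution to a single building block $P=\cind_K^{G_\p}L$ and compare there---is exactly what the paper is doing, only the paper compresses it into one sentence (``the claim now follows from Proposition~\ref{FlachundNoethersch}\eqref{FN2}''). Two points are worth noting.

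First, for the existence of $V^{\univ}$ the paper is much more direct: a flawless lattice $M$ is a quotient of some $P_0=\bigoplus\cind_K^{G_\p}L$ with each $L$ finitely generated over $R$, hence $M$ is finitely generated over $R[G_\p]$, and then \cite{EmUnitary}, Proposition~1.17, gives that the $M$-adic completion of $V$ is its universal unitary completion, with the key identification $(V^{\univ})^\vee\cong\Hom_R(M,R)\otimes_R\Omega$. Your route via Lemma~\ref{quotients} is circuitous and, as written, not quite licensed: that lemma assumes the completions $A^{\univ},B^{\univ}$ are \emph{admissible}, whereas the $\hat P_i$ are not.

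Second, your treatment of the building block is the weakest part. The invocation of Corollary~\ref{limits} and Mittag--Leffler is not the right mechanism. The actual point is that the two coefficient modules $\Hom_R(P,R)\otimes_R\Omega$ and $\Hom_R(P,\Omega)$, though genuinely different, yield the same $G(F)$-cohomology after applying $\mathrm{Map}(G(\mathbb{A}^{\p,\infty})/K^\p,-)$: write each as a coinduction from $K^\p$, apply Shapiro to pass to cohomology of $\p$-arithmetic groups, use flatness of $\Omega$ over $R$ to pull the tensor outside, and then a second Shapiro step (for $\Coind_K^{G_\p}L^\ast$) lands both sides in $\bigoplus_i\HH^d(\Gamma_{g_i},\Hom_R(L,R(\epsilon)))\otimes_R\Omega$. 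This is precisely the computation behind Proposition~\ref{FlachundNoethersch}, which is why the paper can simply cite it.
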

\begin{proof}
Let $M$ be a flawless $R$-lattice in $V$.
Since $M$ is finitely generated as an $R[G_\p]$-module, the completion of $V$ with respect to $M$ is the universal unitary completion $V^{\univ}$ of $V$ by \cite{EmUnitary}, Proposition 1.17.
In particular, we have an isomorphism
$$\Hom_R(M,R)\otimes_R\Omega\cong\Hom_{\cont}(V^{\univ},\Omega).$$
The claim now follows from Proposition \ref{FlachundNoethersch} \eqref{FN2}.
\end{proof}

As $\St_{G_\p}^{\cont}(\Omega)$ is the universal unitary completion of $\St_{G_\p}^{\infty}(\Omega)$ by Corollary \ref{universal}, Proposition \ref{automaticuniversal} yields an isomorphism
\begin{align*}
\HH^{d}(G(F),\Ah(K^\p,\St_{G_\p}^{\infty};\Omega(\epsilon)))\xlongrightarrow{\cong} \HH^{d}(G(F),\Ah^{\cont}(K,\St_{G_\p}^{\cont};\Omega(\epsilon)))
\end{align*}
for every $d\geq 0$.
Thus, by taking pullback via the embedding $\St_{G_\p}^{\an}(\Omega)\subseteq \St_{G_\p}^{\cont}(\Omega)$ we get a map
\begin{align*}
\HH^{d}(G(F),\Ah(K^\p,\St_{G_\p}^{\infty};\Omega(\epsilon)))\too \HH^{d}(G(F),\Ah^{\cont}(K,\St_{G_\p}^{\an};\Omega(\epsilon))).
\end{align*}
Therefore, for every $i\in\Delta$ we get a well-defined cup-product pairing
\begin{align*}
&\HH^{d}(G(F),\Ah(K^\p,\St_{G_\p}^{\infty};\Omega(\epsilon))) \times \Ext^{1}_{\an}(v_{i}^{\infty},\St_{G_\p}^{\an})\\
\too &\HH^{d+1}(G(F),\Ah(K^\p,v^{\infty}_{i};\Omega(\epsilon)))
\end{align*}
which commutes with the Hecke-action.
By Theorem \ref{Ding} we have a canonical isomorphism $\Hom_{\cont}(F_\p^{\times},\Omega)\cong\Ext^{1}_{\an}(v^{\infty}_{i},\St_{G_\p}^{\an})$.

Hence, taking cup product with the extension $\mathcal{E}^{\an}_{i,\emptyset}(\lambda\circ i)$ associated to a homomorphism $\lambda\in \Hom_{\cont}(F_\p^{\times},\Omega)$ yields a map
$$c^{(d)}_{i}(\lambda)[\pi]^{\epsilon}\colon \HH^{d}(G(F),\Ah(K^\p,\St_{G_\p}^{\infty};\Omega(\epsilon)))[\pi]\too \HH^{d+1}(G(F),\Ah(K^\p,v^{\infty}_{i};\Omega(\epsilon)))[\pi]$$
on $\pi$-isotypic parts.

\begin{Def}\label{deficom}
Given a character $\epsilon\colon \pi_0(G_\infty) \to \left\{\pm 1\right\}$, an integer $d\in \Z$ with $0\leq d\leq \delta$ and a root $i\in\Delta$ we define
$$\LI_{i}^{(d)}(\pi,\p)^{\epsilon}\subseteq \Hom_{\cont}(F_\p^{\times},\Omega)$$
as the kernel of the map $\lambda \mapsto c^{(d+q)}_{i}(\lambda)[\pi]^{\epsilon}$.
\end{Def}

\begin{Rem}\label{flawlessversion}
Suppose that $v^{\infty}_{i}(R)$ is flawless.
With similar arguments as above, we would have a canonical isomorphism
$$\HH^{d}(G(F),\Ah(K^\p,v_{i}^{\infty};\Omega(\epsilon)))\too \HH^{d}(G(F),\Ah_\Omega^{\cont}(K,\VV^{\cont}_{i};\Omega(\epsilon)))$$
of cohomology groups for every root $i\in\Delta$.
In this case, $\LI$-invariants defined by taking the cup product with the continuous extensions classes $\mathcal{E}^{\cont}_{i,\emptyset}(\lambda\circ i)$ would yield the same result as their locally analytic counterparts.
\end{Rem}

\begin{Pro}\label{codimension}
For all sign characters $\epsilon$, every degree $d\in [0,\delta]\cap \Z$ and every root $i \in \Delta$ the $\LI$-invariant
$$\LI_{i}^{(d)}(\pi,\p)^{\epsilon}\subseteq \Hom_{\cont}(F_\p^{\times},\Omega)$$
is a subspace of codimension at least one, which does not contain the space of smooth extensions.

Suppose $m_\pi=1$.
Then, in the cases $d=0$ and $d=\delta$ the codimension is exactly one.
\end{Pro}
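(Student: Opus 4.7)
The strategy is to test the cup product on the one-dimensional line of smooth characters in $\Hom_{\cont}(F_\p^{\times},\Omega)$, where it reduces to the smooth connecting homomorphism of Corollary \ref{smoothcup}, and then to finish the two extremal degrees by a direct dimension count via Proposition \ref{fourthdim}.

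For the first assertion, fix a nonzero smooth $\lambda\in\Hom_{\cont}(F_\p^{\times},\Omega)$ (equivalently, a nonzero scalar multiple of $\ord_\p$) and show that $c^{(d+q)}_{i}(\lambda)[\pi]^{\epsilon}\neq 0$; this simultaneously rules out $\lambda\in\LI^{(d)}_{i}(\pi,\p)^{\epsilon}$ and forces the codimension to be at least one. By the Remark following Theorem \ref{Ding}, the class $\mathcal{E}^{\an}_{i,\emptyset}(\lambda\circ i)$ is the image, under the natural injection $\Ext^{1}_{\sm}(v^{\infty}_{i},\St^{\infty}_{G_\p})\hookrightarrow\Ext^{1}_{\an}(v^{\infty}_{i},\St^{\an}_{G_\p})$, of a nonzero multiple of the smooth generator $\mathcal{E}^{\infty}_{i,\emptyset}$; equivalently, the analytic extension representing $\mathcal{E}^{\an}_{i,\emptyset}(\lambda\circ i)$ is the pushout of the smooth one along the inclusion $\iota\colon\St^{\infty}_{G_\p}\hookrightarrow\St^{\an}_{G_\p}$. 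Since the cup product pairing defined before Definition \ref{deficom} proceeds by pulling cohomology back along $\iota$ and then applying the connecting map of the analytic sequence, naturality of connecting homomorphisms identifies $c^{(d+q)}_{i}(\lambda)[\pi]^{\epsilon}$ with a nonzero scalar multiple of the smooth connecting homomorphism $c^{(d+q)}_{\{i\},\emptyset}[\pi]^{\epsilon}$ from Corollary \ref{smoothcup}, which is an isomorphism.

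For the refined statement, assume $m_\pi=1$ and $d\in\{0,\delta\}$. Proposition \ref{fourthdim}, applied once with $I=\emptyset$ and once with $I=\{i\}$, yields
\[
\dim_\Omega\HH^{d+q}(G(F),\Ah(K^{\p},\St^{\infty}_{G_\p};\Omega(\epsilon)))[\pi]
= \dim_\Omega\HH^{d+q+1}(G(F),\Ah(K^{\p},v^{\infty}_{i};\Omega(\epsilon)))[\pi]
= \binom{\delta}{d},
\]
which equals $1$ under our assumption on $d$. Hence the $\Omega$-space of linear maps from source to target is one-dimensional, and the linear map $\lambda\mapsto c^{(d+q)}_{i}(\lambda)[\pi]^{\epsilon}$ from $\Hom_{\cont}(F_\p^{\times},\Omega)$ into this line is nonzero by the previous paragraph. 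Its image therefore fills the line, so the kernel $\LI^{(d)}_{i}(\pi,\p)^{\epsilon}$ has codimension exactly one.

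The decisive technical point is the naturality statement invoked above: one must check that for smooth $\lambda$ the cup product built from the analytic class $\mathcal{E}^{\an}_{i,\emptyset}(\lambda\circ i)$ really does coincide, up to a nonzero scalar, with the purely smooth connecting homomorphism. This is essentially bookkeeping about functoriality of cup products with respect to $\iota\colon\St^{\infty}_{G_\p}\hookrightarrow\St^{\an}_{G_\p}$ and the explicit description of $\mathcal{E}^{\an}_{i,\emptyset}(\lambda\circ i)$ as a pushout, but it is the linchpin that allows Corollary \ref{smoothcup} to feed into the argument.
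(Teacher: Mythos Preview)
Your proof is correct and follows essentially the same approach as the paper's own proof, which reads in full: ``The first assertion follows directly from Corollary \ref{smoothcup}. In the case $d=0$ or $d=\delta$ the cohomology groups in question are one-dimensional by Proposition \ref{fourthdim} and therefore, the second assertion follows.'' You have simply unpacked what ``follows directly'' means---namely that for smooth $\lambda$ the analytic cup product reduces to the smooth connecting map via the Remark after Theorem \ref{Ding}---and made the dimension count for $d\in\{0,\delta\}$ explicit.
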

\begin{proof}
The first assertion follows directly from Corollary \ref{smoothcup}.
In the case $i=0$ or $d=\delta$ the cohomology groups in question are one-dimensional by Proposition \ref{fourthdim} and therefore, the second assertion follows.
\end{proof}

\begin{Con}\label{conjecture}
Let $i\in \Delta$ be a root.
\begin{enumerate}[(i)]
\item $\LI_{i}^{(d)}(\pi,\p)^{\epsilon}\subseteq\Hom_{\cont}(F_\p^{\times},\Omega)$ has codimension one for all $0\leq d\leq \delta$ and each sign character $\epsilon$.
\item\label{conjdegree} $\LI_{i}^{(d)}(\pi,\p)^{\epsilon}$ does not depend on the degree $d\in [0,\delta]\cap \Z$.
\item\label{conjchar} $\LI_{i}^{(d)}(\pi,\p)^{\epsilon}$ does not depend on the sign character $\epsilon$.
\end{enumerate}
\end{Con}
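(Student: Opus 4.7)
Conjecture \ref{conjecture} has three parts of rather different flavours, and my plan is to tackle them in the order (ii), (i), (iii). For part (ii) I would exploit Venkatesh's derived Hecke algebra $\widetilde{\mathbb{T}}$, which was designed precisely to bridge cohomology in different degrees. As the introduction already promises, the plan is to show that under a suitable cyclicity hypothesis the graded $\pi$-isotypic module
\[
\bigoplus_{d} \HH^{d+q}\bigl(G(F),\Ah(K^\p,\St^{\infty}_{G_\p};\Omega(\epsilon))\bigr)[\pi]
\]
is cyclic over $\widetilde{\mathbb{T}}$, generated by its degree-$q$ piece. The cup product with $\mathcal{E}^{\an}_{i,\emptyset}(\lambda\circ i)$ should commute (up to a universal sign) with the action of $\widetilde{\mathbb{T}}$, since the derived Hecke operators are assembled from away-from-$\p$ data while the extension class lives at $\p$. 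Granting this commutation, if $\lambda$ annihilates $c^{(q)}_{i}(\lambda)[\pi]^{\epsilon}$ then $\lambda$ annihilates every $c^{(d+q)}_{i}(\lambda)[\pi]^{\epsilon}$ and conversely, which forces $\LI_i^{(d)}(\pi,\p)^{\epsilon} = \LI_i^{(0)}(\pi,\p)^{\epsilon}$ for every admissible $d$.

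For part (i), Proposition \ref{codimension} already gives codimension at least one, with equality when $d\in\{0,\delta\}$ in the multiplicity-one situation. For intermediate $d$, part (ii) then transports this equality back to the bottom degree automatically. In the case $m_\pi>1$ one first decomposes the $\pi$-isotypic part into generalised eigenlines for the Hecke algebra away from $\p$ and runs the same argument line by line, so codimension one is naturally a per-eigensystem statement.

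Part (iii) is, I expect, the genuine obstacle. The plan would be to exhibit a natural isomorphism between $\epsilon$- and $\epsilon'$-eigenspaces on $\pi$-isotypic cohomology that intertwines the cup product with $\mathcal{E}^{\an}_{i,\emptyset}(\lambda\circ i)$. Candidates include operators coming from $G(F)/G(F)^{+}$ (conjugation by an element of $G(F)$ realising the sign change on $\pi_0(G_\infty)$) or translation by a carefully chosen adelic representative; the difficulty is that such an interchanging operator must arise from the global archimedean structure yet commute with an operation which is purely local at $\p$ and, in the locally analytic form we need, is only defined after $p$-adic completion. The mechanisms available for (i) and (ii) do not separate sign characters at all, so neither of them directly helps. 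A realistic intermediate target would be to establish (iii) in cases where $\pi_0(G_\infty)$ is trivial (e.g.~definite groups), or where compatibility with the Galois-theoretic $\LI$-invariants constructed in Section \ref{Galoisproof} forces the sign independence \emph{a posteriori}.
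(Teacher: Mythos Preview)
The statement you are addressing is a \emph{conjecture}; the paper does not prove it and does not claim to. What the paper does provide is exactly the conditional result you sketch for part~(ii): in Section~\ref{DHa} it shows that if $p$ does not divide the order of the Weyl group, $v_i^\infty(R)$ is flawless, $\LI_i^{(0)}(\pi,\p)^\epsilon$ has codimension one, and the $\pi$-isotypic cohomology is generated in lowest degree over Venkatesh's derived Hecke algebra, then $\LI_i^{(d)}(\pi,\p)^\epsilon=\LI_i^{(0)}(\pi,\p)^\epsilon$ for all $d$. Your outline for (ii) is therefore not a blind proof but a correct anticipation of the paper's conditional argument, including the key mechanism (commutation up to sign of the cup product at $\p$ with derived Hecke operators away from $\p$, which is Lemma~\ref{diagrams}). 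Your deduction of (i) from (ii) plus Proposition~\ref{codimension} is likewise correct but equally conditional, and the paper makes no stronger claim.

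One genuine slip: your remark that for $m_\pi>1$ one ``decomposes the $\pi$-isotypic part into generalised eigenlines for the Hecke algebra away from $\p$'' does not work. The $\pi$-isotypic part \emph{is} already the full eigenspace for the away-from-$\p$ Hecke algebra; the multiplicity $m_\pi$ counts indistinguishable copies that no Hecke operator separates. This is why Proposition~\ref{codimension} only asserts codimension exactly one at $d=0,\delta$ under the hypothesis $m_\pi=1$, and why the paper's Section~\ref{DHa} theorem explicitly assumes the codimension-one statement at $d=0$ rather than deriving it. Higher multiplicity is a real obstruction, not a bookkeeping issue.

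For part~(iii) you are right that nothing in the paper's toolkit touches it; the paper records it as open (Remark~\ref{independenceRem}) and notes only the known $PGL_2/\Q$ case due to Darmon and others.
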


\begin{Rem}\label{independenceRem}
Let us recall the status of these conjectures for the group $G=PGL_2$.
In the case $F=\Q$ it was shown by Darmon that the $\LI$-invariant does not depend on the sign character $\epsilon$ (see Section 3 of \cite{D}).
Alternative proofs were given by Bertolini--Darmon--Iovita (see Theorem 6.8 of \cite{BDI}) and Breuil (see Corollary 5.1.3 of \cite{Br2}).
For arbitrary number fields only partial results are known (see Theorem A of \cite{Ge2}).

In \cite{Ge3} it is shown that Venkatesh's conjectures (cf.~\cite{Ve}) on the action of the derived Hecke algebra on cohomology imply that automorphic $\LI$-invariants do not depend on the cohomological degree $d$.
This result is extended to more general groups in Section \ref{DHa}.
\end{Rem}

\subsection{Invariance under twisting and Galois actions}\label{properties}
We use the same notations as in the previous section.
We fix a character $\epsilon\colon \pi_0(G_\infty) \to \left\{\pm 1\right\}$, a simple root $i\in \Delta$ and an integer $d$ between $0$ and $\delta$.

Let $\chi\colon G(F)\backslash G(\A)\to \C^\times$ be a locally constant character, whose local component at $\p$ is trivial.
The automorphic representation $\pi\otimes\chi$ fulfils all the assumptions we imposed on $\pi$.
Thus after enlarging $\Omega$ if necessary, we can define
$$\LI_{i}^{(d)}(\pi\otimes\chi,\p)^{\epsilon}\subseteq \Hom_{\cont}(F_\p^{\times},\Omega).$$
The restriction of $\chi$ to $G(F_\infty)$ descends to a character $\chi_\infty\colon \pi_0(G_\infty)\to \left\{\pm 1\right\}$.

\begin{Pro}
We have
$$\LI_{i}^{(d)}(\pi\otimes\chi,\p)^{\epsilon\chi_\infty}=\LI_{i}^{(d)}(\pi,\p)^{\epsilon}$$
for every locally constant character $\chi\colon G(F)\backslash G(\A)\to \C^\times$ with $\chi_\p=1$.
\end{Pro}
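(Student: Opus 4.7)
The plan is to construct an explicit twisting isomorphism between the relevant cohomology groups, compatible with both the Hecke action away from $\p$ and the cup product defining the $\LI$-invariant. Throughout I would enlarge $\Omega$ so that $\chi^{\infty,\p}$ takes values in $\Omega^\times$ and shrink $K^\p$ so that $\chi^{\infty,\p}$ is trivial on $K^\p$; neither step affects the $\LI$-invariants, since the isotypic components are functorial in the level.

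\textbf{Step 1: twisting isomorphism.} For any smooth $G_\p$-representation $V$ I would define
\[
\Psi\colon \Ah(K^\p,V;\Omega(\epsilon)) \too \Ah(K^\p,V;\Omega(\epsilon\chi_\infty)),\quad (\Psi\Phi)(g,v) = \chi^{\infty,\p}(g)\cdot\Phi(g,v).
\]
Because $\chi_\p=1$ and $\chi$ is trivial on $G(F)$, one has $\chi_\infty(\gamma_\infty)\chi^{\infty,\p}(\gamma)=1$ for every $\gamma\in G(F)$; combined with the fact that $\pi_0(G_\infty)$ is $2$-torsion, this yields $\chi^{\infty,\p}(\gamma)=\chi_\infty(\gamma_\infty)$. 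A direct calculation using this identity shows that $\Psi$ intertwines the $G(F)$-action twisted by $\epsilon$ on the source with the one twisted by $\epsilon\chi_\infty$ on the target, so $\Psi$ is a $G(F)$-equivariant isomorphism (with inverse multiplication by $(\chi^{\infty,\p})^{-1}$), and descends to an isomorphism $\Psi_\ast$ on cohomology.

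\textbf{Step 2: Hecke compatibility.} I would then check that for each Hecke operator $T$ supported on a double coset $K^\p h K^\p$, on which $\chi^{\infty,\p}$ is constant by our choice of $K^\p$, the identity $T\circ\Psi = \chi^{\infty,\p}(h)\cdot\Psi\circ T$ holds. Since the Hecke eigensystem of $(\pi\otimes\chi)^{\infty,\p}$ is obtained from that of $\pi^{\infty,\p}$ by multiplication by $\chi^{\infty,\p}$, this implies that $\Psi_\ast$ restricts to an isomorphism
\[
\HH^d(G(F),\Ah(K^\p,V;\Omega(\epsilon)))[\pi] \xlongrightarrow{\cong} \HH^d(G(F),\Ah(K^\p,V;\Omega(\epsilon\chi_\infty)))[\pi\otimes\chi].
\]

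\textbf{Step 3: cup product compatibility.} The construction of the map $c_i^{(d)}(\lambda)$ is entirely local at $\p$: it uses the identification with the cohomology of $\St^{\cont}_{G_\p}$ coming from Proposition \ref{automaticuniversal}, restriction to $\St^{\an}_{G_\p}$-vectors, and cup product with the locally analytic class $\mathcal{E}^{\an}_{i,\emptyset}(\lambda\circ i)$. None of these operations affect the $G(\A\pinfty)/K^\p$-coordinate on which $\Psi$ acts. Hence $\Psi_\ast$ fits into a commutative square whose horizontal arrows are $c_i^{(d+q)}(\lambda)[\pi]^\epsilon$ and $c_i^{(d+q)}(\lambda)[\pi\otimes\chi]^{\epsilon\chi_\infty}$ and whose vertical arrows are the isomorphisms of Step 2. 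Taking kernels as $\lambda$ varies over $\Hom_{\cont}(F_\p^\times,\Omega)$ then produces the asserted equality.

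The main obstacle is the bookkeeping in Step 1, specifically verifying that the parity identity $\chi^{\infty,\p}(\gamma)=\chi_\infty(\gamma_\infty)$ on $G(F)$ is exactly what is needed for $\Psi$ to intertwine the two twisted $G(F)$-actions; once this is checked, Steps 2 and 3 are essentially formal manipulations.
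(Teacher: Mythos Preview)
Your proposal is correct and follows essentially the same approach as the paper: define the twisting map $\Psi$ (the paper calls it $\mathfrak{Tw}_\chi$) by multiplication by $\chi$ on the adelic coordinate, verify $G(F)$-equivariance, and observe that it commutes with the purely $\p$-local cup product construction. The paper is terser---it simply asserts $G(F)$-equivariance and draws the commutative square---whereas you spell out the parity identity and the Hecke compatibility explicitly, but the argument is the same.
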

\begin{proof}
After possibly shrinking the compact, open subgroup $K^\p\subseteq G(\A\pinfty)$ chosen at the beginning of Section \ref{Definition} we may assume that the restriction of $\chi$ to it is trivial.
Let $\mathfrak{Tw}_\chi\colon \Ah(K^\p,v^{\infty}_{I};\Omega(\epsilon))\to \Ah(K^\p,v^{\infty}_{I};\Omega(\epsilon\chi_\infty))$ be the $G(F)$-equivariant map given by $\mathfrak{Tw}_\chi(\phi)(g,m)=\chi(g)\cdot \phi(g,m).$
The induced map in cohomology 
$$\HH^d(G(F),\Ah(K^\p,v^{\infty}_{I};\Omega(\epsilon)))[\pi]\xrightarrow{\mathfrak{Tw}_\chi}\HH^d(G(F),\Ah(K^\p,v^{\infty}_{I};\Omega(\epsilon\chi_\infty)))[\pi\otimes\chi]$$
is an isomorphism for all subsets $I\subseteq \Delta$.
The claim now follows from the commutativity of the diagram
\begin{center}
\begin{tikzpicture}
    \path 	(0,0) 	node[name=A]{$\HH^d(G(F),\Ah(K^\p,\St_{G_\p};\Omega(\epsilon)))[\pi]$}
		(6.6,0) 	node[name=B]{$\HH^d(G(F),\Ah(K^\p,\St_{G_\p};\Omega(\epsilon\chi_\infty)))[\pi\otimes\chi]$}
		(0,-1.5) 	node[name=C]{$\HH^{d+1}(G(F),\Ah(K^\p,v^{\infty}_{i};\Omega(\epsilon)))[\pi]$}
		(6.6,-1.5) 	node[name=D]{$\HH^{d+1}(G(F),\Ah(K^\p,v^{\infty}_{i};\Omega(\epsilon\chi_\infty)))[\pi\otimes\chi]$};
    \draw[->] (A) -- (C) node[midway, left]{$c^{(d)}_{i}(\lambda)[\pi]^{\epsilon}$};
    \draw[->] (A) -- (B) node[midway, above]{$\mathfrak{Tw}_\chi$};
    \draw[->] (C) -- (D) node[midway, above]{$\mathfrak{Tw}_\chi$};
    \draw[->] (B) -- (D) node[midway, right]{$c^{(d)}_{i}(\lambda)[\pi\otimes\chi]^{\epsilon\chi_\infty}$};
\end{tikzpicture} 
\end{center}
for all $\lambda\in\Hom_{\cont}(F_\p^{\times},\Omega).$
\end{proof}

Now suppose that there exists a subfield $F_0\subseteq F$ and an algebraic group $G_0$ over $F_0$ such that
\begin{itemize}
\item $F$ is a Galois extension of $F_0$,
\item the base change of $G_0$ to $F$ is isomorphic to $G$ and
\item $G_0$ is split at the prime lying below $\p$.
\end{itemize}

The $F_0$-linear action of the Galois group $\Gal(F/F_0)$ on $\A$ induces an action on $G(\A)=G_0(\A)$.
For an element $\sigma\in \Gal(F/F_0)$ let $\pi^\sigma$ be the representation of $G(\A)$ with the same underlying vector space as $\pi$ and $G(\A)$-action given by $g\cdot_{\pi^{\sigma}}v=\sigma(g).v$ for all $g\in G(\A)$, $v\in \pi$.
The representation $\pi^\sigma$ fulfils all the assumptions we imposed on $\pi$ but with the prime $\p^{\sigma}=\sigma^{-1}(\p)$ in place of $\p$.
Thus after enlarging $\Omega$ if necessary, we can define
$$\LI_{i}^{(d)}(\pi^{\sigma},\p^{\sigma})^\epsilon\subseteq \Hom_{\cont}(F_\p^{\times},\Omega).$$
Given a character $\epsilon\colon \pi_0(G_\infty)\to \left\{\pm 1\right\}$ we put
$$\epsilon^{\sigma}\colon \pi_0(G_\infty)\xrightarrow{\sigma}\pi_0(G_\infty)\xrightarrow{\epsilon}\left\{\pm 1\right\}.$$
Pullback via $\sigma\colon F_{\p^{\sigma}}\to F_\p$ yields an isomorphism
$$\sigma^\ast\colon \Hom_{\cont}(F_\p^{\times},\Omega)\too \Hom_{\cont}(F_{\p^{\sigma}}^{\times},\Omega).$$

The following proposition follows easily by unravelling the definitions. 
See \cite{Ge2}, Lemma 3.1, for a complete proof in the $PGL_2$-case.
\begin{Pro}
We have
$$\LI_{i}^{(d)}(\pi^\sigma,\p^{\sigma})^{\epsilon^\sigma}=\sigma^\ast(\LI_{i}^{(d)}(\pi,\p)^{\epsilon})$$
for every $\sigma\in \Gal(F/F_0)$.
\end{Pro}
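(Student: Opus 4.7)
The strategy is to show that the action of $\sigma$ identifies the cohomological data defining both $\LI$-invariants, and that it intertwines the relevant cup-product pairings; the equality of kernels then follows.

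First, the $F_0$-automorphism $\sigma$ of $F$ extends to continuous automorphisms of $\A$ and of $G(\A) = G_0(\A)$; by definition of $\p^\sigma = \sigma^{-1}(\p)$ it carries $G_{\p^\sigma}$ isomorphically onto $G_\p$ and $G(\A\pinfty)$ onto $G(\A^{\p^\sigma,\infty})$. Setting $K^{\p^\sigma} := \sigma^{-1}(K^\p)$ one has $(\pi^{\sigma,\p^\sigma,\infty})^{K^{\p^\sigma}} \neq 0$. Since $G_0$ is split at the place $\p_0$ of $F_0$ below $\p$ (equivalently, below $\p^\sigma$), the Borel subgroup and maximal split torus chosen in Section \ref{Steinberg} may be taken to descend to $G_0$ over $F_{0,\p_0}$; with this choice $\sigma$ transports the Borel-torus pair on $G_{F_\p}$ onto the corresponding pair on $G_{F_{\p^\sigma}}$, so the identification of root bases is canonical.

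Second, pullback by $\sigma$ induces $G_{\p^\sigma}$-equivariant isomorphisms on the relevant local representations, for example
$$\sigma^\ast\colon \St^\infty_{G_\p}(\Omega)\xlongrightarrow{\cong} \St^\infty_{G_{\p^\sigma}}(\Omega), \qquad \sigma^\ast\colon v_i^\infty(\Omega)\xlongrightarrow{\cong} v_i^\infty(\Omega),$$
and analogously in the analytic setting. Tracing through the construction in Section \ref{Extensions2} (where the class $\mathcal{E}_{i,\emptyset}^{\an}$ is built functorially out of $\tau_\lambda$, $\II_{P_J}^{\an}$, and the quotient $\II_{P_J}^{\an}\onto \VV_J^{\an}$), one verifies the naturality identity
$$\sigma^\ast\bigl(\mathcal{E}_{i,\emptyset}^{\an}(\lambda\circ i)\bigr) = \mathcal{E}_{i,\emptyset}^{\an}\bigl((\sigma^\ast\lambda)\circ i\bigr)$$
in $\Ext^1_{\an}(v_i^\infty, \St^{\an}_{G_{\p^\sigma}})$ for every $\lambda\in \Hom_{\cont}(F_\p^\times,\Omega)$.

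Third, the rule $\mathfrak{S}_\sigma(\Phi)(g,m) := \Phi(\sigma(g), \sigma^\ast m)$ defines a $G(F)$-equivariant isomorphism
$$\mathfrak{S}_\sigma\colon \Ah(K^\p, v_I^\infty; \Omega(\epsilon)) \xlongrightarrow{\cong} \Ah(K^{\p^\sigma}, v_I^\infty; \Omega(\epsilon^\sigma))$$
for each $I\subseteq \Delta$ (the sign character is composed with $\sigma$ because $\sigma$ acts on $\pi_0(G_\infty)$ through its action on the archimedean places). In cohomology, $\mathfrak{S}_\sigma$ is Hecke-equivariant, identifies the $\pi$-isotypic component with the $\pi^\sigma$-isotypic one, and, by the extension-class identity above, intertwines $c_i^{(d+q)}(\lambda)[\pi]^\epsilon$ with $c_i^{(d+q)}(\sigma^\ast\lambda)[\pi^\sigma]^{\epsilon^\sigma}$. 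Their kernels therefore correspond under $\sigma^\ast$, which is the asserted equality.

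The main obstacle lies in the first two steps: one must keep careful track of the transport under $\sigma$ of all fixed data (Borel, torus, root basis, level, sign character, and especially the extension class), and verify that the naturality identity holds on the nose and not merely up to a $\sigma$-dependent scalar. Once that bookkeeping is in place, the cohomological identification of step three is formal. The analogous verification in the $PGL_2$-case is carried out in \cite{Ge2}, Lemma 3.1.
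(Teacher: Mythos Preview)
Your proposal is correct and matches the paper's approach: the paper simply states that the result ``follows easily by unravelling the definitions'' and refers to \cite{Ge2}, Lemma 3.1, for the $PGL_2$-case, which is precisely the reference you invoke after giving an explicit outline of that unravelling. Your sketch supplies more detail than the paper itself (transport of the Borel--torus pair, naturality of the extension class under $\sigma^\ast$, the explicit intertwining map $\mathfrak{S}_\sigma$), but the underlying argument is the same.
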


This together with the conjectural comparison between automorphic and Galois theoretic $\LI$-invariants, which we will explain later in Section \ref{Galois}, suggest that automorphic $\LI$-invariants should be invariant under automorphic base change:
Let us spell this out more precisely in the case $G=PGL_{n,F}$.
Let $E$ be a solvable extension of $F$.
We fix a prime $\q$ of $E$ lying above $\p$.
By the work of Arthur and Clozel (see \cite{AC}) there exists the base change lift $\pi_E$ of $\pi$ to $PGL_{n,E}$.
The representation $\pi_E$ fulfils all the assumptions we imposed on $\pi$ and, in addition, it is stable under the action of the Galois group $\Gal(E/F)$.
Thus, we get the equality
\begin{align}\label{galoisequal}
\LI_{i}^{(d)}(\pi_{E}^\sigma,\q^{\sigma})^{\cf}=\LI_{i}^{(d)}(\pi_E,\q)^{\cf}
\end{align}
for all $\sigma\in \Gal(E/F)$.
In particular, the $\LI$-invariant $\LI_{i}^{(0)}(\pi_E,\q)^{\cf}$ is the preimage of a subspace of $\Hom_{\cont}(F_\p^\times,\Omega)$ under the canonical projection
$$\pr\colon \Hom_{\cont}(E_\q^\times,\Omega) \too \Hom_{\cont}(F_\p^\times,\Omega).$$
\begin{Con}\label{galoisconj}
Under the above assumptions the equality
$$\LI_{i}^{(0)}(\pi_E,\q)^{\cf}=\pr^{-1}(\LI_{i}^{(0)}(\pi,\p)^{\cf})$$
holds in $\Hom_{\cont}(E_\q^\times,\Omega)$.
\end{Con}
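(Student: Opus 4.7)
\medskip

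\noindent\textbf{Proof plan.}
The plan is to deduce the conjecture from a direct functoriality of the cohomological construction under the inclusion $PGL_n(F) \hookrightarrow PGL_n(E)$ coming from the base change. Choose tame levels $K^\p \subseteq PGL_n(\mathbb{A}_F^{\p,\infty})$ and $K^\q \subseteq PGL_n(\mathbb{A}_E^{\q,\infty})$ with compatible Iwahori subgroups at $\p$ and $\q$, and consider the restriction map on the arithmetic cohomology of Section \ref{Component} obtained by pullback along $PGL_n(F) \hookrightarrow PGL_n(E)$ globally together with pullback of (smooth, resp.\ locally analytic) functions along $PGL_n(F_\p) \hookrightarrow PGL_n(E_\q)$ at the Steinberg prime. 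The key local compatibility is that for every $\lambda \in \Hom_{\cont}(E_\q^\times, \Omega)$, the locally analytic extension class $\mathcal{E}^{\an}_{i,\emptyset}(\lambda \circ i)$ for $PGL_n(E_\q)$ constructed in Section \ref{Extensions2} pulls back along $PGL_n(F_\p) \hookrightarrow PGL_n(E_\q)$ to the class $\mathcal{E}^{\an}_{i,\emptyset}(\pr(\lambda) \circ i)$ for $PGL_n(F_\p)$, by naturality of the extension in the group and in the character (all the inputs in Section \ref{Extensions2}, including the decomposition of $\Hom_{\cont}(P_J,\Omega)$ and the Bruhat--Tits resolution of $\VV^{\an}_J$, behave functorially under the inclusion of groups).

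Next, Arthur--Clozel solvable base change identifies Hecke eigensystems and, together with multiplicity one for $PGL_n$, matches the $\pi_E$-isotypic part of the $E$-side cohomology with the $\pi$-isotypic part of the $F$-side cohomology under the restriction. Once nonvanishing of the restriction on isotypic components is established, one obtains a commutative square of cup products whose horizontal arrows compute the relevant $\LI$-invariants, and reading off kernels yields the conjectural identification (at least in the cohomological degree in which the restriction lands).

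The principal obstacle is that $\dim G_\infty$ and hence the lowest cohomological degree $q$ for $\pi$ over $PGL_n(F)$ is strictly smaller than the corresponding degree $q_E$ for $\pi_E$ over $PGL_n(E)$, so the restriction lands in degree $q_E$ on the $F$-side rather than the $d=0$ degree used in the definition of $\LI_i^{(0)}(\pi, \p)^{\cf}$. Closing this gap requires either the conjectural independence of the automorphic $\LI$-invariant from the cohomological degree (Conjecture \ref{conjecture}\eqref{conjdegree}), known under cyclicity over Venkatesh's derived Hecke algebra (Section \ref{DHa}), or a complementary transfer argument via Poincar\'e duality passing through the top cohomological degree. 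A cleaner alternative would be purely Galois-theoretic: extend the automorphic-versus-Fontaine--Mazur $\LI$-invariant comparison of Section \ref{Galois} from the definite unitary case to $PGL_n$ via Jacquet--Langlands, and then exploit the well-known fact that Fontaine--Mazur $\LI$-invariants of a $2$-dimensional trianguline local Galois representation transform precisely by $\pr$ under restriction from $\Gal_{F_\p}$ to $\Gal_{E_\q}$. Either route requires substantial additional input beyond the tools developed in the article, and this bootstrapping is the main difficulty.
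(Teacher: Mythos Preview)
The statement you are attempting to prove is labeled in the paper as a \emph{Conjecture} (environment \texttt{Con}), not a theorem; the paper offers no proof at all. Immediately after stating it, the author only records that the special case $G=PGL_2$, $F=\Q$, $p$ totally inert in $E$ is established elsewhere (\cite{Ge2}, Corollary~3.7) under an $L$-value nonvanishing hypothesis, with a higher-weight analogue in \cite{BWi} for $E$ imaginary quadratic. So there is nothing in the paper to compare your argument against, and any genuine proof would go well beyond what the article claims.

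Your proposal is honest about this: what you have written is a strategy with clearly flagged gaps, not a proof. The two obstructions you isolate are real. First, the degree-shift problem: restriction along $PGL_n(F)\hookrightarrow PGL_n(E)$ lands in degree $q_E>q_F$, and the automorphic $\LI$-invariant is defined at degree $q$; bridging this via Conjecture~\ref{conjecture}\eqref{conjdegree} or the derived Hecke machinery of Section~\ref{DHa} is itself conditional. Second, the alternative route through Fontaine--Mazur $\LI$-invariants presupposes Conjecture~\ref{Galoisconj} for $PGL_n$, which the paper only establishes in the definite unitary setting of Section~\ref{Galoisproof}. There is also a subtler issue you gloss over: even granting the local compatibility of extension classes, you need the global restriction map to be nonzero (indeed injective) on the $\pi_E$-isotypic component, and Arthur--Clozel base change gives a matching of Hecke eigensystems but not directly a nonvanishing statement for this particular cohomological transfer. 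In the known $PGL_2$ case this is exactly where the $L$-value hypothesis enters. In short, your outline is reasonable as a heuristic but does not constitute a proof, which is consistent with the paper leaving the statement conjectural.
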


Conjecture \ref{galoisconj} is proven in \cite{Ge2}, Corollary 3.7, in the case $G=PGL_2$, $F=\Q$ and $p$ totally inert in $E$ under a standard assumption on the non-vanishing of automorphic $L$-functions.
If, in addition, $E$ is an imaginary quadratic extension, Barrera and Williams have proven a higher weight analogue (see Proposition 10.02 of \cite{BWi}).

\subsection{Automorphic $\LI$-invariants relative to a set of roots}\label{relative}
We describe a variant of the definition of automorphic $\LI$-invariants, which seems natural in view of the calculation of finite polynomial cohomology of Drinfeld's upper half space in \cite{BdS}.
As before let $\Omega$ be a finite extension of $\Q_\p$, which contains $\Q_\pi$.
Let $R$ be its ring of integers.
We fix subsets $J\subseteq I\subseteq \Delta$ with $|I|=|J|+1$.
We assume throughout this section that $v^{\infty}_S(R)$ is flawless for every subset $S\subseteq \Delta$.
By Theorem $\ref{genflaw}$ this assumption holds if all simple factors of $G_{F_\p}$ are of type $A_n$.

As before, there are canonical isomorphisms
\begin{align}\label{relativeisom}
\HH^{d}(G(F),\Ah(K^\p,v^{\infty}_{J};\Omega(\epsilon)))\xlongrightarrow{\cong} \HH^{d}(G(F),\Ah^{\cont}(K^\p,\VV^{\cont}_{J};\Omega(\epsilon)))
\end{align}
by Corollary \ref{universal} and Proposition \ref{automaticuniversal}.
Therefore, taking cup product with the class $\mathcal{E}^{\an}_{I,J}(\lambda\circ i)\in \Ext^{1}_{\an}(v_I^{\infty},v_J^{\an})$ associated to a character $\lambda\in \Hom_{\cont}(F_\p^{\times},\Omega)$ induces a map
$$c^{(d)}_{I,J}(\lambda)[\pi]^{\epsilon}\colon \HH^{d}(G(F),\Ah(K^\p,v^{\infty}_{J};\Omega(\epsilon)))[\pi]\too \HH^{d+1}(G(F),\Ah(K^\p,v^{\infty}_{I};\Omega(\epsilon)))[\pi]$$
on the $\pi$-isotypic component of cohomology.

\begin{Def}
For a character $\epsilon\colon \pi_0(G_\infty) \to \left\{\pm 1\right\}$ and an integer $d\in \Z$ with $0\leq d\leq \delta$ we define
$$\LI_{I,J}^{(d)}(\pi,\p)^{\epsilon}\subseteq \Hom_{\cont}(F_\p^{\times},\Omega)$$
as the kernel of the map $\lambda \mapsto c^{(d+q+|J|)}_{I,J}(\lambda)[\pi]^{\epsilon}$.
\end{Def}

\begin{Thm}
Let $i\in I$ be the unique root which is not contained in $J$.
The equality of $\LI$-invariants
$$\LI_{I,J}^{(d)}(\pi,\p)^{\epsilon}=\LI_{i}^{(d)}(\pi,\p)^{\epsilon}$$
holds for every characters $\epsilon\colon \pi_0(G_\infty) \to \left\{\pm 1\right\}$ and all integers $d\in \Z$ with $0\leq d\leq \delta$.
\end{Thm}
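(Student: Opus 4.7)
The plan is to set up a commutative square whose vertical maps are isomorphisms on $\pi$-isotypic components and whose horizontal maps are the cup-product maps defining the two $\LI$-invariants. First, under the running assumption of Section~\ref{relative} that every $v^{\infty}_S(R)$ is flawless (guaranteed by Theorem~\ref{genflaw} when all simple factors of $G_{F_\p}$ are of type $A_n$), Proposition~\ref{automaticuniversal} allows one to translate freely between smooth, locally analytic, and continuous cohomology. In particular, by Remark~\ref{flawlessversion} both $\LI_i^{(d)}(\pi,\p)^{\epsilon}$ and $\LI_{I,J}^{(d)}(\pi,\p)^{\epsilon}$ can be computed by replacing the locally analytic extension classes with the continuous extension classes $\mathcal{E}^{\cont}_{\{i\},\emptyset}(\lambda\circ i)$ and $\mathcal{E}^{\cont}_{I,J}(\lambda\circ i)$ furnished by Proposition~\ref{contpro}.

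Second, I consider the square whose left vertical arrow is cup product with the iterated (continuous) smooth extension $\mathcal{E}^{\cont}_{J,\emptyset}$ of length $|J|$, whose top (resp.~bottom) horizontal arrow is $c^{(d+q)}_{i}(\lambda)$ (resp.~$c^{(d+q+|J|)}_{I,J}(\lambda)$), and whose right vertical arrow is cup product with $\mathcal{E}^{\cont}_{I,\{i\}}$. By iterated application of Corollary~\ref{smoothcup} (combined with the isomorphisms of Proposition~\ref{automaticuniversal}), the two vertical maps are isomorphisms on the $\pi$-isotypic components.

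Third, the commutativity of this square up to a non-zero scalar reduces, by associativity of Yoneda product, to an identity of the shape
\begin{align*}
\mathcal{E}^{\cont}_{I,\{i\}}\cup\mathcal{E}^{\cont}_{\{i\},\emptyset}(\lambda\circ i) \,=\, c\cdot\mathcal{E}^{\cont}_{I,J}(\lambda\circ i)\cup\mathcal{E}^{\cont}_{J,\emptyset}
\end{align*}
in $\Ext^{|J|+1}_{\cont}(\VV^{\cont}_{I},\VV^{\cont}_{\emptyset})$ for some non-zero scalar $c$ that is independent of $\lambda$. To prove this identity, I fix an enumeration $J=\{j_1,\ldots,j_{|J|}\}$ together with non-zero smooth characters $\mu_s\in\Hom_{\cont}(F_\p^{\times},\Omega)$, and use the one-dimensionality statements of Theorem~\ref{DatOrlik}~(ii) to factor the iterated smooth extensions $\mathcal{E}^{\cont}_{J,\emptyset}$ and $\mathcal{E}^{\cont}_{I,\{i\}}$ as Yoneda products of length-one pieces of the form $\mathcal{E}^{\cont}_{J^{(s)},J^{(s-1)}}(\mu_s\circ j_s)$ along the chain $J^{(s)}=\{j_1,\ldots,j_s\}$ (and analogously along $I^{(s)}=J^{(s)}\cup\{i\}$ for $\mathcal{E}^{\cont}_{I,\{i\}}$). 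I then apply Lemma~\ref{indext} iteratively to swap the analytic factor $\mathcal{E}^{\cont}(\lambda\circ i)$ past each of the $|J|$ length-one smooth factors, yielding the constant $c=(-1)^{|J|}$ up to the non-zero scalars absorbed in the choice of factorizations.

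The main obstacle is the bookkeeping in the third step: because $\mathcal{E}^{\cont}_{J,\emptyset}$ and $\mathcal{E}^{\cont}_{I,\{i\}}$ are only well-defined up to non-zero scalar, one must select specific factorizations and verify that each elementary swap falls under the hypotheses of Lemma~\ref{indext} with a distinct simple root added at each step. The one-dimensionality of the smooth $\Ext$-groups in Theorem~\ref{DatOrlik}~(ii) guarantees that the accumulated scalar is non-zero and independent of $\lambda$. Once this is established, the commutativity of the square identifies the kernels of the two horizontal cup-product maps, and hence $\LI_{I,J}^{(d)}(\pi,\p)^{\epsilon}=\LI_{i}^{(d)}(\pi,\p)^{\epsilon}$.
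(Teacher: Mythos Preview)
Your proposal is correct and follows essentially the same approach as the paper: both arguments hinge on Lemma~\ref{indext} to commute the $\lambda$-dependent extension past the smooth $\ord_\p$-extensions, and on Corollary~\ref{smoothcup} (via the flawless identification of Proposition~\ref{automaticuniversal}) to see that the smooth cup-product maps are isomorphisms on $\pi$-isotypic parts. The only difference is organizational: the paper performs an explicit induction on $|J|$, applying Lemma~\ref{indext} once per step to pass from $\LI_{I,J}^{(d)}$ to $\LI_{K\cup I,K}^{(d)}$ for a single $K=J\cup\{k\}$ with $k\neq i$, whereas you collapse the $|J|$ steps into one commutative square whose vertical maps are the full $|J|$-fold Yoneda products.
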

\begin{proof}
For a continuous homomorphism $\lambda \colon F_\p^\times\to \Omega$ taking cup product with the class $\mathcal{E}^{\cont}_{I,J}(\lambda\circ i)$ induces the map
$$\hat{c}^{(d)}_{I,J}(\lambda)\colon \HH^{d}(G(F),\Ah(K^\p,\VV^{\cont}_{J};\Omega(\epsilon)))[\pi]\too \HH^{d+1}(G(F),\Ah(K^\p,\VV^{\cont}_{I};\Omega(\epsilon)))[\pi].$$
By construction we have the following commutative diagram
\vspace{-0.5em}
\begin{center}
\begin{tikzpicture}
    \path 	(0,0) 	node[name=A]{$\HH^{d}(G(F),\Ah^{\cont}(K^\p,\VV^{\cont}_{J};\Omega(\epsilon)))[\pi]$}
		(7.2,0) 	node[name=B]{$\HH^{d+1}(G(F),\Ah^{\cont}(K^\p,\VV^{\cont}_{I};\Omega(\epsilon)))[\pi]$}
		(0,-1.5) 	node[name=C]{$\HH^{d}(G(F),\Ah(K^\p,v^{\infty}_{J};\Omega(\epsilon)))[\pi]$}
		(7.2,-1.5) 	node[name=D]{$\HH^{d+1}(G(F),\Ah(K^\p,v^{\infty}_{I};\Omega(\epsilon)))[\pi]$};
    \draw[->] (A) -- (C) node[midway, left]{\eqref{relativeisom}} node[midway, right]{$\cong$};
    \draw[->] (A) -- (B) node[midway, above]{$\hat{c}^{(d)}_{I,J}(\lambda) $};
    \draw[->] (C) -- (D) node[midway, above]{$c^{(d)}_{I,J}(\lambda)[\pi]^{\epsilon}$};
    \draw[->] (B) -- (D) node[midway, right]{\eqref{relativeisom}}node[midway, left]{$\cong$};
\end{tikzpicture} 
\end{center}
where the vertical maps are isomorphism by our flawlessness assumption.
In particular, the map $\hat{c}^{(d)}_{I,J}(\ord_\p)$ is an isomorphism by Corollary \ref{smoothcup}.
Moreover, we have
\begin{align}\label{def2eq}
\LI_{I,J}^{(d)}(\pi,\p)^{\epsilon}=\Ker(\lambda\mapstoo \hat{c}^{(d+q+|J|)}_{I,J}(\lambda)).
\end{align}

Let $K\subseteq \Delta$ be another subset such that $J\subseteq K$, $|K|=|J|+1$ and $K\neq I$.
The diagram
\vspace{-2em}
\begin{center}
\begin{tikzpicture}
    \path 	(0,0) 	node[name=A]{$\HH^{d}(G(F),\Ah^{\cont}(K^\p,\VV^{\cont}_{J};\Omega(\epsilon)))[\pi]$}
		(7.2,0) 	node[name=B]{$\HH^{d+1}(G(F),\Ah^{\cont}(K^\p,\VV^{\cont}_{I};\Omega(\epsilon)))[\pi]$}
		(0,-1.5) 	node[name=C]{$\HH^{d+1}(G(F),\Ah^{\cont}(K^\p,\VV^{\cont}_{K};\Omega(\epsilon)))[\pi]$}
		(7.2,-1.5) 	node[name=D]{$\HH^{d+2}(G(F),\Ah(K^\p,v^{\infty}_{K\cup I};\Omega(\epsilon)))[\pi]$};
    \draw[->] (A) -- (C) node[midway, left]{$\hat{c}^{(d)}_{K,J}(\ord_\p)$} node[midway, right]{$\cong$};
    \draw[->] (A) -- (B) node[midway, above]{$\hat{c}^{(d)}_{I,J}(\lambda) $};
    \draw[->] (C) -- (D) node[midway, above]{$\hat{c}^{(d+1)}_{K \cup I,K}(\lambda)$};
    \draw[->] (B) -- (D) node[midway, right]{$\pm \hat{c}^{(d+1)}_{K\cup I,I}(\ord_\p)$}node[midway, left]{$\cong$};
\end{tikzpicture} 
\end{center}
is commutative by Lemma \ref{indext} and the vertical arrows are isomorphism.
Thus \eqref{def2eq} implies that
$$\LI_{I,J}^{(d)}(\pi,\p)^{\epsilon}=\LI_{K\cup I,K}^{(d)}(\pi,\p)^{\epsilon}.$$
The claim now follows by induction on the size of $J$.
\end{proof}

\begin{Rem}
An interesting problem is to relate $\LI_{I,J}^{(d)}(\pi,\p)^{\epsilon}$ with the transcendental $\LI$-invariants of Besser and de Shalit (cf.~Section 3 of \cite{BdS}), whenever both type of $\LI$-invariants are defined.
The author hopes to come back to this question in the future.
\end{Rem}

\subsection{Cohomology with compact support}\label{Compact}
There is also a variant of the above constructions using cohomology with compact support.
We use its description in terms of the Steinberg module of $G$.
For more details on the Steinberg module of a reductive group see \cite{R}. 
We keep the same notations as in the previous sections.

Let $\mathcal{P}$ be the set of proper maximal $F$-rational parabolic subgroups of $G$.
A subset $S=\left\{P_{0},\ldots,P_{k}\right\}\subset \mathcal{P}$ of cardinality $k+1$ is called $k$-simplex if $P_{1}\cap\ldots\cap P_{k}$ is a parabolic subgroup of $G$.
Let $D_k$ be the free abelian group generated by the $k$-simplices on $\mathcal{P}$.
Taking the associated simplicial complex we get a sequence of $G(F)$-modules
\begin{align}\label{dualizingcomplex}
D_{l-1} \too  D_{l-2}\too\cdots\too D_0\too\Z\too 0.
\end{align}
\begin{Def}
The Steinberg (or dualizing) module $D_G$ of $G$ is the kernel of the map $D_{l-1} \to  D_{l-2}$ (where we set $D_{-1}=\Z$ if $l=1$).
\end{Def} 
Let $\mathcal{P}_{k}$ be the set of proper $F$-rational parabolic subgroups of semi-simple $F$-rank $l-1-k$ containing a fixed minimal parabolic subgroup of $G$.
Then for $0\leq k\leq l-1$ there is a natural isomorphism of $G(F)$-modules
\begin{align*}
\bigoplus_{P\in\mathcal{P}_{k}}\cind_{P(F)}^{G(F)}\Z\xrightarrow{\cong} D_{k}.
\end{align*}
The homology of the complex \eqref{dualizingcomplex} can be identified with the reduced homology of the spherical building associated to $G$, which is homotopy equivalent to a bouquet of $(l-1)$-spheres.
Therefore, the complex of $G(F)$-modules
\begin{align}\label{dualizingresolution}
0\too D_{G} \too D_{l-1} \too\cdots\too D_{0}\too\Z\too 0
\end{align}
is exact (see for example \cite{BS2}).

With the same notations as at the beginning of Section \ref{Cohomology} we put
\begin{align*}
\Ah_{R,c}(K^\p,M;N)&=\Hom_\Z(D_G,\Ah_{R}(K^\p,M;N)),\\
\Ah_c(K^\p\times K_\p;N)&=\Hom_\Z(D_G,\Ah(K^\p\times K_\p;N))\\
\intertext{and define}
\HH^{d}_c(X_{K^\p\times K_\p},R)^{\epsilon}&=\HH^{d-l}(G(F),\Ah_c(K^\p\times K_\p;R(\epsilon))).
\end{align*}
As before, we often drop the subscript $R$.

Let us give a short justification for this notation:
we assume again that $K^\p\times K_\p$ is neat and $2$ is invertible in $R$.
In \cite{BS} Borel and Serre construct a contractible space $\overline{\mathcal{X}}$ containing $\mathcal{X}$ as an open subspace together with an extension of the $G(F)^+$-action.
They show that the quotient
$$\overline{\mathcal{X}}_{K^\p\times K_\p}=G(F)^+\backslash (G(\A^\infty)/(K^\p\times K_\p) \times \overline{\mathcal{X}})$$
is compact and homotopy equivalent to $\mathcal{X}_{K^\p\times K_\p}.$
Since the boundary $\partial \mathcal{X}=\overline{\mathcal{X}}\setminus \mathcal{X}$ is homotopy equivalent to the spherical building attached to $G$ by \cite{BS}, Corollary 8.4.2, a standard calculation shows that
$$\HH^{d}_c(X_{K^\p\times K_\p},N)^{\epsilon}=\HH^{d}_c(\mathcal{X}_{K^\p\times K_\p},\underline{N})^{\epsilon}.$$

Moreover, by the exactness of the complex \eqref{dualizingresolution} we get morphisms
\begin{align}\label{boundary}
\partial\colon \HH^{d}(G(F),\Ah_{R,c}(K^\p,M;N))\too \HH^{d+l}(G(F),\Ah_{R}(K^\p,M;N)),
\end{align}
which are functorial in $M$ and $N$ and compatible with the usual map from cohomology with compact support to cohomology.

\begin{Lem}\label{partial}
Let $\Omega$ be an extension of $\Q_\pi$.
\begin{enumerate}[(a)]
\item The map
$$\partial\colon \HH^{d}_c(X_{K^\p\times I_\p},\Omega)^{\epsilon}[\pi]\too \HH^{d}(X_{K^\p\times I_\p},\Omega)^{\epsilon}[\pi]$$
is an isomorphism for all characters $\epsilon$.
\item The map
$$\partial\colon \HH^{d}(G(F),\Ah_c(K^\p,v^{\infty}_I;\Omega(\epsilon)))[\pi]\too \HH^{d+l}(G(F),\Ah(K^\p,v^{\infty}_I;\Omega(\epsilon)))[\pi]$$
is an isomorphism for all subsets $I\subseteq\Delta$ and all characters $\epsilon$.
\end{enumerate}
\end{Lem}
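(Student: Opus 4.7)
The plan is to use the exact resolution (\ref{dualizingresolution}) to identify $\partial$ with an iterated connecting homomorphism in $G(F)$-cohomology, then reduce the required vanishing of the intermediate terms to a parabolic statement via Shapiro's lemma, and finally invoke the classical fact that cuspidal representations do not contribute to Borel--Serre boundary cohomology. First I would apply $\Hom_{\Z}(-,\Ah_R(K^\p,M;N))$ to (\ref{dualizingresolution}). Since each $D_k$ is free as a $\Z$-module, this yields an exact sequence of $G(F)$-modules of length $l+2$, with endpoints $\Ah_R(K^\p,M;N)$ and $\Ah_{R,c}(K^\p,M;N)$ and intermediate terms $\Hom_{\Z}(D_k,\Ah_R(K^\p,M;N))$ for $0\le k\le l-1$. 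Splitting this into short exact sequences and chasing the resulting long exact sequences in $G(F)$-cohomology identifies the map $\partial$ from (\ref{boundary}) with a composition of $l$ connecting homomorphisms; hence $\partial$ induces an isomorphism on $\pi$-isotypic components as soon as
\[\HH^{\ast}(G(F),\Hom_{\Z}(D_k,\Ah_R(K^\p,M;N)))[\pi]=0 \qquad (0\le k\le l-1).\]

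Using the $G(F)$-equivariant decomposition $D_k=\bigoplus_{P\in\mathcal{P}_k}\cind_{P(F)}^{G(F)}\Z$ together with Shapiro's lemma (which is Hecke-equivariant, since the Hecke action at $K^\p$ comes from right translation on the first argument and commutes with the $G(F)$-action throughout), this reduces to showing
\[\HH^{\ast}(P(F),\Ah_R(K^\p,M;N))[\pi]=0\]
for every proper $F$-rational parabolic subgroup $P\subset G$. Case (a) specialises $M=\cind_{I_\p}^{G_\p}\Omega$, and case (b) takes $M=v^\infty_I$ for $I\subseteq\Delta$.

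The main obstacle is establishing this parabolic vanishing; its content is the classical fact that cuspidal automorphic representations do not contribute to Borel--Serre boundary cohomology. Via the Borel--Serre compactification of $\mathcal{X}_{K^\p\times K_\p}$ discussed in Section \ref{Cohomology}, the $P(F)$-cohomology above computes, by the spectral sequence for the stratification of the boundary, the contribution of $P$ to boundary cohomology, which is built entirely out of Eisenstein-type pieces induced from proper Levi subgroups. Combined with Hypothesis \ref{Hyp}, which forces any Hecke eigensystem matching $\pi$ in cohomology to come from a cuspidal automorphic representation, this yields the desired vanishing on $\pi$-isotypic components. For case (a) this amounts to the standard statement that the natural map $\HH_c^{\ast}(\mathcal{X}_{K^\p\times I_\p},\Omega)^\epsilon\to\HH^{\ast}(\mathcal{X}_{K^\p\times I_\p},\Omega)^\epsilon$ is an isomorphism on cuspidal isotypic components, upgraded via the geometric identifications of Section \ref{Cohomology}. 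For case (b) one either runs the same Borel--Serre argument with coefficients in $v^\infty_I$, or alternatively chooses a flawless resolution of $v^\infty_I$ (Theorem \ref{admissible}) and invokes naturality of $\partial$ to reduce to the analogous statement for each piece $\cind_{K_\p}^{G_\p}L$ appearing in the resolution.
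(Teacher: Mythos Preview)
Your proposal is correct, and for part~(a) it agrees with the paper: both reduce to the classical fact (Borel, \cite{Bo}) that cuspidal classes lift from cohomology with compact support to ordinary cohomology, which is exactly your parabolic-vanishing statement rephrased.

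For part~(b), however, the paper proceeds quite differently. Rather than proving a parabolic-vanishing statement for general coefficient $M=v^{\infty}_I$, the paper bootstraps from part~(a) using the specific machinery already built in Section~\ref{Component}. First, it defines a compact-support evaluation map $\ev^{(d)}$ at an Iwahori-fixed vector and shows, by the same argument as Proposition~\ref{dimensions}, that it is an isomorphism on $[\pi]$-parts; the commutative square comparing $\ev^{(d)}$ on both sides of $\partial$ then yields the case $I=\emptyset$. The general case is obtained by induction on $|I|$ via the compact-support analogue $c^{(d)}_{I,J}[\pi]^{\epsilon}_c$ of the cup-product maps, which are isomorphisms by the same $\Ext$-vanishing argument as Corollary~\ref{smoothcup}, together with the naturality square for $\partial$.

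Your route is more uniform and conceptually cleaner: it treats all $I$ at once by reducing to a single parabolic-vanishing input. The trade-off is that you must justify $\HH^{\ast}(P(F),\Ah(K^\p,\cind_{K_\p}^{G_\p}L;\Omega(\epsilon)))[\pi]=0$ for arbitrary compact open $K_\p$ and finite-rank $L$, i.e.\ boundary cohomology with non-constant local coefficients at~$\p$; this is true (the Hecke system away from~$\p$ is Eisenstein regardless of the local coefficient), but it is a slightly stronger input than the paper uses. The paper's inductive argument instead stays entirely within the toolkit of Sections~\ref{Component}--\ref{Definition}, at the cost of being less transparent about why the statement holds.
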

\begin{proof}
The first claim follows from the fact that cuspidal cohomology classes lift to cohomology with compact support by the results of \cite{Bo}.
As in \eqref{evaluation} we define the map
$$\ev^{(d)}\colon \HH^{d}(\Ah_c(K^\p,\St^{\infty}_{G_\p};\Omega(\epsilon))^{\p})\too \HH^{d+l}_c(X_{K^\p\times I_\p},\Omega)^{\epsilon}$$
as evaluation at an Iwahori-fixed vector.
With the same arguments as in the proof of Proposition \ref{dimensions} we deduce that the restriction of $\ev^{(d)}$ to the $\pi$-isotypic component is an isomorphism.
The diagram
\begin{center}
\begin{tikzpicture}
    \path 	(0,0) 	node[name=A]{$\HH^{d}(G(F),\Ah_c(K^\p,St^{\infty}_{G_\p};\Omega(\epsilon))^{\p})[\pi]$}
		(6.8,0) 	node[name=B]{$\HH^{d+l}(G(F),\Ah(K^\p,St^{\infty}_{G_\p};\Omega(\epsilon))^{\p})[\pi]$}
		(0,-1.5) 	node[name=C]{$\HH^{d+l}_c(X_{K^\p\times I_\p},\Omega)^{\epsilon}[\pi]$}
		(6.8,-1.5) 	node[name=D]{$\HH^{d+l}(X_{K^\p\times I_\p},\Omega)^{\epsilon}[\pi]$};
    \draw[->] (A) -- (C) node[midway, left]{$\ev^{(d)}$};
    \draw[->] (A) -- (B) node[midway, above]{$\partial$};
    \draw[->] (C) -- (D) node[midway, above]{$\partial$};
    \draw[->] (B) -- (D) node[midway, right]{$\ev^{(d+l)}$};
\end{tikzpicture} 
\end{center}
is commutative.
Both vertical maps and the bottom horizontal one are isomorphisms and therefore, the upper horizontal map is an isomorphism as well.

As in Section \ref{Component} the short exact sequence $$0\too v^{\infty}_J(\Q)\too \mathcal{E}_{I,J}^{\infty}(\Q) \too v^{\infty}_I(\Q)\too 0$$ induces a map
$$c^{(d)}_{I,J}[\pi]_c^{\epsilon}\colon\HH^{d}(G(F),\Ah_c(K^\p,v^{\infty}_J;\Omega(\epsilon)))[\pi]
\to\HH^{d+1}(G(F),\Ah_c(K^\p,v^{\infty}_I;\Omega(\epsilon)))[\pi].$$
By the same arguments as for Corollary \ref{smoothcup} this is an isomorphism.
Again, the diagram
\begin{center}
\begin{tikzpicture}
    \path 	(0,0) 	node[name=A]{$\HH^{d}(G(F),\Ah_c(K^\p,v^{\infty}_J;\Omega(\epsilon)))[\pi]$}
		(6.8,0) 	node[name=B]{$\HH^{d+l}(G(F),\Ah(K^\p,v^{\infty}_J;\Omega(\epsilon)))[\pi]$}
		(0,-1.5) 	node[name=C]{$\HH^{d}(G(F),\Ah_c(K^\p,v^{\infty}_I;\Omega(\epsilon)))[\pi]$}
		(6.8,-1.5) 	node[name=D]{$\HH^{d+l}(G(F),\Ah(K^\p,v^{\infty}_I;\Omega(\epsilon)))[\pi]$};
    \draw[->] (A) -- (C) node[midway, left]{$c^{(d)}_{I,J}[\pi]_c^{\epsilon}$};
    \draw[->] (A) -- (B) node[midway, above]{$\partial$};
    \draw[->] (C) -- (D) node[midway, above]{$\partial$};
    \draw[->] (B) -- (D) node[midway, right]{$c^{(d+l)}_{I,J}[\pi]_c^{\epsilon}$};
\end{tikzpicture} 
\end{center}
is commutative and the vertical maps are isomorphisms.
Thus, the second claim follows by induction.
\end{proof}

Utilizing the resolution \eqref{dualizingresolution} of $D_G$ one can easily deduce that Proposition \ref{FlachundNoethersch} and Corollary \ref{limits} also hold with $\Ah_{R,c}(K^\p,M;N)$ in place of $\Ah_{R}(K^\p,M;N)$ (see Proposition 5.6 of \cite{Sp} for a detailed proof in the case $G=PGL_2$).
Let $\Omega$ be a finite extension of $\Q_\p$ containing $\Q_\pi$.
If $V\in\mathcal{U}_{\Omega}(G_\p)$ is an admissible $\Omega$-Banach representation of $G_\p$, we put
$$\Ah_{\Omega,_c}^{\cont}(K^{\p},V;\Omega(\epsilon))=\Hom_\Z(D_G,\Ah_{\Omega}^{\cont}(K^{\p},V;\Omega(\epsilon))).$$
As before we get an isomorphism
\begin{align}\label{automatic}
\HH^{d}(G(F),\Ah_c(K^\p,\St_{G_\p}^{\infty};\Omega(\epsilon)))\xlongrightarrow{\cong} \HH^{d}(G(F),\Ah_{\Omega,c}^{\cont}(K,\St_{G_\p}^{\cont};\Omega(\epsilon)))
\end{align}
of cohomology groups and hence, a well-defined cup-product pairing
\begin{align*}
&\HH^{d}(G(F),\Ah_c(K,\St_{G_\p}^{\infty};\Omega(\epsilon))) \times \Ext^{1}(v_{i}^{\infty}(\Omega),\St_{G_\p}^{\an}(\Omega))\\
\too &\HH^{d+1}(G(F),\Ah_c(K,v^{\infty}_{i};\Omega(\epsilon)))
\end{align*}
for every root $i\in \Delta$.
Thus every homomorphism $\lambda \in \Hom_{\cont}(F_\p^{\times},\Omega)$ induces a map
$$c^{(d)}_{i}(\lambda)[\pi]_c^{\epsilon}\colon \HH^{d}(G(F),\Ah_c(K,\St_{G_\p}^\infty;\Omega(\epsilon)))[\pi]\too \HH^{d+1}(G(F),\Ah_c(K,v^{\infty}_{i};\Omega(\epsilon)))[\pi].$$

\begin{Def}\label{defi}
Given a character $\epsilon\colon \pi_0(G_\infty) \to \left\{\pm 1\right\}$, an integer $d\in \Z$ with $0\leq d\leq \delta$ and a root $i\in I$ we define 
$$\LI_{i}^{(d)}(\pi,\p)_c^{\epsilon}\subseteq \Hom_{\cont}(F_\p^{\times},\Omega)$$
as the kernel of the map $\lambda \mapsto c^{(d+q-l+|J|)}_{i}(\lambda)[\pi]_c^{\epsilon}$ .
\end{Def}
 
Using similar arguments as the one in the proof of Lemma \ref{partial} one immediately proves the following claim. 
\begin{Pro}\label{compactcomparison}
Let $\epsilon\colon \pi_0(G_\infty) \to \left\{\pm 1\right\}$ be a character and $i\in \Delta$ a simple root.
We have
$$\LI_{i}^{(d)}(\pi,\p)_c^{\epsilon}=\LI_{i}^{(d)}(\pi,\p)^{\epsilon}$$
for every $0\leq d\leq \delta$.
\end{Pro}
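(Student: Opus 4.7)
The plan is to use the boundary map $\partial$ from \eqref{boundary}, which by Lemma \ref{partial}(b) induces isomorphisms between the $\pi$-isotypic components of compact-support and ordinary cohomology (shifting the degree by $l$). Since both $\LI$-invariants are defined as kernels of cup-product maps with the \emph{same} locally analytic extension class $\mathcal{E}^{\an}_{i,\emptyset}(\lambda\circ i)$, and since $\partial$ is natural in the coefficient module of $\Ah_R(K^\p,M;N)$, it should intertwine the two cup-product maps. Once this is established, the equality of kernels follows automatically.

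Concretely, I would assemble the commutative square
\begin{center}
\begin{tikzpicture}
    \path 	(0,0) 	node[name=A]{$\HH^{d+q-l}(G(F),\Ah_c(K^\p,\St_{G_\p}^{\infty};\Omega(\epsilon)))[\pi]$}
		(7.2,0) 	node[name=B]{$\HH^{d+q}(G(F),\Ah(K^\p,\St_{G_\p}^{\infty};\Omega(\epsilon)))[\pi]$}
		(0,-1.5) 	node[name=C]{$\HH^{d+q+1-l}(G(F),\Ah_c(K^\p,v^{\infty}_{i};\Omega(\epsilon)))[\pi]$}
		(7.2,-1.5) 	node[name=D]{$\HH^{d+q+1}(G(F),\Ah(K^\p,v^{\infty}_{i};\Omega(\epsilon)))[\pi]$};
    \draw[->] (A) -- (C) node[midway, left]{$c^{(d+q-l)}_{i}(\lambda)[\pi]_c^{\epsilon}$};
    \draw[->] (A) -- (B) node[midway, above]{$\partial$};
    \draw[->] (C) -- (D) node[midway, above]{$\partial$};
    \draw[->] (B) -- (D) node[midway, right]{$c^{(d+q)}_{i}(\lambda)[\pi]^{\epsilon}$};
\end{tikzpicture}
\end{center}
for every $\lambda\in \Hom_{\cont}(F_\p^{\times},\Omega)$. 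Each vertical map factors as: the comparison isomorphism \eqref{automatic} (and its compact-support analogue) from smooth to Banach coefficients; pullback along the inclusion $\St_{G_\p}^{\an}\subseteq \St_{G_\p}^{\cont}$; and cup product with $\mathcal{E}^{\an}_{i,\emptyset}(\lambda\circ i)$. Each of these three operations is functorial in the $\Omega[G(F)]$-module appearing as the target of $\Ah(K^\p,-;\Omega(\epsilon))$, and $\partial$ is by construction --- via the resolution \eqref{dualizingresolution} of the Steinberg module $D_G$ --- natural in exactly the same argument. Therefore the diagram commutes. Lemma \ref{partial}(b) tells us that both horizontal arrows are isomorphisms, so a character $\lambda$ is killed by the left vertical arrow if and only if it is killed by the right one, yielding the claimed equality of $\LI$-invariants.

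The main point to verify carefully is that Proposition \ref{automaticuniversal} and the resulting cup-product construction of Section \ref{Definition} transport to compactly supported cohomology; this is the substance of the remark preceding the proposition, namely that applying the exact functor $\Hom_\Z(D_G,-)$ and using the resolution \eqref{dualizingresolution} allows one to reproduce the entire argument of Section \ref{Cohomology} in the compact-support setting. There is no genuine obstacle beyond bookkeeping: one just tracks the global degree shift by $l$, which is precisely the shift built into Definition \ref{defi} so that the two sides are comparable.
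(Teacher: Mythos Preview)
Your proposal is correct and follows exactly the approach the paper indicates: the paper itself offers no detailed proof, stating only that one uses ``similar arguments as the one in the proof of Lemma \ref{partial}.'' Your commutative square with $\partial$ horizontal and $c_i(\lambda)$ vertical is precisely the analogue of the second diagram in that proof, and your appeal to Lemma \ref{partial}(b) for the horizontal isomorphisms together with naturality of $\partial$ in the $M$-variable is the intended argument.
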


We end this section by noting that the proof of Lemma \ref{hom} also works for cohomology with compact support.
Thus, we get:
\begin{Lem}\label{comphom}
Let $R$ be a ring and $M\in\mathfrak{C}^{\sm}_{R}(G_\p)$ a smooth $R[G_\p]$-module.
There is a natural isomorphism:
$$\HH^{0}(G(F),\Ah_{R,c}(K^\p,M;R(\epsilon)))\cong \Hom_{R[G_\p]}(M,\varinjlim_{K_\p}\HH_c^{l}(X_{K^\p\times K_\p},R)^\epsilon),$$
where the injective limit runs over all compact, open subgroups $K_\p \subseteq G_\p$.
\end{Lem}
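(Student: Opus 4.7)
The plan is to imitate the proof of Lemma \ref{hom}(a) verbatim, inserting the functor $\Hom_{\Z}(D_G,-)$ at each step. First I would recall the Frobenius reciprocity isomorphism
$$\Ah_R(K^\p,M;R(\epsilon))\;\cong\;\Hom_{R[G_\p]}(M,\tilde{\Ah}(K^\p;R(\epsilon)))$$
established in the proof of Lemma \ref{hom}, valid for any smooth $R[G_\p]$-module $M$. The crucial structural point (which carries over unchanged) is that the smooth $G_\p$-action on $\tilde{\Ah}(K^\p;R(\epsilon))$ by right translation at $\p$ commutes with the $G(F)$-action by diagonal left translation through $G(F)\hookrightarrow G(\A^\infty)$, while $G_\p$ acts trivially on $D_G$.

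I would then apply $\Hom_{\Z}(D_G,-)$, pass to $G(F)$-invariants, and swap the two $\Hom$'s (legitimate because the $G(F)$- and $G_\p$-actions on $\tilde{\Ah}(K^\p;R(\epsilon))$ commute) to obtain the chain
\begin{align*}
\HH^{0}(G(F),\Ah_{R,c}(K^\p,M;R(\epsilon)))
&=\Hom_{\Z[G(F)]}(D_G,\Ah_R(K^\p,M;R(\epsilon)))\\
&\cong\Hom_{\Z[G(F)]}\bigl(D_G,\Hom_{R[G_\p]}(M,\tilde{\Ah}(K^\p;R(\epsilon)))\bigr)\\
&\cong\Hom_{R[G_\p]}\bigl(M,\Hom_{\Z[G(F)]}(D_G,\tilde{\Ah}(K^\p;R(\epsilon)))\bigr).
\end{align*}

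To finish I would identify
$$\Hom_{\Z[G(F)]}(D_G,\tilde{\Ah}(K^\p;R(\epsilon)))\;\cong\;\varinjlim_{K_\p}\HH_c^{l}(X_{K^\p\times K_\p},R)^\epsilon,$$
whose right-hand side equals $\varinjlim_{K_\p}\Hom_{\Z[G(F)]}(D_G,\Ah(K^\p\times K_\p;R(\epsilon)))$ by definition of $\HH_c^l$. Since $\tilde{\Ah}(K^\p;R(\epsilon))=\bigcup_{K_\p}\Ah(K^\p\times K_\p;R(\epsilon))$ is a filtered colimit along inclusions, this reduces to showing that $\Hom_{\Z[G(F)]}(D_G,-)$ commutes with such colimits. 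This is the step I expect to be the main obstacle. My approach is to leverage the Borel--Serre exact sequence $0\to D_G\to D_{l-1}\to\cdots\to D_0\to\Z\to 0$: for each $D_k=\bigoplus_{P\in\mathcal{P}_k}\cind_{P(F)}^{G(F)}\Z$ one has $\Hom_{\Z[G(F)]}(D_k,-)=\bigoplus_P(-)^{P(F)}$, and taking invariants under any subgroup obviously commutes with filtered colimits of inclusions. An iterated long-exact-sequence argument applied to the short exact pieces of the resolution then propagates the commutation from the $D_k$'s to the kernel $D_G$.
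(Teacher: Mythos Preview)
Your approach is correct and is essentially what the paper intends: the paper's entire proof reads ``the proof of Lemma~\ref{hom} also works for cohomology with compact support,'' and you have faithfully unpacked that. One remark on your final ``main obstacle'': you can bypass the colimit commutation entirely by observing that Lemma~\ref{hom}(a) is stated for an \emph{arbitrary} $R[G(F)]$-module $N$, and that the currying identifications
\[
\Ah_{R,c}(K^\p,M;R(\epsilon))\cong \Ah_R\bigl(K^\p,M;\Hom_\Z(D_G,R)(\epsilon)\bigr),
\qquad
\HH_c^{l}(X_{K^\p\times K_\p},R)^\epsilon\cong \HH^{0}\bigl(X_{K^\p\times K_\p},\Hom_\Z(D_G,R)\bigr)^\epsilon
\]
exhibit Lemma~\ref{comphom} as the special case $N=\Hom_\Z(D_G,R)$ of Lemma~\ref{hom}(a); no separate colimit argument is then needed. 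If you prefer to keep your route, note that your ``iterated long-exact-sequence'' propagation is not quite automatic (it drags in $\Ext^1$-terms), but the commutation follows directly once you use that $D_G$ is finitely generated as a $\Z[G(F)]$-module: for a directed union $\bigcup A_i$, any equivariant map out of a finitely generated module lands in some $A_i$ because the transition maps are injective.
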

The $R$-valued Hecke algebra $\mathbb{T}(K^\p)_R$ away from $\p$ of level $K^\p$ acts on both sides.
Going through the proof of Lemma \ref{hom} it is easy to see that the above map is Hecke-equivariant.


\section{Derived Hecke algebra}\label{DHa}
Throughout this section we fix a root $i\in\Delta$.
We show that $\LI_{i}^{(d)}(\pi,\p)^{\epsilon}$ is independent of the cohomological degree $d$ provided that a conjecture of Venkatesh on the action of the derived Hecke algebra on cohomology holds.
This generalizes the main result of \cite{Ge3}, where the case $G=PGL_2$ was treated.

We will the following notation frequently:
if $(A^d)_{d\geq 0}$ is a sequence of abelian groups, we put
$$A^\ast=\bigoplus_{d\geq 0} A^d.$$

\subsection{Local derived Hecke algebras}\label{Hecke}
We recall Venkatesh's definition of (spherical) local derived Hecke algebras and their action on various cohomology groups (cf.~Section 2 of \cite{Ve}).

Let $v$ be a prime of $F$, which does not divide $p$ and such that $G_{F_v}$ is split.
We fix a hyperspecial subgroup $K_v\subseteq G_v$.
Further, let $r\geq 1$ be an integer.
The category $\mathfrak{C}^{\sm}_{\Z/p^{r}}(G_v)$ of smooth representation of $G_v$ over $R$ has enough projective objects (see for example \cite{Ve}, Appendix A.2)
\begin{Def}
The \emph{(spherical) derived Hecke algebra} at $v$ with $\Z/p^r$-coefficients is the graded algebra
$$\Hec_{v,\Z/p^r}=\Hec(G_v,K_v)_{\Z/p^r}=\Ext^{\ast}_{\mathfrak{C}^{\sm}_{\Z/p^{r}}(G_v)}(\Z/p^r[G_v/K_v],\Z/p^r[G_v/K_v]).$$
\end{Def}

Note that the degree 0 subalgebra of $\Hec_{v,\Z/p^r}$ is the usual spherical Hecke algebra of $G_v$ with $\Z/p^r$-coefficients.

\begin{Thm}[Venkatesh]
Suppose that $p^{r}$ divides $\mathcal{N}(v)-1$ and $p$ does not divide the order of the Weil group of $G$.
Then the local derived Hecke algebra $\Hec_{v,\Z/p^r}$ is graded commutative.
\end{Thm}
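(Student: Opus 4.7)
The plan is to imitate the classical Gelfand trick for the ordinary spherical Hecke algebra, enhanced with a Shapiro-type identification that turns the Ext computation into group cohomology of compact subgroups.

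First, since $\Z/p^r[G_v/K_v]\cong\cind_{K_v}^{G_v}\Z/p^r$ and compact induction is left adjoint to restriction in the smooth category (even at the derived level, because $\cind$ is exact and preserves projectives on nice pieces), Frobenius reciprocity yields a graded isomorphism
\[
\Hec_{v,\Z/p^r}\;\cong\; H^\ast\bigl(K_v,\;\cind_{K_v}^{G_v}\Z/p^r\bigr),
\]
where the right hand side is continuous cohomology of the profinite group $K_v$. Applying Mackey's decomposition together with the Cartan decomposition of $G_v=\bigsqcup_\lambda K_v\lambda(\varpi)K_v$ (indexed by dominant cocharacters $\lambda$) refines this to a direct sum
\[
\Hec_{v,\Z/p^r}\;\cong\;\bigoplus_{\lambda}\; H^\ast\bigl(K_v\cap \lambda(\varpi)K_v\lambda(\varpi)^{-1},\;\Z/p^r\bigr),
\]
which is the derived analogue of the usual basis of the spherical Hecke algebra by double cosets.

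Second, I would analyse each summand. The hypothesis $p^r\mid \mathcal N(v)-1$ ensures that $T(\F_v)$ has enough $p^r$-torsion so that $H^\ast(T(\F_v),\Z/p^r)$ is an exterior algebra in the correct number of generators; simultaneously, the hypothesis $p\nmid|W|$ makes $W$-invariants exact and ensures the Bruhat-type stratifications of the reductive quotient of $K_v$ behave well modulo $p$. Combining these with a Lyndon--Hochschild--Serre spectral sequence for the pro-$p$ radical of $K_v$ inside $K_v$, one obtains a graded Satake-type isomorphism
\[
\Hec_{v,\Z/p^r}\;\xlongrightarrow{\;\sim\;}\;\bigl(\Z/p^r[X_\ast(T)]\otimes_{\Z/p^r} H^\ast(T(\F_v),\Z/p^r)\bigr)^{W},
\]
where the right hand side is manifestly graded commutative (tensor product of a commutative ring and an exterior algebra, with $W$ acting in degree-preserving fashion).

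Third, to finish I would verify that the convolution/Yoneda product on the left matches the ring structure on the right. This is the place where a Gelfand-style anti-involution $\iota\colon G_v\to G_v$ fixing $K_v$ (e.g.\ a Chevalley anti-involution) is decisive: it induces an involution of $\Hec_{v,\Z/p^r}$ that one checks agrees with the anti-automorphism $x\cdot y\mapsto(-1)^{|x||y|}y\cdot x$ on each Ext group (the sign coming from the Koszul rule in Yoneda product), and simultaneously acts as the identity on the Satake side. Comparing the two descriptions forces graded commutativity.

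The main obstacle I anticipate is the third step: matching the Yoneda product with the Satake-side multiplication requires controlling how cup products in $H^\ast(K_v\cap gK_vg^{-1},\Z/p^r)$ combine under the Mackey decomposition, and verifying that the anti-involution acts trivially modulo the signs predicted by graded commutativity. Both the hypothesis on $p\nmid |W|$ (to descend to $W$-invariants) and the hypothesis $p^r\mid \mathcal N(v)-1$ (to identify the exterior generators with characters of $T(\F_v)$) are used precisely at this juncture; without them the cohomology of intersections $K_v\cap gK_vg^{-1}$ would be too complicated to admit the clean Satake description on which the Gelfand trick relies.
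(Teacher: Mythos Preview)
The paper's own proof is a single sentence: it is an immediate consequence of Venkatesh's derived Satake isomorphism (\cite{Ve}, Theorem 3.3), which identifies $\Hec_{v,\Z/p^r}$ with $\bigl(\Z/p^r[X_\ast(T)]\otimes H^\ast(T(\F_v),\Z/p^r)\bigr)^{W}$ as graded rings. So what you have written is not really an alternative proof of the statement as it appears in this paper, but rather a sketch of (parts of) Venkatesh's theorem itself.

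Your first two steps --- the Shapiro/Frobenius identification and the Mackey decomposition over Cartan double cosets --- are correct and are exactly the starting point of Venkatesh's argument. The additive decomposition you write down is right, and the identification of the target as $W$-invariants in a tensor of the cocharacter group ring with the exterior algebra on $\Hom(T(\F_v),\Z/p^r)$ is the content of the derived Satake isomorphism.

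Where your outline becomes shaky is the third step. You conflate two logically distinct strategies. One is Venkatesh's: prove that the additive Satake map is a \emph{ring} homomorphism, after which graded commutativity is read off from the target. The other is a direct Gelfand trick via an anti-involution. You invoke the second to justify the first, but they are separate arguments. Moreover, the Gelfand trick in the derived setting is not as clean as you suggest: a Chevalley anti-involution acts on $T$ by inversion, hence on $H^1(T(\F_v),\Z/p^r)$ by $-1$, and on $X_\ast(T)$ essentially by $-w_0$; checking that these signs conspire to give exactly the Koszul sign $(-1)^{|x||y|}$ on the Yoneda product is a genuine computation, not a formality. Venkatesh does not take this route --- he verifies multiplicativity of the Satake map directly by an explicit restriction--corestriction calculation on the double-coset summands. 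If you want to pursue your outline, the honest thing to do is either carry out that multiplicativity check (which is the real work) or drop the Satake isomorphism entirely and make the anti-involution argument self-contained, tracking all signs carefully.
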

\begin{proof}
This is an immediate consequence of the derived Satake isomorphism (see \cite{Ve}, Theorem 3.3)
\end{proof}

Let $\Omega$ be a finite extension of $\Q_p$ with ring of integers $R$.
Given a compact open subgroup $K^{\p}=\prod_{w\nmid \p\infty}K_w\subseteq G(\A\pinfty)$ with $K_v$ as above, a sign character $\epsilon\colon \pi_0(G_\infty)\to \left\{\pm 1\right\}$ and a smooth $G_\p$-representation $M\in \mathfrak{C}^{\sm}_{R/p^r}(G_\p) $ there is a graded action of $\Hec_{v,\Z/p^r}$ on $\HH^{\ast}(G(F),\Ah_{R/p^{r}}(K^{\p},M;R/p^{r}(\epsilon))$:
pullback along the embedding $G(F)\to G_\p$ induces a map
$$\Hec(G_v,K_v)_{\Z/p^r}\too \HH^{\ast}(G(F),\End_{\Z/p^{r}}(\Z/p^r[G_v/K_v])).$$
The action of the derived Hecke algebra is then given by the cup product pairing coming from the natural pairing
$$\Ah_{R/p^{r}}(K^{\p},M;R/p^{r}(\epsilon))\times \End_{\Z/p^{r}}(\Z/p^r[G_v/K_v]) \too \Ah_{R/p^{r}}(K^{\p},M;R/p^{r}(\epsilon)).$$
A more concrete description of the action in terms of explicit generators of the derived Hecke algebra can be found in Section 4.2 of \cite{Ge3}.

Let $k\gg 0$ be an integer.
By the results of Section \ref{Extensions4} we can associate to any locally constant homomorphism $\bar{\lambda}\colon F_\p^{\times}\to p^{k}R/p^{k+r}R$ an extension class in $\Ext^{1}_{\mathfrak{C}^{\sm}_{R/p^{r}}(G_\p)}(v^{\infty}_i(R/p^{r}),\St^{\infty}_{G_\p}(R/p^{r})).$
Taking cup product with this class induces a homomorphism

\begin{align*}
c^{(d)}_{i}(\bar{\lambda})^{\epsilon}\colon \HH^{d}(G(F),\Ah(K^\p,\St_{G_\p}^{\infty};R/p^{r}(\epsilon)))\too \HH^{d+1}(G(F),\Ah(K^\p,v^{\infty}_{i};R/p^{r}(\epsilon))).
\end{align*}

Further, let $I_\p\subseteq G_\p$ be an Iwahori subgroup.
The $\Z$-module of $I_p$-invariants of $\St_{G_\p}(\Z)$ is free of rank one (see the discussion above Corollary 2 of \cite{GK2}).
Thus, as in Section \ref{Component} a choice of a generator yields a map
\begin{align}\label{evmod}
\ev^{(d)}_{R/p^{r}}\colon \HH^{d}(G(F),\Ah(K^\p,\St^{\infty}_{G_\p};R/p^{r}(\epsilon)))\too \HH^{d}(X_{K^\p\times I_\p},R/p^{r})^{\epsilon}.
\end{align}

\begin{Lem}\label{diagrams}
Let $t\in\Hec_{v,\Z/p^r}$ be an element of degree $d^\prime$.
\begin{enumerate}[(a)]
\item\label{diagram1}
Let $\bar{\lambda}\colon F_\p^{\times}\to p^{k}R/p^{k+r}R$ be a locally constant character.
Then, the equality
$$c^{(d+d^\prime)}_{i}(\bar{\lambda})^{\epsilon}(t.x) = (-1)^{d^\prime}\cdot t.c^{(d)}_{i}(\bar{\lambda})^{\epsilon}(x)$$
holds for all $x\in \HH^{d}(G(F),\Ah(K^\p,\St_{G_\p}^{\infty};R/p^{r}(\epsilon)))$.
\item\label{diagram2}
The equality
$$\ev^{(d+d^\prime)}_{R/p^{r}}(t.x) = t.\ev^{(d)}_{R/p^{r}}(x)$$
holds for all $x\in \HH^{d}(G(F),\Ah(K^\p,\St_{G_\p}^{\infty};R/p^{r}(\epsilon)))$.
\end{enumerate}
\end{Lem}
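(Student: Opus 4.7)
The plan is to interpret both identities as instances of graded commutativity for cup products applied to operations supported at disjoint places. Write $K^\p = K^{\p,v} \times K_v$ and exploit the canonical identification
\[
\Ah_{R/p^r}(K^\p,M;R/p^r(\epsilon)) \;\cong\; \Hom_{R/p^r}\bigl(R/p^r[G_v/K_v],\ \Ah_{R/p^r}(K^{\p,v},M;R/p^r(\epsilon))\bigr),
\]
which realises $\Ah_{R/p^r}(K^\p,M;R/p^r(\epsilon))$ as a smooth $G_v$-module (independent of $M$) tensored with information at $\p$. The derived Hecke action of $t\in\Hec_{v,\Z/p^r}$ is, by construction, the Yoneda product on the $G_v$-factor and is pulled back to cohomology of $G(F)$ via the inclusion $G(F)\hookrightarrow G_v$; on the other hand, both $c^{(d)}_i(\bar\lambda)^\epsilon$ and $\ev^{(d)}_{R/p^r}$ are induced by maps or extensions that only involve $M$ (i.e.\ the $G_\p$-component), so they are $G_v$-equivariant.

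For part \eqref{diagram1}, I would express $c^{(d)}_i(\bar\lambda)^\epsilon$ as cup product with the extension class $\bar{\mathcal E}\in\Ext^1_{\mathfrak C^{\sm}_{R/p^r}(G_\p)}(v^{\infty}_i(R/p^r),\St^{\infty}_{G_\p}(R/p^r))$ produced in Section \ref{Extensions4}. Choosing a projective resolution at $v$ and a two-term resolution at $\p$ realising $\bar{\mathcal E}$, one sees that both operations come from morphisms in a bicomplex whose cohomology computes $\HH^\ast(G(F),\Ah(\cdot))$. Then the equality $c^{(d+d')}_i(\bar\lambda)^\epsilon(t.x)=(-1)^{d'}t.c^{(d)}_i(\bar\lambda)^\epsilon(x)$ is the standard sign rule for switching the order of two cup products of degrees $d'$ and $1$, giving the sign $(-1)^{d'\cdot 1}=(-1)^{d'}$.

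For part \eqref{diagram2}, the map $\ev^{(d)}_{R/p^r}$ is induced by a $G_\p$-equivariant morphism $\cind_{I_\p}^{G_\p}\Z\to \St^{\infty}_{G_\p}(\Z)$ of degree $0$, which is the identity on the $G_v$-factor. Hence it commutes strictly (no sign) with the cup product by $t$; formally this is the naturality of the derived Hecke action in the coefficient module $M$. An alternative—and perhaps cleaner—route is to invoke Lemma \ref{hom} to rewrite each cohomology group as $\Hom_{G_\p}$ applied to an injective/projective resolution against $\varinjlim_{K_\p}\HH^\ast(X_{K^\p\times K_\p},R/p^r)^\epsilon$, on which $\Hec_{v,\Z/p^r}$ also acts, and note that $\ev^{(d)}_{R/p^r}$ corresponds to restriction along an Iwahori, a purely $\p$-local operation.

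The main obstacle is purely bookkeeping: identifying the Yoneda/$\delta$-functor description of the derived Hecke action of \cite{Ve} with the concrete cup-product-on-cochains description so that the bicomplex argument goes through, and checking that the sign convention matches the convention used to define $c^{(d)}_i(\bar\lambda)^\epsilon$ via the connecting homomorphism of the short exact sequence underlying $\bar{\mathcal E}$. Once these conventions are pinned down—most transparently by working on the level of an explicit Koszul-type double complex combining a resolution of $R/p^r[G_v/K_v]$ as a smooth $G_v$-module with the two-term resolution defining $\bar{\mathcal E}$—both identities fall out. A very similar computation was carried out for $G=PGL_2$ in Section 4 of \cite{Ge3}, and the argument there transfers \textit{mutatis mutandis}.
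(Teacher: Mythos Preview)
Your proposal is correct and aligns with the paper's own treatment: the paper does not give a self-contained argument but simply refers to Lemma~4.6 of \cite{Ge3} for the $PGL_2$ case and asserts that the same proof carries over. Your sketch---isolating the $G_v$-factor, recognising the derived Hecke action as a cup product at the place $v$ while $c^{(d)}_i(\bar\lambda)^\epsilon$ and $\ev^{(d)}$ act purely at $\p$, and invoking the graded commutativity sign $(-1)^{d'\cdot 1}$ for part~(a) and degree-$0$ naturality for part~(b)---is precisely the content of that reference, so you have in fact supplied more detail than the paper itself.
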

\begin{proof}
See Lemma 4.6 of \cite{Ge3} for a detailed proof in the case $G=PGL_2$.
The same arguments carry over to this more general setup.
\end{proof}

\subsection{Global derived Hecke algebra and independence of degree}\label{Degreeshift}
As already noted in \cite{Ve}, a major technical problem in the construction of the global derived Hecke algebra is that the natural reduction maps
$$\Hec_{v,\Z/p^{n+1}}\too \Hec_{v,\Z/p^n}$$
are in general far away from being surjective.
Following Venkatesh we consider as a first step the subalgebra of the endomorphism ring of a certain cohomology group generated by the image of the action of all local derived Hecke algebras.
As a second step we take the projective limit over those subalgebras.

Let $K^{\p}\subseteq G(\A\pinfty)$ a compact open subgroup as in Section \ref{Component}.
We may assume that $K_\p$ is given as a product $\prod_{v\nmid \p\infty}K_v.$
Further, let $\Omega$ be a finite extension of $\Q_\p$, such that $\pi^{\infty}$ can be defined over $\Omega$, and $R$ its ring of integers with uniformizer $\varpi$.
We denote by $S$ the finite set of all places $v$ of $F$ that either divide $\infty$ or such that $K_v$ is not hyperspecial.
We write $\T^{(S)}=\T_R^{(S)}=R[K^S\backslash G(\A^S)/K^S]$ for the associated Hecke algebra, which is commutative by the Satake isomorphism.
It acts on $\pi^{S}$ via a character $\varphi \colon \T^{(S)} \to R$.
Let $\m=\m(\pi)$ be the kernel of the map
$$\T^{(S)}\too R/\varpi,\ t \mapstoo \varphi(t) \bmod (\varpi).$$
We put
$$A_r= \HH^{\ast}(X_{K^\p\times I_\p},R/p^{r})_{\m}^{\epsilon},$$
where the subscript $\m$ denotes localization at the maximal ideal $\m$.
Further, we set
$$A_\infty=\varprojlim_r A_r \otimes_R \Omega.$$
By Corollary \ref{limits} we have a canonical isomorphism
\begin{align*}
A_\infty[\pi]\cong \HH^{\ast}(X_{K^\p\times I_\p},\Omega)^{\epsilon}[\pi].
\end{align*}

Let $\tilde{\mathbb{T}}^{\ast}_{\Z/p^r}\subseteq \End(A_r)$ be the graded algebra generated by the actions of $\Hec_{v,\Z/p^r}$ for all places $v\notin S$ such that $G_v$ is split and $p^{r}$ divides $\mathcal{N}(v)-1$.
There are natural reduction maps $\tilde{\mathbb{T}}^{\ast}_{\Z/p^r}\to \tilde{\mathbb{T}}^{\ast}_{\Z/p^s}$ for $r\geq s$.
\begin{Def}
The \emph{global derived Hecke algebra} is the graded $\Q_p$-algebra
$$\tilde{\mathbb{T}}=\varprojlim_r \tilde{\mathbb{T}}^{\ast}_{\Z/p^r}\otimes \Q_p.$$
\end{Def}
By construction $\tilde{\mathbb{T}}$ acts on $A_\infty.$

\begin{Lem}\label{commutative}
Assume that $p$ does not divide the order of the Weil group of $G$.
Then the algebra $\tilde{\mathbb{T}}$ is graded-commutative and its action on $A_\infty$ commutes with the action of $\T^{(S)}$.
\end{Lem}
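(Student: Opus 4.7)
The claim breaks into two parts: graded-commutativity of $\tilde{\T}$, and commutativity of the $\tilde{\T}$-action on $A_\infty$ with the action of $\T^{(S)}$. The plan is to combine (i) the graded-commutativity of each local factor $\Hec_{v,\Z/p^r}$, which is Venkatesh's derived Satake theorem recalled at the start of Section~\ref{Hecke} and which uses the standing hypothesis on $p$, with (ii) a ``distinct places'' principle asserting that the actions of $\Hec_{v,\Z/p^r}$ and $\Hec_{w,\Z/p^r}$ on $A_r$ commute up to Koszul sign whenever $v\neq w$.

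To verify (ii), recall that the action of $t_v\in\Hec_{v,\Z/p^r}$ on $A_r$ is given as cup product with the image of $t_v$ under the pullback
\[
\Hec_{v,\Z/p^r} \too \HH^{\ast}(G(F),\End_{\Z/p^{r}}(\Z/p^r[G_v/K_v])),
\]
and similarly at $w$. Because the off-$\p$ level subgroup factors as $K^\p=\prod_{u\nmid \p\infty}K_u$, the coefficient module $\Ah_{R/p^r}(K^\p,M;R/p^r(\epsilon))$ carries commuting pairings with $\End_{\Z/p^r}(\Z/p^r[G_v/K_v])$ and $\End_{\Z/p^r}(\Z/p^r[G_w/K_w])$, acting on disjoint ``tensor factors'' of the adelic coefficient object. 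Together with graded-commutativity of the cup product on $\HH^{\ast}(G(F),-)$ applied to commuting coefficient subalgebras, this yields $t_v t_w=(-1)^{|t_v||t_w|}t_w t_v$ on $A_r$. Combined with (i), the subalgebra of $\End(A_r)$ generated by all such local factors, which is precisely $\tilde{\T}^\ast_{\Z/p^r}$, is graded-commutative. Both the inverse limit and tensoring with $\Q_p$ preserve graded-commutativity, settling the first part.

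For the second part, the local component of $\T^{(S)}$ at $v\notin S$ is the classical spherical Hecke algebra at $v$, which embeds into the degree-zero subalgebra of $\Hec_{v,\Z/p^r}$ whenever $v$ is split and $p^r\mid\mathcal{N}(v)-1$. Graded-commutativity from (i) then forces each such classical operator to commute with the entirety of $\Hec_{v,\Z/p^r}$, while commutativity with derived Hecke operators at other places follows from (ii) in mixed degree. At places where $\Hec_{v,\Z/p^r}$ is not among our generators, only the classical Hecke operator appears, and (ii) applied in degree zero does the job. Passing to the inverse limit concludes.

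The main technical obstacle is a clean formulation of the distinct-places principle at finite level $K^\p\times I_\p$ with arbitrary ramification inside $S$. In practice, one represents each Ext class by a chain map on a flat $\Z/p^r[G(F)]$-resolution of the coefficients built adelically from compactly induced representations; the two chain maps act on disjoint tensor factors and thus commute strictly, with the Koszul sign emerging only from the interchange of cohomological degrees in the cup product. This is a direct adaptation of the framework set up by Venkatesh in~\cite{Ve}, to which I would refer for the detailed bookkeeping.
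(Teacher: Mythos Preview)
Your proof is correct and follows essentially the same approach as the paper: reduce to finite level $\tilde{\T}^\ast_{\Z/p^r}$ acting on $A_r$, invoke Venkatesh's derived Satake theorem for graded-commutativity at a single place, and appeal to the (anti-)commutation of derived Hecke operators at distinct places. The paper's proof is terser (it simply asserts that operators at different places anti-commute), whereas you unpack this via the cup-product description and also spell out the case distinction for $\T^{(S)}$-operators at places that are or are not among the generators of $\tilde{\T}^\ast_{\Z/p^r}$.
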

\begin{proof}
By construction it suffices to prove the statement with $\tilde{\mathbb{T}}^{\ast}_{\Z/p^r}$ in place of $\tilde{\mathbb{T}}$ respectively $A_r$ in place of $A_\infty$.
It can easily be seen that derived Hecke operators at different places (anti-)commute with each other.
Thus, the assertion follows from Venkatesh's theorem above.
\end{proof}
In particular, we see that $H^{\ast}(X_{K^\p\times I_\p},\Omega)^{\epsilon}[\pi]\cong A_\infty[\pi]$ is a $\tilde{\mathbb{T}}$-submodule of $A_\infty$.

Let us consider the module
$$B_r^{\prime}=\HH^{\ast}(G(F),\Ah(K^\p,\St_{G_\p}^{\infty};R/p^{r}R(\epsilon)))_{\m}.$$
We write $\ev^{(\ast)}_{R/p^{r}}\colon B_r^{\prime}\to A_r$ for the localization at $\m$ of the direct sums of the maps \eqref{evmod} and put
$$B_r=B_r^{\prime}/ \ker(\ev^{(\ast)}_{R/p^{r}}).$$
As above, we set
\begin{align*}
B_\infty^\prime=\varprojlim_r B_r^\prime \otimes_R \Omega
\quad \mbox{and}\quad
B_\infty=\varprojlim_r B_r \otimes_R \Omega.
\end{align*}
The evaluation maps $\ev^{(\ast)}_{R/p^{r}}$ induce an injection
$$\ev^{(\ast)}\colon B_\infty \intoo A_\infty,$$
via which we consider $B_\infty$ as a subspace of $A_\infty$.
Lemma \ref{diagrams} \eqref{diagram2} implies that $B_\infty$ is even a submodule of $A_\infty$ considered as a $\tilde{\mathbb{T}}$-module.

The $R/p^{r}$-modules $B_r^\prime$ (and thus also $B_r$) are finitely generated by Proposition \ref{FlachundNoethersch} \eqref{FuN}-Therefore, taking projective limits is exact by the Mittag-Leffler condition and one deduces that the homomorphism
$$B_\infty^{\prime}\too B_\infty$$
is surjective.
By Corollary \ref{limits} we have
\begin{align}\label{firstid}
B_\infty^\prime[\pi]\cong \HH^{\ast}(G(F),\Ah(K^\p,\St;\Omega(\epsilon)))[\pi].
\end{align}
Thus, it follows from Proposition \ref{dimensions} \eqref{seconddim} that the map
$$B_\infty^\prime [\pi]\to A_\infty[\pi]$$
induced by the evaluation maps is an isomorphism.
By definition this map factors over $B_\infty$ and, hence, we get a chain of isomorphisms
\begin{align}\label{Bisom}
B_\infty^{\prime}[\pi]\xlongrightarrow{\cong} B_\infty [\pi]\xlongrightarrow{\cong} A_\infty [\pi].
\end{align}
Note that the second isomorphisms is $\tilde{\mathbb{T}}$-equivariant.

Given a locally constant character $\bar{\lambda}\colon F_\p^{\times}\to p^{k}R/p^n$ we write
$$c^{(\ast)}_{i}(\bar{\lambda})^{\epsilon}\colon B_r^{\prime} \too \HH^{\ast}(G(F),\Ah(K^\p,v^{\infty}_{i};R/p^{r}(\epsilon)))_\m$$
for the localization at $\m$ of the direct sums of the maps $c^{d}_{i}(\bar{\lambda})^{\epsilon}$.
We define
$$C_r^{\prime}\subseteq \HH^{\ast}(G(F),\Ah(K^\p,v^{\infty}_{i};R/p^{r}(\epsilon)))$$
as the submodule generated by the images of the maps $c^{(\ast)}_{i}(\bar{\lambda})^{\epsilon}$ where $\lambda$ runs through all locally constant homomorphisms as above.
Thus, every homomorphism $\lambda$ induces a map
\begin{align}\label{modmap}
c^{(\ast)}_{i}(\bar{\lambda})^{\epsilon}\colon B_r^{\prime} \too C_r^\prime.
\end{align}
Finally, we define
$$C_r=C_r^{\prime}/\sum_{\bar{\lambda}}c^{(\ast)}_{i}(\bar{\lambda})^{\epsilon}(\ker(\ev^{(\ast)}_{R/p^{r}})).$$
The map \eqref{modmap} descends to a map
$$c^{(\ast)}_{i}(\bar{\lambda})^{\epsilon}\colon B_r\too C_r.$$

As before, we set
\begin{align*}
C_\infty^\prime=\varprojlim_r C_r^\prime \otimes_R \Omega
\quad \mbox{and}\quad
C_\infty=\varprojlim_r C_r \otimes_R \Omega.
\end{align*}
Replacing Lemma \ref{diagrams} \eqref{diagram2} by Lemma \ref{diagrams} \eqref{diagram1} one can argue as before to show that
the action of the local derived Hecke algebras on $C_r$ descend to an action of $\tilde{\mathbb{T}}$ on $C_\infty$.
Moreover, for any continuous character $\lambda\colon F_\p^{\times}\to p^{k}R$ the map
$$\varprojlim_r c^{(\ast)}_{i}(\lambda \bmod p^r)^{\epsilon} \colon B_\infty \too C_\infty$$
induced by the compatible system $\left(c^{(\ast)}_{i}(\lambda \bmod p^r)^{\epsilon}\right)_r$ is a homomorphisms of $\tilde{\mathbb{T}}$-modules.

Let us assume that $v^{\infty}_{i}(R)$ is flawless.
Then Corollary \ref{limits} and Corollary \ref{smoothcup} imply that
\begin{align}\label{secondid}
C_\infty^\prime[\pi]\cong \HH^{\ast}(G(F),\Ah(K^\p,v^{\infty}_{i};\Omega(\epsilon)))[\pi].
\end{align}
As before, since the Mittag-Leffler condition is fulfilled, we deduce that the canonical map $C_\infty^\prime\too C_\infty$
is surjective.

Moreover, for every continuous character $\lambda\colon F_\p^{\times}\to p^{k}R$ the map 
$$\varprojlim_r c^{(\ast)}_{i}(\lambda \bmod p^r)^{\epsilon} [\pi]\colon B_\infty^\prime[\pi] \too C_\infty^\prime[\pi]$$
induced by the compatible system $\left(c^{(\ast)}_{i}(\lambda \bmod p^r)^{\epsilon}\right)_r$ agrees with $c^{(\ast)}_{i}(\lambda )^{\epsilon}$ under the identifications \eqref{firstid} and \eqref{secondid}.
Hence, by Corollary \ref{smoothcup} the map above is an isomorphism for $\lambda$ a suitable multiple of the $p$-adic valuation.
One deduces from the first isomorphism in \eqref{Bisom} that the map
$$C_\infty^\prime[\pi]\too C_\infty[\pi]$$
is an isomorphism and, therefore, we have an isomorphism
$$C_\infty[\pi]\cong \HH^{\ast}(G(F),\Ah(K^\p,v^{\infty}_{i};\Omega(\epsilon)))[\pi]$$
by \eqref{secondid}.

\begin{Thm}
Assume that $p$ does not divide the order of the Weil group of $G$ and that $v^{\infty}_{i}(R)$ is flawless.
Further, assume that $\LI_{i}^{(0)}(\pi,\p)^{\epsilon}\subseteq \Hom_{\cont}(F_\p^{\times},\Omega)$ is a subspace of codimension one.
If $\HH^{q}(X_{K^\p\times I_\p},\Omega)^{\epsilon}[\pi]$ generates $\HH^{\ast}(X_{K^\p\times I_\p},\Omega)^{\epsilon}[\pi]$ as a $\tilde{\mathbb{T}}$-module, then part \eqref{conjdegree} of Conjecture \ref{conjecture} holds, i.e.~we have
$$\LI_{i}^{(d)}(\pi,\p)^{\epsilon}=\LI_{i}^{(0)}(\pi,\p)^{\epsilon}$$
for all $0\leq d \leq \delta$.
\end{Thm}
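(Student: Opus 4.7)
The plan is to promote the vanishing of the cup product map from the lowest degree $q$ to all degrees $q+d$ using the $\tilde{\mathbb{T}}$-equivariance of cup products together with the cyclicity hypothesis, and then use the codimension bound from Proposition~\ref{codimension} to conclude equality rather than just inclusion. This follows the strategy of \cite{Ge3} in the $PGL_2$ case, extended to the present setting.

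First, I would set up the $\tilde{\mathbb{T}}$-equivariance of the global cup product. The discussion preceding the theorem identifies
$$B_\infty[\pi]\cong \HH^{\ast}(G(F),\Ah(K^\p,\St^\infty_{G_\p};\Omega(\epsilon)))[\pi], \qquad C_\infty[\pi]\cong \HH^{\ast}(G(F),\Ah(K^\p,v^\infty_i;\Omega(\epsilon)))[\pi],$$
and realizes the cup product $c^{(\ast)}_i(\lambda)^\epsilon[\pi]$ associated to any $\lambda\in \Hom_{\cont}(F_\p^\times,\Omega)$ as the projective limit of the mod $p^r$ cup products $c^{(\ast)}_i(\lambda\bmod p^r)^\epsilon$. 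By Lemma~\ref{diagrams}(a), each finite-level cup product satisfies $c^{(d+d')}_i(\bar\lambda)^\epsilon(t.x) = (-1)^{d'} t.c^{(d)}_i(\bar\lambda)^\epsilon(x)$ for every $t\in \Hec_{v,\Z/p^r}$ of degree $d'$. Passing to the limit over $r$ and localizing at $\m(\pi)$, the resulting map $B_\infty[\pi]\to C_\infty[\pi]$ is graded $\tilde{\mathbb{T}}$-equivariant up to the graded commutativity sign. Moreover, the chain of isomorphisms \eqref{Bisom} is itself $\tilde{\mathbb{T}}$-equivariant, so cyclicity of $\HH^{\ast}(X_{K^\p\times I_\p},\Omega)^\epsilon[\pi]$ transports into cyclicity of $B_\infty[\pi]$ as a $\tilde{\mathbb{T}}$-module.

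Next, I would fix $\lambda\in \LI^{(0)}_i(\pi,\p)^\epsilon$ and show $\lambda\in \LI^{(d)}_i(\pi,\p)^\epsilon$ for every $0\le d\le \delta$. By definition the map $c^{(q)}_i(\lambda)^\epsilon[\pi]$ vanishes on the lowest-degree piece $\HH^{q}(G(F),\Ah(K^\p,\St^\infty_{G_\p};\Omega(\epsilon)))[\pi]$. Given any class $y$ in the degree $d+q$ piece of $B_\infty[\pi]$, the cyclicity hypothesis lets me write $y=\sum_j t_j.x_j$ with $t_j\in \tilde{\mathbb{T}}$ of degree $d$ and $x_j$ in the degree $q$ piece. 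The equivariance just established gives
$$c^{(d+q)}_i(\lambda)^\epsilon(y) = \sum_j \pm\, t_j.\,c^{(q)}_i(\lambda)^\epsilon(x_j) = 0,$$
so $c^{(d+q)}_i(\lambda)^\epsilon[\pi]$ vanishes identically. This proves the inclusion $\LI^{(0)}_i(\pi,\p)^\epsilon \subseteq \LI^{(d)}_i(\pi,\p)^\epsilon$.

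Finally, I would invoke Proposition~\ref{codimension}, which states that $\LI^{(d)}_i(\pi,\p)^\epsilon$ has codimension at least one in $\Hom_{\cont}(F_\p^\times,\Omega)$, since it never contains the space of smooth homomorphisms. Because $\LI^{(0)}_i(\pi,\p)^\epsilon$ is assumed to have codimension exactly one, the inclusion above forces equality for every $d$. The main technical point that requires care is the passage to the limit: one must verify that the finite-level equivariance from Lemma~\ref{diagrams}(a) is compatible with the two systems of localizations, quotients, and projective limits that define $B_\infty$ and $C_\infty$, and that the isomorphism $B_\infty[\pi]\xrightarrow{\cong} A_\infty[\pi]$ intertwines the actions. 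Once this bookkeeping is in place, the argument is essentially formal.
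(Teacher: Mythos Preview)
Your proposal is correct and follows essentially the same approach as the paper: transport cyclicity from $A_\infty[\pi]$ to $B_\infty[\pi]$ via the $\tilde{\mathbb{T}}$-equivariant isomorphism \eqref{Bisom}, then use Lemma~\ref{diagrams}\eqref{diagram1} in the limit to propagate vanishing of $c^{(q)}_i(\lambda)$ to all degrees. The only cosmetic difference is in the final step: the paper packages both inclusions at once by introducing a scalar $\ell_\lambda$ with $c^{(q)}_i(\lambda)=\ell_\lambda\cdot c^{(q)}_i(\ord_\p)$ and showing $c^{(d+q)}_i(\lambda)(m)=\ell_\lambda\cdot c^{(d+q)}_i(\ord_\p)(m)$ for all $m$, whereas you prove the inclusion $\LI^{(0)}_i\subseteq\LI^{(d)}_i$ and then invoke Proposition~\ref{codimension} for the reverse.
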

\begin{proof}
As explained above, we have endowed the source and the target of the isomorphism
$$\HH^{\ast}(G(F),\Ah(K^\p,\St^{\infty}_{G_\p};\Omega(\epsilon)))[\pi]\xrightarrow{\ev^{(d)}} \HH^{\ast}(X_{K^\p\times I_\p},\Omega)^{\epsilon}[\pi]$$
with an action of the global derived Hecke algebra $\tilde{\mathbb{T}}$ in such a way that the map is $\tilde{\mathbb{T}}$-equivariant.
As the right hand side is generated by its lowest degree component, so is the left hand side.
Let $m$ be an element of $\HH^{d}(G(F),\Ah(K^\p,\St^{\infty}_{G_\p};\Omega(\epsilon)))[\pi].$
By the above we find an element $x\in\HH^{q}(G(F),\Ah(K^\p,\St^{\infty}_{G_\p};\Omega(\epsilon)))[\pi]$ and a derived Hecke operator $t\in \tilde{\mathbb{T}}$ of degree $d-q$ such that $t.x=m.$

Let $\lambda\colon F_\p^\times\to \Omega$ be a continuous character.
By our codimension one assumption we know that there exists a constant $\ell_{\lambda}\in\Omega$ such that
$$c^{(q)}_{i}(\lambda)[\pi]^{\epsilon}(y)=\ell_\lambda\cdot  c^{(q)}_{i}(\ord_\p)[\pi]^{\epsilon}(y)$$
for all $y\in \HH^{q}(G(F),\Ah(K^\p,\St^{\infty}_{G_\p};\Omega(\epsilon)))[\pi].$
Thus, a homomorphism $\lambda$ is an element of  $\LI_{i}^{(0)}(\pi,\p)^{\epsilon}$ if and only if $\ell_\lambda=0.$
From the discussion preceding the theorem and Lemma \ref{diagrams} \eqref{diagram1} we get that
\begin{align*}
c^{(d)}_{i}(\lambda)[\pi]^{\epsilon}(m)
&= c^{(d)}_{i}(\lambda)[\pi]^{\epsilon}(t.x)\\
&= (-1)^{d-q}\cdot t.c^{(q)}_{i}(\lambda)[\pi]^{\epsilon}(x)\\
&= (-1)^{d-q}\ell_\lambda\cdot  t.c^{(q)}_{i}(\ord_\p)[\pi]^{\epsilon}(x)\\
&= \ell_\lambda\cdot c^{(d)}_{i}(\ord_\p)[\pi]^{\epsilon}(t.x)\\
&= \ell_\lambda\cdot c^{(d)}_{i}(\ord_\p)[\pi]^{\epsilon}(m)
\end{align*}
which proves the assertion.
\end{proof}

\begin{Rem}
By Proposition \ref{codimension} the condition that $\LI_{i}^{(0)}(\pi,\p)^{\epsilon}$ has codimension one is fulfilled if the multiplicity of the representation $\pi$ is equal to one, which always holds if $G=PGL_n$.
By Theorem \ref{admissible} the $R[G_\p]$-module $v^{\infty}_{i}(R)$ is also flawless in this case.
Thus, if $G=PGL_n$, the assumptions of the above theorem reduce to $p\nmid n!$ and that the $\pi$-isotypic component of cohomology of the associated locally symmetric space is generated by its minimal degree component.
The later assumption is, of course, the crucial one.
In case that $\pi$ is spherical at $p$ it was formulated as a question by Venkatesh in \cite{Ve}.
See Theorem 7.6 of \textit{loc.cit.~}for certain cases, in which Venkatesh answers this question affirmatively. 
\end{Rem}


\section{Galois representations and completed cohomology}\label{someconjectures}
\subsection{Conjectural connection to Galois representations}\label{Galois}
By assumption the automorphic representation $\pi$ is cohomological.
So, in particular, it is $C$-algebraic in the sense of Buzzard and Gee (cf.~\cite{BuzzGee}).
In \textit{loc.~cit.~}it is conjectured that one can associate a Galois representation $\rho_\pi$ to $\pi$ that takes values in $p$-adic points of the $C$-group $\CG$ of $G$.
After recalling the definition of the $C$-group we will refine the conjecture in our case.
More precisely, we will give a conjectural description of certain subquotients of the restriction $\rho_{\pi,\p}$ of $\rho_\pi$ to the local Galois group at $\p$ in terms of automorphic $\LI$-invariants.

Let us start with a brief reminder on the classical situation.
Suppose $f$ is a (elliptic modular) newform of weight $2$ and level $\Gamma_0(N)$, which is Steinberg at the rational prime $p$.
For simplicity, we assume that $f$ has rational Fourier coefficients
The automorphic representation $\pi_f$ associated to $f$ is a representation of $PGL_2$.
Although the dual group of $PGL_2$ is $SL_2$, the $p$-adic Galois representation $\rho_f$ attached to $f$ takes values in $GL_2(\Q_p)$ and no twist of it has image in $SL_2(\Q_p)$.
We write $\Q_p(1)$ for the one-dimensional Galois representation given by the cyclotomic character (and use a similar notation in case the coefficients are a finite extension of $\Q_p$).
Since $f$ is Steinberg (and ordinary) at $p$ the restriction of $\rho_f$ to the local Galois group $\Gal(\overline{\Q}_p/\Q_p)$ at $p$ is a non-split extension of $\Q_p$ by $\Q_p(1)$, i.e.~it defines a non-zero class $[\rho_{f,p}]$ in $\HH^{1}(\Gal(\overline{\Q}_p/\Q_p),\Q_p(1)).$
By local class field theory we have an isomorphism
$$\Hom_{\cont}(\Q_p^{\times},\Q_p)\cong\HH^{1}(\Gal(\overline{\Q}_p/\Q_p),\Q_p).$$
We define
$$\LI(\rho_{f,p})\subseteq \Hom_{\cont}(\Q_p^{\times},\Q_p)$$
as the kernel of taking the cup product with the class $[\rho_{f,p}]$.
By local Tate duality this is subspace of codimension one.
By Theorem 1 of \cite{D} we have an equality of subspaces
\begin{align}\label{Darmon}
\LI(\rho_{f,p})=\LI(\pi_f,p).
\end{align}
In this situation there is only one simple root and one cohomological degree.
Hence, we have dropped them from the notation.
Since the independence of the sign character is known for the automorphic $\LI$-invariant in this case (see Remark \ref{independenceRem}), we dropped it from the notation as well.
Over totally real number fields only partial results on the equality of automorphic and arithmetic $\LI$-invariants for modular elliptic curves are known (see Proposition 5.9 of \cite{Sp}).

Now let us return to our general setup.
In Proposition 5.3.1 of \cite{BuzzGee} a canonical $\mathbb{G}_m$-extension $G^{\prime}$ of $G$ is constructed.
For example, in the case $G=PGL_2$ this extensions turns out to be $GL_2$.
The $C$-group $\CG$ of $G$ is defined to be the $L$-group of $G^{\prime}$.
Here we view the $L$-group of a reductive group over $F$ as an algebraic group defined over $\overline{\Q}$.
Thus, the group $\CG$ is a semi-direct product of the dual reductive group $\widehat{G^{\prime}}$ of $G^{\prime}$ (defined over $\overline{\Q}$) and the absolute Galois group $\Gal(\overline{\Q}/F)$ of $F$.
Let
$$\rho_\pi\colon\Gal(\overline{\Q}/F)\too \CG(\overline{\Q}_p)$$
be one of the Galois representation conjecturally associated to $\pi$ as in \cite{BuzzGee}, Conjecture 5.3.4.
In particular, the composition of $\rho_\pi$ with the natural projection $\CG(\overline{\Q}_p)\to \Gal(\overline{\Q}/F)$ is the identity and the restriction of $\rho_\pi$ to almost all local Galois groups is compatible with the local Satake isomorphism (cf.~\textit{loc.~cit.~}for more details.)
Note that $\rho_\pi$ is not uniquely determined (up to conjugation) by $\pi$ as pointed out in Remark 3.2.4 of \cite{BuzzGee}.

By assumption, $G$ and hence $G^{\prime}$ is split at $\p$.
Therefore, the action of the local Galois group $\Gal(\overline{\Q}_p/F_\p)$ on $\widehat{G^{\prime}}$ is trivial.
Thus, restricting $\rho_\pi$ to the local Galois group at $\p$ yields a homomorphism
$$\rho_{\pi,\p}\colon\Gal(\overline{\Q}_p/F_\p)\too \widehat{G^{\prime}}(\overline{\Q}_p) \times \Gal(\overline{\Q}_p/F_\p)\too \widehat{G^{\prime}}(\overline{\Q}_p),$$
where the second map is given by projection on the first factor.
We may fix a finite extension $\Omega$ of $\Q_p$ such that $\rho_{\pi,\p}$ takes values in $\widehat{G^{\prime}}(\Omega)$.

We fix a split torus and a Borel subgroup $\widehat{T^{\prime}}\subseteq\widehat{B^{\prime}}\subseteq\widehat{G^{\prime}}$ and view $\Delta$ as a set of cocharacters of $T$.
Let $\Delta^{\vee}$ be the associated set of dual roots. 

\begin{Def}
A representation
$$\rho_{\p}\colon\Gal(\overline{\Q}_p/F_\p)\too \widehat{G^{\prime}}(\Omega)$$
is \emph{special} if 
\begin{itemize}
\item its image is (up to conjugation) contained in $\widehat{B^{\prime}}(\Omega)$ and
\item the composition $i^{\vee}\circ\rho_{\p}$ is the cyclotomic character for all $i^{\vee}\in \Delta^{\vee}.$
\end{itemize}
\end{Def}

Let $\rho_{\p}\colon\Gal(\overline{\Q}_p/F_\p)\to \widehat{G^{\prime}}(\Omega)$ be a special representation.
For every $i\in \Delta$ let $\widehat{P^\prime_i}\subseteq \widehat{G^{\prime}}$ the standard parabolic subgroup associated to its dual root $i^{\vee}$.
Since the Levi subgroup of $\widehat{P^\prime_i}$ has semi-simple rank 1, there exists a non-zero map
$$\pr_i\colon \widehat{P^\prime_i}\too PGL_2.$$
The composition $\rho_{\p,i}=\pr_i\circ\rho_{\p}$ takes (up to conjugation) values in the Borel subgroup of upper triangular matrices in $PGL_2(\Omega)$.
Since $\rho_{\p}$ is special we can lift $\rho_{\p,i}$ to an honest two-dimensional representation which is an extension of $\Omega$ by $\Omega(1)$
and, thus, it defines a class $[\rho_{\p,i}]$ in $\HH^{1}(\Gal(\overline{\Q}_p/\Q_p),\Omega(1)).$
Note that a priori this class could be trivial.
Similar as above, we define
$$\LI_{i}(\rho_{\p})\subseteq \Hom_{\cont}(F_\p^{\times},\Omega)\cong\HH^{1}(\Gal(\overline{\Q}_p/F_\p),\Omega)$$
as the kernel of taking the cup product with the class $[\rho_{\p,i}]$.
By local Tate duality this is a subspace of codimension at most one and the representation $\rho_{\p,i}$ is crystalline if and only if $\ord_\p \in \LI_{i}(\rho_{\p}).$

In accordance with \eqref{Darmon} we make the following prediction.
\begin{Con}\label{Galoisconj}
The representation $\rho_{\pi,\p}$ is special and the equality
$$\LI_{i}(\rho_{\pi,\p})=\LI_{i}^{(0)}(\pi,\p)^{\cf}$$
holds for all $i\in\Delta$.
\end{Con}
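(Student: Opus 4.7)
The conjecture is open in full generality; I will describe the strategy for establishing it in the special setting alluded to in the introduction, where $G$ is a definite unitary group split at $\p$ and $\pi$ is spherical away from $\p$ and one auxiliary place, so that Ding's local-global compatibility (\cite{Ding}, Theorem 1.2) is available. Specialness of $\rho_{\pi,\p}$ is essentially part of Ding's input, so the substantive content is the equality of the two codimension-one subspaces of $\Hom_{\cont}(F_\p^{\times}, \Omega)$. The plan is to identify both $\LI_i(\rho_{\pi,\p})$ and $\LI^{(0)}_i(\pi,\p)^{\cf}$ with the same subspace, namely the $\lambda$ for which the continuous extension $\mathcal{E}^{\cont}_{i,\emptyset}(\lambda \circ i)$ embeds into the $\pi$-typical completed cohomology $\widetilde{H}^q[\pi]$.

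The first step is to reinterpret $\LI^{(0)}_i(\pi,\p)^{\cf}$ via the continuous extensions of Section \ref{Extensions3}. The simple factors of $G_{F_\p}$ are of type $A_n$, so Theorem \ref{genflaw} ensures $v_i^{\infty}(R)$ is flawless; then Proposition \ref{automaticuniversal} together with Corollary \ref{universal} provides canonical isomorphisms
$$\HH^{\ast}(G(F), \Ah(K^{\p}, \St^{\infty}_{G_\p}; \Omega(\cf))) \cong \HH^{\ast}(G(F), \Ah^{\cont}(K^{\p}, \St^{\cont}_{G_\p}; \Omega(\cf)))$$
and similarly with $v_i^{\infty}$ replaced by $\VV^{\cont}_i$. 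Since passage to locally analytic vectors is exact by \cite{STadm}, Theorem 7.1, and sends $\mathcal{E}^{\cont}_{i,\emptyset}(\lambda \circ i)$ to $\mathcal{E}^{\an}_{i,\emptyset}(\lambda \circ i)$ by Proposition \ref{contpro}, cup product with either class has the same kernel on the $\pi$-isotypic component in degree $q$. Generalizing the argument of Spie\ss\ and of Breuil (\cite{Br2}, Theorem 1.1.5) — this is precisely what the paper's Propositions \ref{last} and \ref{lastlast} accomplish — I would then show that $\lambda \in \LI^{(0)}_i(\pi,\p)^{\cf}$ if and only if $\mathcal{E}^{\cont}_{i,\emptyset}(\lambda \circ i)$ lifts to a continuous $G_\p$-equivariant embedding into $\widetilde{H}^q[\pi]$.

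The second step is to invoke Ding's Theorem 1.2 of \cite{Ding}, which in our special setting identifies the relevant $\pi$-typical subquotient of $\widetilde{H}^{\ast}[\pi]$ with a $G_\p$-representation determined by the trianguline $(\varphi, \Gamma)$-module attached to $\rho_{\pi,\p}$. The two-dimensional subquotient cut out by the simple root $i$ is exactly the one whose Fontaine--Mazur $\LI$-invariant is $\LI_i(\rho_{\pi,\p})$, and Ding's dictionary translates membership $\lambda \in \LI_i(\rho_{\pi,\p})$ into the statement that $\mathcal{E}^{\cont}_{i,\emptyset}(\lambda \circ i)$ embeds into completed cohomology. Matched against Step 1, this gives the desired equality.

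The hard part will be the reconciliation of normalizations across three parametrizations: the isomorphism of Corollary \ref{extisom} on the locally analytic side, its continuous counterpart implicit in Proposition \ref{contpro}, and Ding's parametrization on the Galois side. Each is defined only up to a non-zero scalar, and one must verify that the composite self-map of $\Hom_{\cont}(F_\p^{\times}, \Omega)$ obtained from traversing all three differs from the identity by a single, $\lambda$-\emph{independent} non-zero scalar; otherwise the two codimension-one kernels could fail to coincide even if they are both forced to have codimension one. This amounts to careful bookkeeping through the constructions of Section \ref{Extensions2} and Ding's comparison map, but it is genuinely the heart of the comparison. A further, more fundamental obstacle to the general conjecture is that outside Ding's spherical setting no local-global compatibility theorem is currently available to identify $\widetilde{H}^{\ast}[\pi]$ with data extracted from $\rho_{\pi,\p}$, which is precisely why the statement must remain conjectural in general.
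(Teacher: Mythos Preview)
Your approach is correct and matches the paper's own treatment: the statement is a conjecture, and the paper proves it only in the definite unitary setting (Theorem~\ref{Galequality}) by exactly the two-step strategy you outline --- Proposition~\ref{last} to detect $\LI_i^{(0)}(\pi,\p)^{\cf}$ via embeddings of $\mathcal{E}^{\cont}_i(\lambda)$ into completed cohomology, and then Ding's Theorem~1.2 to match this with $\LI_i(\rho_{\pi_E,\q})$.

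Your emphasis on normalization bookkeeping as ``genuinely the heart of the comparison'' is, however, misplaced. Both $\LI$-invariants are defined as \emph{subspaces} (kernels) of $\Hom_{\cont}(F_\p^{\times},\Omega)$, and the extension $\mathcal{E}^{\cont}_{i,\emptyset}(\lambda\circ i)$ is constructed explicitly from $\lambda$ via the two-dimensional representation $\tau_{\lambda\circ i}$ --- the same construction Ding uses. Whether this extension embeds into completed cohomology is an intrinsic yes/no question about $\lambda$, independent of any scalar rescaling of the parametrizing isomorphisms in Corollary~\ref{extisom} or Proposition~\ref{contpro}; a $\lambda$-independent scalar (which you yourself concede is all that can arise) does not move a kernel. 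The paper's proof of Theorem~\ref{Galequality} is correspondingly two sentences long, with no normalization-tracking at all. The genuine hard work lies in establishing Proposition~\ref{last} and in Ding's local-global compatibility, both of which are taken as input.
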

If Conjecture \ref{Galoisconj} holds, Proposition \ref{codimension} would imply that $\LI_{i}(\rho_{\pi,\p})$ does not contain the homomorphism $\ord_\p$ and, therefore, the two-dimensional representation $\rho_{\pi,\p,i}$ is not crystalline for all $i\in\Delta$.

Let us give some evidence for the first part of the conjecture, which follows from deep results in the Langlands program.
We assume that $F$ is either totally real or a CM field and that $G$ is equal to $PGL_n$.
In addition to our running assumptions we assume that the automorphic representation $\pi$ is polarized.
Then, by the work of many people (see Theorem 2.1.1 of \cite{BLGGT} for a detailed discussion) there exists a $p$-adic Galois representation
$$\rho_{\pi}\colon \Gal(\overline{\Q}/F)\too GL_n(\Omega),$$
which is compatible with the local Langlands correspondence at all finite places away from $p$ and is Hodge--Tate with weights $n-1, n-2,\ldots,0$ at all places above $p$.
(Note that $GL_n$ is not isomorphic but merely isogenous to $\widehat{PGL_n^\prime}$.)
Since $\pi_\p=\St_\p^{\infty}(\C)$ has an Iwahori-fixed vector, the local representation $\rho_{\pi,\p}$ is semi-stable and the associated Weil--Deligne representation is equal to the image of $\St_\p^{\infty}(\C)$ under the local Langlands correspondence by the main theorem of \cite{Caraiani}.
In particular, the associated Newton and Hodge polygons agree.
Thus, one can deduce that $\rho_{\pi,\p}$ is special. 
In addition, one sees that the monodromy operator has maximal rank and, therefore, an easy calculation shows that the representations $\rho_{\pi,\p,i}$, $i=1,\ldots,n-1$, are not crystalline (cf.~\cite{Ding}, Lemma 3.2). 

In Section \ref{Galoisproof} we will provide a proof of the above conjecture for certain automorphic representations of definite unitary groups. 

\begin{Rem}
In case of a non-trivial coefficient system $\pi$ would no longer be ordinary at $\p$.
Thus, one does not expect that the associated local Galois representation is upper triangular but rather trianguline.
For this to make sense and to define $\LI$-invariants on the Galois side for a general reductive group $G$ one needs to develop a theory of $\widehat{G^{\prime}}$-valued $(\varphi,\Gamma)$-modules.
\end{Rem}

\subsection{Completed cohomology - relatively definite case}\label{Completed}
We relate automorphic $\LI$-invariants to the $\pi$-isotypic component of completed cohomology as introduced by Emerton in \cite{Em}.
First, we deal with the case that the group $G$ is close to being definite.
The general case will be discussed in a subsequent section.

As before, let $\Omega$ be a finite extension of $\Q_p$, which is large enough, and $R$ its ring of integers with uniformizer $\varpi$.
Also, we fix a compact, open subgroup $K^\p\subseteq G(\A\pinfty)$, which is small enough. 

We put
$$\widetilde{\HH}{}_c^{d}(X_{K^\p},R/\varpi^r)^\epsilon=\varinjlim_{K_\p}\HH_c^{d}(X_{K^\p\times K_\p},R/\varpi^r)^\epsilon,$$
where the injective limit runs over all compact, open subgroups $K_\p \subseteq G_\p$.
Completed cohomology (with compact support) of tame level $K^{p}$, degree $d$ and sign $\epsilon$ with coefficients in $R$ respectively $\Omega$ is defined by
\begin{align*}
\widetilde{\HH}{}_c^{d}(X_{K^\p},R)^\epsilon&=\varprojlim_{r}\widetilde{\HH}{}_c^{d}(X_{K^\p},R/\varpi^r)^\epsilon\\
\intertext{respectively}
\widetilde{\HH}{}_c^{d}(X_{K^\p},\Omega)^\epsilon&=\widetilde{\HH}{}_c^{d}(X_{K^\p},R)^\epsilon \otimes \Omega.
\end{align*}
The completed cohomology group $\widetilde{\HH}{}_c^{d}(X_{K^\p},\Omega)^\epsilon$ is an admissible unitary Banach representation of $G_\p$ by \cite{Em}, Theorem 2.1.1.

The following theorem was first proven in the case $G=GL_2$, $F=\Q$ by Breuil (see \cite{Br2}, Theorem 1.1.3).
\begin{Thm}\label{compcomp}
Let $B$ be a $\Omega$-Banach representation of $G_\p$.
There is a Hecke-equivariant isomorphism
$$\HH^{0}(G(F),\Ah_{\Omega,c}^{\cont}(K^\p,B;\Omega(\epsilon)))\cong \Hom_{\Omega[G_\p],\cont}(B,\widetilde{\HH}{}_c^{l}(X_{K^\p},\Omega)^\epsilon),$$
which is functorial in $B$.
\end{Thm}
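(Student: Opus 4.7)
\medskip

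\noindent\textbf{Proof plan.}
The strategy is to bootstrap from the smooth statement of Lemma \ref{comphom} to the Banach setting by taking an open $G_\p$-invariant $R$-lattice, reducing modulo $\varpi^r$, and then passing to the projective limit. Throughout, naturality will give Hecke-equivariance for free.

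First, I choose a $G_\p$-invariant open unit ball $B_0\subseteq B$, which is a $\varpi$-adically admissible $R[G_\p]$-representation. For every $r\geq 1$ the quotient $B_0/\varpi^r B_0$ lies in $\mathfrak{C}^{\sm}_{R/\varpi^r}(G_\p)$, so Lemma \ref{comphom} (applied with $R/\varpi^r$ in place of $R$ and $M=B_0/\varpi^r B_0$) yields a natural, Hecke-equivariant isomorphism
\begin{equation*}
\HH^{0}(G(F),\Ah_{R/\varpi^r\!,c}(K^\p,B_0/\varpi^r B_0;R/\varpi^r(\epsilon)))\cong \Hom_{R/\varpi^r[G_\p]}\!\bigl(B_0/\varpi^r B_0,\widetilde{\HH}{}_c^{l}(X_{K^\p},R/\varpi^r)^\epsilon\bigr).
\end{equation*}

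Next, I take the projective limit over $r$. On the right, continuous homomorphisms of $\varpi$-adically complete $R[G_\p]$-modules are the same as compatible systems modulo $\varpi^r$, so the limit equals $\Hom_{R[G_\p],\cont}(B_0,\widetilde{\HH}{}_c^{l}(X_{K^\p},R)^\epsilon)$. On the left, the compact-support analogue of Corollary \ref{limits} (whose proof is identical, using that Proposition \ref{FlachundNoethersch}\eqref{FuN} carries over to $\Ah_{R,c}$ via the finite resolution \eqref{dualizingresolution}) guarantees the Mittag--Leffler condition and lets the projective limit pass inside the cohomology. After identifying $\varprojlim_r \Hom_R(B_0/\varpi^r B_0,R/\varpi^r)$ with the continuous dual $B_0^\vee$, one checks that the inverse limit of $\Ah_{R/\varpi^r\!,c}(K^\p,B_0/\varpi^r B_0;R/\varpi^r(\epsilon))$ is naturally $\Ah_{R,c}^{\cont}(K^\p,B_0;R(\epsilon))$, so the left-hand side becomes $\HH^{0}(G(F),\Ah_{R,c}^{\cont}(K^\p,B_0;R(\epsilon)))$.

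Finally, I invert $\varpi$. On the right, $\Hom_{R[G_\p],\cont}(B_0,\widetilde{\HH}{}_c^{l}(X_{K^\p},R)^\epsilon)\otimes_R \Omega = \Hom_{\Omega[G_\p],\cont}(B,\widetilde{\HH}{}_c^{l}(X_{K^\p},\Omega)^\epsilon)$ since the source $B=B_0\otimes_R\Omega$ and targets satisfy the relevant admissibility, and any continuous map is bounded hence carries some $\varpi^{-k}B_0$ into $\widetilde{\HH}{}_c^{l}(X_{K^\p},R)^\epsilon$. On the left, tensoring with $\Omega$ and using the definition $\Ah_{\Omega,c}^{\cont}(K^\p,B;\Omega(\epsilon))=\Ah_{R,c}^{\cont}(K^\p,B_0;R(\epsilon))\otimes_R\Omega$ together with the compact-support version of Proposition \ref{FlachundNoethersch}\eqref{FN2} yields the required cohomology group. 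Since every intermediate identification is natural in the source and commutes with the diagonal action of the Hecke algebra away from $\p$, the resulting isomorphism is both functorial in $B$ and Hecke-equivariant.

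The main obstacle is the left-hand passage to the limit: one must verify that $\varprojlim_r \Ah_{R/\varpi^r\!,c}(K^\p,B_0/\varpi^r B_0;R/\varpi^r(\epsilon))$ genuinely computes $\Ah_{R,c}^{\cont}(K^\p,B_0;R(\epsilon))$ and that cohomology commutes with this limit — this is where the compact-support refinement of Corollary \ref{limits}, and hence the finite length of the resolution \eqref{dualizingresolution}, is essential. Once this bookkeeping is settled, the rest is a formal assembly of the pieces already established in Section~\ref{Cohomology}.
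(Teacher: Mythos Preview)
Your strategy---choose an integral lattice, reduce modulo $\varpi^r$, apply Lemma~\ref{comphom}, pass to the projective limit, then invert $\varpi$---is exactly the paper's. Two of your justifications need adjustment, however.

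First, your appeals to Corollary~\ref{limits} and Proposition~\ref{FlachundNoethersch} are illegitimate: both results assume the local representation $M$ is \emph{flawless}, and no such hypothesis is placed on $B$ or on $B_0/\varpi^r B_0$. Fortunately none of this machinery is needed in degree zero. The functor $\HH^0(G(F),-)$ is simply taking $G(F)$-invariants and hence commutes with arbitrary projective limits, so no Mittag--Leffler argument is required; and since the coefficient module is $\varpi$-torsion-free (it consists of maps into the torsion-free $R$-module $\Hom_{R,\cont}(B_0,R)$), inverting $\varpi$ also commutes with $\HH^0$ for elementary reasons. The left-hand bookkeeping is thus simpler than you make it, and the paper does not invoke those results here either.

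Second, on the right-hand side your localization step is under-justified. To identify $\Hom_{R[G_\p],\cont}(B_0,\widetilde{\HH}{}_c^{l}(X_{K^\p},R)^\epsilon)\otimes_R\Omega$ with $\Hom_{\Omega[G_\p],\cont}(B,\widetilde{\HH}{}_c^{l}(X_{K^\p},\Omega)^\epsilon)$ one needs to know that the $\varpi$-torsion submodule of $\widetilde{\HH}{}_c^{l}(X_{K^\p},R)^\epsilon$ has bounded exponent, so that its torsion-free quotient is an honest open lattice in the $\Omega$-Banach space. The paper invokes \cite{CaEm}, Theorem 1.1, for precisely this point; your remark that continuous maps are bounded handles the source but not this structural feature of the target.
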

\begin{proof}
Let us fix a $G_\p$-invariant complete $R$-lattice $M\subseteq B$ and put $M_r=M/\varpi^{r}M.$
Since $M_r$ is a smooth $G_\p$-module, we have
$$\HH^{0}(G(F),\Ah_{R,c}(K^\p,M_r;R/\varpi^{r}(\epsilon)))\cong\Hom_{R[G_\p]}(M_r,\widetilde{\HH}{}_c^{d}(X_{K^\p},R/\varpi^r)^\epsilon)$$
by Lemma \ref{comphom}.
Note that we may replace $M_r$ by $M$ on both sides without changing them.
Taking projective limits we get an isomorphism
$$\HH^{0}(G(F),\Ah_{R,c}(K^\p,M;R(\epsilon)))\cong\Hom_{R[G_\p]}(M,\widetilde{\HH}{}_c^{d}(X_{K^\p},R)^\epsilon).$$
It is known that the torsion submodule of completed cohomology has bounded exponent (cf.~\cite{CaEm}, Theorem 1.1).
Thus, we may localize at $p$ to obtain the sought-after isomorphism.
It obviously commutes with the action of the Hecke algebra.
\end{proof}

\begin{Def}
We call the group $G$ \emph{relatively definite} if the equality $l=q$ holds.
\end{Def}

\begin{Exa}
If $G$ is definite, then both $l$ and $q$ are zero and, therefore, $G$ is relatively definite.
Suppose $F=\Q$, then the groups $PGL_2$ and $PGL_3$ are relatively definite.
In the first case we have $l=q=1$ and in the second case we have $l=q=2$.
If $F$ is an imaginary quadratic field, the group $PGL_2$ is relatively definite with $l=q=1$.
Every group isogenous to a relatively definite group is relatively definite.
\end{Exa}

Combining Theorem \ref{compcomp} with Lemma \ref{partial}, Proposition \ref{dimensions} \eqref{thirddim} and the isomorphism \eqref{automatic}
we get the following result.
\begin{Cor}\label{fifthdimension}
Suppose $G$ is relatively definite.
Then we have:
$$\dim \Hom_{\Omega[G_\p],\cont}(\St_{G_\p}^{\cont},\widetilde{\HH}{}_c^{q}(X_{K^\p},\Omega)^\epsilon[\pi])= m_\pi.$$
\end{Cor}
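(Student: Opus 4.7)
The plan is to chain together the four results cited in the sentence preceding the corollary. First I would apply Theorem~\ref{compcomp} to the admissible Banach representation $B=\St_{G_\p}^{\cont}(\Omega)$ to obtain a Hecke-equivariant isomorphism
$$
\HH^{0}(G(F),\Ah_{\Omega,c}^{\cont}(K^\p,\St_{G_\p}^{\cont};\Omega(\epsilon)))\;\cong\;\Hom_{\Omega[G_\p],\cont}(\St_{G_\p}^{\cont},\widetilde{\HH}{}_c^{l}(X_{K^\p},\Omega)^\epsilon).
$$
By the relatively definite hypothesis $l=q$, so the right-hand side is the module whose dimension we want to compute (after taking the $\pi$-isotypic component, which makes sense because the isomorphism is Hecke-equivariant).

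Next I would use the isomorphism \eqref{automatic}, which rewrites the left-hand side with the continuous induction replaced by the smooth one: the fact that $\St_{G_\p}^{\cont}$ is the universal unitary completion of $\St_{G_\p}^{\infty}$ (Corollary~\ref{universal}), together with the compactly supported analogue of Proposition~\ref{automaticuniversal} (the Steinberg representation admits a flawless integral lattice by Theorem~\ref{resolution1}), gives
$$
\HH^{0}(G(F),\Ah_{\Omega,c}^{\cont}(K^\p,\St_{G_\p}^{\cont};\Omega(\epsilon)))\;\cong\;\HH^{0}(G(F),\Ah_{c}(K^\p,\St_{G_\p}^{\infty};\Omega(\epsilon))).
$$
Taking $\pi$-isotypic parts, it remains to compute the dimension of the right-hand side.

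For the final step I would invoke Lemma~\ref{partial}(b) with $I=\emptyset$ and $d=0$ to identify
$$
\HH^{0}(G(F),\Ah_{c}(K^\p,\St_{G_\p}^{\infty};\Omega(\epsilon)))[\pi]\;\xrightarrow{\partial}\;\HH^{l}(G(F),\Ah(K^\p,\St_{G_\p}^{\infty};\Omega(\epsilon)))[\pi],
$$
which is an isomorphism. Proposition~\ref{dimensions}\eqref{thirddim} then yields that the target has dimension $m_\pi\cdot\binom{\delta}{l-q}$, and since $G$ is relatively definite we have $l-q=0$, so $\binom{\delta}{0}=1$ and the dimension is $m_\pi$, as claimed.

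There is no real obstacle here: the corollary is a direct concatenation of existing identifications, and the role of the hypothesis $l=q$ is exactly to match the shift produced by Lemma~\ref{partial}(b) with the degree $q$ in which $\pi$ contributes minimally, so that the binomial coefficient collapses to $1$. The only mild point to keep in mind is functoriality/Hecke-equivariance of all the identifications, which is built into Theorem~\ref{compcomp}, the construction of \eqref{automatic}, and the naturality of the connecting map $\partial$ in \eqref{boundary}; this ensures that passing to the $\pi$-isotypic subspace commutes with every step.
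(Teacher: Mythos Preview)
Your proof is correct and follows exactly the approach indicated in the paper: you chain together Theorem~\ref{compcomp}, the isomorphism~\eqref{automatic}, Lemma~\ref{partial}(b), and Proposition~\ref{dimensions}\eqref{thirddim} in the natural order, using the hypothesis $l=q$ both to match the degree of completed cohomology and to collapse the binomial coefficient. There is nothing to add.
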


\begin{Pro}\label{last}
Suppose $G$ is relatively definite.
Under the assumption that $v_i^{\infty}(R)$ is flawless, we have:
The canonical restriction map
$$\Hom_{\Omega[G_\p],\cont}(\mathcal{E}^{\cont}_{i}(\lambda),\widetilde{\HH}{}_c^{q}(X_{K^\p},\Omega)^\epsilon[\pi])\too \Hom_{\Omega[G_\p],\cont}(\St_{G_\p}^{\cont}(\Omega),\widetilde{\HH}{}_c^{q}(X_{K^\p},\Omega)^\epsilon[\pi])$$
is injective for all homomorphisms $\lambda\in \Hom_{\cont}(F_\p^{\times},\Omega)$.
It is an isomorphism if and only if $\lambda\in \LI_{i}^{(0)}(\pi,\p)^{\epsilon}$.
\end{Pro}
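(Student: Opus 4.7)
The plan is to set $M_\pi = \widetilde{\HH}{}_c^{q}(X_{K^\p},\Omega)^\epsilon[\pi]$ and apply the left exact contravariant functor $\Hom_{\Omega[G_\p],\cont}(-, M_\pi)$ to the short exact sequence of admissible unitary Banach representations
$$0 \too \St_{G_\p}^{\cont}(\Omega) \too \mathcal{E}^{\cont}_{i}(\lambda) \too \VV_i^{\cont}(\Omega) \too 0$$
from Section \ref{Extensions3}, obtaining an exact sequence
\begin{align*}
0 &\to \Hom_{\Omega[G_\p],\cont}(\VV_i^{\cont}, M_\pi) \to \Hom_{\Omega[G_\p],\cont}(\mathcal{E}^{\cont}_{i}(\lambda), M_\pi) \\
&\to \Hom_{\Omega[G_\p],\cont}(\St_{G_\p}^{\cont}, M_\pi) \xrightarrow{\delta_\lambda} \Ext^{1}_{\cont}(\VV_i^{\cont}, M_\pi).
\end{align*}
The restriction map of the proposition fits into this sequence; hence it is injective iff the first term vanishes, and an isomorphism iff moreover $\delta_\lambda=0$.

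For injectivity, I would identify the first term by a Hecke-equivariant reading of Theorem \ref{compcomp} with $\HH^{0}(G(F), \Ah^{\cont}_{\Omega,c}(K^\p, \VV_i^{\cont}; \Omega(\epsilon)))[\pi]$. Since $v_i^{\infty}(R)$ is flawless by hypothesis and $\VV_i^{\cont}$ is its universal unitary completion (Corollary \ref{universal}), the compactly supported analogue of Proposition \ref{automaticuniversal} gives an isomorphism with $\HH^{0}(G(F), \Ah_c(K^\p, v_i^{\infty}; \Omega(\epsilon)))[\pi]$. Lemma \ref{partial}(b) combined with Proposition \ref{fourthdim} computes the dimension as $m_\pi \cdot \binom{\delta}{l-q-1}$, which vanishes since $l=q$ by relative definiteness.

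For the equivalence, I would identify $\delta_\lambda$ with the map defining the $\LI$-invariant. Applying $\Ah^{\cont}_{\Omega,c}(K^\p,-;\Omega(\epsilon))$ to the Banach short exact sequence produces a short exact sequence of $R[G(F)]$-modules, whose connecting homomorphism on $\HH^0$ becomes $\delta_\lambda$ under Theorem \ref{compcomp}. Using flawlessness on both endpoints, this translates to the smooth-level cup product map
$$\HH^{0}(G(F), \Ah_c(K^\p, \St^\infty_{G_\p}; \Omega(\epsilon)))[\pi] \too \HH^{1}(G(F), \Ah_c(K^\p, v_i^\infty; \Omega(\epsilon)))[\pi],$$
which is, up to a non-zero scalar, the map $c^{(0)}_i(\lambda)[\pi]_c^\epsilon$ of Definition \ref{defi}, because the continuous extension class $\mathcal{E}^{\cont}_{i}(\lambda)$ is sent to the locally analytic extension $\mathcal{E}^{\an}_{\{i\},\emptyset}(\lambda\circ i)$ under the canonical map (as recorded in the proof of Proposition \ref{contpro}). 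Therefore $\delta_\lambda=0$ precisely when $\lambda\in\LI_{i}^{(0)}(\pi,\p)_c^{\epsilon}$, which equals $\LI_{i}^{(0)}(\pi,\p)^{\epsilon}$ by Proposition \ref{compactcomparison}.

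The main obstacle is the compatibility step in the second part: one must track through the chain of identifications (Theorem \ref{compcomp}, then the flawlessness-based comparison between continuous and smooth function-space cohomology, and finally the passage from the continuous to the locally analytic extension class) in a Hecke-equivariant fashion, in order to recognise the abstract Banach-level connecting map $\delta_\lambda$ as precisely the cup-product map that defines the $\LI$-invariant. The essential input making this possible is Proposition \ref{contpro}(iii), which guarantees the compatibility of the continuous and analytic extension classes.
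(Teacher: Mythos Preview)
Your proof is correct and follows essentially the same route as the paper's: the paper writes the long exact sequence in $G(F)$-cohomology of $\Ah^{\cont}_{\Omega,c}(K^\p,-;\Omega(\epsilon))$ applied to the Banach short exact sequence, shows the $\HH^0$ term for $\VV_i^{\cont}$ vanishes via flawlessness, Lemma~\ref{partial} and Proposition~\ref{fourthdim}, identifies the boundary map with the cup product defining $\LI_{i}^{(0)}(\pi,\p)_c^{\epsilon}$ (via Remark~\ref{flawlessversion}), invokes Proposition~\ref{compactcomparison}, and then applies Theorem~\ref{compcomp} at the end --- exactly the ingredients you use, just in a different order.

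One small remark on phrasing: saying the $G(F)$-cohomology connecting map ``becomes $\delta_\lambda$ under Theorem~\ref{compcomp}'' is imprecise, since Theorem~\ref{compcomp} only identifies the $\HH^0$ terms, not the targets $\HH^1$ and $\Ext^1_{\cont}$. What you actually need (and clearly have in mind) is that both maps vanish simultaneously, each being equivalent to surjectivity of the corresponding restriction map; this equivalence is immediate from Theorem~\ref{compcomp} applied to the three $\HH^0$ terms.
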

\begin{proof}
Taking continuous duals is an exact functor on Banach spaces.
Therefore, we have a long exact sequence
\begin{align*}\ldots\too\HH^{d}(G(F),\Ah^{\cont}_{c}(K^\p,\VV_i^{\cont};\Omega(\epsilon))[\pi]
\too\HH^{d}(G(F),\Ah^{\cont}_{c}(K^\p,\mathcal{E}^{\cont}_{i}(\lambda);\Omega(\epsilon))[\pi]\\
\too\HH^{d}(G(F),\Ah^{\cont}_{c}(K^\p,\St^{\cont}_{G_\p};\Omega(\epsilon))[\pi]
\too\HH^{d+1}(G(F),\Ah^{\cont}_{c}(K^\p,\VV_i^{\cont};\Omega(\epsilon))[\pi]
\too\ldots\end{align*}
for every $\lambda\in \Hom_{\cont}(F_\p^{\times},\Omega)$.
We have
$$\HH^{0}(G(F),\Ah^{\cont}_{c}(K^\p,\VV_i^{\cont};\Omega(\epsilon))[\pi]=\HH^{0}(G(F),\Ah^{\cont}_{c}(K^\p,v_i^{\infty};\Omega(\epsilon))[\pi]=0,$$
where the first equality follows by flawlessness of $v_i^{\infty}(R)$ and the second by Proposition \ref{fourthdim} and Lemma \ref{partial}.

Since $v_i^{\infty}(R)$ is flawless, we may take the cup product with $\mathcal{E}^{\cont}_{i}(\lambda)$ to define automorphic $\LI$-invariants.
Hence, the first boundary map in the above exact sequence is zero if and only if $\lambda\in \LI_{i}^{(0)}(\pi,\p)_c^{\epsilon}$.
But by Proposition \ref{compactcomparison} we have an equality of $\LI$-invariants $\LI_{i}^{(0)}(\pi,\p)^{\epsilon}=\LI_{i}^{(0)}(\pi,\p)_c^{\epsilon}$.
Now the claims follow by applying Theorem \ref{compcomp}.
\end{proof}
\begin{Rem}
\begin{enumerate}[(i)]\thmenumhspace
\item In the case of elliptic modular cusp forms the proposition above is due to Breuil (see \cite{Br2}, Theorem 1.1.5).
\item One might wonder if a similar statement is true in higher cohomological degree.
But by the vanishing conjectures on completed cohomology dictate that this cannot be the case.
More precisely, it is conjectured that $\widetilde{\HH}{}_c^{d}(X_{K^\p},\Omega)^\epsilon=0$ for all $d> q$ and every group $G$ (see the survey paper of Calegari and Emerton \cite{CaEm} for a more detailed conjecture).
This is known for groups of Hodge type by Scholze (see \cite{Scholze}, Corollary IV.2.2).
A non-Shimura example for which the vanishing conjecture can be checked easily is the group $PGL_2$ over an imaginary quadratic field (cf.~Example 2.12 of \cite{CaEm}).
\end{enumerate}
\end{Rem}

\subsection{Definite unitary case}\label{Galoisproof}
Let us assume that we are in the situation considered in Section 4 of \cite{Ding} (see also Section 5.1 of \cite{EmGee} and Section 2.3 of \cite{six}). 
We do not spell out the complete list of assumptions imposed on $G$ and $\pi$ in these references but give a summary of the most crucial ones in the following. 
We assume $F$ is a totally real number field and we fix a totally imaginary quadratic extension $E$ of $F$, which is unramified at all finite places and split at $\p$.
Let $\q$ be any prime of $E$ lying above $\p$. 
Aside from our usual assumption on $G$ we suppose that $G$ is a definite outer form of $PGL_n$ such that $G_E=PGL_{n,E}$ and $p \nmid 2n$.
(In particular, $\delta=0$ and $\pi_0(G_\infty)$ is trivial and, therefore, there is only a single automorphic $\LI$-invariant attached to $\pi$.)

We further assume that $\pi_v$ is spherical at all finite places $v$ except at $v=\p$ and at another finite place $v_1\nmid p$, which is split in $E$ and does not split completely in $F(\zeta_p)$. 

Let $\rho_{\pi_E}\colon \Gal(\overline{\Q}/E)\to GL_n(\Omega)$ be the $p$-adic Galois representation associated (to the base change of) $\pi$ (see \cite{ChH} and\cite{Shin} for its existence).
Aside some other assumptions on the reduction of $\rho_{\pi,E}$ we assume that it is unramified and adequate, hence in particular irreducible, and that Frobenius at $v_1$ acts with distinct eigenvalues, which are not equal to $\mathcal{N}(v_1)^{\pm 1}.$

\begin{Thm}\label{Galequality}
Under the above assumptions Conjecture \ref{Galoisconj} holds, i.e.~we have the equality
$$\LI_{i}(\rho_{\pi_E,\q})=\LI_{i}^{(0)}(\pi,\p)^{\cf}$$
for all $i\in\Delta$.
\end{Thm}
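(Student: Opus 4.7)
The plan is to translate both sides of the claimed equality into statements about the $\pi$-isotypic part of completed cohomology in degree $0$ and then to invoke Ding's local--global compatibility theorem to match them.

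First I would verify that the hypotheses of Proposition \ref{last} are in force. Since $G$ is definite, we have $\delta = l = q = 0$, so in particular $G$ is relatively definite in the sense of Section \ref{Completed}. Moreover, $G_E = PGL_{n,E}$ implies that every simple factor of $G_{F_\p}$ is of type $A$, so Theorem \ref{genflaw} guarantees that $v_i^{\infty}(R)$ is flawless for every $i\in\Delta$. Via the isomorphism of Proposition \ref{contpro}, the criterion of Proposition \ref{last} then reads: a character $\lambda\in\Hom_{\cont}(F_\p^{\times},\Omega)$ lies in $\LI_i^{(0)}(\pi,\p)^{\cf}$ if and only if the continuous extension $\mathcal{E}^{\cont}_i(\lambda)$ of $\VV^{\cont}_i$ by $\St^{\cont}_{G_\p}$ admits a $G_\p$-equivariant continuous homomorphism to $\widetilde{\HH}{}_c^{0}(X_{K^\p},\Omega)^{\cf}[\pi]$ restricting non-trivially to $\St^{\cont}_{G_\p}$; the target multiplicity space is $m_\pi$-dimensional by Corollary \ref{fifthdimension}.

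Next I would invoke Ding's local--global compatibility theorem, \cite[Theorem 1.2]{Ding}. In the definite unitary setting, the degree-zero completed cohomology $\widetilde{\HH}{}_c^{0}(X_{K^\p},\Omega)$ coincides with the space of $p$-adic automorphic forms studied in \emph{loc.~cit.}, and the running assumptions on $\pi$ listed above Theorem \ref{Galequality} are precisely those required to apply his theorem. The content of Ding's result, translated through the isomorphism of Proposition \ref{contpro}, is that the existence of the embedding above is equivalent to $\lambda$ being a Fontaine--Mazur $\LI$-invariant of the two-dimensional subquotient $\rho_{\pi_E,\q,i}$ of $\rho_{\pi_E,\q}$ cut out by the simple root $i$. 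Combining the two equivalences yields $\LI_i^{(0)}(\pi,\p)^{\cf}=\LI_i(\rho_{\pi_E,\q})$ as desired.

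The main obstacle, and the step requiring most care, is the dictionary between our framework and that of \cite{Ding}. Concretely, one has to check that (a) the base change from $G$ to $G_E = PGL_{n,E}$ identifies our completed cohomology with Ding's space of $p$-adic automorphic forms, compatibly with the identification $G_\p \cong PGL_n(E_\q)$ coming from splitness of $E/F$ at $\p$; (b) the parametrization of continuous extensions by $\lambda\circ i$ via Corollary \ref{extisom} matches Ding's parametrization used to define his simple $\LI$-invariants for $GL_n$; and (c) the Galois-theoretic $\LI$-invariant $\LI_i(\rho_{\pi_E,\q})$ of Section \ref{Galois}, defined via cup-product with a class in $\HH^1(\Gal(\overline{\Q}_p/F_\p),\Omega(1))$, coincides with Ding's Fontaine--Mazur invariant of the relevant two-dimensional $(\varphi,\Gamma)$-module subquotient, which by local Tate duality is a straightforward but notationally delicate comparison. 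Once these identifications are in place, the theorem follows formally from the combination of Proposition \ref{last} and \cite[Theorem 1.2]{Ding}.
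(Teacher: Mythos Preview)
Your proposal is correct and follows essentially the same route as the paper's proof: apply Proposition \ref{last} (definiteness gives relative definiteness, Theorem \ref{genflaw} gives flawlessness) to characterize $\LI_i^{(0)}(\pi,\p)^{\cf}$ in terms of extensions of maps from $\St^{\cont}_{G_\p}$ into completed cohomology, then invoke \cite[Theorem 1.2]{Ding} to identify this with $\LI_i(\rho_{\pi_E,\q})$. One small caution: your rephrasing of Proposition \ref{last} as ``admits a homomorphism restricting non-trivially'' is literally equivalent to the paper's ``every embedding extends'' only when $m_\pi=1$; you should either note that the running hypotheses force $m_\pi=1$, or keep the exact isomorphism formulation of the restriction map.
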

\begin{proof}
Since $G$ is definite Proposition \ref{last} applies, i.e.~a continuous homomorphism $\lambda\colon F_\p^{\times}\to \Omega$ lies in $\LI_{i}^{(0)}(\pi,\p)^{\cf}$ if and only if every embedding $\St^{\cont}_{G_\p}\into \widetilde{\HH}{}_c^{0}(X_{K^\p},\Omega)^\epsilon[\pi]$ can be extended to a map $\mathcal{E}^{\cont}_{i}(\lambda)\to \widetilde{\HH}{}_c^{0}(X_{K^\p},\Omega)^\epsilon[\pi].$ 
(Note that by Theorem \ref{genflaw} the flawlessness assumption of $v_i^\infty(R)$ is fulfilled.)
But by Theorem 1.2 of \cite{Ding} this is equivalent to $\lambda\in \LI_{i}(\rho_{\pi_E,\q})$.
\end{proof}

\subsection{Completed cohomology - general case}\label{Spectral}
We end by making a few remarks on how the results from Section \ref{Completed} can be generalized.
A priori, it might be possible that the $\pi$-isotypic part of completed cohomology is non-zero below the expected degree, i.e.~ below $q$.
In order to avoid this problem, we make the following strong assumption, which is, in general, hard to check as the example at the end of the section demonstrates.
Let $\m$ be the maximal ideal of the integral Hecke algebra $\T^{(S)}=\T_R^{(S)}$ as defined at the beginning of Section \ref{Degreeshift}.
\begin{Ass}\label{assvanish}
We assume that
$$\HH_c^{d}(X_{K^\p\times K_\p},\Z/p)_\m^\epsilon=0$$
for all $0\leq d<q$ and all open, compact subgroups $K_\p\subseteq G_\p$.
\end{Ass}
Under this assumption one can inductively prove that $\widetilde{\HH}{}_c^{d}(X_{K^\p},R/\varpi^{r})_\m^\epsilon=0$ holds for all $r\geq 1$.
Then, with an easy spectral sequence argument one can the following generalization of Lemma \ref{comphom}.
\begin{Lem}
Suppose that Assumption \ref{assvanish} holds.
There is a natural isomorphism
$$\HH^{q-l}(G(F),\Ah_{R,c}(K^\p,M;R/\varpi^{r}(\epsilon)))_\m\cong \Hom_{R[G_\p]}(M,\widetilde{\HH}{}_c^{q}(X_{K^\p},R/\varpi^r)_\m^\epsilon)$$
for every $M\in\mathfrak{C}^{\sm}_{R/\varpi^{r}}(G_\p)$.
\end{Lem}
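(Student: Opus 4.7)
The plan is to combine two steps: (i) propagate Assumption \ref{assvanish} from $\Z/p$-coefficients to $R/\varpi^r$-coefficients and to the direct limit $\widetilde{\HH}{}_c^\bullet$, and (ii) feed this into a hypercohomology spectral sequence coming from a projective resolution of $M$.

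For step (i), since $R/\varpi$ is a finite free $\Z/p$-module, the assumed vanishing with $\Z/p$-coefficients immediately upgrades to $R/\varpi$-coefficients; induction on $r$ via the long exact sequence attached to $0\to R/\varpi\to R/\varpi^{r+1}\to R/\varpi^r\to 0$ then yields $\HH_c^d(X_{K^\p\times K_\p},R/\varpi^r)_\m^\epsilon=0$ for all $d<q$ and all $r\geq 1$. Taking $\varinjlim_{K_\p}$, which is exact, gives the same vanishing of $\widetilde{\HH}{}_c^d(X_{K^\p},R/\varpi^r)_\m^\epsilon$. The effaceable $\delta$-functor argument of Lemma \ref{hom} transfers verbatim to the compactly supported setting (using the resolution \eqref{dualizingresolution} to see that the extra factor $\Hom_\Z(D_G,-)$ is exact and preserves injectives), so for every projective $P$ in $\mathfrak{C}^{\sm}_{R/\varpi^r}(G_\p)$ one has
$$
\HH^d(G(F),\Ah_{R,c}(K^\p,P;R/\varpi^r(\epsilon)))_\m \cong \Hom_{R[G_\p]}(P,\widetilde{\HH}{}_c^{d+l}(X_{K^\p},R/\varpi^r)_\m^\epsilon),
$$
which vanishes for $d<q-l$ by the previous step.

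For step (ii), I would pick a projective resolution $P_\bullet\twoheadrightarrow M$ in $\mathfrak{C}^{\sm}_{R/\varpi^r}(G_\p)$. Since $R/\varpi^r$ is self-injective, the contravariant functor $\Ah_{R,c}(K^\p,\,\cdot\,;R/\varpi^r(\epsilon))$ is exact on $\mathfrak{C}^{\sm}_{R/\varpi^r}(G_\p)$, and therefore $\Ah_{R,c}(K^\p,P_\bullet;R/\varpi^r(\epsilon))$ is a right resolution of $\Ah_{R,c}(K^\p,M;R/\varpi^r(\epsilon))$. The associated hypercohomology spectral sequence reads
$$
E_2^{p,j} = H^p\bigl(\HH^{j}(G(F),\Ah_{R,c}(K^\p,P_\bullet;R/\varpi^r(\epsilon)))_\m\bigr) \Longrightarrow \HH^{p+j}(G(F),\Ah_{R,c}(K^\p,M;R/\varpi^r(\epsilon)))_\m,
$$
and has $E_2^{p,j}=0$ for $j<q-l$. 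In total degree $q-l$ only the corner $E_\infty^{0,q-l}$ can survive; the differentials $d_r$ emanating from $E_r^{0,q-l}$ land in the vanishing strip for every $r\geq 2$, while no differential maps into it since the source would have negative first coordinate. Thus $E_\infty^{0,q-l}=E_2^{0,q-l}=\Hom_{R[G_\p]}(M,\widetilde{\HH}{}_c^q(X_{K^\p},R/\varpi^r)_\m^\epsilon)$ by left-exactness of $\Hom$, yielding the claimed natural isomorphism.

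The main technical obstacle is making the effaceability step of Lemma \ref{hom} go through simultaneously with compact support and with the $\m$-localisation: the latter is harmless because localisation is exact and commutes with $\Hom$ from a finitely generated source, while the former reduces to checking that the coinduction-type description of $\Ah_{R,c}(K^\p,M;R/\varpi^r(\epsilon))$ still sends injective coefficient modules to injectives, which follows from the freeness of $D_G$ as a $\Z$-module. A minor auxiliary point is ensuring enough projectives exist in $\mathfrak{C}^{\sm}_{R/\varpi^r}(G_\p)$, which is standard and already invoked in Section \ref{Hecke}.
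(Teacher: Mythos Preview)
Your argument is correct and follows exactly the two-step strategy the paper sketches: bootstrap Assumption~\ref{assvanish} to $R/\varpi^r$-coefficients by d\'evissage and pass to the direct limit over $K_\p$, then run the hypercohomology spectral sequence attached to a projective resolution of $M$ in $\mathfrak{C}^{\sm}_{R/\varpi^r}(G_\p)$, identifying the $E_1$-terms via the compact-support analogue of Lemma~\ref{hom}(b) and reading off the corner term in total degree $q-l$.

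One small caveat on the technical paragraph at the end: your stated reason that localisation at $\m$ commutes with $\Hom_{R[G_\p]}(P_p,-)$ ``from a finitely generated source'' does not apply as written, since the projectives $P_p$ in $\mathfrak{C}^{\sm}_{R/\varpi^r}(G_\p)$ are typically not finitely generated. What makes the commutation go through is rather that $\widetilde{\HH}_c^{d}(X_{K^\p},R/\varpi^r)^\epsilon$, being a filtered colimit of finite $\T^{(S)}$-modules, is locally $\T^{(S)}$-finite, so that its $\m$-localisation is a $\T^{(S)}[G_\p]$-direct summand; combined with the fact that any $G_\p$-equivariant map into it has image with cofinite $\T^{(S)}$-annihilator once the source is finitely generated (which covers all the applications downstream, where $M$ arises from a flawless lattice), the required vanishing of $\Hom_{R[G_\p]}(P_p,W')_\m$ for the complementary summand $W'$ follows. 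This is a bookkeeping refinement rather than a change of strategy.
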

Using the lemma above one gets the following generalizations of the results of Section \ref{Completed}.
\begin{Thm}
Suppose that Assumption \ref{assvanish} holds.
Let $B$ be a $\Omega$-Banach representation of $G_\p$.
There is a Hecke-equivariant isomorphism
$$\HH^{q-l}(G(F),\Ah_{\Omega,c}^{\cont}(K^\p,B;\Omega(\epsilon)))_\m\cong \Hom_{\Omega[G_\p]}(B,\widetilde{\HH}{}_c^{q}(X_{K^\p},\Omega)_\m^\epsilon).$$
\end{Thm}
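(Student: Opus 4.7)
The plan is to mimic the proof of Theorem \ref{compcomp} (the relatively definite case), substituting the generalized Lemma which immediately precedes this theorem in place of Lemma \ref{comphom}. First I would fix a $G_\p$-invariant complete open $R$-lattice $M \subseteq B$ (such a lattice exists by admissibility/completeness of $B$ as a Banach space) and set $M_r = M/\varpi^r M \in \mathfrak{C}^{\sm}_{R/\varpi^r}(G_\p)$. Applying the preceding Lemma to $M_r$ (viewed over $R/\varpi^r$) yields a natural isomorphism
\[
\HH^{q-l}(G(F),\Ah_{R,c}(K^\p,M_r;R/\varpi^{r}(\epsilon)))_\m \cong \Hom_{R[G_\p]}(M_r,\widetilde{\HH}{}_c^{q}(X_{K^\p},R/\varpi^r)_\m^\epsilon).
\]
Note that since $M$ is $\varpi$-adically complete and the modules $\widetilde{\HH}{}_c^{q}(X_{K^\p},R/\varpi^r)_\m^\epsilon$ are smooth, the left and right hand sides compute the same object whether we use $M$ or $M_r$ on the source.

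Next I would pass to the projective limit over $r$. On the right side the limit of the Hom spaces is, by definition, $\Hom_{R[G_\p],\cont}(M,\widetilde{\HH}{}_c^{q}(X_{K^\p},R)_\m^\epsilon)$, where continuity is with respect to the $\varpi$-adic topology on $M$ and the natural topology on completed cohomology. On the left, one invokes the compact-support analogue of Corollary \ref{limits} (whose proof only uses the finite generation statement of Proposition \ref{FlachundNoethersch}\eqref{FuN}, which holds also in the compactly supported version as remarked in Section \ref{Compact}) to identify the limit with $\HH^{q-l}(G(F),\Ah_{R,c}(K^\p,M;R(\epsilon)))_\m$. This gives the integral version of the desired isomorphism for $M$.

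Finally I would invert $\varpi$. On the left this produces $\HH^{q-l}(G(F),\Ah_{\Omega,c}^{\cont}(K^\p,B;\Omega(\epsilon)))_\m$ using the identification $\Hom_R(M,R)\otimes_R \Omega \cong \Hom_{\cont}(B,\Omega)$ and Proposition \ref{FlachundNoethersch}\eqref{FN2} (compact-support version). On the right, the key input is the theorem of Calegari--Emerton (cited in the proof of Theorem \ref{compcomp}, from \cite{CaEm}) that the torsion submodule of completed cohomology has bounded exponent; this ensures that inverting $\varpi$ turns $\Hom_{R[G_\p],\cont}(M,\widetilde{\HH}{}_c^{q}(X_{K^\p},R)_\m^\epsilon)$ into $\Hom_{\Omega[G_\p]}(B,\widetilde{\HH}{}_c^{q}(X_{K^\p},\Omega)_\m^\epsilon)$ (any $G_\p$-equivariant $\Omega$-linear map of Banach representations restricts to a continuous map of integral lattices after rescaling). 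Hecke-equivariance is inherited from Hecke-equivariance at the finite-level torsion-coefficient stage, which is built into the preceding Lemma.

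The main obstacle I anticipate is purely bookkeeping rather than conceptual: one must verify that localization at $\m$ commutes with the projective limit over $r$ (which requires the Mittag-Leffler condition from Proposition \ref{FlachundNoethersch}\eqref{FuN} applied after localization, using that $\T^{(S)}$-localization is exact) and that the bounded-exponent-of-torsion argument from \cite{CaEm} carries over to the $\m$-localized, compactly supported, $\epsilon$-isotypic version of completed cohomology. The first point is standard once one notes that localization at $\m$ commutes with finite direct sums and kernels/cokernels; the second holds because localization is an exact functor that preserves boundedness of torsion.
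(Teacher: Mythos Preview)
Your proposal is correct and mirrors exactly the approach the paper indicates: apply the preceding Lemma (the $\m$-localized analogue of Lemma~\ref{comphom}) at each finite level $r$, pass to the projective limit, and invert $\varpi$ using the Calegari--Emerton bounded-torsion result, precisely as in the proof of Theorem~\ref{compcomp}. The only slip is your appeal to Corollary~\ref{limits} for the projective-limit step on the left, since that corollary is stated for flawless $M$; but this is harmless here, because the finite-level isomorphisms with the $\Hom$-side already identify the inverse system explicitly and the limit on the right is by definition $\Hom$ into completed cohomology.
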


\begin{Cor}
Suppose that Assumption \ref{assvanish} holds.
We have:
$$\dim_\Omega \Hom_{\Omega[G_\p],\cont}(\St_{G_\p}^{\cont},\widetilde{\HH}{}_c^{q}(X_{K^\p},\Omega)^\epsilon[\pi])= m_\pi.$$
\end{Cor}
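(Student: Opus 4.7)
The plan is to mirror the proof strategy for Corollary \ref{fifthdimension} in the relatively definite case, using the new theorem stated just above as a replacement for Theorem \ref{compcomp}.

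First, I would apply the preceding theorem with $B = \St_{G_\p}^{\cont}(\Omega)$. This produces a Hecke-equivariant isomorphism
\[
\HH^{q-l}(G(F),\Ah_{\Omega,c}^{\cont}(K^\p,\St_{G_\p}^{\cont};\Omega(\epsilon)))_\m \cong \Hom_{\Omega[G_\p]}(\St_{G_\p}^{\cont}(\Omega),\widetilde{\HH}{}_c^{q}(X_{K^\p},\Omega)_\m^\epsilon).
\]
Passing to the $\pi$-isotypic component (which is a $\T^{(S)}_\Omega$-direct summand of the $\m$-localization, since $\m$ is the reduction of the eigensystem of $\pi$) reduces the computation of the right hand side of the corollary to computing the dimension of the left hand side after taking $[\pi]$.

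Next, I would reduce the continuous side to the smooth side. By Theorem \ref{resolution1} the representation $\St_{G_\p}^{\infty}(R)$ is flawless, and by Corollary \ref{universal} its universal unitary completion is $\St_{G_\p}^{\cont}(\Omega)$. Hence Proposition \ref{automaticuniversal}, together with its compact-support variant (valid because Proposition \ref{FlachundNoethersch} and Corollary \ref{limits} carry over for $\Ah_{R,c}$ via the resolution \eqref{dualizingresolution} of the Steinberg module), yields
\[
\HH^{q-l}(G(F),\Ah_{\Omega,c}^{\cont}(K^\p,\St_{G_\p}^{\cont};\Omega(\epsilon)))[\pi] \cong \HH^{q-l}(G(F),\Ah_c(K^\p,\St_{G_\p}^{\infty};\Omega(\epsilon)))[\pi].
\]

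Finally, I would apply the boundary isomorphism of Lemma \ref{partial}(b) with $I=\emptyset$ to identify this last group with $\HH^{q}(G(F),\Ah(K^\p,\St_{G_\p}^{\infty};\Omega(\epsilon)))[\pi]$, whose dimension is $m_\pi\cdot\binom{\delta}{q-q}=m_\pi$ by Proposition \ref{dimensions}\eqref{thirddim}. Concatenating the three isomorphisms gives the claim. I do not expect any genuine obstacle in this last step: the only mild subtlety is that all intermediate isomorphisms must be Hecke-equivariant so that passage to the $[\pi]$-component is well defined, but this is built into the cited statements. The substantive content of the corollary lies entirely in Assumption \ref{assvanish}, which was already absorbed in the proof of the preceding theorem.
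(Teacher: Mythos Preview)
Your proposal is correct and follows exactly the route the paper indicates: it parallels the proof of Corollary \ref{fifthdimension} by combining the generalised theorem (replacing Theorem \ref{compcomp}) with the compact-support isomorphism \eqref{automatic}, Lemma \ref{partial}(b), and Proposition \ref{dimensions}\eqref{thirddim}. The paper does not spell out a separate proof for this corollary but simply states it as a generalisation of the results of Section \ref{Completed}, so your write-up is in fact more detailed than the original.
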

\begin{Pro}\label{lastlast}
Suppose that Assumption \ref{assvanish} holds and that $v_i(R)$ is flawless.
The canonical restriction map
$$\Hom_{\Omega[G_\p],\cont}(\mathcal{E}^{\cont}_{i}(\lambda),\widetilde{\HH}{}_c^{q}(X_{K^\p},\Omega)^\epsilon[\pi])\too \Hom_{\Omega[G_\p],\cont}(\St_{G_\p}^{\an}(\Omega),\widetilde{\HH}{}_c^{q}(X_{K^\p},\Omega)^\epsilon[\pi])$$
is injective for all homomorphisms $\lambda\in \Hom_{\cont}(F_\p^{\times},\Omega)$.
It is an isomorphism if and only if $\lambda\in \LI_{i}^{(0)}(\pi,\p)^{\epsilon}$.
\end{Pro}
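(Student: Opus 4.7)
The plan is to imitate the proof of Proposition~\ref{last}, with the generalized Theorem stated immediately above taking the place of Theorem~\ref{compcomp}. Since $v_i(R)$ is flawless, Corollary~\ref{universal} together with Proposition~\ref{automaticuniversal} (and the analogue for cohomology with compact support) gives natural isomorphisms between the continuous cohomology of the Banach representation $\VV^{\cont}_i(\Omega)$ and the smooth cohomology of $v_i^\infty$; by Remark~\ref{flawlessversion} this also identifies the cup product with $\mathcal{E}^{\cont}_i(\lambda)$ with the map defining the automorphic $\LI$-invariant.

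The first step is to dualize the strict exact sequence
\[
0\too \VV^{\cont}_i(\Omega)\too \mathcal{E}^{\cont}_i(\lambda)\too \St^{\cont}_{G_\p}(\Omega)\too 0
\]
and form the associated long exact sequence of the cohomology groups $\HH^\ast(G(F),\Ah^{\cont}_{\Omega,c}(K^\p,-;\Omega(\epsilon)))$, localized at the maximal ideal $\m$ and restricted to the $[\pi]$-isotypic component. Invoking the generalized Theorem stated above (which requires Assumption~\ref{assvanish}) one translates the degree $q-l$ and $q-l+1$ pieces of this long exact sequence into a sequence of continuous $\Omega[G_\p]$-Hom spaces into $\widetilde{\HH}{}_c^{q}(X_{K^\p},\Omega)_\m^\epsilon$. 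The key vanishing needed for injectivity is
\[
\HH^{q-l}(G(F),\Ah^{\cont}_{\Omega,c}(K^\p,\VV^{\cont}_i;\Omega(\epsilon)))_\m[\pi]=0.
\]
By the flawlessness-based comparison this equals $\HH^{q-l}(G(F),\Ah_c(K^\p,v_i^\infty;\Omega(\epsilon)))[\pi]$, which by Lemma~\ref{partial}\eqref{diagram2} (applied with $l$ in place of the shift) is isomorphic to $\HH^{q}(G(F),\Ah(K^\p,v_i^\infty;\Omega(\epsilon)))[\pi]$; Proposition~\ref{fourthdim} with $|I|=1$ and $d=q-1$ computes this dimension as $m_\pi\binom{\delta}{-1}=0$.

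For the second half of the statement I would identify the boundary map
\[
\HH^{q-l}(G(F),\Ah^{\cont}_{\Omega,c}(K^\p,\St^{\cont}_{G_\p};\Omega(\epsilon)))_\m[\pi]\too \HH^{q-l+1}(G(F),\Ah^{\cont}_{\Omega,c}(K^\p,\VV^{\cont}_i;\Omega(\epsilon)))_\m[\pi]
\]
with the cup product by $\mathcal{E}^{\cont}_i(\lambda)$. By flawlessness and Remark~\ref{flawlessversion} this agrees (up to a nonzero scalar) with the map $c^{(q-l)}_i(\lambda)[\pi]^\epsilon_c$ whose kernel is $\LI_i^{(0)}(\pi,\p)_c^\epsilon$; by Proposition~\ref{compactcomparison} the latter equals $\LI_i^{(0)}(\pi,\p)^\epsilon$. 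Hence the boundary vanishes precisely when $\lambda\in \LI_i^{(0)}(\pi,\p)^\epsilon$, which, combined with the injectivity already established, yields the ``if and only if'' in the statement.

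The main obstacle I anticipate is not the formal argument but checking that Assumption~\ref{assvanish} is strong enough to power the generalized Theorem uniformly for all three Banach representations $\St^{\cont}_{G_\p}$, $\VV^{\cont}_i$, and $\mathcal{E}^{\cont}_i(\lambda)$, and that the relevant operations—passage to the $[\pi]$-component, localization at $\m$, formation of the long exact sequence, and the flawlessness-based comparison of smooth and continuous cohomology—commute in the required manner. Once this compatibility is verified, the argument is essentially formal and reduces to the two ingredients above: the vanishing coming from Proposition~\ref{fourthdim} and the identification of the boundary map with the $\LI$-invariant-defining cup product.
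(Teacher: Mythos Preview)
Your approach is correct and mirrors the paper's implicit proof, which simply transports the argument of Proposition~\ref{last} to the general setting using the generalized Theorem stated just above Proposition~\ref{lastlast}. One slip to fix: the extension $\mathcal{E}^{\cont}_i(\lambda)$ sits in $\Ext^1_{\cont}(\VV^{\cont}_i,\St^{\cont}_{G_\p})$, so the short exact sequence is
\[
0\too \St^{\cont}_{G_\p}(\Omega)\too \mathcal{E}^{\cont}_i(\lambda)\too \VV^{\cont}_i(\Omega)\too 0,
\]
not the reversed sequence you wrote; your subsequent identification of the vanishing term and the boundary map is nonetheless the correct one (and is consistent with the correct sequence, not the one you displayed). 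Also, the reference ``Lemma~\ref{partial}\eqref{diagram2}'' should be to part~(b) of Lemma~\ref{partial}; the label \texttt{diagram2} belongs to Lemma~\ref{diagrams}.
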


\begin{Rem}
Let us end with mentioning an example, where the above assumption on the vanishing of completed cohomology is satisfied.
Let us assume that $F$ is totally real and $G$ is an anisotropic similitude group of a quadratic CM extension $E$ of $F$, which contains an imaginary quadratic field.
Further, we assume that $G$ is associated with a division algebra over $E$.
The main theorem of \cite{CaSch} states that Assumption \ref{assvanish} holds if there exists a rational prime $l\neq p$ that is completely split in $F$ such that the $\bmod\ p$ Galois representation attached to $\pi$ is unramified and decomposed generic at all places above $l$.
Being decomposed generic is a certain big image condition (see Definition 1.9 of \textit{loc.cit.~}for a precise definition).
\end{Rem}

\def\cprime{$'$}


\begin{thebibliography}{10}

\bibitem{Ait}
Y.~A\"{\i}t~Amrane.
\newblock Generalized {S}teinberg representations for split reductive groups.
\newblock {\em C. R. Math. Acad. Sci. Paris}, 348(5-6):243--248, 2010.

\bibitem{AdS}
G.~Alon and E.~de~Shalit.
\newblock Cohomology of discrete groups in harmonic cochains on buildings.
\newblock {\em Isr. J. Math.}, 135(1):355--380, Dec 2003.

\bibitem{AC}
J.~Arthur and L.~Clozel.
\newblock {\em Simple algebras, base change, and the advanced theory of the
  trace formula}, volume 120 of {\em Annals of Mathematics Studies}.
\newblock Princeton University Press, Princeton, NJ, 1989.

\bibitem{BLGGT}
T.~Barnet-Lamb, T.~Gee, D.~Geraghty, and R.~Taylor.
\newblock Potential automorphy and change of weight.
\newblock {\em Ann. of Math. (2)}, 179(2):501--609, 2014.

\bibitem{BWi}
D.~Barrera~Salazar and C.~Williams.
\newblock Exceptional zeros and {$\mathcal{L}$}-invariants of {B}ianchi modular
  forms.
\newblock {\em Trans. Amer. Math. Soc.}, 372(1):1--34, 2019.

\bibitem{BDI}
M.~Bertolini, H.~Darmon, and A.~Iovita.
\newblock Families of automorphic forms on definite quaternion algebras and
  {T}eitelbaum’s conjecture.
\newblock {\em Astérisque}, 331:29--64, 2010.

\bibitem{BdS}
A.~Besser and E.~de~Shalit.
\newblock $\mathcal{L}$-invariants of p-adically uniformized varieties.
\newblock {\em Ann. Math. du Qu{\'e}bec}, 40(1):29--54, Jun 2016.

\bibitem{BoCN}
A.~Borel.
\newblock Some finiteness properties of adele groups over number fields.
\newblock {\em Inst. Hautes \'{E}tudes Sci. Publ. Math.}, 16:5--30, 1963.

\bibitem{Bo}
A.~Borel.
\newblock Stable real cohomology of arithmetic groups {II}.
\newblock In J.-i. Hano, A.~Morimoto, S.~Murakami, K.~Okamoto, and H.~Ozeki,
  editors, {\em Manifolds and Lie Groups}, volume~14 of {\em Progress in
  Mathematics}, pages 21--55. Birkhäuser Boston, 1981.

\bibitem{BS}
A.~Borel and J.-P. Serre.
\newblock Corners and arithmetic groups.
\newblock {\em Comment. Math. Helvetici}, 48(1):436--491, 1973.

\bibitem{BS2}
A.~Borel and J.-P. Serre.
\newblock Cohomologie d'immeubles et de groupes {S}-arithmétiques.
\newblock {\em Topology}, 15(3):211 -- 232, 1976.

\bibitem{BW}
A.~Borel and N.~Wallach.
\newblock {\em Continuous Cohomology, Discrete Subgroups, and Representations
  of Reductive Groups}.
\newblock Mathematical surveys and monographs. American Mathematical Society,
  2000.

\bibitem{Br2}
C.~Breuil.
\newblock Série spéciale p-adique et cohomologie étale complétée.
\newblock {\em Astérisque}, 331:65--115, 2010.

\bibitem{Br3}
C.~Breuil.
\newblock {$\rm Ext^1$} localement analytique et compatibilit\'{e}
  local-global.
\newblock {\em Am. J. Math.}, 141(3):611--703, 2019.

\bibitem{BrHe}
C.~Breuil and F.~Herzig.
\newblock Towards the finite slope part for {${\rm GL}_n$}.
\newblock {\em Int. Math. Res. Not. IMRN}, 2020(24):10495--10552, 2020.

\bibitem{BuzzGee}
K.~Buzzard and T.~Gee.
\newblock The conjectural connections between automorphic representations and
  {G}alois representations.
\newblock In {\em Automorphic forms and {G}alois representations. {V}ol. 1},
  volume 414 of {\em London Math. Soc. Lecture Note Ser.}, pages 135--187.
  Cambridge Univ. Press, Cambridge, 2014.

\bibitem{CaEm}
F.~Calegari and M.~Emerton.
\newblock {\em Completed cohomology – a survey}, page 239–257.
\newblock London Mathematical Society Lecture Note Series. Cambridge University
  Press, 2011.

\bibitem{Caraiani}
A.~Caraiani.
\newblock Monodromy and local-global compatibility for {$l=p$}.
\newblock {\em Algebra Number Theory}, 8(7):1597--1646, 2014.

\bibitem{six}
A.~Caraiani, M.~Emerton, T.~Gee, D.~Geraghty, V.~Paskunas, and S.~W. Shin.
\newblock Patching and the {$p$}-adic local {L}anglands correspondence.
\newblock {\em Camb. J. Math.}, 4(2):197--287, 2016.

\bibitem{CaSch}
A.~Caraiani and P.~Scholze.
\newblock On the generic part of the cohomology of compact unitary {S}himura
  varieties.
\newblock {\em Ann. of Math. (2)}, 186(3):649--766, 2017.

\bibitem{CaWi}
W.~Casselman and D.~Wigner.
\newblock Continuous cohomology and a conjecture of {S}erre's.
\newblock {\em Invent. Math.}, 25:199--211, 1974.

\bibitem{ChH}
G.~Chenevier and M.~Harris.
\newblock Construction of automorphic {G}alois representations, {II}.
\newblock {\em Camb. J. Math.}, 1(1):53--73, 2013.

\bibitem{Co}
P.~Colmez.
\newblock Z{\'e}ros suppl{\'e}mentaires de fonctions {L} p-adiques de formes
  modulaires.
\newblock In {\em Algebra and Number Theory: Proceedings of the Silver Jubilee
  Conference University of Hyderabad}, pages 193--210. Hindustan Book Agency,
  2005.

\bibitem{CDHN}
P.~Colmez, G.~Dospienscu, J.~Hauseux, and W.~Nizioł.
\newblock {p}-adic \'{e}tale cohomology of period domains.
\newblock {\em Math. Ann.}, to appear.

\bibitem{D}
H.~Darmon.
\newblock Integration on {$H_{ p}\times H$} and arithmetic applications.
\newblock {\em Ann. of Math. (2)}, 154(3):589--639, 2001.

\bibitem{Dat}
J.~F. Dat.
\newblock Espaces sym\'etriques de {D}rinfeld et correspondance de {L}anglands
  locale.
\newblock {\em Ann. Sci. de l'Ecole Norm. Superieure}, 4e s{\'e}rie,
  39(1):1--74, 2006.

\bibitem{Ding}
Y.~Ding.
\newblock Simple {$\mathcal{L}$}-invariants for {${\rm GL}_n$}.
\newblock {\em Trans. Amer. Math. Soc.}, 372(11):7993--8042, 2019.

\bibitem{EmUnitary}
M.~Emerton.
\newblock {$p$}-adic {$L$}-functions and unitary completions of representations
  of {$p$}-adic reductive groups.
\newblock {\em Duke Math. J.}, 130(2):353--392, 2005.

\bibitem{Em}
M.~Emerton.
\newblock On the interpolation of systems of eigenvalues attached to
  automorphic Hecke eigenforms.
\newblock {\em Invent. Math.}, 164(1):1--84, Apr 2006.

\bibitem{EmOrd1}
M.~Emerton.
\newblock Ordinary parts of admissible representations of $p$-adic reductive
  groups {I}. {D}efinition and first properties.
\newblock In {\em Repr\'esentations $p$-adiques de groupes $p$-adiques {III} :
  m\'ethodes globales et g\'eom\'etriques}, Ast\'erisque, pages 355--402.
  Soci\'et\'e math\'ematique de France, 2010.

\bibitem{EmAn}
M.~Emerton.
\newblock Locally analytic vectors in representations of locally {$p$}-adic
  analytic groups.
\newblock {\em Mem. Amer. Math. Soc.}, 248(1175):iv+158, 2017.

\bibitem{EmJac2}
M.~Emerton.
\newblock Jacquet modules of locally analytic representations of p-adic
  reductivegroups {II}. {T}he relation to parabolic induction.
\newblock {\em J. Institut Math. Jussieu.}, to appear.

\bibitem{EmGee}
M.~Emerton and T.~Gee.
\newblock A geometric perspective on the {B}reuil-{M}\'{e}zard conjecture.
\newblock {\em J. Inst. Math. Jussieu}, 13(1):183--223, 2014.

\bibitem{Ge}
L.~Gehrmann.
\newblock On {S}halika models and p-adic {L}-functions.
\newblock {\em Isr. J. Math.}, 226(1):237--294, Jun 2018.

\bibitem{Ge3}
L.~{Gehrmann}.
\newblock Derived {H}ecke algebra and automorphic {L}-invariants.
\newblock {\em Trans. Amer. Math. Soc.}, 372(11):7767--7784, 2019.

\bibitem{Ge2}
L.~{Gehrmann}.
\newblock {Functoriality of automorphic $L$-invariants and applications}.
\newblock {\em Doc. Math.}, 24:1225--1243, 2019.

\bibitem{GPS}
I.~M. Gelfand and I.~Piatetski-Shapiro.
\newblock Automorphic functions and the theory of representations.
\newblock {\em Trudy Moskov. Mat. Ob\v{s}\v{c}.}, 12:389--412, 1963.

\bibitem{GKomega}
E.~Grosse-Kl\"{o}nne.
\newblock On the {$p$}-adic cohomology of some {$p$}-adically uniformized
  varieties.
\newblock {\em J. Algebraic Geom.}, 20(1):151--198, 2011.

\bibitem{GK2}
E.~Gro{\ss}e-Kl\"{o}nne.
\newblock On special representations of {$p$}-adic reductive groups.
\newblock {\em Duke Math. J.}, 163(12):2179--2216, 2014.

\bibitem{Hauseux}
J.~Hauseux.
\newblock Extensions entre s\'{e}ries principales {$p$}-adiques et modulo {$p$}
  de {$G(F)$}.
\newblock {\em J. Inst. Math. Jussieu}, 15(2):225--270, 2016.

\bibitem{FabianRat}
F.~Januszewski.
\newblock Rational structures on automorphic representations.
\newblock {\em Math. Ann.}, 370(3-4):1805--1881, 2018.

\bibitem{Ko}
J.~Kohlhaase.
\newblock The cohomology of locally analytic representations.
\newblock {\em J. Reine Angew. Math.}, 651:187--240, 2011.

\bibitem{StLy}
T.~Ly.
\newblock Repr\'{e}sentations de {S}teinberg modulo {$p$} pour un groupe
  r\'{e}ductif sur un corps local.
\newblock {\em Pacific J. Math.}, 277(2):425--462, 2015.

\bibitem{Mat1964}
H.~Matsumoto.
\newblock Quelques remarques sur les groupes algébriques réels.
\newblock {\em Proc. Japan Acad.}, 40(1):4--7, 1964.

\bibitem{Mazur}
B.~Mazur.
\newblock On monodromy invariants occurring in global arithmetic, and
  {F}ontaine's theory.
\newblock In {\em {$p$}-adic monodromy and the {B}irch and {S}winnerton-{D}yer
  conjecture ({B}oston, {MA}, 1991)}, volume 165 of {\em Contemp. Math.}, pages
  1--20. Amer. Math. Soc., Providence, RI, 1994.

\bibitem{MTT}
B.~Mazur, J.~Tate, and J.~Teitelbaum.
\newblock On p-adic analogues of the conjectures of {B}irch and
  {S}winnerton-{D}yer.
\newblock {\em Invent. Math.}, 84(1):1--48, 1986.

\bibitem{Orlik}
S.~Orlik.
\newblock On extensions of generalized {S}teinberg representations.
\newblock {\em Journal of Algebra}, 293(2):611 -- 630, 2005.

\bibitem{OrSchraen}
S.~Orlik and B.~Schraen.
\newblock The {J}ordan-{H}\"{o}lder series of the locally analytic {S}teinberg
  representation.
\newblock {\em Doc. Math.}, 19:647--671, 2014.

\bibitem{Orton}
L.~Orton.
\newblock An elementary proof of a weak exceptional zero conjecture.
\newblock {\em Canadian Journal of Mathematics}, 56:373--405, 2004.

\bibitem{R}
M.~Reeder.
\newblock Modular symbols and the {S}teinberg representation.
\newblock In J.-P. Labesse and J.~Schwermer, editors, {\em Cohomology of
  Arithmetic Groups and Automorphic Forms}, volume 1447 of {\em Lecture Notes
  in Mathematics}, pages 287--302. Springer Berlin Heidelberg, 1990.

\bibitem{SScoh}
P.~Schneider and U.~Stuhler.
\newblock The cohomology of p-adic symmetric spaces.
\newblock {\em Invent. Math.}, 105(1):47--122, 1991.

\bibitem{SSred}
P.~Schneider and U.~Stuhler.
\newblock Representation theory and sheaves on the {B}ruhat-{T}its building.
\newblock {\em Publications Math\'ematiques de l'IH\'ES}, 85:97--191, 1997.

\bibitem{ST}
P.~Schneider and J.~Teitelbaum.
\newblock Locally analytic distributions and p-adic representation theory, with
  applications to {GL(2)}.
\newblock {\em J. Amer. Math. Soc.}, 15(2):443--468, 2002.

\bibitem{STadm}
P.~Schneider and J.~Teitelbaum.
\newblock Algebras of {$p$}-adic distributions and admissible representations.
\newblock {\em Invent. Math.}, 153(1):145--196, 2003.

\bibitem{Scholze}
P.~Scholze.
\newblock On torsion in the cohomology of locally symmetric varieties.
\newblock {\em Ann. of Math. (2)}, 182(3):945--1066, 2015.

\bibitem{Schraen}
B.~Schraen.
\newblock Repr\'{e}sentations localement analytiques de {${\rm GL}_3(\mathbb
  {Q}_p)$}.
\newblock {\em Ann. Sci. de l'Ecole Norm. Superieure}, 44(1):43--145, 2011.

\bibitem{Shin}
S.~W. Shin.
\newblock Galois representations arising from some compact {S}himura varieties.
\newblock {\em Ann. of Math. (2)}, 173(3):1645--1741, 2011.

\bibitem{Sp}
M.~Spieß.
\newblock On special zeros of p-adic {L}-functions of {H}ilbert modular forms.
\newblock {\em Invent. Math.}, 196(1):69--138, 2014.

\bibitem{Teitelbaum}
J.~Teitelbaum.
\newblock Values of p-adic {L}-functions and a p-adic {P}oisson kernel.
\newblock {\em Invent. Math.}, 101(2):395--410, 1990.

\bibitem{Ve}
A.~{Venkatesh}.
\newblock Derived {H}ecke algebra and cohomology of arithmetic groups.
\newblock {\em Forum of Mathematics, Pi}, 7:e7, 2019.

\bibitem{Vi}
M.-F. Vign{\'e}ras.
\newblock A criterion for integral structures and coefficient systems on the
  tree of {${\rm PGL}(2,F)$}.
\newblock {\em Pure Appl. Math. Q.}, 4(4, Special Issue: In honor of
  Jean-Pierre Serre. Part 1):1291--1316, 2008.

\bibitem{Weibel}
C.~A. Weibel.
\newblock {\em An Introduction to Homological Algebra}.
\newblock Cambridge Studies in Advanced Mathematics. Cambridge University
  Press, 1994.

\end{thebibliography}
\end{document}